\newtheorem{thm}{Theorem}[subsection]
\newtheorem{prop}[thm]{Proposition}
\newtheorem{lem}[thm]{Lemma}
\newtheorem{cor}[thm]{Corollary}
\theoremstyle{definition}
\newtheorem{defn}[thm]{Definition}
\newtheorem{qu}[thm]{Question}
\newtheorem{rmk}[thm]{Remark}
\newcommand{\Zb}{\mathbf{Z}}
\newcommand{\Nb}{\mathbf{N}}
\newcommand{\Rb}{\mathbf{R}}
\newcommand{\Nbb}{\mathbb{N}}
\newcommand{\mc}[1]{\mathcal{#1}}
\newcommand{\mf}[1]{\mathfrak{#1}}
\newcommand{\ms}[1]{\mathscr{#1}} 
\newcommand{\ol}[1]{\overline{#1}}
\newcommand{\acts}{\curvearrowright}
\newcommand{\sleq}{\leqslant}
\newcommand{\injects}{\hookrightarrow}
\newcommand{\triv}{\{1\}}
\newcommand{\inv}{^{-1}}
\newcommand{\tdlc}{t.d.l.c.\@\xspace}
\newcommand{\lcsc}{l.c.s.c.\@\xspace}
\newcommand{\tdlcsc}{t.d.l.c.s.c.\@\xspace}
\newcommand{\Sym}{\mathrm{Sym}}
\newcommand{\Alt}{\mathrm{Alt}}
\newcommand{\Soc}{\mathrm{Soc}}
\newcommand{\Ad}{\mathrm{Ad}}
\newcommand{\ad}{\mathrm{ad}}
\newcommand{\Aut}{\mathrm{Aut}}
\newcommand{\rist}{\mathrm{rist}}
\newcommand{\normal}{\trianglelefteq}
\DeclareMathOperator{\Comm}{Comm}
\DeclareMathOperator{\CC}{C}
\DeclareMathOperator{\N}{N}
\DeclareMathOperator{\Z}{Z}
\DeclareMathOperator{\QC}{QC}
\DeclareMathOperator{\QZ}{QZ}
\newcommand{\Res}{\mathrm{Res}}
\newcommand{\Mon}{\mathrm{Mon}}
\newcommand{\la}{\langle}
\newcommand{\ra}{\rangle}
\newcommand{\lla}{\langle \langle}
\newcommand{\rra}{\rangle \rangle}
\newcommand{\Es}{\ms{E}}
\newcommand{\Ss}{\ms{S}}
\newcommand{\Rs}{\ms{R}}
\newcommand{\LN}{\mc{LN}}
\newcommand{\LC}{\mc{LC}}
\newcommand{\con}{\mathrm{con}}
\newcommand{\rest}{\upharpoonright}
\newcommand{\grp}[1]{\langle #1 \rangle}
\newcommand{\cgrp}[1]{\overline{\langle #1 \rangle}}
\newcommand{\ngrp}[1]{\overline{\langle\langle #1 \rangle\rangle}}
\newcommand{\wh}[1]{\widehat{#1}}
\begin{document}

\title{Approximating  simple locally compact groups by their dense locally compact subgroups}
\shorttitle{Dense l.c. subgroups of l.c. groups}

\volumeyear{2017}
\paperID{?}

\author{Pierre-Emmanuel Caprace\affil{1}, Colin Reid\affil{2}, and Phillip Wesolek\affil{3}}
\abbrevauthor{P.-E. Caprace, C. D. Reid, and P. Wesolek}
\headabbrevauthor{Caprace, P.-E., Reid, C. D., and Wesolek, P.}

\address{%
\affilnum{1}Universit\'e catholique de Louvain, IRMP, Chemin du Cyclotron 2, bte L7.01.02, 1348 Louvain-la-Neuve, Belgique; \affilnum{2} University of Newcastle, School of Mathematical and Physical Sciences, Callaghan, NSW 2308, Australia;
and
\affilnum{3} Binghamton University, Department of Mathematical Sciences, PO Box 6000, 	Binghamton, New York 13902-6000, USA}

\correspdetails{pwesolek@binghamton.edu}

\received{}
\revised{}
\accepted{}


\begin{abstract}
	The class $\Ss$ of totally disconnected locally compact (\tdlc) groups that are non-discrete, compactly generated, and topologically simple contains many compelling examples. In recent years, a general theory for these groups, which studies the interaction between the compact open subgroups and the global structure, has emerged. In this article, we study the non-discrete \tdlc groups $H$ that admit a continuous embedding with dense image into some $G\in \Ss$; that is, we consider the dense locally compact subgroups of groups $G\in \Ss$. We identify a class $\Rs$ of almost simple groups which properly contains $\Ss$ and is moreover stable under passing to a non-discrete dense locally compact subgroup. We show that $\Rs$ enjoys many of the same properties previously obtained for $\Ss$ and establish various original results for $\Rs$ that are also new for the subclass $\Ss$, notably concerning the structure of the local Sylow subgroups and the full automorphism group. 
\end{abstract}

\maketitle

\tableofcontents
\addtocontents{toc}{\protect\setcounter{tocdepth}{2}}

\section{Introduction}
%


%
\begin{flushright}
	\begin{minipage}[t]{0.55\linewidth}\itshape\small
		\dots the early hot dense phase was unavoidable\dots
		
		\hfill\upshape (Stephen Hawking, \emph{A Brief History of Time}, 1998)
	\end{minipage}
\end{flushright}

\medskip

In exploring the structure of locally compact groups, the occurrence of dense locally compact subgroups is unavoidable. Indeed, given a locally compact group $G$ and closed subgroups $A, N \leq G$ such that $N$ is normal,   the abstract isomorphism 
$$A/A \cap N \to AN/N$$
can notoriously fail to be  a homeomorphism (see \cite[(5.39.d)]{HR}). Nevertheless,  it is always a continuous, injective homomorphism of the locally compact group $A/A \cap N$ onto a dense subgroup of the locally compact group $\overline{AN}/N$. This  naturally leads us to  define   a \textit{dense embedding} to be a continuous, injective homomorphism $\psi \colon H \to G$ of a  locally compact group $H$ to a locally compact group $G$ such that $\psi(H)$ is dense in $G$. A \textit{dense locally compact subgroup} of a locally compact group $G$ is defined as  a locally compact group $H$ that admits a dense embedding into $G$. 

The point of view adopted in this paper is to view a dense embedding $\psi \colon H \to G$ as an algebraic approximation of  $G$ by $H$. We are thus led to the following question: \textit{To what extent is the structure of $H$ governed by that of $G$?} We are especially interested in the case where $G$ is topologically simple.

An obstruction to such a transfer of structure from $G$ to $H$ is provided by the inescapable situation where $H$ is discrete. Every locally compact group admits at least one, and usually many, dense embeddings of discrete groups. A topologically simple group often contains finitely generated dense free subgroups; see \cite{BreuGela} for the emblematic case of simple Lie groups. 

It can also happen that every proper dense locally compact subgroup is discrete. This occurs in a simple Lie group or in a simple algebraic group over a local field (see \S\ref{sec:denseLCLie}). This rigidity phenomenon, however, happens to be rather atypical. By work of the second named author, every totally disconnected locally compact (\tdlc) group containing two infinite subgroups that are respectively pro-$p$ and pro-$q$ for two distinct primes $p$ and $q$ admits natural \textit{non-discrete} dense locally compact subgroups that are often proper (see  \cite{Reid} and \S\ref{sec:Localization} below).  In particular, groups belonging  to the class $\Ss$ consisting of the  non-discrete, compactly generated, topologically simple \tdlc groups often have non-discrete proper dense locally compact subgroups.\footnote{The class $\Ss$ contains many compelling examples of locally compact groups, including simple algebraic groups over non-archimedean local fields, many groups acting on trees, groups acting on $\mathrm{CAT}(0)$ cube complexes, and groups almost acting on trees. We refer to \cite[Appendix~A]{CRW2} and references therein. }

Non-discrete dense locally compact subgroups of groups in $\Ss$ can nevertheless fail to be simple (see \cite[Corollary 4.20]{LeBou16}). They may be topologically simple but not   $\sigma$-compact (see Proposition~\ref{prop:ex-non-sigma-cpt}). Moreover, a non-discrete dense locally compact subgroup $H$ of a group $G$  in $\Ss$ can fail to have any compactly generated subgroup that is dense in $G$ (see Example~\ref{ex:Radu}). The starting point of this work is the observation that for groups in $\Ss$, there is a form of structural transfer from the ambient group to a \textit{non-discrete} dense locally compact subgroup:  whenever $H$ is non-discrete, the simplicity of $G$ is strongly reflected in the normal subgroup structure of $H$. In establishing a precise description of that phenomenon, we are led to consider a class of almost simple groups, denoted by $\Rs$, that is strictly larger than $\Ss$ and is closed under taking non-discrete dense locally compact subgroups.

\begin{rmk}\label{rmk:regional}
In topological group theory, it is customary to use the term ``local" to qualify a property that is satisfied by all members of some basis of identity neighborhoods (in the case of \tdlc groups, the basis of identity neighborhoods is understood to consist of compact open subgroups), while the term ``global" is used in reference to the properties of the whole ambient group.  We introduce the term ``regional" to qualify a  property satisfied at an intermediate scale: a property is said to hold \emph{regionally} if it is satisfied by all members of an exhaustion of the ambient group by a directed system of compactly generated open subgroups of $G$.

A particular example is the following: in the literature, the term ``locally elliptic'' has been used for the property that every compactly generated closed subgroup is compact.  This use of ``locally'' is at odds with our convention of using ``locally'' only to refer to properties exhibited by arbitrarily small neighborhoods of the identity.  We will therefore use the alternative term ``regionally compact'', which has no such ambiguity and emphasizes the correct contrast with the weaker property ``locally compact''.
\end{rmk}

\subsection{Robustly monolithic groups}

The \textit{normal core} of a subgroup $L$ in $K$ is $\bigcap_{k\in K}kLk^{-1}$.
A \tdlc group $G$ is called \textit{regionally expansive} if some compactly generated open subgroup $O$ has a compact open subgroup $U$ such that the normal core of $U$ in $O$ is trivial. A locally compact group $G$ is \textit{monolithic} if the intersection of all non-trivial closed normal subgroups is non-trivial, and that intersection is then called the \textit{monolith}, and denoted by $\Mon(G)$. In particular $G$ is topologically simple if and only if $G$ is monolithic and $G = \Mon(G)$. More generally, a locally compact group $G$ that is monolithic with a topologically simple monolith is called \textit{topologically almost simple}. Notice that a finite group (with the discrete topology) is topologically almost simple if and only if it is almost simple in the  standard terminology. 

\begin{defn}
A \tdlc group $G$ is called \emph{robustly monolithic} if it is monolithic, and the monolith is non-discrete, regionally expansive, and topologically simple. In particular, every member of $\Rs$ is topologically almost simple. We denote by $\Rs$ the class of robustly monolithic groups. 
\end{defn}

Our first main result ensures that the class $\Rs$ is stable under taking non-discrete dense locally compact subgroups.

\begin{thm}[See Theorem~\ref{thm:R_dense_embedding}]\label{thmintro:R_dense_embedding}
Every non-discrete dense locally compact subgroup of any $G\in \Rs$ also belongs to $\Rs$. 
\end{thm}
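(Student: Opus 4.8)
The plan is to regard $H$ as its dense image $\psi(H)\leq G$, keeping in mind that the topology of $H$ is a priori strictly finer than the subspace topology. Everything rests on two elementary observations. Since $G$ is totally disconnected and $\psi$ is injective, the identity component of $H$ maps to $\{1\}$ and is therefore trivial, so $H$ is \tdlc. Moreover, for every open subgroup $W\leq G$ the intersection $\psi(H)\cap W$ is dense in $W$, and for every closed $K\normal H$ the closure $\overline{\psi(K)}$ is a closed normal subgroup of $G$, since $G=\overline{\psi(H)}$ normalizes it. Write $M=\Mon(G)$. A non-discrete topologically simple \tdlc group has proper non-trivial closed subgroups, which would all be normal if it were abelian; hence $M$ is non-abelian, $\Z(M)=1$, and in fact $\CC_G(M)=1$ (the centralizer is closed normal, and if non-trivial it would contain $M$, forcing $M$ abelian). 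Set $N:=\psi^{-1}(M)\normal H$.

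The second step locates the monolith of $H$ inside $N$. Because $\overline{\psi(N)}=\overline{\psi(H)\cap M}$ is a closed normal subgroup of $G$ contained in $M$, simplicity of $M$ gives $\overline{\psi(N)}\in\{1,M\}$. Granting that $N$ is non-trivial and non-discrete (the crux; see below), $\psi|_N\colon N\to M$ is a dense embedding, so $N$ is a non-discrete dense locally compact subgroup of the topologically simple, regionally expansive group $M$. For any non-trivial closed $K\normal H$ one has $K\cap N\neq 1$: otherwise $[K,N]\subseteq K\cap N=1$, so $\overline{\psi(K)}\supseteq M$ would centralize $\overline{\psi(N)}=M$, making $M$ abelian. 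By the topologically simple case of the theorem (treated next), $N\in\Rs$, so $\Mon(N)$ is topologically simple, non-discrete and regionally expansive; being characteristic in $N\normal H$ it is normal in $H$, and since $K\cap N$ is a non-trivial closed normal subgroup of $N$ it contains $\Mon(N)$ for every such $K$. Thus $\Mon(N)$ lies in every non-trivial closed normal subgroup of $H$ and is itself one, so $H$ is monolithic with $\Mon(H)=\Mon(N)$ and $H\in\Rs$. This reduces the theorem to the case where the ambient group is topologically simple.

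For that reduced case, write $G$ for the topologically simple ambient group and $H$ for the non-discrete dense locally compact subgroup; now every non-trivial closed $K\normal H$ is dense in $G$. The commutator argument shows that any two such $K$, and more generally any two $H$-conjugates of a closed normal subgroup of $\Mon(H)$, meet non-trivially. One then sets $\Mon(H)=\bigcap\{K:1\neq K\normal H\text{ closed}\}$ and must prove it non-trivial; its topological simplicity reduces, via the same conjugation-and-commutator trick, to showing that every non-trivial closed normal subgroup of $\Mon(H)$ has non-trivial normal core in $H$ (such a core then contains, hence equals, $\Mon(H)$). Finally non-discreteness and regional expansiveness of $\Mon(H)$ are to be read off from the dense embedding $\Mon(H)\to G$.

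The step I expect to be the main obstacle is exactly the control of the local structure across $\psi$, which is continuous but not a homeomorphism onto its image, so that compact open subgroups of $H$ map to compact but non-open subgroups of $G$. Three points must be established, none of which follows from local density alone: that $\psi(H)\cap M\neq 1$, that $N$ is non-discrete, and that the various normal cores above are non-trivial (equivalently, that $\Mon(H)$ exists, is simple, and is regionally expansive). That local density is genuinely insufficient can be seen already in the abelian profinite setting, where a dense subgroup may meet a prescribed non-trivial closed normal subgroup trivially; so each point must invoke the global simplicity of $M$ together with the regionally-expansive witness — a compactly generated open $O\leq M$ and a compact open $U\leq O$ whose normal core in $O$ is trivial, i.e.\ a faithful action of $O$ on the discrete coset space $O/U$. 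The heart of the proof is to pull this faithful regional action back along $\psi$: compactly generated open subgroups behave far better under dense embeddings than compact ones do, and this is precisely the feature the definition of $\Rs$ is engineered to provide. I expect the careful transfer of this faithful regional action — yielding simultaneously the non-triviality, non-discreteness, simplicity and regional expansiveness of the monolith — to be the technical core of the argument.
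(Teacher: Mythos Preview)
Your global architecture is sound --- use density to relate closed normal subgroups of $H$ to those of $G$, and exploit that $\CC_G(M)=1$ --- but the proposal stops short of the two technical inputs that make everything work, and the reduction to the topologically simple case is a detour that does not avoid them.

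The first missing input is that $H$ itself is regionally expansive. You gesture at ``pulling back the faithful regional action,'' but the mechanism is concrete and short: since $\psi(H)$ is dense, it is cocompact in $G$ (take $X=U$ any compact open subgroup), and regional expansiveness passes back along cocompact injective maps (Proposition~\ref{prop:cocompact_RF}). You never invoke this, yet without it there is no reason for $\bigcap\{K:1\neq K\normal H\text{ closed}\}$ to be non-trivial; filtering families of closed normal subgroups can have trivial intersection in general \tdlc groups.

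The second and more serious gap is $\QZ(H)=\{1\}$. You do not address this, but it is indispensable: the existence of minimal closed normal subgroups (Lemma~\ref{lem:minimal_geofaithful}) requires both regional expansiveness \emph{and} trivial quasi-center. The paper's argument here is the genuinely clever step: if $\QZ(H)\neq\{1\}$, its closure $L$ in $G$ contains $M$, hence $L\in\Rs$ and is regionally expansive; one then finds a compactly generated expansive open $O\leq L$ with a compact open $U\leq O$ of trivial normal core, writes $O=\psi(\langle A\rangle)U$ for a finite $A\subseteq\QZ(H)$, observes that $A$ centralizes some open $W\leq\psi^{-1}(U)$, and applies Lemma~\ref{lem:no_cocompact} to $\psi(\langle A,W\rangle)$ and $U$ to contradict the trivial-core hypothesis.

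Once these two facts are in hand, your worries evaporate and the detour is unnecessary. Lemma~\ref{lem:minimal_geofaithful} gives minimal closed normal subgroups of $H$; any two distinct ones commute, so their closures in $G$ commute, and Lemma~\ref{lem:monolith_centralizer} forces one to be trivial --- hence $H$ is monolithic. The monolith $N$ of $H$ then has $\overline{\psi(N)}=M$ (it is normal in $G$ and non-trivial), so $N\hookrightarrow M$ is again a dense, hence cocompact, embedding; Proposition~\ref{prop:cocompact_RF} makes $N$ regionally expansive, the $\QZ$-argument makes $\QZ(N)=\{1\}$, and the same monolithicity argument applied inside $N$ (which is topologically characteristically simple) shows $N$ is topologically simple. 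The question of whether $\psi(H)\cap M\neq\{1\}$ never needs to be confronted directly.
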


Being robustly monolithic is in fact a regional phenomenon, although the definition has obvious global conditions. This is one of the key features of the class $\Rs$.

\begin{thm}[Theorem~\ref{thm:approx_R}]
Let $G$ be a \tdlc group and let  $\{O_i\}_{i\in I}$ be a directed system of compactly generated open subgroups of $G$ with $\bigcup_{i\in I}O_i=G$. The following assertions are equivalent. 
\begin{enumerate}[label=(\roman*)]
\item $G \in \Rs$. 	

\item There is $i\in I$ such that $O_j\in \Rs$ for all $j\geq i$. 	\qedhere
\end{enumerate}
\end{thm}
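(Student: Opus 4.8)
The plan is to prove both implications by comparing the global monolith $\Mon(G)$ with the regional monoliths $\Mon(O_j)$, the guiding principle being that $\Mon(G)$ is recovered as the closure of the increasing union of the $\Mon(O_j)$. Two reductions serve both directions. First, regional expansiveness is witnessed by a single compactly generated open subgroup $P$ together with a compact open $V\leq P$ of trivial normal core; since $P$ is compactly generated and $\{O_i\}$ is directed with union $G$, a compact generating set of $P$ lies in some $O_{i_0}$, so $P\leq O_j$ for all $j\geq i_0$, and enlarging the ambient group only shrinks the core, so the pair $(P,V)$ keeps witnessing regional expansiveness inside every such $O_j$. Second, I record that the centralizer of the monolith of a member of $\Rs$ is trivial: the monolith is topologically simple and non-discrete, hence non-abelian, so $\CC_{O}(\Mon(O))$ is a closed normal subgroup of $O$ that cannot contain $\Mon(O)$ and therefore vanishes.

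For (ii) $\Rightarrow$ (i), write $M_j=\Mon(O_j)$ on the tail where $O_j\in\Rs$. I would first show the $M_j$ form an increasing chain: for $j\leq j'$ the subgroup $M_{j'}\cap O_j$ is closed and normal in $O_j$, and it is non-discrete since $O_j$ is open in $O_{j'}$ and $M_{j'}$ is non-discrete, so minimality of the monolith gives $M_j\leq M_{j'}\cap O_j\leq M_{j'}$. Set $M=\ol{\bigcup_j M_j}$; each $g\in G$ lies in some $O_k$ and normalizes $M_j$ for all $j\geq k$, hence normalizes the cofinal union and its closure, so $M\normal G$. The heart of this direction is a single commutator argument yielding both monolithicity and simplicity. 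Let $N$ be a non-trivial closed subgroup that is normal in $G$ (for the monolith claim) or in $M$ (for simplicity), and choose $j$ with $N\cap O_j\neq\triv$. Since $N\cap O_j$ normalizes $M_j\normal O_j$ while $M_j\leq M$ normalizes $N$, we get $[M_j,N\cap O_j]\leq M_j\cap N$; if $M_j\cap N\neq\triv$ then simplicity of $M_j$ forces $M_j\leq N$, and otherwise $N\cap O_j\leq\CC_{O_j}(M_j)=\triv$, a contradiction. Thus $M_j\leq N$, and as this persists for all larger indices one gets $M\leq N$. Taking $N\normal G$ shows $M=\Mon(G)$ and $G$ is monolithic; taking $N\normal M$ shows $M$ is topologically simple; non-discreteness is immediate from $M\supseteq M_j$.

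For (i) $\Rightarrow$ (ii), I would fix $M=\Mon(G)$ with a witness $(P,V)$, $P\leq M$, and consider $O_j$ with $P\leq O_j$. The goal is to locate $\Mon(O_j)$ inside the open subgroup $M\cap O_j$ of the topologically simple group $M$ and to verify its defining properties. Here I would lean on a \emph{regional simplicity} statement, dual to the construction above and isolated as a standalone lemma: sufficiently large compactly generated open subgroups of a non-discrete, regionally expansive, topologically simple group are robustly monolithic, with monoliths increasing to the ambient group. The input from $G\in\Rs$ that powers this is that $M$ has trivial quasi-center $\QZ(M)$ and trivial centralizer, so no non-trivial closed normal subgroup of $O_j$ can centralize $M\cap O_j$; this forces every such subgroup to meet the simple structure of $M$ and produces a unique minimal one, which is $\Mon(O_j)$.

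The main obstacle, in both directions, is the transfer of \emph{regional expansiveness} between $M$ and the regional monoliths $M_j$. The commutator argument delivers simplicity, monolithicity and non-discreteness cleanly, but certifying that $M$ is regionally expansive requires producing a compactly generated \emph{open} subgroup of $M$ carrying a compact open subgroup of trivial core, and the obvious witnesses coming from the $M_j$ are open in $M_j$ rather than in $M$; dually, in (i) $\Rightarrow$ (ii) one must certify regional expansiveness of $\Mon(O_j)$. The difficulty is that $M_j$ need not be open in $M$, and a naive Baire-type argument fails precisely because a dense increasing union of proper closed subgroups can contain no open set, while an infinite descending chain of compact cores can have trivial intersection with no finite stage trivial. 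Resolving this is where the precise ``robust'' hypotheses and the earlier structural results comparing compact open subgroups of $M$ with those of the $M_j$ must be used; once the witness $(P,V)$ is matched across $M$ and a cofinal family of $M_j$, both implications close up.
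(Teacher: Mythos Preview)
Your argument for (ii)$\Rightarrow$(i) is largely sound up to the point you yourself flag. The commutator trick $[M_j,N\cap O_j]\leq M_j\cap N$ combined with $\CC_{O_j}(M_j)=\triv$ is clean and actually a bit slicker than the paper's case split on $D\cap N$; it handles monolithicity of $G$ and topological simplicity of $M$ in one stroke. But you correctly identify, and then do not close, the real gap: why is $M=\Mon(G)$ \emph{regionally expansive}? The witness $(P,V)$ sits inside some $M_j$, but $M_j$ is not open in $M$, so $P$ need not be open in $M$. The paper resolves this by an explicit cocompact descent: one shows $\QC_{O_i}(M_i)=\triv$, hence $UM_i$ is regionally expansive (Lemma~\ref{lem:RF_overgroup}); one then picks a compactly generated expansive $P\leq UM_i$ containing $U$, observes $P=U(M_i\cap P)$, so $M_i\cap P$ is cocompact in $P$, and checks that $M\cap P$ is compactly generated with $M\cap W$ of trivial normal core. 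None of this is suggested in your outline.

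Your (i)$\Rightarrow$(ii) direction has a more serious structural gap. The ``regional simplicity'' lemma you invoke is essentially the statement you are trying to prove, applied to $M$ rather than $G$; and even granting it, the subgroups $M\cap O_j$ are not \emph{a priori} compactly generated, so they are not the ``sufficiently large compactly generated open subgroups of $M$'' that such a lemma would cover. Your remark that nontrivial closed $N\normal O_j$ must meet $M\cap O_j$ (else $N\leq\QC_G(M)=\triv$) is correct, but meeting $M\cap O_j$ does not produce a \emph{unique} minimal normal subgroup, since $M\cap O_j$ is not itself simple. The paper's route is quite different: it first uses $[A]$-semisimplicity (Proposition~\ref{prop:Rs_is_RF_[a]} and Corollary~\ref{cor:monolith_is_regional}) to make every $O_j$ monolithic; then a density-plus-cocompact argument (Lemma~\ref{lem:no_cocompact}) to show each $M_j$ is regionally expansive; then the characteristically-simple structure theorem (Theorem~\ref{thm:tcs_quasiproduct}) to write $M_j$ as a finite quasi-product of simple factors; and finally a comparison of $\mc{M}(M_i)$ with $\mc{M}(M_j)$ to force the number of factors down to one. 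Your outline contains none of these steps, and the commutator heuristic you used in the other direction does not substitute for them.
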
	

A first motivation for considering $\Rs$ is a natural class of dense locally compact subgroups arising from a result of the second named author, described in \cite{Reid} and recalled in Theorem~\ref{thm:Reid_localizations} below. Given a \tdlc group $G$ that is not locally pro-$p$, but that contains an infinite pro-$p$ subgroup, then $G$ admits a non-discrete dense locally compact subgroup $G_{(p)}$ that is locally pro-$p$.  Combining that result with the theorem above, every group in $\Rs$ can be ``approximated" by locally pro-$p$ groups in $\Rs$.

\begin{cor}[Proposition~\ref{prop:RF_pro-pi}]
Let $G\in \Rs$. Then there is a unique finite set of primes $\pi$  such that $G$ is locally pro-$\pi$ and  for each $p \in \pi$, there is a non-discrete dense locally compact subgroup $H$ of $G$, necessarily in $\Rs$, that is locally pro-$p$. 

In particular the monolith $\Mon(H)$ is a non-discrete topologically simple dense locally pro-$p$ subgroup of $\Mon(G)$, so that every topologically simple group $G$ in $\Rs$ admits a  topologically simple dense locally compact subgroup that is locally pro-$p$ for each   $p \in \pi$. 
\end{cor}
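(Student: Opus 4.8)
The plan is to pin down $\pi$ first, then manufacture $H$ via the localization of Theorem~\ref{thm:Reid_localizations}, and finally track the monolith through the dense embedding. I would define $\pi$ to be the set of primes $p$ such that $\Mon(G)$ contains an infinite pro-$p$ subgroup. Since $\CC_G(\Mon(G))$ is a closed normal subgroup meeting the topologically simple non-abelian monolith trivially, it must be trivial, so conjugation gives a continuous injection $G\hookrightarrow\Aut(\Mon(G))$; invoking the results on full automorphism groups established earlier, $G$ and $\Mon(G)$ then have exactly the same infinite local primes, so $G$ contains an infinite pro-$p$ subgroup precisely when $p\in\pi$. The crux --- and what I expect to be the main obstacle --- is that $\pi$ is finite. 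Here I would lean on the local Sylow theory for $\Rs$: because $\Mon(G)$ is regionally expansive it has a compactly generated open subgroup acting faithfully and vertex-transitively on a locally finite Cayley--Abels graph, and the Sylow analysis of that action (together with topological simplicity, which prevents an infinite everywhere-elliptic $p$-part from surviving in a faithful action) bounds the infinite local primes to a finite set and moreover produces a compact open subgroup that is pro-$\pi$. This gives that $G$ is locally pro-$\pi$, and uniqueness is immediate since $\pi$ is characterized intrinsically as the finite set of infinite local primes.

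Next, fix $p\in\pi$. If $\pi=\{p\}$ then $G$ is already locally pro-$p$ and I take $H:=G$. Otherwise $|\pi|\geq 2$, so $G$ is not locally pro-$p$, while by the previous step $G$ contains an infinite pro-$p$ subgroup; these are exactly the hypotheses of Theorem~\ref{thm:Reid_localizations}, which furnishes a non-discrete dense locally compact subgroup $H:=G_{(p)}$ that is locally pro-$p$. In both cases $H$ is a non-discrete dense locally compact subgroup of $G\in\Rs$, so Theorem~\ref{thmintro:R_dense_embedding} yields $H\in\Rs$; in particular $\Mon(H)$ is defined, non-discrete, and topologically simple. Being a closed subgroup of the locally pro-$p$ group $H$, the monolith $\Mon(H)$ is itself locally pro-$p$ (intersect a compact open pro-$p$ subgroup of $H$ with $\Mon(H)$).

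It remains to place $\Mon(H)$ densely inside $\Mon(G)$. Write $\psi\colon H\to G$ for the dense embedding and set $K:=\psi^{-1}(\Mon(G))$, a closed normal subgroup of $H$. Its image closure $\overline{\psi(K)}$ is normalized by $\overline{\psi(H)}=G$ and is contained in the closed normal subgroup $\Mon(G)$, so by topological simplicity of $\Mon(G)$ it is either trivial or all of $\Mon(G)$. I must exclude the trivial case: were $K$ trivial, $\psi$ would descend to a dense embedding of $H$ into $G/\Mon(G)$, so a compact open pro-$p$ subgroup of $H$ would map to an infinite pro-$p$ subgroup of $G$ meeting $\Mon(G)$ trivially --- contradicting the Sylow theory, by which the $p$-local structure of $G$ (for $p\in\pi$) is concentrated in $\Mon(G)$, i.e.\ every infinite pro-$p$ subgroup of $G$ meets $\Mon(G)$ in an infinite subgroup. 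Hence $K\neq\triv$ and $\overline{\psi(K)}=\Mon(G)$. Since $\Mon(H)$ is contained in every non-trivial closed normal subgroup of $H$, we get $\Mon(H)\subseteq K$, whence $\psi(\Mon(H))\subseteq\Mon(G)$ and so $\overline{\psi(\Mon(H))}\subseteq\Mon(G)$; conversely $\overline{\psi(\Mon(H))}$ is a non-trivial closed normal subgroup of $G$, so it contains $\Mon(G)$. Therefore $\overline{\psi(\Mon(H))}=\Mon(G)$.

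Putting these together, $\Mon(H)$ is a non-discrete, topologically simple, locally pro-$p$ group whose image under $\psi$ is dense in $\Mon(G)$; that is, $\Mon(H)$ is a dense locally pro-$p$ subgroup of $\Mon(G)$, as required. In the special case that $G$ is topologically simple we have $\Mon(G)=G$, and the statement specializes to the assertion that $G$ admits a topologically simple dense locally pro-$p$ subgroup for each $p\in\pi$. The routine inputs here are the applications of Theorem~\ref{thm:Reid_localizations} and Theorem~\ref{thmintro:R_dense_embedding} together with the monolith bookkeeping; the genuine difficulty is concentrated in the first paragraph, namely the finiteness of $\pi$ and the concentration of the $p$-local structure in the monolith, both of which rest on the local Sylow theory for $\Rs$ and the analysis of $\Aut(\Mon(G))$.
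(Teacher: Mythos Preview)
Your overall strategy coincides with the paper's: extract finiteness of $\pi$ from regional expansiveness, take $H=G_{(p)}$ via Theorem~\ref{thm:Reid_localizations}, and apply Theorem~\ref{thmintro:R_dense_embedding} to conclude $H\in\Rs$. Two points deserve comment.

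First, your route to the finiteness of $\pi$ is more circuitous than needed. You pass from $G$ to $\Mon(G)$ and then through $\Aut(\Mon(G))$ to identify their local primes, but the paper simply observes that $G$ itself is regionally expansive (Proposition~\ref{prop:Rs_is_RF_[a]}), so Proposition~\ref{prop:RF_pro-pi} applies directly: a compactly generated expansive open subgroup of $G$ has a compact open subgroup with trivial normal core, and such a subgroup is pro-$\pi$ for a finite set $\pi$ by Proposition~\ref{prop:locallypro-pi}. Your Cayley--Abels sketch is essentially the content of the latter proposition, so nothing is wrong, just indirect; the appeal to automorphism groups is unnecessary.

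Second, the monolith bookkeeping you flag as routine is already done inside the proof of Theorem~\ref{thm:R_dense_embedding}, which establishes $\overline{\psi(\Mon(H))}=\Mon(G)$ for any non-discrete dense embedding of a \tdlc group into $G\in\Rs$. So no separate argument is required here. Your re-derivation contains a mild over-claim: you assert that \emph{every} infinite pro-$p$ subgroup of $G$ meets $\Mon(G)$ in an infinite subgroup, but the paper only proves the corresponding statement for pro-$p$-Sylow subgroups of compact open subgroups intersected with locally normal subgroups (Proposition~\ref{prop:Primes-in-locally-normal}). For $H=G_{(p)}$ specifically your compact open subgroup \emph{is} such a Sylow and $\Mon(G)$ \emph{is} locally normal, so the argument goes through in this instance; but the general formulation is unsupported, and in any case the detour is avoidable once you cite the full statement and proof of Theorem~\ref{thm:R_dense_embedding}.
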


Another motivation to consider the class $\Rs$, rather than just $\Ss$, comes from the question of how much the structure of a simple \tdlc group is determined \emph{locally} - i.e.\ by an arbitrarily small open neighborhood of the identity.  In contrast to the case of simple Lie groups, there are a number of examples of groups in $\Ss$ that are locally but not globally isomorphic.  For example, S.M.~Smith (\cite{Smith}) constructs a family of $2^{\aleph_0}$ pairwise non-isomorphic groups that are all locally isomorphic.

The control that the local structure exerts on the global structure can be made precise, using a special case of a construction \cite{BEW} of Barnea--Ershov--Weigel (see also \cite{CapDeM}).

\begin{thm}[{See \S\ref{sec:germs}}]
Let $G$ be a topologically simple group in $\Rs$. Then there is a \tdlc group $\ms{L}(G)$, unique up to isomorphism,  with an open monolith $\widetilde{G}$ such that the following hold.
\begin{enumerate}[label=(\roman*)]
\item $G$ embeds as an open subgroup of $\widetilde{G}$.
\item For any \tdlc group $H$ locally isomorphic to $G$, there is an open, continuous homomorphism $\theta: H \rightarrow \ms{L}(G)$, and $\theta(H) \le \widetilde{G}$ if $H$ is topologically simple.
\item Both $\widetilde{G}$ and $ \ms{L}(G)$ are members of $\Rs$. In particular $\widetilde G$ is topologically simple. \qedhere
\end{enumerate}
\end{thm}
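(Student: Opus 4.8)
The plan is to realize $\ms{L}(G)$ as the abstract commensurator of a compact open subgroup, topologised by the Barnea--Ershov--Weigel machinery, and to read off all of its structure from the resulting germ calculus. Fix a compact open subgroup $U \le G$. The first and most delicate step is to check that the BEW construction applies to $U$, producing a \tdlc group $\Comm(U)$ in which $U$ embeds as a compact open subgroup via its inner germs; here the regional expansiveness of $G$ (which, being topologically simple, is its own monolith) is what guarantees the requisite local faithfulness hypotheses. I would then \emph{set} $\ms{L}(G) := \Comm(U)$. The single technical fact on which everything else rests is that $\ms{L}(G)$ has trivial quasi-centre, $\QZ(\ms{L}(G)) = 1$: no non-trivial germ commutes with all inner germs of an arbitrarily small open subgroup. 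I expect this to be the main obstacle, and it is precisely here that robust monolithicity is essential---regional expansiveness supplies a compactly generated open $O \le G$ and a compact open $V \le O$ whose normal core in $O$ is trivial, and this triviality of centralisers at the regional scale is what kills the quasi-centre of the germ group.

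Granting $\QZ(\ms{L}(G)) = 1$, the remainder is germ bookkeeping. Since each $g \in G$ conjugates $U$ to a commensurate subgroup, conjugation defines a germ, yielding a homomorphism $\iota \colon G \to \ms{L}(G)$; its kernel is the quasi-centre of $G$, trivial since $G$ is non-discrete and topologically simple, so $\iota$ is injective, and it is open because $\iota(U) = U$ is compact open. To identify the monolith I would show that every non-trivial closed normal subgroup $N \trianglelefteq \ms{L}(G)$ contains $\iota(G)$: indeed $N \cap \iota(G) \trianglelefteq \iota(G)$ is either dense---whence $\iota(G) \le N$ as $N$ is closed---or trivial, and the latter forces $N$ to be discrete, hence $N \le \QZ(\ms{L}(G)) = 1$. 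Thus $\ms{L}(G)$ is monolithic with $\widetilde{G} := \Mon(\ms{L}(G)) = \ngrp{\iota(G)} \supseteq \iota(G)$ open and non-discrete, giving (i).

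For (ii), if $H$ is locally isomorphic to $G$, choose a topological isomorphism $\alpha$ from an open subgroup $V \le H$ onto an open subgroup of $U$; transporting the conjugation germs $c_h$ through $\alpha$ defines an open, continuous homomorphism $\theta \colon H \to \Comm(U) = \ms{L}(G)$. When $H$ is topologically simple $\theta$ is injective (its kernel is a proper closed normal subgroup), so $\theta(H)$ is a topologically simple open subgroup; then $\theta(H) \cap \widetilde{G}$ is open, normal and non-trivial in $\theta(H)$, forcing $\theta(H) \le \widetilde{G}$ exactly as for $\iota(G)$.

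Finally I would settle the membership and uniqueness claims. The monolith $\widetilde G = \ngrp{\iota(G)}$ is topologically generated by the open, topologically simple conjugates $g\,\iota(G)\,g^{-1}$; a standard normality argument (any non-trivial closed normal $K \trianglelefteq \widetilde G$ meets some conjugate non-trivially, hence contains it, hence---after conjugating and using that these conjugates generate---equals $\widetilde G$, the alternative of meeting all conjugates trivially again being excluded by $\QZ(\ms{L}(G)) = 1$) shows $\widetilde G$ is topologically simple. Since $\iota(G)$ is open in $\widetilde G$, the regional-expansiveness witness $O \le \iota(G)$ also witnesses regional expansiveness of $\widetilde G$ and of $\ms{L}(G)$; hence $\ms{L}(G)$ is robustly monolithic, and $\widetilde G$, being non-discrete, regionally expansive and topologically simple, lies in $\Rs$ as well, consistent with the regional characterisation theorem. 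For uniqueness, any pair $(\ms{L}', \widetilde{G}')$ with the stated properties has $\ms{L}'$ locally isomorphic to $G$, so property (ii) applied to both $\ms{L}(G)$ and $\ms{L}'$ supplies open continuous homomorphisms in each direction that restrict to the identity germ on $U$; triviality of the quasi-centres together with monolithicity forces these to be mutually inverse isomorphisms. The crux remains the quasi-centre computation of the first paragraph; once it is in place, the entire normal-subgroup structure of the germ group is determined.
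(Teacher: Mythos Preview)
Your approach is essentially the paper's: the construction of $\ms{L}(G)$ as the group of germs of local automorphisms is exactly the Barnea--Ershov--Weigel machinery that the paper invokes as a black box (Theorem~\ref{thm:AbstractCommensurator} and Corollary~\ref{cor:LargestSimple}), and your subsequent arguments for the monolith, topological simplicity of $\widetilde G$, and membership in $\Rs$ are the natural unpacking of that citation. One point of emphasis is slightly off, however. You flag $\QZ(\ms{L}(G))=\triv$ as the crux and claim that regional expansiveness is what kills it, but in fact this triviality follows from $\QZ(G)=\triv$ alone via a pure germ computation: if $[\phi]$ centralises $\ad(W)$ for some open $W\le U$, then $[c_{\phi(w)}]=[c_w]$ for $w$ in the domain, whence $\phi(w)w^{-1}\in\QZ(U)=\triv$ and $[\phi]$ is the trivial germ. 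Regional expansiveness enters earlier, to get $\QZ(G)=\triv$ in the first place (Proposition~\ref{prop:Rs_is_RF_[a]} or Lemma~\ref{lem:nonqd}), and later, to transfer regional expansiveness from $G$ to the open supergroup $\widetilde G$ and hence place $\widetilde G$ and $\ms{L}(G)$ in $\Rs$. Your simplicity argument for $\widetilde G$ also needs one extra line: once a closed normal $K\trianglelefteq\widetilde G$ contains $\iota(G)$, it meets every conjugate $g\iota(G)g^{-1}$ in the open subgroup $\iota(G)\cap g\iota(G)g^{-1}$, which is a non-trivial closed normal subgroup of the topologically simple group $g\iota(G)g^{-1}$, forcing $K\supseteq g\iota(G)g^{-1}$; normality of $K$ in $\widetilde G$ alone does not give this, since $g$ ranges over $\ms{L}(G)$.
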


The group $\ms{L}(G)$ encapsulates all possible global structure admissible in a group locally isomorphic to $G$, including all automorphisms of such a group, and the group $\widetilde{G}$ represents the largest possible \emph{topologically simple} group that is locally isomorphic to $G$. Furthermore, both of these groups lie in $\Rs$. We remark that even if $G \in \Ss$, the group $\widetilde{G}$ may not be in $\Ss$. For instance, if $G$ is one of the aforementioned examples of Smith, $\widetilde{G}$ is not even $\sigma$-compact.  Nevertheless, results for $\Rs$ still apply to both $\widetilde{G}$ and $\ms{L}(G)$.

\subsection{Properties of robustly monolithic groups}

The second main thrust of the article is to explore the properties of the groups in $\Rs$. As will become clear, the flexibility of the class $\Rs$ allows us not only to show that $\Rs$ enjoys many of the same properties as $\Ss$ but also to derive properties novel even for $\Ss$.

\begin{thm}[See Theorem~\ref{thm:prime_content_RM}]
	Let $G \in \Rs$. There is a finite set of primes $\pi$ such that any locally compact group $H$ that acts   continuously and  faithfully by topological group automorphisms on $G$  is  a locally pro-$\pi$ \tdlc group.
\end{thm}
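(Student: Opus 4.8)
The plan is to first reduce to the case that $G$ is topologically simple by passing to the monolith, then to show that $H$ is automatically \tdlc, and finally to pin $\pi$ down as the finite local prime content of the universal group $\ms{L}(M)$ and transport it back to $H$.

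\emph{Reduction to the monolith.} Set $M=\Mon(G)$, a non-discrete, regionally expansive, topologically simple member of $\Rs$. Since $M$ is topologically simple and non-discrete it is non-abelian, so $\CC_G(M)=1$: the centraliser is a closed normal subgroup, hence trivial or containing $M$, and the latter would force $M$ abelian. As $M$ is characteristic in $G$, the group $H$ acts continuously on $M$, and I claim this action is faithful. Indeed, if $h\in H$ fixes $M$ pointwise, then for all $g\in G$ and $m\in M$ one has $gmg\inv=h(g)\,m\,h(g)\inv$ (using $gmg\inv\in M$), whence $h(g)\inv g\in\CC_G(M)=1$ and $h$ fixes $G$ pointwise; faithfulness of the original action gives $h=1$. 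Thus it suffices to treat $G=M$ topologically simple.

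\emph{$H$ is totally disconnected.} Let $H^{\circ}$ be the identity component. For any compact open subgroup $U\le M$, continuity of the action at the points $(1,u)$ with $u\in U$, together with compactness of $U$, produces an open neighbourhood of $1$ in $H$ that stabilises $U$; hence $\Stab_H(U)$ is open and contains $H^{\circ}$. Applying this to $U$ and to each open normal subgroup $U_n\normal U$, the connected group $H^{\circ}$ acts on every finite quotient $U/U_n$ through a continuous homomorphism into a finite group, which is trivial; so $H^{\circ}$ fixes $U=\varprojlim U/U_n$ pointwise. As this holds for every compact open subgroup, $\mathrm{Fix}(H^{\circ})$ contains the subgroup generated by all compact open subgroups of $M$, which is a non-trivial normal subgroup and hence dense by topological simplicity. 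Since $\mathrm{Fix}(H^{\circ})$ is closed it equals $M$, so $H^{\circ}$ acts trivially and $H^{\circ}=1$ by faithfulness. Thus $H$ is \tdlc.

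\emph{Identifying $\pi$ and transporting it.} By the construction of \S\ref{sec:germs} the universal group $\ms{L}(M)$ lies in $\Rs$; let $\pi$ be the finite set of primes for which $\ms{L}(M)$ is locally pro-$\pi$ (Proposition~\ref{prop:RF_pro-pi}), a set depending only on $M$ and hence only on $G$. Because $\ms{L}(M)$ absorbs the automorphisms of $M$, passage to germs of local automorphisms sends each $h\in H$, acting as an automorphism of $M$, to an element of $\ms{L}(M)$, yielding a homomorphism $\Phi\colon H\to\ms{L}(M)$ that is continuous for the natural topologies. Its kernel consists of those $h$ acting as the identity on some open subgroup of $M$; but an automorphism $\alpha$ of $M$ that is trivial on an open subgroup $O$ satisfies $g\inv\alpha(g)\in\CC_M(O\cap g\inv Og)$ for every $g$, an open centraliser, so $g\inv\alpha(g)\in\QZ(M)=1$ and $\alpha$ is trivial. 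Hence $\Phi$ is injective. Finally, choosing a compact open pro-$\pi$ subgroup $V\le\ms{L}(M)$, the set $\Phi\inv(V)$ is open in $H$ and contains a compact open (hence profinite) subgroup $W$; then $\Phi(W)\le V$ is pro-$\pi$, and $\Phi|_W\colon W\to\Phi(W)$ is a continuous bijection of profinite groups, thus an isomorphism. Therefore $W$ is pro-$\pi$ and $H$ is a locally pro-$\pi$ \tdlc group.

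\emph{Main obstacle.} The substantive point is the construction and analysis of $\Phi$: one must realise an arbitrary continuous automorphism of $M$ as an element of $\ms{L}(M)$ in a way that is continuous for the relevant topologies, and one must know that a topologically simple group in $\Rs$ has trivial quasi-centre, $\QZ(M)=1$, to secure injectivity. Both inputs rest on the finer theory developed earlier — the germ/Barnea--Ershov--Weigel description of $\ms{L}(M)$ and the quasi-centre computation for $\Rs$ — whereas the reductions to the monolith and to the totally disconnected case, and the transport of the pro-$\pi$ structure across $\Phi$, are elementary.
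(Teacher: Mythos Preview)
Your reduction to the monolith and the direct argument for total disconnectedness are both fine, and the injectivity of $\Phi$ via $\QZ(M)=\triv$ is correct. The gap is the continuity of $\Phi\colon H\to\ms{L}(M)$, which you assert but do not prove. The universal property of $\ms{L}(M)$ from Theorem~\ref{thm:AbstractCommensurator}(iii) only furnishes a continuous (in fact open) homomorphism $\theta\colon H'\to\ms{L}(M)$ when $H'$ is \emph{locally isomorphic to} $M$; it says nothing about an arbitrary locally compact $H$ acting on $M$. Concretely, the preimage $\Phi^{-1}(\ad(U))$ for $U$ compact open in $M$ consists of those $h\in H$ whose action on $M$ equals $\ad(u)$ for some $u\in U$; one checks easily that this set is closed in $H$, but there is no evident reason it should be open. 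Equivalently, you would need $\Inn(M)$ to be open in $\Aut(M)$ (i.e.\ $\Out(M)$ discrete), and that is not known for general topologically simple $M\in\Rs$; indeed it is essentially what the theorem is after. Without continuity, your transport step collapses: $\Phi(W)$ need not be compact, hence need not be a closed subgroup of a pro-$\pi$ group, and nothing forces $W$ to be pro-$\pi$.

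The paper circumvents this by a different mechanism. One first reduces to $H$ compact (it suffices to handle a compact open subgroup), and then exploits compactness to construct an $H$-invariant, \emph{$\sigma$-compact} open subgroup $O\le G$ lying in $\Rs$: since $H$ has compact orbits, $G$ is a directed union of $H$-invariant compactly generated open subgroups, and by Theorem~\ref{thm:approx_R} these eventually lie in $\Rs$. Second countability of $O$ then permits application of Proposition~\ref{prop:R_overgroup} (via Corollary~\ref{cor:RM_extension}) to the genuine \tdlc semidirect product $O\rtimes(H/K)$, placing an appropriate quotient in $\Rs$; Corollary~\ref{cor:LNpro-p->pro-p} then forces the local prime content to be $\eta(G)$. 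The essential idea you are missing is this passage to a second countable invariant open subgroup: it is what allows one to stay inside the category of \tdlc groups (where the semidirect product is manifestly \tdlc and the $\Rs$-machinery applies), rather than appealing to a continuity statement for $\Aut(M)\to\ms{L}(M)$ that the theory does not supply.
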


\begin{thm}[See Corollary~\ref{cor:ab_sylow_R}]
Let $G\in \Rs$, $U$ be a compact open subgroup of $G$, and $S$ be a pro-$p$-Sylow subgroup of $U$. If $S$ is infinite, then the only virtually solvable normal subgroup of $S$ is the trivial group. In particular, $S$ is not solvable. 
\end{thm}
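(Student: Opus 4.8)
The plan is to reduce the statement, in two steps, to the single structural assertion that \emph{no group in $\Rs$ has a non-trivial abelian locally normal subgroup} (a subgroup whose normalizer is open), and to locate the real difficulty there. The final clause ``in particular $S$ is not solvable'' is then the special case $N=S$ of the main assertion.

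First I would pass from virtually solvable to abelian. Suppose $1\neq N\normal S$ is virtually solvable; replacing $N$ by its closure (the closure of a virtually solvable group is virtually solvable, as commutators are continuous) we may take $N$ closed. If $d$ bounds the derived length of some open solvable subgroup of $N$, then the largest closed normal subgroup of $N$ of derived length at most $d$ is characteristic, closed, solvable, and of finite index, hence non-trivial; the last non-trivial term of its closed derived series is a non-trivial closed abelian subgroup $A$ that is characteristic in $N$. Since $N\normal S$ and $A$ is characteristic in $N$, we obtain $1\neq A\normal S$ with $A$ abelian. It therefore suffices to rule out a non-trivial closed abelian $A\normal S$.

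Second, I would remove the defect that $S$ need not be open in $G$ by localizing at $p$. As $S$ is infinite, $p$ lies in the local prime content $\pi$ of $G$, so by Proposition~\ref{prop:RF_pro-pi} there is a non-discrete dense locally compact subgroup $H$ of $G$ with $H\in\Rs$ that is locally pro-$p$. By Reid's localization (Theorem~\ref{thm:Reid_localizations}), the compact open subgroups of $H$ are, via the dense embedding, topologically isomorphic to pro-$p$-Sylow subgroups of compact open subgroups of $G$; since any two such pro-$p$-Sylow subgroups are conjugate in $G$, there is no loss in assuming that $S$ is realized as a compact open subgroup $W$ of $H$ and that $A$ sits inside $W$ with $A\normal W$. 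Because $W$ is now open in $H$, the subgroup $A$ is a non-trivial abelian locally normal subgroup of $H\in\Rs$.

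It remains to prove that a group $T\in\Rs$ has no non-trivial abelian locally normal subgroup; applying this to $T=H$ contradicts the previous paragraph and finishes the proof. I would first reduce to the monolith $M=\Mon(T)$, which is non-discrete, topologically simple and regionally expansive. Given a non-trivial closed abelian locally normal $A$ of $T$ with $U\le \N_T(A)$ compact open, the open subgroup $M\cap U$ of $M$ also normalizes $A$, so $[M\cap U,A]\le A\cap M$; if $A\cap M=1$ then $A$ centralizes an open subgroup of $M$, and the local faithfulness of the conjugation action of $T$ on its monolith (which is exactly what regional expansiveness of $M$ secures, via a compact open subgroup of trivial normal core in a compactly generated open subgroup) forces $A=1$; otherwise $A\cap M$ is a non-trivial abelian locally normal subgroup of $M$. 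Thus everything comes down to showing that the topologically simple, non-discrete, regionally expansive group $M$ has no non-trivial abelian locally normal subgroup. Here two easy remarks start the argument --- a non-discrete topologically simple \tdlc group is non-abelian, so its centre is a proper closed normal subgroup and hence trivial, and the normal closure of any non-trivial subgroup is dense --- but I expect the genuine obstacle to be converting an abelian locally normal subgroup into a proper non-trivial closed normal subgroup, contradicting simplicity. For compactly generated topologically simple groups this is the $[A]$-semisimplicity theorem of Caprace--Reid--Willis, proved through the structure and centralizer lattices; the point, and the hardest part, is to run that machinery with regional expansiveness in place of compact generation, passing to a compactly generated open subgroup witnessing regional expansiveness, analyzing the centralizer lattice of $A$ there, and feeding the resulting invariant closed subgroup back into the global simplicity of $M$.
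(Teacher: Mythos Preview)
Your proposal is correct and follows essentially the same route as the paper. The paper's argument (Corollary~\ref{cor:dense_comm} and Corollary~\ref{cor:ab_sylow_R}) likewise localizes at $S$ so that $S$ becomes a compact open subgroup of $G_{(S)}\in\Rs$ (via Theorem~\ref{thm:R_dense_embedding}), and then invokes the $[A]$-semisimplicity of groups in $\Rs$ (Proposition~\ref{prop:Rs_is_RF_[a]}), whose proof is precisely your reduction to the monolith followed by Theorem~\ref{thm:A-semisimple1}; you have correctly located the genuine work in establishing $[A]$-semisimplicity for non-discrete, regionally expansive, topologically characteristically simple groups.
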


The class of \textit{regionally elementary groups} is the smallest class of \tdlc groups containing the second countable profinite groups and discrete groups and closed under taking closed subgroups, Hausdorff quotients, group extensions, and directed unions of open subgroups. The class $\Ss$ contains no regionally elementary groups by the results of \cite{W_E_14}. 

\begin{thm}[See Corollary~\ref{cor:R_no_elementary}]
	The class $\Rs$ contains no regionally elementary groups.
\end{thm}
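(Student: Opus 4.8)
The plan is to reduce, via the closure of the regionally elementary class under closed subgroups, to the monolith, and then to invoke Theorem~\ref{thm:approx_R} together with regional expansiveness to arrive at a compactly generated situation where the result of \cite{W_E_14} on $\Ss$ applies. Suppose for contradiction that some $G \in \Rs$ is regionally elementary. Its monolith $M := \Mon(G)$ is a closed normal subgroup, hence is itself regionally elementary. By the definition of $\Rs$, the group $M$ is non-discrete, topologically simple, and regionally expansive, and being its own monolith it again lies in $\Rs$. Thus it suffices to treat the case $G = M$, i.e.\ to show that no non-discrete, topologically simple, regionally expansive \tdlc group is regionally elementary.

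Next I would pass to a compactly generated, second countable model. Write $M = \bigcup_i O_i$ as a directed union of compactly generated open subgroups; each $O_i$ is open, hence closed, hence regionally elementary. By Theorem~\ref{thm:approx_R} applied to $M \in \Rs$, the subgroups $O_i$ lie in $\Rs$ cofinally, and since $M$ is regionally expansive there is a compactly generated open subgroup carrying a compact open subgroup $U$ of trivial normal core. Because passing to a larger subgroup only intersects more conjugates of $U$, this trivial-core property persists in every larger member of the system; so, enlarging within the directed system while keeping membership in $\Rs$, I may fix $O \in \Rs$ that is compactly generated and has a compact open $U$ with trivial core in $O$. Applying van Dantzig and Kakutani--Kodaira, I quotient by a compact normal subgroup $K \leq U$ to make $O/K$ second countable. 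Since $\Mon(O)$ is non-discrete and topologically simple it cannot be profinite, so it is not contained in the compact group $K$; as $K \cap \Mon(O)$ is closed and normal in $\Mon(O)$, simplicity forces $K \cap \Mon(O) = \{1\}$, and $\Mon(O)$ embeds as a closed subgroup of the second countable elementary group $O/K$. Hence it suffices to derive a contradiction from $O$ (equivalently $\Mon(O)$) being elementary.

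The crux is to activate the decomposition-rank theory of \cite{W_E_14}. A compactly generated \tdlc group with a compact open subgroup of trivial normal core is not residually discrete (a residually discrete compactly generated group has a basis of compact open \emph{normal} subgroups, and any such subgroup inside $U$ would be a nontrivial normal subgroup contained in $U$ unless the group is discrete), so $\Res(O) \neq \{1\}$ for all sufficiently large members of the exhaustion. On compactly generated elementary groups the rank strictly decreases on passing to the discrete residual, and topological simplicity of the ambient monolith forces the normal closure of each such $\Res(O)$ to be everything; tracking this through the exhaustion should manufacture a compactly generated, non-discrete, topologically simple subquotient, i.e.\ a member of $\Ss$ that is regionally elementary, contradicting the cited result.

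The hard part is precisely this last step. The monolith of a compactly generated group in $\Rs$ need not itself be compactly generated --- a shift-type extension $M_0 \rtimes \Zb$ can be compactly generated while its simple monolith is not --- so one cannot simply apply \cite{W_E_14} to $\Mon(O)$. The entire purpose of regional expansiveness is to supply, cofinally, compactly generated open subgroups with trivial-core compact open subgroups and hence nontrivial discrete residual; the delicate point is to combine this with topological simplicity and the rank function to produce an honest group in $\Ss$ (or, alternatively, to rerun the rank induction directly at the level of $\Rs$), rather than merely a non-compactly-generated simple elementary group, which by itself is not excluded by the hypotheses.
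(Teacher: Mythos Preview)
Your reduction to a compactly generated $O \in \Rs$ via Theorem~\ref{thm:approx_R} is on the right track, and your diagnosis of the obstruction---that $\Mon(O)$ need not be compactly generated, so one cannot directly invoke the result of \cite{W_E_14} on $\Ss$---is exactly right. But the proposal stops at precisely the point where the real argument begins: you name the ``hard part'' and gesture at ``rerunning the rank induction at the level of $\Rs$'' without carrying it out. As written this is a plan, not a proof.

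The paper resolves the obstruction by \emph{iterating} your construction. Starting from a compactly generated $K_0 \in \Rs$, one passes to $M_1 := \Mon(K_0) \in \Rs$ (Lemma~\ref{lem:monolith_overgroup}), then by Theorem~\ref{thm:approx_R} finds a compactly generated open $K_1 \le M_1$ with $K_1 \in \Rs$, and repeats. This produces a chain $K_0 \ge K_1 \ge K_2 \ge \dots$ of compactly generated closed subgroups, each in $\Rs$ and hence non-trivial, with $K_{i+1} \le \Mon(K_i) \le \Res(K_i)$ (the monolith lies in every open normal subgroup). The paper then invokes a chain characterization of regional elementarity (Proposition~\ref{prop:reg_elem_char}): a first-countable \tdlc group is regionally elementary if and only if every descending chain of compactly generated closed subgroups with $K_{i+1} \le \Res(K_i)$ eventually reaches $\{1\}$. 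Since the chain above never does (even if it stabilizes, it stabilizes at a group in $\Ss$), $G$ is not regionally elementary. This is Proposition~\ref{prop:R_chain} combined with Proposition~\ref{prop:reg_elem_char}; the key idea you are missing is this chain condition, which replaces any attempt to land in $\Ss$ by a direct descent argument on the rank.

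Two minor points. Your second-countability reduction is unnecessary: a compactly generated regionally expansive group is already second countable (Lemma~\ref{lem:FirstCountable}), and in any case the compact normal $K \le U$ you propose to quotient by is automatically trivial since $U$ has trivial normal core in $O$. The initial reduction to the monolith is likewise not needed: the iterated-chain argument works directly inside $G$.
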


\begin{cor} 
Let $G\in \Rs$. Every regionally elementary dense locally compact subgroup of $G$  is discrete.
\end{cor}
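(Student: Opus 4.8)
The plan is to derive this corollary directly from the preceding theorem (Corollary~\ref{cor:R_no_elementary}, stating that $\Rs$ contains no regionally elementary groups) together with Theorem~\ref{thmintro:R_dense_embedding} (stating that $\Rs$ is stable under taking non-discrete dense locally compact subgroups). So suppose for contradiction that $H$ is a regionally elementary dense locally compact subgroup of $G \in \Rs$ that is \emph{not} discrete. The key observation is that $H$, being a non-discrete dense locally compact subgroup of $G \in \Rs$, must itself lie in $\Rs$ by Theorem~\ref{thmintro:R_dense_embedding}. But $H$ is regionally elementary, and the preceding result asserts that no member of $\Rs$ is regionally elementary. This is the desired contradiction, so $H$ must be discrete.

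First I would make explicit that the notion of ``dense locally compact subgroup'' here matches the definition given in the introduction: $H$ admits a dense embedding $\psi \colon H \to G$, i.e.\ a continuous injective homomorphism with dense image. The dichotomy is then simply between $\psi(H)$ being discrete in $G$ and not; if the image is discrete we are done, and otherwise we invoke the stability theorem. One small point worth verifying is that ``$H$ is discrete as an abstract locally compact group'' is equivalent to ``$\psi(H)$ is discrete in $G$,'' but since $\psi$ is a continuous injective homomorphism of locally compact groups, non-discreteness of $H$ forces non-discreteness of the image in the relevant sense needed to apply Theorem~\ref{thmintro:R_dense_embedding}; the hypothesis of that theorem is precisely about \emph{non-discrete} dense locally compact subgroups, so the logical packaging lines up cleanly.

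I do not anticipate any genuine obstacle here, as this corollary is a formal consequence of the two results just established; the entire content is a two-line contradiction argument. The only thing requiring minor care is ensuring that being regionally elementary is inherited in the right direction — but here $H$ is assumed regionally elementary outright, so no inheritance through the embedding is needed, and the proof is immediate.

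\begin{proof}
Suppose toward a contradiction that $H$ is a regionally elementary dense locally compact subgroup of $G$ that is not discrete. By Theorem~\ref{thmintro:R_dense_embedding}, every non-discrete dense locally compact subgroup of a member of $\Rs$ again lies in $\Rs$; hence $H \in \Rs$. On the other hand, by the previous result the class $\Rs$ contains no regionally elementary groups, contradicting the fact that $H$ is regionally elementary. Therefore $H$ must be discrete.
\end{proof}
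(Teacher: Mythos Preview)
Your proof is correct and matches the paper's approach: the corollary is stated without proof in the paper, being an immediate consequence of Corollary~\ref{cor:R_no_elementary} combined with Theorem~\ref{thm:R_dense_embedding} (the internal version of Theorem~\ref{thmintro:R_dense_embedding}), exactly as you argue. Your side remark about discreteness of $H$ versus discreteness of $\psi(H)$ is unnecessary here, since the hypothesis of Theorem~\ref{thm:R_dense_embedding} is simply that $H$ itself is non-discrete as a \tdlc group, which is precisely what you are assuming.
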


In \cite{CRW1}, a structure theory of \tdlc groups via locally normal subgroups, i.e.\ subgroups with open normalizer, is developed. This theory is applied to groups in $\Ss$ in \cite{CRW2}. It turns out that the results for the class $\Ss$ generalize to the class $\Rs$.

A \tdlc group $G$ is \textit{[A]-semisimple} if the only element with open centralizer is the identity and the only abelian subgroup with open normalizer is the trivial group. The presence of this property allows for the application of the more powerful tools developed in \cite{CRW1}.

\begin{thm}[{Proposition~\ref{prop:Rs_is_RF_[a]}}]
Every element of $\Rs$ is [A]-semisimple.
\end{thm}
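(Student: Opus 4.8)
The plan is to verify the two defining properties of [A]-semisimplicity for $G\in\Rs$ — that $\QZ(G)=\triv$ and that $G$ has no nontrivial abelian subgroup with open normalizer — by concentrating everything into a single statement about the monolith $M=\Mon(G)$. First I would record two elementary reductions. Since $M$ is non-discrete and topologically simple it is non-abelian, so $\Z(M)=\triv$; as $\CC_G(M)$ is a closed normal subgroup meeting $M$ trivially, monolithicity forces $\CC_G(M)=\triv$. Now if $z\in\QZ(G)$ then $\CC_M(z)=\CC_G(z)\cap M$ is open in $M$, whence $\QZ(G)\cap M\sleq\QZ(M)$; and since $\QZ(G)$ is normal, $[\QZ(G),M]\sleq\QZ(G)\cap M$. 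Thus $\QZ(M)=\triv$ will give $[\QZ(G),M]=\triv$, i.e.\ $\QZ(G)\sleq\CC_G(M)=\triv$. Likewise, if $A\sleq G$ is abelian with $\N_G(A)$ open, then $\overline{A}\cap M$ is abelian with open normalizer in $M$; and in the degenerate case $\overline A\cap M=\triv$ one checks $[\overline A,V\cap M]\sleq \overline A\cap M=\triv$ for a compact open $V\sleq\N_G(A)$, so conjugation by a nontrivial $a\in\overline A$ is a nontrivial automorphism of $M$ fixing an open subgroup pointwise, which in turn produces a nontrivial quasi-central element of $M$. In every case the obstruction descends to a nontrivial abelian subgroup of $M$ with open normalizer (using that any $\triv\ne z\in\QZ(M)$ yields the abelian locally normal subgroup $\overline{\grp z}$). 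Hence it suffices to prove the single statement

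\medskip
$(\star)$ \emph{a non-discrete, topologically simple, regionally expansive \tdlc group $M$ has no nontrivial abelian subgroup with open normalizer.}
\medskip

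To prove $(\star)$ I would reduce to the compactly generated case. Given a putative nontrivial abelian $A\sleq M$ with $\N_M(A)$ open, fix $\triv\ne a\in A$ and use regional expansiveness of $M$ together with Theorem~\ref{thm:approx_R} (noting $M\in\Rs$): enlarging the witnessing compactly generated open subgroup, I obtain a compactly generated open $O\in\Rs$ with $a\in O$ and a compact open $U\sleq O$ whose normal core in $O$ is trivial — triviality of the core being inherited by larger open subgroups. Then $A\cap O$ is a nontrivial abelian subgroup of $O$ with open normalizer, so it is enough to treat compactly generated $O\in\Rs$. For such an $O$, I would first observe that its monolith is non-compact: if it were compact, the witnessing open subgroup would be a finite-index compact group acting faithfully on a finite coset space, hence finite, contradicting non-discreteness of the monolith. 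Therefore the normal core of $U$ in $O$, being a compact normal subgroup that cannot contain the monolith, must be trivial, and $O$ acts faithfully on its Cayley--Abels graph $\Gamma$: a connected, locally finite, vertex-transitive graph with compact open vertex-stabilizers.

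It remains to show that a compactly generated group in $\Rs$ acting faithfully on such a $\Gamma$ admits no nontrivial abelian locally normal subgroup; this is the heart of the matter and the main obstacle. I expect to run the same analysis as for the class $\Ss$ in \cite{CRW2}, feeding the faithful local action (triviality of the core) into the structure-lattice and centralizer-lattice machinery of \cite{CRW1}: a nontrivial abelian locally normal subgroup would, after intersecting with a compact open subgroup or passing to a procyclic/translation piece, yield a nontrivial $G$-invariant element of the centralizer lattice, which faithfulness of the action on $\Gamma$ together with topological simplicity of the monolith forbids. The essential new point — and the reason $\Rs$ behaves like $\Ss$ here — is precisely that regional expansiveness is exactly what guarantees this faithful local action; once it is in hand, the delicate dynamical and lattice-theoretic argument proceeds as in the simple case, and the reductions above transfer the conclusion from the monolith back to an arbitrary $G\in\Rs$.
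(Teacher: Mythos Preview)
Your reduction to the monolith is essentially the same as the paper's: one first shows $\CC_G(M)=\triv$, then that $\QZ(M)=\triv$ forces $\QZ(G)=\triv$, and that [A]-semisimplicity of $M$ kills any abelian locally normal $A\sleq G$ (via $A\cap M=\triv\Rightarrow A\sleq\QC_G(M)$, which equals $\CC_G(M)$ once $\QZ(M)=\triv$ by Lemma~\ref{lem:qcent_norm}). So statement $(\star)$ is indeed the crux, and the paper isolates it as Theorem~\ref{thm:A-semisimple1}.

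The problem is your proposed proof of $(\star)$. First, you invoke Theorem~\ref{thm:approx_R} to obtain a compactly generated open $O\in\Rs$, but the proof of Theorem~\ref{thm:approx_R} in the paper \emph{uses} Proposition~\ref{prop:Rs_is_RF_[a]} (both to get regional expansiveness and [A]-semisimplicity of $G$), so this is circular. Second, and more seriously, your final step appeals to ``the structure-lattice and centralizer-lattice machinery of \cite{CRW1}'' to rule out an abelian locally normal subgroup, but that machinery is only available \emph{after} one knows the group is [A]-semisimple (or at least locally C-stable); the centralizer lattice $\LC(G)$ is not even well-defined otherwise. So the argument as sketched assumes what it is trying to prove.

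The paper's proof of $(\star)$ (Theorem~\ref{thm:A-semisimple1}) is genuinely different and self-contained: assuming a nontrivial abelian compact locally normal subgroup exists, one builds a filtering family of closed normal subgroups $Q_{\mc{B},F,U}=\overline{\QC_G(L_{\mc{B},F,U})}$ indexed by finite configurations, where each $L_{\mc{B},F,U}$ is a commensurated nilpotent group assembled from conjugates of abelian locally normal pieces (using Fitting's theorem and Lemma~\ref{lem:CRW_comm1}). The intersection of this family is characteristic, hence trivial or all of $G$; both cases are then ruled out by playing the filtering family against a single expansive compactly generated open subgroup via Proposition~\ref{prop:filtering} and Lemma~\ref{lem:no_cocompact}. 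No Cayley--Abels graph or centralizer-lattice input is needed.
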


Any $[A]$-semisimple \tdlc group $G$ has an associated Boolean algebra on which it acts, denoted by $\LC(G)$ and called the \textit{centralizer lattice}. Recall a Boolean algebra $\mathcal A$ has a canonically associated compact space  $\mf{S}(\mathcal A)$ called the \textit{Stone space}.

\begin{thm}[{Theorem~\ref{thm:StronglyProx}}]
For $G \in \Rs$, the $G$-action on the Stone space $\mf{S}(\LC(G))$ is minimal, strongly proximal, and has a compressible open set. In particular, if $G$ is amenable, then every non-trivial locally normal subgroup of $G$ has trivial centralizer. 
\end{thm}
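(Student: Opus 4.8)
The plan is to build on the structure theory of the centralizer lattice from \cite{CRW1,CRW2}, feeding in the two facts already available for $\Rs$: by Proposition~\ref{prop:Rs_is_RF_[a]} every $G \in \Rs$ is [A]-semisimple, so $\LC(G)$ is a well-defined Boolean algebra and the action $G\acts X$ on $X := \mf{S}(\LC(G))$ by homeomorphisms makes sense; and $G$ is monolithic with a non-discrete, topologically simple, regionally expansive monolith $M := \Mon(G)$. If $\LC(G) = \{0,\infty\}$ then $X$ is a single point and all three dynamical assertions hold trivially, so I may assume $\LC(G)$ is non-trivial; this is precisely the statement that the action is \emph{micro-supported}, i.e.\ every non-empty clopen set $\hat\alpha$ (for $0 \ne \alpha \in \LC(G)$) has non-trivial rigid stabiliser $\rist_G(\hat\alpha)$. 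The first reduction I would make is to translate minimality into an algebraic statement: since closed $G$-invariant subsets of the Stone space $X$ correspond to $G$-invariant filters of $\LC(G)$, and $G$ acts by Boolean automorphisms preserving complementation, the action is minimal exactly when $\LC(G)$ admits no proper non-trivial $G$-invariant ideal, that is, when $\LC(G)$ is $G$-simple.

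The crux, and the step I expect to be the main obstacle, is establishing this $G$-simplicity of $\LC(G)$ from monolithicity alone. For $G \in \Ss$ this is where \cite{CRW2} uses topological simplicity of the whole group together with compact generation; neither is available here, since $M$ may be a proper subgroup and need not be compactly generated. The approach I would take is to exploit the rigid stabilisers: conjugation sends $\rist_G(\hat\alpha)$ to $\rist_G(g\hat\alpha)$, so the closed subgroup $N$ generated by all rigid stabilisers is normal and, by micro-support, non-trivial, whence $N \supseteq M$. A proper non-trivial $G$-invariant ideal $I$ of $\LC(G)$ would then cut out a proper $G$-invariant region of $X$; regional expansiveness provides enough rigid stabilisers supported inside that region to produce a non-trivial closed normal subgroup acting trivially off it, which therefore cannot contain the micro-supported $M$, contradicting that $M$ is the monolith. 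To handle the absence of compact generation I would argue regionally: by Theorem~\ref{thm:approx_R}, $G$ is a directed union of compactly generated open subgroups cofinally lying in $\Rs$, I would settle the compactly generated robustly monolithic case first, and then test elements of $\LC(G)$ inside this directed system to assemble the global conclusion.

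Granting minimality, I would obtain strong proximality and the compressible open set together, by compression. Using two disjoint non-empty clopen sets and non-trivial elements of their rigid stabilisers, which exist and are ``large'' because $M$ is non-discrete and regionally expansive, a ping-pong argument produces elements $g$ with $g\hat\alpha \subsetneq \hat\alpha$; iterating shows that a suitable non-empty clopen set can be compressed inside any prescribed non-empty clopen set, which exhibits a compressible open set directly. This upgrades to extreme proximality: any proper closed subset of $X$ lies in a proper clopen set (its complement contains a non-empty clopen set), which we then compress into an arbitrarily small clopen set. Combined with minimality, extreme proximality yields strong proximality, since for any probability measure $\nu$ one may select $g_n$ compressing the bulk of $\nu$ into shrinking clopen sets, so that $g_n\nu$ converges weak-$*$ to a point mass.

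Finally, the amenable case follows cleanly from the first part. If $G$ is amenable, its affine action on the compact convex set of probability measures $P(X)$ has a fixed point, so there is a $G$-invariant probability measure $\mu$ on $X$. By strong proximality the weak-$*$ orbit closure of $\mu$ contains a Dirac mass; but $\mu$ is invariant, so $\overline{G\mu} = \{\mu\}$, forcing $\mu = \delta_x$ and making $x$ a global fixed point. Minimality then gives $X = \overline{Gx} = \{x\}$, so $X$ is a single point and $\LC(G) = \{0,\infty\}$ is trivial. To extract the stated conclusion, let $K$ be a non-trivial locally normal subgroup; replacing $K$ by $\overline{K}$, which is locally normal with the same centraliser, I may assume $K$ closed, and then $C := C_G(K)$ is a closed locally normal subgroup, so its class lies in $\LC(G) = \{0,\infty\}$. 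If $[C] = \infty$, then its Boolean complement $[C_G(C)]$ equals $0$, so $C_G(C)$ is discrete and hence trivial, since a discrete closed locally normal subgroup of an [A]-semisimple group is trivial (each of its elements would have open centraliser); as $K \le C_G(C)$ this gives $K = 1$, a contradiction. Therefore $[C] = 0$, which by the same token forces $C = C_G(K)$ to be trivial, as required.
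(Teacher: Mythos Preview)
There is a genuine gap in your minimality argument. You reduce minimality to the non-existence of a proper non-trivial $G$-invariant ideal $I$ of $\LC(G)$, and then argue that the open set $U \subseteq X$ corresponding to $I$ would carry a non-trivial closed normal subgroup (the closure of $\rist_G(U)$) which must contain the monolith $M$; you then assert this ``cannot contain the micro-supported $M$''. But micro-support of the $G$-action says only that $\rist_G(\hat\alpha) \neq \{1\}$ for each non-empty clopen $\hat\alpha$; it does \emph{not} prevent $M$ from fixing the closed complement $X \setminus U$ pointwise. Worse, a proper ideal in a Boolean algebra can be order-dense, so $X \setminus U$ may have empty interior and contain no non-empty clopen set at all --- there is then no clopen $\hat\beta \subseteq X\setminus U$ on which to invoke micro-support. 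This is exactly the residual situation that Proposition~\ref{cor:rf_compressible:mono}(ii) leaves open: it only shows that the $M$-fixed-point set has empty interior, not that it is empty. Your sketch does not close this gap, even in the compactly generated case that you propose to settle first.

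The paper's route is genuinely different from yours at this point. It first reduces from $G$ to its topologically simple monolith via the Boolean embedding $\LC(G) \hookrightarrow \LC(\Mon(G))$ of Proposition~\ref{prop:FaithfulLC}, so that it may assume $G$ is topologically simple. It then argues by contradiction that $G$ has a fixed point $\mf p$: the subgroup $H$ of elements fixing a neighbourhood of $\mf p$ is normal and non-trivial, hence dense. Now comes the key step you are missing: using Theorem~\ref{thm:approx_R} one passes to a compactly generated open $O \in \Rs$, and Lemma~\ref{lem:Cayley-Abels} supplies \emph{finitely many} elements of $H \cap O$ which together with a compact open subgroup generate $O$; since these finitely many elements have a common clopen neighbourhood $\beta$ of $\mf p$ on which they act trivially, one deduces that $\Mon(O)$ fixes $\beta$ pointwise, contradicting Proposition~\ref{cor:rf_compressible:mono}(ii). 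Once there are no fixed points, Proposition~\ref{cor:rf_compressible:mono} yields minimality and the compressible clopen set simultaneously, and strong proximality and the amenable consequence are then read off from \cite[Propositions~6.24 and~6.25]{CRW2}. Your ping-pong sketch for compression is also too vague to replace Proposition~\ref{cor:rf_compressible:mono}, whose proof hinges precisely on compact generation and a finite-partition argument. Your treatment of the amenable corollary, by contrast, is correct and essentially equivalent to what the cited results provide.
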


\begin{cor}[{See Corollary~\ref{cor:AbstractSimplicity}}]
Let $G$ be a topologically simple group in $\Rs$.  If $G$ has an open subgroup of the form $K \times L$ such that $K$ and $L$ are non-trivial closed subgroups, then $G$ is non-amenable and abstractly simple.
\end{cor}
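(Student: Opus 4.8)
The plan is to establish the two conclusions separately, both resting on Theorem~\ref{thm:StronglyProx}. For non-amenability I would argue directly from its final clause. Since $K\times L$ is open in $G$, each of $K$ and $L$ is closed and normal in this open subgroup, hence has open normalizer and is therefore locally normal; moreover $K$ and $L$ centralize one another. Thus $K$ is a non-trivial locally normal subgroup of $G$ whose centralizer contains the non-trivial group $L$. If $G$ were amenable, Theorem~\ref{thm:StronglyProx} would force every non-trivial locally normal subgroup to have trivial centralizer, contradicting $L \le \CC_G(K)$. Hence $G$ is non-amenable.

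For abstract simplicity I would first record the structural consequences of the hypothesis. The two commuting non-trivial locally normal subgroups $K$ and $L$ produce a non-trivial element of the centralizer lattice; equivalently, the $G$-action on $X := \mf{S}(\LC(G))$ is micro-supported, so that $\rist_G(\alpha)\neq\{1\}$ for a suitable non-empty proper clopen $\alpha\subseteq X$. Because $G$ is $[A]$-semisimple (Proposition~\ref{prop:Rs_is_RF_[a]}), this action is faithful, and by Theorem~\ref{thm:StronglyProx} it is minimal, strongly proximal, and has a compressible open set. Minimality then upgrades the micro-support to $\rist_G(\gamma)\neq\{1\}$ for every non-empty clopen $\gamma\subseteq X$.

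Now let $N$ be a non-trivial normal subgroup of $G$; the goal is $N=G$. By faithfulness some $n\in N$ moves a point of $X$, and since $X$ is Hausdorff and totally disconnected there is a non-empty clopen $\beta$ with $\beta\cap n\beta=\emptyset$. For $g\in\rist_G(\beta)$ the commutator $[n,g]=n\,(gn^{-1}g^{-1})$ lies in $N$, while writing it as $[n,g]=(ngn^{-1})g^{-1}$ shows that, as $ngn^{-1}$ is supported on the disjoint set $n\beta$, the element $[n,g]$ restricts to $g^{-1}$ on $\beta$. Applying this twice, for $g,h\in\rist_G(\beta)$ the double commutator $[[n,g],h]\in N$ is supported on $\beta$ and acts there as $[g^{-1},h]$; letting $g,h$ vary over $\rist_G(\beta)$ yields $[\rist_G(\beta),\rist_G(\beta)]\le N$ for every such $\beta$. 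Feeding in strong proximality and the compressible open set, I would then spread these rigid-stabilizer commutator subgroups around by conjugation and compression so as to recover first a full rigid stabilizer inside $N$ and ultimately all of $G$.

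The hard part is exactly this final step: passing from ``$N$ contains the derived groups of the rigid stabilizers'' to ``$N=G$'' as \emph{abstract} groups. Topological simplicity makes $N$ dense immediately, but abstract simplicity demands equality rather than density, and this is precisely where the compressible open set is indispensable: it should allow every element of $G$ to be written as a product of boundedly many conjugates of rigid-stabilizer elements already located in $N$, so that no passage to the closure is required. Checking that the compression furnished by Theorem~\ref{thm:StronglyProx} is genuinely strong enough to realize each element of $G$ as such a finite product — rather than merely producing a dense normal subgroup — is the crux of the argument, and the remaining steps are then routine.
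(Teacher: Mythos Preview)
Your non-amenability argument is correct and matches the paper's implicit reasoning: the hypothesis forces $\LC(G)\neq\{0,\infty\}$, which by Theorem~\ref{thm:StronglyProx} rules out amenability.

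For abstract simplicity, your approach diverges from the paper's and, as you yourself flag, is incomplete at the decisive step. The paper does not attempt the rigid-stabilizer double-commutator route at all. Instead it observes that the hypothesis makes the \emph{decomposition lattice} non-trivial, that the action on it is faithful (Proposition~\ref{prop:LC_faithful}) and minimal with a compressible open set (Theorem~\ref{thm:StronglyProx}), and hence that there exist $\alpha$ and $g\in G$ with $g\alpha<\alpha$. It then invokes \cite[Proposition~6.29]{CRW2} to conclude that $G^\dagger=\langle\overline{\con(g)}\mid g\in G\rangle$ is open, and \cite[Corollary~6.28]{CRW2} to deduce abstract simplicity. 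The mechanism is contraction-group dynamics arising from the strict inclusion $g\alpha<\alpha$, not the commutator trick.

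Your route via $[\rist_G(\beta),\rist_G(\beta)]\le N$ is a correct opening move, but the step you label ``the crux'' is not routine: you would need to know that $G$ is \emph{abstractly} generated by commutators of rigid stabilizers of small clopen sets. For the $G$-action on $\mf{S}(\LC(G))$ there is no evident fragmentation lemma guaranteeing this---the action is by conjugation on commensurability classes of locally normal subgroups, not by homeomorphisms built from compactly supported pieces in any direct sense, and the existence of a compressible clopen set does not by itself yield bounded-length products of rigid-stabilizer elements. This is precisely why the paper outsources the endgame to the contraction-group machinery of \cite{CRW2}. As written, your argument establishes only that $N$ is dense, which topological simplicity already gives; the passage to $N=G$ remains a genuine gap.
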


Our investigations conclude with several examples of groups in the class $\Rs$. We exhibit an example of a non-$\sigma$-compact group in $\Rs$ that appears as a dense locally compact subgroup of a group in $\Ss$ (see Subsection~\ref{sec:ConcreteEx}). We additionally characterize the groups $G(F,F')$, defined by A.~Le~Boudec in \cite{LeBou16}, which are members of $\Rs$ (see Theorem~\ref{thm:char_G(F,F')_Rs}). The following independently interesting fact about the groups $G(F,F')$ is discovered along the way.

\begin{thm}[{See Proposition~\ref{prop:virt_simple_G(F,F')}}]
Take $d>2$, let $c$ be a legal coloring of the $d$-regular tree, and $F'\leq \Sym(d)$ be such that the action of $F'$ is not free. If $F\leq F'\leq \wh{F}$, then $G_c(F,F')$ is virtually simple if and only if $F'$ is transitive and generated by its point stabilizers.
\end{thm}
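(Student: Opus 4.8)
The plan is to analyze the subgroup $G^+ \leq G := G_c(F,F')$ generated by the pointwise stabilizers of the edges of $\Trd$, and to establish the equivalence by showing that $G$ is virtually simple precisely when $G^+$ is simple and of finite index. Two ingredients drive this: Tits' simplicity theorem, which controls when $G^+$ is abstractly simple, and a Bass--Serre analysis of the quotient $G/G^+$, which controls the index. Throughout I use that $G_c(F,F')$ satisfies Tits' independence property (property $\mathrm{P}$) and acts on $\Trd$ without inversions after subdivision, with $U(F)$ as a compact open subgroup; these are established in \cite{LeBou16}.

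For the implication ($\Leftarrow$), suppose $F'$ is transitive and generated by its point stabilizers. Transitivity of $F'$ makes $G$ act minimally on $\Trd$ with no fixed end, so by Tits' theorem $G^+$ is either trivial or abstractly simple. Since $F'$ is not free, some point stabilizer is non-trivial, and from it one builds a non-trivial element fixing an edge pointwise; hence $G^+ \neq \{1\}$ and $G^+$ is simple. It remains to see $[G:G^+] < \infty$. As $F'$ is transitive, $G$ acts edge-transitively, so the associated graph of groups has underlying graph a single (subdivided) edge, and killing the edge groups identifies $G/G^+$ with the free product of the two vertex-group quotients. A vertex stabilizer surjects onto $F'$ via its local action (one exceptional vertex is permitted in $G_c(F,F')$), and the image of an edge stabilizer is a point stabilizer of $F'$; thus the vertex quotient is $F'/(F')^+$, trivial by hypothesis, while the midpoint quotient is the order-two ``type'' group recording the bipartition of $\Trd$. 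Hence $G/G^+ \cong \mathbf{Z}/2$, so $G^+$ is open of index two and $G$ is virtually simple. (Equivalently, one may invoke that $G \in \Rs$ and apply Corollary~\ref{cor:AbstractSimplicity} to its monolith $G^+$, which carries a product open subgroup by property $\mathrm{P}$.)

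For ($\Rightarrow$) I argue the contrapositive via the same computation. For general $F'$ the Bass--Serre analysis gives $G/G^+ \cong (F'/(F')^+) \ast (\mathbf{Z}/2)^{\ast k}$, where $k$ is the number of orbits of $F'$ on $\{1,\dots,d\}$ (equal to the number of $G$-orbits of edges); crucially this quotient depends only on $F'$, since the surjection from a vertex stabilizer onto $F'$ sees nothing of $F$. If $F'$ is intransitive then $k \geq 2$ and the factor $(\mathbf{Z}/2)^{\ast k} \supseteq \mathbf{Z}/2 \ast \mathbf{Z}/2$ is infinite; if $F'$ is transitive but $(F')^+ \neq F'$ then $F'/(F')^+$ is a non-trivial free factor and again $G/G^+$ is infinite. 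In either case $G/G^+$ is an infinite virtually free group. A virtually simple group cannot surject onto such a group: if $S \leq G$ were a finite-index abstractly simple subgroup, then $S$ is infinite (as $G$ is non-discrete) and $S \cap G^+ \trianglelefteq S$ forces $S \leq G^+$ or $S \cap G^+ = \{1\}$; the former contradicts $[G:G^+]=\infty$, while the latter embeds $S$ as a finite-index subgroup of the infinite virtually free group $G/G^+$, which has no infinite simple subgroup of finite index. Hence $G$ is not virtually simple.

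The main obstacle is the finite-index half, namely the precise computation of $G/G^+$ as the free product $(F'/(F')^+)\ast(\mathbf{Z}/2)^{\ast k}$ and, above all, establishing that it is governed by $F'$ alone. This requires the amalgam (graph-of-groups) decomposition of $G_c(F,F')$ together with the facts that a vertex stabilizer surjects onto all of $F'$---using the defining freedom to be exceptional at one vertex---and that edge stabilizers surject exactly onto point stabilizers of $F'$; the invisibility of $F$ in this quotient is what makes the criterion a condition on $F'$ rather than on $F$. The simplicity half is comparatively routine once property $\mathrm{P}$ and minimality are in hand, both of which may be cited from \cite{LeBou16}, with Tits' theorem supplying abstract (not merely topological) simplicity of $G^+$.
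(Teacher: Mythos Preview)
Your approach is correct and genuinely different from the paper's. The paper never computes $G/G^+$; instead, for the forward implication it exploits the dense embedding $G_c(F,F')\hookrightarrow U_c(F')$: a finite-index simple subgroup $M$ of $G_c(F,F')$ is shown to lie inside $G_c(F,F')\cap U_c(F')^+$, and density then forces $[U_c(F'):U_c(F')^+]<\infty$, whence the condition on $F'$ via Proposition~\ref{prop:SimpleBurgerMozes}. For the converse, the paper cites three results from \cite{LeBou16} to obtain a simple monolith, observes that its closure contains $U_c(F')^+$ and hence acts with finitely many orbits on $T$, and then invokes the regionally-compact-stabilizer lemma (Lemma~\ref{lem:cocompact_orbits}) to conclude cocompactness and thus finite index.

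Your Bass--Serre computation of $G/G^+ \cong (F'/(F')^+) \ast (\mathbf{Z}/2)^{\ast k}$ is more explicit and self-contained: it yields the precise index (namely $2$) in the virtually simple case, and it makes transparent why the criterion depends only on $F'$. The key step that the surjection $G_v\to F'$ exists uses exactly the hypothesis $F'\leq\wh{F}$, which you identify correctly. The paper's route, by contrast, avoids the amalgam analysis entirely by transferring the question to the closed overgroup $U_c(F')$, where the answer is already known.

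Two small points to tighten. First, your assertion that $S$ is infinite ``as $G$ is non-discrete'' is not quite the right reason: $G_c(F,F')$ can be discrete when $F$ acts freely, but it is always infinite (it contains $\Gamma$), which is all you need. Second, you cite \cite{LeBou16} for property~P of $G_c(F,F')$; while this is easy to verify directly (split an edge-fixer into its two half-tree components, each still having only finitely many local actions outside $F$), it is worth checking that this is actually stated there rather than only for $U_c(F)$.
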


\subsection{Structure of article}
Section~2 covers preliminaries on \tdlc groups; the results of this section are used throughout the work. Section~3 is independent of all sections besides Section~2. It serves primarily as motivation and to provide a setting for our results, and it can be safely skipped. Sections~4 and~5 contain most of the main results of the article. Section~6 is an immediate application of the results of Sections~4 and~5. The results of Section~6 are not used in later sections, so it can be skipped. Sections~7 and~8 contain the remainder of our main results and should be read together. Section~9 presents several examples and is self-contained, except for the use of a theorem from Section~5. 
\section{Preliminaries}

A   family $(N_i)_{i \in I}$ of subsets of a set $X$ indexed by a directed set $I$ is called \textit{filtering} if for all $i, j \in I$ there exists $k \geq i, j$ such that $N_k \subset N_i \cap N_j$. For a group $G$ acting on a set $X$, the pointwise stabilizer of $Z\subseteq X$ in $G$ is denoted by $G_{(Z)}$. For a group $G$, the commutator of $g,h\in G$ is $[g,h]:=ghg^{-1}h^{-1}$. A topological group is always assumed to be Hausdorff.

\subsection{Compactly generated \tdlc groups}

Our work requires a number of results on compactly generated \tdlc groups.

\begin{prop}[{\cite[Proposition 2.5]{CM11}}]\label{prop:filtering}
Let $G$ be a compactly generated \tdlc group, $\mc{F}$ be a filtering family of  non-trivial closed normal subgroups of $G$, and $V$ a compact open subgroup of $G$. If $\mc{F}$ has trivial intersection, then there is $N\in \mc{F}$ and a closed $K\normal G$ with $K\leq V\cap N$ such that $N/K$ is discrete. In particular, if $G$ is without a non-trivial compact or discrete normal subgroup, then every filtering family of non-trivial closed normal subgroups has a non-trivial intersection. 
\end{prop}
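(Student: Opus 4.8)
The plan is to realise the hypothesis geometrically via the Cayley--Abels graph and then to use normality to propagate a purely local triviality statement across the whole graph. Since $G$ is compactly generated and $V$ is compact open, let $\Gamma$ be the associated Cayley--Abels graph: a connected, locally finite graph on which $G$ acts vertex-transitively and continuously, with the stabiliser of a base vertex $v_0$ equal to $V$. Write $Q=\bigcap_{g\in G}gVg\inv$ for the normal core of $V$, which is the kernel of the $G$-action on $\Gamma$; it is a compact normal subgroup contained in $V$. The first reduction is to observe that it suffices to produce a single $N\in\mc{F}$ with $N\cap V\subseteq Q$. Indeed, for such an $N$ one has $N\cap V=N\cap Q$, a subgroup that is normal in $G$ (an intersection of normal subgroups) and open in $N$ (since $V$ is open), so $K:=N\cap V$ witnesses the conclusion: $K\normal G$, $K\le V\cap N$, and $N/K$ is discrete.

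The key step is a local one: I would find $N\in\mc{F}$ whose intersection with $V$ fixes $v_0$ together with all its neighbours. Let $B$ consist of $v_0$ and its (finitely many) neighbours, and let $G_{(B)}\le V$ be its pointwise stabiliser; as $V$ permutes $B$, the subgroup $G_{(B)}$ is normal of finite index in $V$, with finite quotient $\bar V:=V/G_{(B)}$. For each $N\in\mc{F}$ the image $P_N$ of $N\cap V$ in $\bar V$ is a normal subgroup, because $N\cap V\normal V$ by normality of $N$; moreover $N'\subseteq N$ gives $P_{N'}\subseteq P_N$, so $\{P_N\}$ is a downward-directed family of subgroups of the finite group $\bar V$ and attains a least element $M=P_{N^*}$, with $P_N=M$ for every $N\in\mc{F}$ below $N^*$. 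I claim $M=1$. If some $\sigma\in M$ were nontrivial, then every $N\subseteq N^*$ would meet the coset $C\subseteq V$ of $G_{(B)}$ mapping to $\sigma$; these $N\cap C$ form a downward-directed family of non-empty compact sets, so by the finite intersection property their intersection is non-empty. But the $N\subseteq N^*$ are cofinal in $\mc{F}$, whence this intersection equals $\bigl(\bigcap_{N\in\mc{F}}N\bigr)\cap C=\{1\}\cap C$, forcing $1\in C$ and contradicting $\sigma\neq 1$. Hence $M=1$, i.e.\ there is $N\in\mc{F}$ with $N\cap V\subseteq G_{(B)}$.

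It remains to propagate this. Since $N$ is normal and $G$ acts vertex-transitively, conjugating $N\cap V=N\cap G_{v_0}$ by an element carrying $v_0$ to an arbitrary vertex $u$ shows that the stabiliser $N\cap G_u$ fixes every neighbour of $u$. Now take any $n\in N\cap V$ and any vertex $x$, and walk along a path $v_0=x_0\sim x_1\sim\cdots\sim x_m=x$: as $n$ fixes $x_0$, it lies in $N\cap G_{x_0}$ and so fixes $x_1$; then it lies in $N\cap G_{x_1}$ and fixes $x_2$; and so on up to $x_m=x$. By connectedness of $\Gamma$ this shows $n$ fixes $\Gamma$ pointwise, i.e.\ $n\in Q$, so $N\cap V\subseteq Q$, which completes the proof by the reduction above. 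The final assertion is then immediate: if $G$ has no non-trivial compact normal subgroup then the compact normal $K$ just produced is trivial, so the corresponding $N$ is discrete and non-trivial, which is impossible when $G$ also has no non-trivial discrete normal subgroup.

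I expect the genuine content to sit in the second paragraph: the interplay between the finiteness of $\bar V$ (where compact generation enters, through local finiteness of $\Gamma$) and the compactness argument forcing $M=1$. Once local triviality near $v_0$ is secured, the propagation in the third paragraph is a soft consequence of normality and connectedness, and the reduction in the first paragraph is purely formal.
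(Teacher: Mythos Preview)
The paper does not prove this proposition; it is quoted verbatim as \cite[Proposition~2.5]{CM11} and used as a black box. Your argument is correct and is essentially the standard proof from the original source: reduce to finding $N\in\mc{F}$ with $N\cap V$ in the core $Q$ of $V$, use local finiteness of the Cayley--Abels graph to pass to the finite quotient $V/G_{(B)}$, apply a compactness/finite-intersection argument to force some $P_{N^*}$ to be trivial, and then propagate along paths using normality of $N$ and vertex-transitivity. All steps are sound, including the cofinality of $\{N\in\mc{F}:N\subseteq N^*\}$ needed to identify the intersection with $\{1\}$.
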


A version of the following is given by \cite[Proposition~2.6 (corrected)]{CM_corr}.  For the reader's convenience, we give a simplified statement and proof here.

\begin{thm}\label{thm:minimal_normal} 
Suppose that $G$ is a compactly generated \tdlc group such that $G$ has no infinite discrete normal subgroups and there is $V\leq G$ compact and open such that the normal core of $V$ in $G$ is trivial. Then every non-trivial closed normal subgroup of $G$ contains a minimal non-trivial closed normal subgroup.
\end{thm}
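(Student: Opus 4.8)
The plan is to prove the statement by a Zorn's lemma argument. Fix a non-trivial closed normal subgroup $M \normal G$ and consider the poset $\mc{C}$ of all non-trivial closed normal subgroups of $G$ contained in $M$, ordered by inclusion. A minimal element of $\mc{C}$ is automatically a minimal non-trivial closed normal subgroup of $G$: any non-trivial closed normal subgroup of $G$ properly contained in it would itself lie in $M$, hence in $\mc{C}$, contradicting minimality. The poset is non-empty since $M \in \mc{C}$, so by the form of Zorn's lemma guaranteeing minimal elements it suffices to show that every non-empty chain $\mc{F}$ in $\mc{C}$ admits a lower bound in $\mc{C}$. The natural candidate is $\bigcap \mc{F}$, which is visibly closed and normal, so the whole matter reduces to proving $\bigcap \mc{F} \neq \triv$.

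To prove this I would argue by contradiction, supposing $\bigcap \mc{F} = \triv$. A chain under inclusion is a filtering family (index it by reverse inclusion), so Proposition~\ref{prop:filtering} applies to $\mc{F}$ and produces some $N \in \mc{F}$ together with a closed $K \normal G$ with $K \leq V \cap N$ and $N/K$ discrete. The two structural hypotheses now enter in turn. First, $K \leq V$ and $K \normal G$, so $K$ lies in the normal core of $V$ in $G$, which is trivial; hence $K = \triv$ and $N$ is itself discrete. Second, $N$ is then a discrete closed normal subgroup of $G$, and since $G$ has no infinite discrete normal subgroups, $N$ is finite.

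It remains to exploit the chain structure together with the finiteness of $N$. The members of $\mc{F}$ that are contained in $N$ form a chain of subgroups of the finite group $N$, and therefore possess a least element $N_0$. As $\mc{F}$ is totally ordered by inclusion, every member of $\mc{F}$ contains $N_0$, whence $N_0 \leq \bigcap \mc{F}$; but $N_0$ is non-trivial, contradicting $\bigcap \mc{F} = \triv$. This contradiction shows $\bigcap \mc{F} \neq \triv$, and Zorn's lemma then yields the desired minimal subgroup.

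The main obstacle I anticipate is that the hypotheses here are deliberately weaker than ``$G$ has no non-trivial compact or discrete normal subgroup'', so one cannot simply invoke the final (``in particular'') clause of Proposition~\ref{prop:filtering}, which would instantly give non-trivial intersections. The substance of the argument instead lies in combining the triviality of the normal core of $V$ (which forces $K = \triv$, so that $N$ is genuinely discrete rather than merely discrete modulo a compact normal subgroup) with the absence of infinite discrete normal subgroups (which then forces $N$ finite), and finally in using the finiteness of $N$ to descend within the chain. A minor point deserving care is the verification that a chain qualifies as a filtering family in the precise sense required by Proposition~\ref{prop:filtering}.
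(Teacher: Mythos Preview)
Your proof is correct and follows essentially the same approach as the paper: a Zorn's lemma argument on the poset of non-trivial closed normal subgroups contained in the given one, with Proposition~\ref{prop:filtering} used to force any chain with trivial intersection to contain a discrete (hence finite) member, after which the chain structure inside that finite group yields the contradiction. Your presentation is slightly more explicit about why the least element $N_0$ of the finite sub-chain is a lower bound for all of $\mc{F}$, but the substance is identical.
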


\begin{proof}
Let $N$ be a non-trivial normal subgroup of $G$ and let $\mc{M}$ be the set of closed normal subgroups $M$ of $G$ such that $\triv \neq M \le N$; it suffices to show that $\mc{M}$ has a minimal element.  Let $\mc{F}$ be a chain in $\mc{M}$, let $L = \bigcap_{M \in \mc{F}}M$ and suppose that $L = \triv$.  Then by Proposition~\ref{prop:filtering}, there is $M \in \mc{F}$ and a closed $K \normal G$ with $K \le V \cap M$ such that $M/K$ is discrete.  Since the normal core of $V$ in $G$ is trivial, we must have $K = \triv$, so in fact $M$ is discrete.  By hypothesis, it follows that $M$ is finite.  Since $\mc{F}$ is a chain, we realize $L$ as the intersection of elements of $\mc{F}$ contained in $M$.  But then $L$ is the intersection of a finite chain of non-trivial finite normal subgroups, so $L$ must itself be non-trivial, contrary to our earlier assumption.  In particular, $L$ is a lower bound for $\mc{F}$ in $\mc{M}$.  Hence by Zorn's lemma, $\mc{M}$ has a minimal element as required.
\end{proof}

\begin{prop}[{\cite[Proposition 4.6]{CRW2}}]\label{prop:locallypro-pi}
Let $G$ be a compactly generated \tdlc group and $U$ be a compact open subgroup. If the normal core of $U$ in $G$ is trivial, then $U$ is pro-$\pi$ for some finite set of primes $\pi$. 
\end{prop}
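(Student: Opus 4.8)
The plan is to make $G$ act on its Cayley--Abels graph and to bound, uniformly over all finite quotients of $U$, the primes that occur by the order of the local action of a vertex stabilizer. First I would attach to $(G,U)$ a Cayley--Abels graph $\Gamma$. Since $G$ is compactly generated and $U$ is compact and open, a compact generating set is covered by finitely many cosets of $U$, so there is a finite symmetric $\Sigma \subseteq G$ with $1 \in \Sigma$ and $G = \la U \cup \Sigma\ra$. Let $\Gamma$ have vertex set $\Omega = G/U$, with $gU$ adjacent to $hU$ exactly when $g\inv h \in U\Sigma U \setminus U$. This relation is left-$G$-invariant and symmetric, so $G$ acts on $\Gamma$ by graph automorphisms, transitively on vertices; $\Gamma$ is connected because $\la U\Sigma U\ra = G$, and locally finite because the compact set $U\Sigma U$ is a union of finitely many cosets $hU$. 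In particular, writing $\omega = U$ for the base vertex, every ball $B_n(\omega)$ and sphere $S_n(\omega)$ is finite.

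Next, let $U_n \leq U$ be the pointwise stabilizer of $B_n(\omega)$. As $U = G_\omega$ fixes $\omega$ and preserves $\Gamma$, it stabilizes each $B_n(\omega)$ setwise, so each $U_n$ is the kernel of the action $U \to \Sym(B_n(\omega))$; hence $U_n \normal U$ is open of finite index, and $U_0 \geq U_1 \geq \cdots$. The hypothesis that the normal core of $U$ in $G$ is trivial says exactly that $G$, and therefore $U$, acts faithfully on $\Omega$; since $\Gamma$ is connected, $\bigcap_n U_n$ is the pointwise stabilizer of all of $\Omega$, hence $\triv$. A standard compactness argument (the compact set $U \setminus W$ is covered by the increasing open sets $U \setminus U_n$) then shows that every open normal $W \normal U$ contains some $U_N$, so $U/W$ is a quotient of $U/U_N$. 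Consequently it suffices to exhibit a finite set of primes $\pi$ such that $|U/U_n|$ is a $\pi$-number for every $n$, for then every finite continuous quotient of the profinite group $U$ is a $\pi$-group.

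The load-bearing step is the analysis of the layers $U_n/U_{n+1}$ through the local action. For a vertex $v$, let $F_v \leq \Sym(N(v))$ be the permutation group induced on the neighbour set $N(v)$ by the stabilizer $G_v$; by vertex-transitivity the groups $F_v$ are pairwise isomorphic, so $|F_v|$ is a fixed integer, and I set $\pi$ to be its set of prime divisors. I would then consider the homomorphism $\Phi \colon U_n \to \prod_{v \in S_n} F_v$ sending $g$ to the tuple of permutations it induces on the sets $N(v)$; this is well defined because each $g \in U_n$ fixes every $v \in S_n$ and so lies in $G_v$. The kernel of $\Phi$ is precisely $U_{n+1}$: an element of $U_n$ lies in $\ker \Phi$ iff it also fixes $N(v)$ pointwise for every $v \in S_n$, and since $S_{n+1} \subseteq \bigcup_{v \in S_n} N(v)$ this is equivalent to fixing $B_{n+1}(\omega)$ pointwise. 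Therefore $U_n/U_{n+1}$ embeds into the finite group $\prod_{v \in S_n} F_v$, whose order $|F_v|^{|S_n|}$ is a $\pi$-number. Multiplying up the finitely many layers shows $|U/U_n|$ is a $\pi$-number for all $n$, completing the reduction of the previous paragraph and proving that $U$ is pro-$\pi$.

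I expect the only real friction to be bookkeeping: verifying the connectedness and local finiteness of $\Gamma$, and pinning down the kernel of $\Phi$. The genuinely essential idea is that a single finite permutation group $F$ controls every layer $U_n/U_{n+1}$ at once, so that its prime divisors form the desired finite set $\pi$ independently of $n$; once this uniformity is isolated, finiteness of $\pi$ is immediate.
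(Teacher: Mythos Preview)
Your proof is correct. The paper does not actually give a proof of this proposition; it simply cites it as \cite[Proposition~4.6]{CRW2} and moves on. Your argument via the Cayley--Abels graph is the standard one and is essentially what appears in the cited reference: build the locally finite graph on $G/U$, use triviality of the normal core to conclude that the ball-stabilizers $U_n$ form a basis of identity neighborhoods in $U$, and then bound each layer $U_n/U_{n+1}$ by a power of the finite local permutation group $F$. All the bookkeeping steps you flag (connectedness, local finiteness, the identification $\ker\Phi = U_{n+1}$, and $U_0 = U$) go through without issue.
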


Given a locally compact group $G$, the intersection of all open normal subgroups of $G$ is denoted by $\Res(G)$ and called the \textit{discrete residual} of $G$. If $\Res(G) = \triv$, we say that $G$ is \textit{residually discrete}. A locally compact group has \textit{small invariant neighborhoods} (or is called a \textit{SIN-group}) if it admits a basis of conjugation invariant open identity neighborhoods.

\begin{prop}[{\cite[Corollary 4.1]{CM11}}]\label{prop:ResDiscr->SIN}
Let $G$ be a compactly generated \tdlc group.  Then $G$ is residually discrete if and only if it has a basis of identity neighborhoods consisting of compact open normal subgroups. In particular, if $G$ is residually discrete, then $G$ is a SIN-group. 
\end{prop}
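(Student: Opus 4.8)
The plan is to prove the two implications of the equivalence separately and to read off the SIN statement as an immediate consequence of the forward implication. The reverse implication is purely formal: if $G$ admits a basis $\mathcal{B}$ of identity neighborhoods consisting of compact open normal subgroups, then each $V \in \mathcal{B}$ is in particular an open normal subgroup, so $\Res(G) \le V$; since $G$ is Hausdorff and $\mathcal{B}$ is a basis at the identity, $\bigcap_{V \in \mathcal{B}} V = \triv$, whence $\Res(G) = \triv$ and $G$ is residually discrete. The same basis $\mathcal{B}$ consists of conjugation-invariant open identity neighborhoods, so $G$ is a SIN-group; this already yields the final assertion once the forward implication is in hand.

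For the forward implication, I would assume $\Res(G) = \triv$ and fix an arbitrary compact open subgroup $U$ of $G$ (these form a basis at the identity by van Dantzig's theorem). The goal is to produce a compact open normal subgroup of $G$ contained in $U$; ranging over all such $U$ then yields the desired basis. If $G$ is discrete the trivial subgroup suffices, so I assume otherwise, in which case every open normal subgroup of $G$ is non-trivial. Let $\mathcal{F}$ be the family of all open normal subgroups of $G$. This family is filtering, since the intersection of two open normal subgroups is again one, and its intersection equals $\Res(G) = \triv$ by the definition of the discrete residual.

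Now I would apply Proposition~\ref{prop:filtering} to the compactly generated group $G$, the filtering family $\mathcal{F}$ of non-trivial closed normal subgroups, and the compact open subgroup $V = U$: it furnishes $N \in \mathcal{F}$ and a closed $K \normal G$ with $K \le U \cap N$ such that $N/K$ is discrete. The subgroup $K$ is exactly what is wanted: it is normal in $G$ by construction; it is compact, being a closed subgroup of the compact group $U$; and it is open, because $N$ is open in $G$ while $N/K$ being discrete makes $K$ open in $N$. Thus $K \le U$ is a compact open normal subgroup of $G$, completing the forward implication. The only genuine content of the argument is the recognition that Proposition~\ref{prop:filtering} is precisely the tool required, fed with the family of all open normal subgroups and with $V = U$; the apparent obstacle — converting an abstract trivial-intersection statement into a single normal subgroup that is simultaneously compact and open inside the prescribed $U$ — dissolves once one observes that discreteness of $N/K$ upgrades the openness of $N$ to openness of $K$, while containment in the compact group $U$ supplies compactness.
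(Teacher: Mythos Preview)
The paper does not supply its own proof of this proposition; it is quoted directly from \cite[Corollary~4.1]{CM11}. Your argument is correct: the reverse implication is formal, and for the forward implication you correctly feed the filtering family of all open normal subgroups of $G$ (non-trivial when $G$ is non-discrete, with trivial intersection by hypothesis) into Proposition~\ref{prop:filtering} to extract a compact open normal subgroup inside any prescribed compact open $U$.
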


Every closed subgroup of a SIN-group is a SIN-group.  The property also passes to quotients.

\begin{lem}\label{lem:SIN_quotient}
Let $G$ be a locally compact SIN-group, $H$ be a closed subgroup of $G$, and $K\normal H$ be closed.  Then $H/K$ is a SIN-group.
\end{lem}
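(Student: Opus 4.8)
The plan is to first reduce to the case of a quotient and then transport a conjugation-invariant neighborhood basis through the quotient map. By the remark immediately preceding the lemma, every closed subgroup of a SIN-group is again a SIN-group; applied to the closed subgroup $H\leq G$, this shows that $H$ is itself a locally compact SIN-group. It therefore suffices to prove the statement in the form: if $H$ is a locally compact SIN-group and $K\normal H$ is closed, then $H/K$ is a SIN-group. (Recall that $H/K$ is indeed a locally compact Hausdorff group, since $K$ is closed and normal.)

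Let $q\colon H\to H/K$ be the canonical projection, which is a continuous, open, surjective homomorphism; openness here is the standard fact that the quotient map of a topological group by a closed normal subgroup sends open sets to open sets. Fix a basis $\mathcal B$ of conjugation-invariant open identity neighborhoods of $H$, which exists because $H$ is a SIN-group. I would then claim that $\{q(V):V\in\mathcal B\}$ is a basis of conjugation-invariant open identity neighborhoods of $H/K$, which is exactly what is required. Each $q(V)$ is open because $q$ is open, and it contains the trivial coset. For conjugation invariance, given any element $hK\in H/K$, surjectivity of $q$ together with the homomorphism property gives $(hK)\,q(V)\,(hK)\inv=q(hVh\inv)=q(V)$, the last equality because $hVh\inv=V$ by choice of $V\in\mathcal B$; since every element of $H/K$ is of the form $q(h)$, this yields invariance under all of $H/K$. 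Finally, to see that these sets form a basis, take any open $W\ni eK$ in $H/K$: then $q\inv(W)$ is open in $H$ and contains the identity, so some $V\in\mathcal B$ satisfies $V\subseteq q\inv(W)$, whence $q(V)\subseteq W$.

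The only point that needs any care — the mild ``main obstacle'' — is the openness of $q$, which is what simultaneously guarantees that each $q(V)$ is open and that the images of a neighborhood basis of $H$ again form a neighborhood basis of the identity in $H/K$. Everything else is purely formal, and no compactness beyond local compactness (needed only so that $H/K$ is a legitimate locally compact group) enters the argument.
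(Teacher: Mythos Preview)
Your proof is correct and follows essentially the same approach as the paper: push a conjugation-invariant neighborhood basis through the quotient map, using openness of the projection. The only organizational difference is that you first reduce to $G=H$ via the remark that closed subgroups of SIN-groups are SIN, whereas the paper handles subgroup and quotient in a single step by taking $\{(H\cap O)K/K : O\in\mc{O}\}$ directly from a basis $\mc{O}$ in $G$.
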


\begin{proof}
Let $\mc{O}$ be a basis of open conjugation-invariant identity neighborhoods in $G$ and let $V/K$ be an open identity neighborhood in $H/K$.  Then $V$ is an identity neighborhood in $H$, so there is an identity neighborhood $W$ in $G$ such that $H \cap W \subseteq V$.  In turn, $O \subseteq W$ for some $O \in \mc{O}$.  It follows that $H \cap O \subseteq V$, and hence $(H \cap O)K/K$ is an open neighborhood of the identity in $H/K$ contained in $V/K$.  By construction, $H\cap O$ is conjugation-invariant under the action of $H$, and hence $(H\cap O)K/K$ is conjugation-invariant in $H/K$.   The set $\mc{O}' =  \{(H \cap O)K/K \mid O \in \mc{O}\}$ is thus a basis of conjugation-invariant open identity neighborhoods in $H/K$, and $H/K$ is a SIN-group.
\end{proof}

We recall a generation property of compactly generated \tdlc groups.

\begin{lem}[{\cite[Proposition 4.1]{CRW2}}]\label{lem:Cayley-Abels}
	Let $G$ be a compactly generated \tdlc group and $U \leq G$ be a compact open subgroup. Given any abstract subgroup $H \leq G$ such that $G = HU$, there exists a finite subset $\{h_1,\dots, h_n\} \subset H$ satisfying the following properties:
	\begin{enumerate}[label=(\roman*)]
		\item  $\la h_1, \dots, h_n\ra U = G$, and
		\item Given any $\tau_i \in h_i U$ for $i=1, \dots, n$, we have $\la \tau_1, \dots, \tau_n\ra U = G$.\qedhere
	\end{enumerate}
\end{lem}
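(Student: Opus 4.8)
The plan is to recast the condition $\la F\ra U = G$ as a transitivity statement for the action of $\la F\ra$ on the vertex set of a Cayley--Abels graph, and then to observe that this transitivity depends only on where the generators send a single base vertex. First I would construct the graph. Since $G$ is compactly generated, fix a compact generating set $K$ and put $A = U \cup UKU \cup UK\inv U$. This $A$ is compact, open, symmetric ($A = A\inv$, using $U = U\inv$), bi-$U$-invariant ($UAU = A$), contains $U$, and generates $G$ (as $K \subseteq A$); being compact and open it is a finite union of left cosets of $U$. Define $\Gamma$ to have vertex set $G/U$ with $gU \sim g'U$ iff $g\inv g' \in A \setminus U$. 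Bi-invariance makes adjacency well defined on cosets, symmetry of $A$ makes it symmetric, finiteness of $A/U$ makes $\Gamma$ locally finite, and since $A$ generates $G$ the graph $\Gamma$ is connected. Left translation gives an action of $G$ on $\Gamma$ by automorphisms that is transitive on vertices, and for $F \subseteq G$ the condition $\la F\ra U = G$ is precisely the statement that $\la F\ra$ acts transitively on $G/U$.

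Second, I would unpack the hypothesis $G = HU$: it says exactly that $H$ acts transitively on $G/U$, since the orbit of the base vertex $v_0 = U$ is $\{hU \mid h \in H\} = G/U$. Let $w_1, \dots, w_m$ be the (finitely many) neighbors of $v_0$ in $\Gamma$, say $w_i = a_i U$ with $a_i \in A$. By transitivity of $H$, choose $h_i \in H$ with $h_i v_0 = w_i$; note $h_i \in a_i U$.

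Third --- the heart of the matter --- I would prove the following in a single stroke for \emph{any} choice of elements $\tau_i \in h_i U$ (the case $\tau_i = h_i$ then yields (i) and the general case yields (ii)): the group $L = \la \tau_1, \dots, \tau_m\ra$ acts transitively on $\Gamma$. The key point is that $\tau_i \in h_i U = a_i U$ forces $\tau_i v_0 = w_i$, so the $\tau_i$ collectively send $v_0$ onto the whole neighbor set $N(v_0) = \{w_1, \dots, w_m\}$, even though the $\tau_i$ need not lie in $H$. I then show $L v_0$ is closed under taking neighbors: if $v = \ell v_0$ with $\ell \in L$, then since $\ell$ is a graph automorphism $N(v) = \ell N(v_0) = \{\ell\tau_i v_0 \mid i\} \subseteq L v_0$, because each $\ell\tau_i \in L$. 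As $\Gamma$ is connected and $v_0 \in L v_0$, it follows that $L v_0 = G/U$, i.e.\ $\la \tau_1, \dots, \tau_m\ra U = G$.

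The main obstacle --- indeed essentially the only subtlety --- is the robustness clause (ii): a priori, replacing each generator $h_i$ by an arbitrary $\tau_i \in h_i U$ could both break the generation property and push the generators out of $H$. The resolution is the observation above, namely that the only datum the transitivity argument uses is the image of the base vertex $v_0$, and this is left unchanged by perturbation within a coset of $U$. With that in hand, clauses (i) and (ii) are obtained simultaneously; the remaining ingredients (the well-definedness, local finiteness, and connectivity of $\Gamma$, and the neighbor-closure induction) are routine.
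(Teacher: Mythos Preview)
Your argument is correct. The paper itself does not prove this lemma but simply cites \cite[Proposition~4.1]{CRW2}; the label \texttt{Cayley-Abels} and the structure of that reference indicate that the Cayley--Abels graph argument you give is exactly the intended one, and your handling of the robustness clause (ii) --- noting that only the image of the base vertex under each generator matters --- is precisely the key point.
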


Say a (not necessarily closed) subgroup $H$ of a locally compact group $G$ is \textit{cocompact} (or \textit{syndetic}) if there is a compact subset $X$ of $G$ such that $G = HX$.

We note that compact generation is stable under taking cocompact subgroups.

\begin{prop}[{\cite{MS59}}]\label{prop:cocompact_generation}
Let $G$ be a locally compact group and let $H$ be a closed cocompact subgroup of $G$.  Then $H$ is compactly generated if and only if $G$ is compactly generated.
\end{prop}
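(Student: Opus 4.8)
The plan is to prove the two implications separately, with the reverse direction being the substantive one. For the easy implication, suppose $H$ is compactly generated, say $H = \la C\ra$ with $C \subseteq H$ compact, and fix a compact $X \subseteq G$ with $G = HX$. Then $C \cup X$ is compact, and since every $g \in G$ factors as $g = hx$ with $h \in \la C\ra$ and $x \in X$, we get $g \in \la C \cup X\ra$; hence $G = \la C \cup X\ra$ is compactly generated. This direction uses nothing beyond the definition of cocompactness.

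For the hard direction, suppose $G = \la K\ra$ with $K$ compact. After replacing $K$ by $K \cup K^{-1}$ I may assume $K = K^{-1}$, and after replacing $X$ by $X \cup X^{-1} \cup \{1\}$ I may assume $X = X^{-1}$ and $1 \in X$, all while preserving $G = HX$. The claim is then that $H$ is generated by the set $C := (XKX) \cap H$, which is compact because $XKX$ is the continuous image of the compact set $X \times K \times X$ under multiplication, and $H$ is closed, so $C$ is a closed subset of a compact set.

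The core is a telescoping rewriting. Given $h \in H \subseteq G = \la K\ra$, write $h = k_1 \cdots k_n$ with $k_i \in K$ and set the partial products $g_i = k_1 \cdots k_i$, with $g_0 = 1$. Using $G = HX$, factor each $g_i = h_i x_i$ with $h_i \in H$ and $x_i \in X$, choosing $h_0 = x_0 = 1$ and $h_n = h$, $x_n = 1$, which is legitimate since $g_0 = 1$ and $g_n = h \in H$. Each consecutive ratio then computes as $h_{i-1}^{-1} h_i = x_{i-1}(g_{i-1}^{-1} g_i)x_i^{-1} = x_{i-1} k_i x_i^{-1}$, which lies in $XKX$ (using $X = X^{-1}$) and also lies in $H$, hence in $C$. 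The product telescopes to $h = h_0^{-1} h_n = \prod_{i=1}^{n}(h_{i-1}^{-1} h_i) \in \la C\ra$, so $H = \la C\ra$ is compactly generated.

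I expect the main obstacle to be producing the correct compact generating set inside $H$: the factorizations $g_i = h_i x_i$ are neither canonical nor continuous, so one cannot transfer a word in $K$ to a word in $H$ directly. The telescoping trick circumvents this by using only the ratios $h_{i-1}^{-1} h_i$, which are forced into the fixed compact set $XKX$ no matter how the factorizations are chosen. The two points requiring care are verifying that these ratios genuinely land in $H$ (which is where closedness of $H$ and the arrangement $X = X^{-1}$ enter) and confirming that $C = XKX \cap H$ is actually compact, both of which are handled above.
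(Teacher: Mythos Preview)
Your proof is correct. Note, however, that the paper does not supply its own proof of this proposition: it is stated with a citation to \cite{MS59} and used as a black box, so there is no argument in the paper to compare against. Your telescoping argument is the standard one and goes through cleanly.

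One small remark on your closing commentary: closedness of $H$ is not what forces the ratios $h_{i-1}^{-1}h_i$ to lie in $H$; that follows simply because $H$ is a subgroup and $h_{i-1}, h_i \in H$. Closedness of $H$ is used only to ensure that $C = (XKX) \cap H$ is compact, as the intersection of a compact set with a closed set. The symmetry $X = X^{-1}$ is what places $x_{i-1}k_i x_i^{-1}$ in $XKX$.
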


More generally, a variation on Lemma~\ref{lem:Cayley-Abels} allows us to control the structure of cocompact subgroups using compactly generated open subgroups.

\begin{lem}\label{lem:cocompact_restriction}Let $G$ be a \tdlc group, $H$ be a cocompact subgroup of $G$, and $O$ be a compactly generated open subgroup of $G$.  Then there exists a finite subset $\{h_1,\dots, h_n\} \subset H \cap O$ such that $\la h_1, \dots, h_n\ra$ is cocompact in $O$.
\end{lem}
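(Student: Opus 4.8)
The plan is to reduce the statement to a connectivity argument on the Cayley--Abels graph of $O$. First I would fix a compact open subgroup $U$ of $O$, which exists because $O$, being open in the \tdlc group $G$, is itself \tdlc. Writing $G = HX$ with $X$ compact, I would replace $X$ by the compact \emph{open} set $XU$, so that $XU = \bigcup_{i=1}^m x_iU$ for finitely many $x_i \in G$ and hence $G = \bigcup_{i=1}^m Hx_iU$. The first step is to show that $H \cap O$ is cocompact in $O$: given $o \in O$, choose $i$ with $o \in Hx_iU$ and a representative $o_i \in Hx_iU \cap O$; writing $o = h_1x_iu_1$ and $o_i = h_2x_iu_2$ and eliminating $x_i$ yields $o = (h_1h_2^{-1})\,o_i\,(u_2^{-1}u_1)$. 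Since $o$, $o_i$, and $u_2^{-1}u_1 \in U \subseteq O$ all lie in $O$, the factor $h := h_1h_2^{-1} \in H$ in fact lies in $H \cap O$. Thus $O = \bigcup_{i}(H\cap O)\,o_i\,U$, so $H \cap O$ has only finitely many orbits on the coset space $O/U$.

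Next I would introduce the Cayley--Abels graph $\Gamma$ of $O$: take a finite symmetric set $T_0 \subseteq O$ with $O = \la U \cup T_0\ra$, and let $\Gamma$ have vertex set $O/U$, with $gU$ adjacent to $g'U$ whenever $g^{-1}g' \in UT_0U$. Then $\Gamma$ is connected (as $O = \la U \cup T_0\ra$), locally finite (as $UT_0U$ is a finite union of $U$-cosets), and $O$---and hence $A := H \cap O$---acts on $\Gamma$ by automorphisms via left translation. By the previous step $A$ has finitely many orbits on the vertex set, so I may fix a finite transversal $w_1, \dots, w_k$ meeting every $A$-orbit.

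The core of the argument is to produce finitely many elements of $A$ generating a subgroup with finitely many vertex-orbits; this is where connectivity of $\Gamma$ does the work, and where I expect the main obstacle to lie. The difficulty is that the vertex stabilizers in $A$ are typically infinite (they contain copies of $H \cap U$), so one cannot simply collect all elements moving a bounded region, as that set would be infinite. Instead, I would choose, for each $w_j$ and each of its finitely many neighbours $u$, a \emph{single} element $a_u \in A$ with $u = a_u w_{j'}$ for some $j'$ (possible since the $w_i$ meet every orbit), let $S$ be the resulting finite subset of $A$, and set $L = \la S\ra$. A short induction on the graph distance $d(v, \{w_1,\dots,w_k\})$ then shows $L\cdot\{w_1,\dots,w_k\} = O/U$: if $v$ is adjacent to some $v' = \ell w_j$ with $\ell \in L$ already handled, then $\ell^{-1}v$ is a neighbour of $w_j$, so $\ell^{-1}v = a_u w_{j'}$ with $a_u \in S$, whence $v = \ell a_u w_{j'} \in L\cdot\{w_i\}$. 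Consequently $L$ has at most $k$ orbits on $O/U$; writing $w_i = g_iU$ gives $O = L\{g_1,\dots,g_k\}U$, so $L$ is cocompact in $O$, and its finite generating set $S$ lies in $H \cap O$ as required. (Alternatively, once cocompactness of $H\cap O$ is established one could try to invoke Lemma~\ref{lem:Cayley-Abels}, but the several orbits obstruct a direct transitive reduction with generators inside $H$, which is why I prefer the direct connectivity argument.)
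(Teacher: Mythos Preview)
Your proof is correct. Both arguments begin the same way, establishing that $H\cap O$ is cocompact in $O$ by showing it has finitely many $(H\cap O,U)$-double cosets; your elimination-of-$x_i$ computation is essentially equivalent to the paper's choice of representatives $y_i\in O$ for the double cosets $Hy_iU$ meeting $O$.

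From that point the routes diverge. The paper passes to the closure $L:=\overline{H\cap O}$, invokes Proposition~\ref{prop:cocompact_generation} (M\"oller--Seifter) to deduce that $L$ is compactly generated, and then applies Lemma~\ref{lem:Cayley-Abels} to $L$ with the dense subgroup $H\cap O$ to extract finitely many $h_i\in H\cap O$ with $L=\langle h_1,\dots,h_n\rangle V$ for some compact $V\leq L$; cocompactness of $\langle h_1,\dots,h_n\rangle$ in $O$ then follows. Your approach instead works directly with the Cayley--Abels graph of $O$, using the standard connectivity argument (choosing one Schreier-type generator per neighbour of each orbit representative) to produce a finitely generated $L\leq H\cap O$ with finitely many orbits on $O/U$. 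This is more self-contained---you need neither the closure step nor the two cited results---and is essentially the classical proof that a group acting cocompactly on a connected locally finite graph is finitely generated. The paper's version is shorter on the page because it outsources that argument to the existing lemmas; your observation that Lemma~\ref{lem:Cayley-Abels} does not apply directly to $O$ (since $O\neq (H\cap O)U$ in general) is exactly why the paper takes the detour through the closure.
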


\begin{proof}
Fix a compact open subgroup $U$ of $O$.  Since $H$ is cocompact in $G$, we can write $G = \bigcup^k_{i=1}Hx_iU$ for some finite set $\{x_1,\dots,x_k\}$. The set $HO$, which is a union of $(H,U)$-double cosets in $G$, can be written as $HO = \bigcup^d_{i=1}Hy_iU$ for $y_1,\dots,y_d \in HO$.  We can further choose $y_1,\dots,y_d \in O$, so we may write $O$ as $\bigcup^d_{i=1}(H \cap O)y_iU$. We deduce that $H \cap O$ is cocompact in $O$, and hence $L: = \overline{(H \cap O)}$ is compactly generated.  By Lemma~\ref{lem:Cayley-Abels}, $L = \la h_1,\dots, h_n \ra V$ for a compact subgroup $V$ of $L$ and a finite subset $\{h_1,\dots,h_n\}$ of $H \cap O$.  Thus $O = \bigcup^d_{i=1}\la h_1,\dots, h_n \ra Vy_i U$, and since each of the sets $Vy_iU$ is compact, we conclude that $\la h_1,\dots,h_n \ra$ is cocompact in $O$.
\end{proof}

\subsection{Automorphism groups of \tdlc groups}

Let $G$ be a locally compact group. We denote by $\Aut(G)$ the group of homeomorphic automorphisms of $G$. The group $\Aut(G)$ is naturally endowed with a topology, the so-called \textit{Braconnier topology} (also sometimes called the \textit{Birkhoff topology}), with respect to which it is a Hausdorff topological group.

We collect several facts concerning groups acting on \tdlc groups. Our first lemma is an easy exercise in the definitions.

\begin{lem}\label{lem:centralizer_action}
	Let $H$ be a \tdlc group acting continuously and faithfully on a \tdlc group $G$ by topological group automorphisms. Let $\sigma:H\rightarrow \Aut(G)$ be the induced homomorphism and $\Ad:G\rightarrow \Aut(G)$ be the natural homomorphism. The following hold. 
	\begin{enumerate}[label=(\roman*)]
		\item The group $L:=G\rtimes H$ is a \tdlc group under the product topology; 
		
		\item $\sigma$ is continuous; 
		
		\item $\CC_L(G)=\{(g,h)\in L \mid \Ad(g^{-1})=\sigma(h)\}$, where $G\leq L$ in the obvious way;
		\item if $\Z(G)=\triv$, then $\CC_{L/\CC_L(G)}(\pi(G))=\{1\}$, where $\pi:L\rightarrow L/\CC_L(G)$ is the usual projection; and 
		\item the map $\chi:\CC_L(G)\rightarrow G$ defined by $(g,h)\mapsto g^{-1}$ is a continuous, injective homomorphism.\qedhere
	\end{enumerate}
\end{lem}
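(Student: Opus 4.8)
The plan is to fix the semidirect-product conventions and then treat the five assertions in order, since (iii)--(v) all flow from a single centralizer computation. Write multiplication in $L = G\rtimes H$ as $(g_1,h_1)(g_2,h_2) = (g_1\,\sigma(h_1)(g_2),\,h_1h_2)$, identify $G$ with $\{(g,1):g\in G\}$, and record the conjugation formula $(b,k)(g,1)(b,k)^{-1} = (b\,\sigma(k)(g)\,b^{-1},1)$; in particular $G$ is normal in $L$. For (i) I would note that multiplication and inversion in $L$ are continuous exactly because the action map $H\times G\to G$, $(h,g)\mapsto\sigma(h)(g)$, is continuous by hypothesis, so $L$ is a topological group for the product topology; it is then \tdlc because both total disconnectedness and local compactness pass to finite products.

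For (ii) I would work directly from the Braconnier topology on $\Aut(G)$, whose basic identity neighborhoods are indexed by a compact $K\subseteq G$ and an identity neighborhood $U\subseteq G$ and consist of those $\alpha$ with $\alpha(k)k^{-1}\in U$ and $\alpha^{-1}(k)k^{-1}\in U$ for all $k\in K$. The map $(h,k)\mapsto\sigma(h)(k)k^{-1}$ is continuous and sends $\{1\}\times K$ into $U$, so a tube-lemma argument using compactness of $K$ yields an identity neighborhood $W\subseteq H$ with $\sigma(h)(k)k^{-1}\in U$ for all $h\in W$, $k\in K$. Running the same argument for $(h,k)\mapsto\sigma(h^{-1})(k)k^{-1}$ and intersecting shows $\sigma(W)$ lands in the prescribed neighborhood. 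This is the only genuinely topological step rather than a formal manipulation, so I expect the compactness/tube-lemma argument here to be the main (if modest) obstacle.

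For (iii) I would impose $(g,h)(a,1)=(a,1)(g,h)$ for all $a\in G$. The two sides are $(g\,\sigma(h)(a),h)$ and $(ag,h)$, so the condition reduces to $\sigma(h)(a)=g^{-1}ag$ for every $a$, i.e.\ $\sigma(h)=\Ad(g^{-1})$ under the convention $\Ad(x)(a)=xax^{-1}$. This is exactly the asserted description of $\CC_L(G)$.

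Both remaining parts feed off (iii). Since $G\normal L$, the subgroup $C:=\CC_L(G)$ is normal and $\pi$ is defined; moreover $(c,1)\in C$ forces $\Ad(c^{-1})=\mathrm{id}$, i.e.\ $c\in\Z(G)$, so when $\Z(G)=\triv$ we get $G\cap C=\triv$ and $\pi|_G$ injective. For (iv), take $x=(b,k)$ with $\pi(x)$ centralizing $\pi(G)$, which means $[x,(g,1)]\in C$ for all $g$; the conjugation formula gives $[x,(g,1)]=(b\,\sigma(k)(g)\,b^{-1}g^{-1},1)\in G$, and such an element lies in $C$ only if its first coordinate is central, so $\Z(G)=\triv$ forces $\sigma(k)=\Ad(b^{-1})$, i.e.\ $x\in C$ by (iii) and hence $\pi(x)=1$. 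For (v), $\chi\colon(g,h)\mapsto g^{-1}$ is continuous as the first-coordinate projection followed by inversion; it is injective because on $C$ the first coordinate determines $\sigma(h)=\Ad(g^{-1})$, so equal first coordinates give $\sigma(h)=\sigma(h')$ and then $h=h'$ by faithfulness of the action; and it is a homomorphism because $\chi((g,h)(g',h'))=\sigma(h)(g')^{-1}g^{-1}$, which using $\sigma(h)(g')=g^{-1}g'g$ on $C$ collapses to $g^{-1}(g')^{-1}=\chi(g,h)\chi(g',h')$.
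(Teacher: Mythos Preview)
Your proof is correct and follows essentially the same line as the paper's own argument: both hinge on the single computation of how $(g,h)$ conjugates (equivalently, commutes with) elements of $G$, from which (iii)--(v) fall out by the same manipulations. The only cosmetic difference is that you supply direct arguments for (i) and (ii) (product-topology and a tube-lemma/Braconnier computation), whereas the paper simply cites standard references for these facts.
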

\begin{proof}
	Claim (i) is a classical fact; see, for example, \cite[III.2.10 Proposition 28]{Bour98} or \cite[(6.20)]{HR}. 
	
	\medskip \noindent
	Claim (ii) follows from (i) and \cite[Theorem~(26.7)]{HR}.
	
	\medskip \noindent
	For (iii), choose  $(g,h)\in L$ and take $(x,1)\in G\leq L$. Then,
	\[
	(g,h)(x,1)(g,h)\inv = (g,h)(x,1)(\sigma(h^{-1})(g^{-1}),h^{-1})=(g\sigma(h)(x)g^{-1},1).
	\]
	We conclude that $(g,h) \in \CC_L(G)$ if and only if $g\inv xg=\sigma(h)(x)$ for all $x\in G$. That is to say, $(g,h)\in \CC_L(G)$ if and only if $\Ad(g^{-1})=\sigma(h)$.  Hence, $\CC_L(G)= \{(g,h)\in L \mid \Ad(g^{-1})=\sigma(h)\}$.
	
\medskip \noindent
For (iv), take $(g,h)\in L$ such that $\pi((g,h))\in \CC_{L/\CC_L(G)}(\pi(G))$. For every $(x,1)\in G$, it is then the case that $[(x,1),(g,h)]\in \CC_L(G)$. Let us compute this commutator:
	\[
	\begin{array}{rcl}
	[(x,1),(g,h)] & = & (x,1)(g,h)(x^{-1},1)(\sigma(h^{-1})(g^{-1}),h^{-1})\\
					& =& (xg\sigma(h)(x^{-1})g^{-1},1).
	\end{array}
	\]
	Since this commutator lies in $\CC_L(G)$, $\Ad(xg\sigma(h)(x^{-1})g^{-1})=\sigma(1)=\mathrm{id}$. The map $\Ad$ is injective, since $\Z(G)=\{1\}$, so we deduce that $xg\sigma(h)(x^{-1})g^{-1}=1$ for all $x$. Hence, $g^{-1}xg=\sigma(h)(x)$ for all $x\in G$, so $(g,h)$ is such that $\Ad(g^{-1})=\sigma(h)$. In view of Claim~(iii), we conclude that $(g,h)\in \CC_L(G)$, and therefore, $\CC_{L/\CC_L(G)}(\pi(G))=\triv$.
	
\medskip \noindent
(v). Let $(g_1,h_1),(g_2,h_2)\in \CC_L(G)$. We have  
	\[
	(g_1,h_1)(g_2,h_2)=(g_1\sigma(h_1)(g_2),h_1h_2)=(g_1\Ad(g_1^{-1})(g_2),h_1h_2)=(g_2g_1,h_1h_2).
	\]
	Therefore,
	\[
	\chi((g_1,h_1)(g_2,h_2))=g_1^{-1}g_2^{-1}=\chi((g_1,h_1))\chi((g_2,h_2)).
	\]
	We conclude that $\chi$ is a homomorphism, and it is clearly also continuous. That $\chi$ is injective follows from the fact that the action of $H$ is faithful.
\end{proof}

We record another basic property of the Braconnier topology. 

\begin{prop}[{\cite[Theorem~(26.8)]{HR}}]\label{prop:tdlc_action}
For any \tdlc group $G$, the automorphism group $\Aut(G)$ is totally disconnected. 
\end{prop}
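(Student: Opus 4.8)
The goal is to show that $\Aut(G)$, equipped with the Braconnier topology, is totally disconnected for any t.d.l.c.\ group $G$. Since $G$ is totally disconnected, it admits a basis of identity neighborhoods consisting of compact open subgroups. My plan is to produce a basis of identity neighborhoods in $\Aut(G)$ consisting of open subgroups; the existence of such a basis forces the connected component of the identity to be trivial, giving total disconnectedness.

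First I would recall the definition of the Braconnier topology. For a compact subset $K$ and an identity neighborhood $V$ in $G$, one sets
\[
\mathcal{U}(K,V) = \{\alpha \in \Aut(G) \mid \alpha(k)k^{-1} \in V \text{ and } \alpha^{-1}(k)k^{-1} \in V \text{ for all } k \in K\},
\]
and the sets $\mathcal{U}(K,V)$ form a basis of identity neighborhoods. The key idea is to specialize this to the case where both $K=U$ and $V=U$ are taken to be a single compact open subgroup $U$ of $G$: as $U$ ranges over a neighborhood basis of compact open subgroups, the sets $\mathcal{U}(U,U)$ should range over a neighborhood basis in $\Aut(G)$.

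The main step is then to verify that $\mathcal{U}(U,U)$ is in fact a \emph{subgroup} of $\Aut(G)$ (not merely a neighborhood). Concretely, $\alpha \in \mathcal{U}(U,U)$ means $\alpha(u)u^{-1} \in U$ and $\alpha^{-1}(u)u^{-1}\in U$ for all $u \in U$; since $U$ is a group this is equivalent to the statement $\alpha(U) = U$, i.e.\ $\mathcal{U}(U,U)$ is precisely the stabilizer of $U$ in $\Aut(G)$. The stabilizer of a fixed subgroup is manifestly closed under composition and inverses, hence is a subgroup. Being an identity neighborhood, it is therefore an open subgroup of $\Aut(G)$. I expect this identification of the basic neighborhood with a point stabilizer to be the crux; the remaining work is bookkeeping to confirm these stabilizers genuinely form a neighborhood basis as $U$ shrinks.

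Finally, I would conclude using the standard topological fact that a topological group possessing a basis of identity neighborhoods consisting of open subgroups is totally disconnected: each such open subgroup is also closed (its complement being a union of cosets), so the intersection of all of them—which contains the connected component of the identity—is $\{1\}$, and translating shows every connected component is a singleton. The hardest part conceptually is recognizing that one can collapse the two-parameter family $\mathcal{U}(K,V)$ down to the one-parameter family $\mathcal{U}(U,U)$ using total disconnectedness of $G$, and verifying that $\mathcal{U}(U,U)$ coincides with the (automatically subgroup) stabilizer $\{\alpha \in \Aut(G) \mid \alpha(U)=U\}$.
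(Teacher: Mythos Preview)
The paper does not prove this statement; it merely records it with a citation to Hewitt--Ross, so there is no in-paper argument to compare against. I will therefore evaluate your sketch on its own terms.

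There is a genuine gap. Your identification $\mathcal{U}(U,U)=\{\alpha:\alpha(U)=U\}$ is correct, and these stabilizers are indeed open subgroups of $\Aut(G)$. However, the family $\{\mathcal{U}(U,U):U\text{ compact open}\}$ is \emph{not} in general a neighborhood basis of the identity in $\Aut(G)$, and this is precisely the step you wave away as ``bookkeeping''. For a concrete counterexample take $G=\Zb_p$: every compact open subgroup is $p^n\Zb_p$, and multiplication by any unit $u\in\Zb_p^\times$ stabilizes all of them simultaneously. Thus $\bigcap_U\mathcal{U}(U,U)=\Zb_p^\times$, which is nontrivial, so these sets cannot form a basis in a Hausdorff group.

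Your strategy can be repaired, but the repair is the real content. Given a basic neighborhood $\mathcal{U}(K,V)$, first use compactness of $K$ and continuity of conjugation to choose a compact open $U\subseteq V$ with $kUk^{-1}\subseteq V$ for all $k\in K$; then cover $K\subseteq\bigcup_{i=1}^n k_iU$ and pass to the open subgroup $W=\{\alpha:\alpha(U)=U,\ \alpha(k_iU)=k_iU\ \forall i\}$, which one checks lies inside $\mathcal{U}(K,V)$. Alternatively, and more cleanly, observe that for each $g\in G$ the evaluation map $\Aut(G)\to G$, $\alpha\mapsto\alpha(g)$, is continuous; the identity component of $\Aut(G)$ therefore has connected image in the totally disconnected group $G$, hence fixes every $g$, and so is trivial.
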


\subsection{Quasi-centralizers}

The \textit{quasi-centralizer}, denoted by $\QC_H(K)$, is defined to be the set of $g \in H$ which centralize an open subgroup of $K$. We stress that quasi-centralizers are not closed in general.

The following basic observation about quasi-centralizers will be useful to control discrete subgroups.

\begin{lem}\label{lem:QC:basic}
Let $G$ be a topological group, $H$ be a closed subgroup, and $K$ be a discrete subgroup of $G$ that is normalized by $H$.  Then $K \le \QC_G(H)$.
\end{lem}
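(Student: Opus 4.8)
The plan is to prove the containment elementwise: I would fix an arbitrary $k \in K$ and exhibit an open subgroup of $H$ that $k$ centralizes. The natural candidate is the centralizer $\CC_H(k) = \{h \in H : hk = kh\}$, which $k$ centralizes by definition; so the whole task reduces to showing that $\CC_H(k)$ is \emph{open} in $H$. Once that is done, $k$ centralizes an open subgroup of $H$ and hence lies in $\QC_G(H)$ by definition, and since $k$ was arbitrary we conclude $K \le \QC_G(H)$.

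To establish openness of $\CC_H(k)$, I would consider the conjugation map $f \colon H \to G$ defined by $f(h) = hkh^{-1}$. This map is continuous since multiplication and inversion are continuous in the topological group $G$. The key point is that, because $H$ normalizes $K$, the image of $f$ lies entirely inside $K$. Now I would invoke the discreteness of $K$: there is an open set $V \subseteq G$ with $V \cap K = \{k\}$. Then $f^{-1}(V)$ is an open subset of $H$ containing the identity, and for any $h \in f^{-1}(V)$ one has $hkh^{-1} \in V \cap K = \{k\}$, forcing $hkh^{-1} = k$; conversely every $h \in \CC_H(k)$ satisfies $f(h) = k \in V$. Hence $f^{-1}(V) = \CC_H(k)$, so $\CC_H(k)$ is open.

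I do not anticipate a serious obstacle here: the entire content is the interaction between continuity of the conjugation action of $H$ and the discreteness of $K$, which forces the stabilizer (equivalently the centralizer of $k$) to be open. The only point that requires a little care is reading $f$ as a map whose image lands in $K$ — this uses the hypothesis that $H$ normalizes $K$ — so that discreteness of $K$ can legitimately be used to pull back a separating open neighborhood of $k$.
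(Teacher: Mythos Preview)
Your proof is correct and follows essentially the same approach as the paper: fix an element of $K$, use continuity of the conjugation map $h \mapsto hkh^{-1}$ together with the discreteness of its image (which lies in $K$ since $H$ normalizes $K$) to conclude that $\CC_H(k)$ is open in $H$. The paper's version is more compressed but the idea is identical.
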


\begin{proof}
Fix $x \in K$.  The $H$-conjugacy class of $x$ is discrete. Since $h \mapsto hxh\inv$ is a continuous map from $H$ to $G$, it follows that $\CC_{H}(x)$ is open in $H$, so $x \in \QC_G(H)$.
\end{proof}

The \textit{quasi-center} of a \tdlc group $G$, denoted by $\QZ(G)$, is the set of elements with an open centralizer. For any compact open subgroup $U \leq G$, we see that $\QZ(G) = \QC_G(U)$. In particular, $\QZ(G)$ is normal in $G$.

We recall also the following relationship between the centralizer and quasi-centralizer of a subgroup.

\begin{lem}[{\cite[Lemma~3.8(i)]{CRW1}}]\label{lem:qcent_norm}
Let $G$ be a topological group and $H$ be a closed subgroup of $G$ such that $\QZ(H) = \triv$.  Then
\[
\QC_G(H) \cap \N_G(H) = \CC_G(H).
\]
\end{lem}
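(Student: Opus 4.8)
The containment $\CC_G(H) \subseteq \QC_G(H) \cap \N_G(H)$ is immediate from the definitions: any $g$ centralizing all of $H$ in particular centralizes the open subgroup $H$ of itself (so lies in $\QC_G(H)$) and normalizes $H$. The content is therefore the reverse inclusion, and the plan is to fix $g \in \QC_G(H) \cap \N_G(H)$ and show that conjugation by $g$ acts trivially on $H$.

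Since $g \in \N_G(H)$, conjugation by $g$ restricts to an automorphism $\alpha$ of $H$ given by $\alpha(h) = ghg\inv$. Since $g \in \QC_G(H)$, there is an open subgroup $V \leq H$ fixed pointwise by $\alpha$, that is, $\alpha(v) = v$ for all $v \in V$. The goal is to upgrade the statement ``$\alpha$ fixes an open subgroup pointwise'' to ``$\alpha = \mathrm{id}_H$'', and this is exactly where the hypothesis $\QZ(H) = \triv$ enters.

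I would fix an arbitrary $h \in H$ and set $x := h\inv \alpha(h) \in H$, aiming to show $x = 1$. The key observation is that $x$ centralizes the open subgroup $W := V \cap h\inv V h$ of $H$. Indeed, for $w \in W$ one has both $w \in V$, whence $\alpha(w) = w$, and $hwh\inv \in V$, whence $\alpha(hwh\inv) = hwh\inv$; expanding the latter as $\alpha(hwh\inv) = \alpha(h)\, w\, \alpha(h)\inv$ and rearranging yields $x w x\inv = w$. Thus $x$ centralizes the open subgroup $W$, so $x \in \QC_H(H) = \QZ(H)$. By hypothesis $\QZ(H) = \triv$, so $x = 1$, i.e.\ $\alpha(h) = h$. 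As $h$ was arbitrary, $\alpha = \mathrm{id}_H$, which is precisely $g \in \CC_G(H)$.

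The only point requiring care is the verification that $x$ quasi-centralizes $H$: one must pass to the smaller open subgroup $W = V \cap h\inv V h$ rather than work with $V$ directly, and the computation uses both that $g$ centralizes $V$ and that $g$ normalizes $H$ (so that $\alpha$ is an automorphism, hence multiplicative). The argument is otherwise essentially algebraic once the open subgroup $V$ is produced, the sole topological input being that $W$, as the intersection of the open subgroup $V$ with its conjugate $h\inv V h$, is again open in $H$.
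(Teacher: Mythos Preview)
Your proof is correct. The paper does not actually prove this lemma; it merely cites it as \cite[Lemma~3.8(i)]{CRW1}, so there is no in-paper argument to compare against. Your argument is the standard one and matches what one finds in the cited reference: the key point is precisely that for $g \in \QC_G(H) \cap \N_G(H)$ and any $h \in H$, the element $h^{-1}ghg^{-1}$ centralizes an open subgroup of $H$ and hence lies in $\QZ(H) = \triv$.
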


\subsection{Commensuration and localization}\label{sec:Localization}

Let $G$ be a   group  and $K \leq G$ be a subgroup. A subgroup $K' \leq G$ is \textit{commensurate} to $K$ if $K \cap K'$ is of finite index in both $K$ and in $K'$. Given $H \leq G$, we define the \textit{commensurator} of $K$ in $H$, denoted by $\Comm_H(K)$, to be the set of $g \in H$ such that $K$ and $ gKg^{-1}$ are commensurate. In topological groups, the relationship between the quasi-centralizer and the commensurator of a compact subgroup is analogous to the relationship between the centralizer and the normalizer.

\begin{lem}\label{lem:QC}
Let $G$ be a topological group and $H \leq G$ be any subgroup. For $K \leq G$ compact, the quasi-centralizer $\QC_H(K)$ is a normal subgroup of the commensurator $\Comm_H(K)$. 
\end{lem}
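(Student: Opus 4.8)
The plan is to verify, in turn, that $\QC_H(K)$ and $\Comm_H(K)$ are each subgroups of $H$, that $\QC_H(K) \leq \Comm_H(K)$, and finally that $\QC_H(K)$ is normalized by $\Comm_H(K)$. The single feature of $K$ that drives the whole argument is its compactness, which I would use through two elementary facts: an open subgroup of $K$ has finite index in $K$, and conversely a closed subgroup of $K$ of finite index is open (its complement being a finite union of cosets, each of which is closed since $K$ is Hausdorff).

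First I would record the two subgroup claims. If $g_1, g_2 \in \QC_H(K)$ centralize open subgroups $V_1, V_2 \leq K$ respectively, then $g_1 g_2$ centralizes the open subgroup $V_1 \cap V_2$, while $g\inv$ centralizes $V$ whenever $g$ does; hence $\QC_H(K)$ is closed under products and inverses. That $\Comm_H(K)$ is a subgroup is the standard observation that commensurability of subgroups is an equivalence relation preserved under conjugation, so that $g_1Kg_1\inv \sim K$ and $g_2 K g_2\inv \sim K$ force $g_1g_2 K (g_1 g_2)\inv \sim K$, and $gKg\inv \sim K$ is equivalent to $g\inv K g \sim K$. Next, to see $\QC_H(K) \leq \Comm_H(K)$, suppose $g$ centralizes an open $V \leq K$. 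Then $gVg\inv = V$, so $V \leq K \cap gKg\inv$. As $V$ is open in the compact group $K$ it has finite index in $K$, and conjugation by $g$ carries this to the statement that $V = gVg\inv$ has finite index in $gKg\inv$. Thus $K \cap gKg\inv$ has finite index in both $K$ and $gKg\inv$, i.e.\ $g \in \Comm_H(K)$.

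The main work, and the step I expect to be the crux, is normality. Given $c \in \Comm_H(K)$ and $g \in \QC_H(K)$ centralizing an open subgroup $V \leq K$, a direct computation shows that $cgc\inv$ centralizes $cVc\inv$ pointwise. The difficulty is that $cVc\inv$ need not lie inside $K$, so I would pass to $W := cVc\inv \cap K$ and argue that $W$ is an open subgroup of $K$ still centralized by $cgc\inv$. Here compactness and the commensurating property of $c$ combine: since $V$ is open, hence closed, in $K$, it is compact, so $cVc\inv$ is compact and therefore closed in $G$, whence $W$ is closed in $K$; and since $V$ has finite index in $K$, its conjugate $cVc\inv$ has finite index in $cKc\inv$, so intersecting with the finite-index subgroup $K \cap cKc\inv$ of $K$ shows that $W$ has finite index in $K$. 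A closed finite-index subgroup of the compact group $K$ is open, so $W$ is an open subgroup of $K$ centralized by $cgc\inv$, giving $cgc\inv \in \QC_H(K)$. As $c \in \Comm_H(K)$ and $g \in \QC_H(K)$ were arbitrary, this yields $c\,\QC_H(K)\,c\inv \subseteq \QC_H(K)$, establishing that $\QC_H(K)$ is a normal subgroup of $\Comm_H(K)$.
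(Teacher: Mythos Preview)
Your proof is correct and follows essentially the same approach as the paper's. The paper's argument is terser: it observes that $ghg\inv$ centralizes an open subgroup of $gKg\inv \cap K$, and then asserts that $gKg\inv \cap K$ is open in $K$ because $g \in \Comm_H(K)$; your version unpacks exactly this last step (closed plus finite index in a compact group implies open) and also records the routine subgroup verifications that the paper omits.
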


\begin{proof}
Given $g \in \QC_H(K)$, then $g$ centralizes an open subgroup $K'$ of $K$; since $K$ is compact, $|K:K'|$ has finite index, and hence $g \in \Comm_H(K)$, showing that $\QC_H(K)\leq \Comm_H(K)$. Taking $g \in \Comm_H(K)$ and $h \in \QC_H(K)$, the element $h$ centralizes an open subgroup of $K$, and $ghg^{-1}$ centralizes an open subgroup of $gKg^{-1}$. The element $ghg^{-1}$ thus centralizes an open subgroup of $g K g^{-1} \cap K$. Since $g \in \Comm_H(K)$, the subgroup $g K g^{-1} \cap K$ is open in $K$, hence $ghg^{-1} \in \QC_H(K)$, as required.
\end{proof}

Commensurators additionally admit a canonical \tdlc group topology.
	\begin{prop}[\cite{Reid}] If $K$ is a compact subgroup of a \tdlc group $G$, then $\Comm_G(K)$ admits a unique \tdlc group topology $\tau$ such that the inclusion map from $K$ to $\Comm_G(K)$ is continuous and open.
	\end{prop}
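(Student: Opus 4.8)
The plan is to construct $\tau$ by prescribing a basis of identity neighborhoods and then invoking the standard Bourbaki criterion for turning a filtered family of subgroups into a group topology. Write $C = \Comm_G(K)$; since $kKk\inv = K$ for every $k \in K$, we have $K \le C$, and $C$ is a subgroup of $G$. Let $\mathcal{B}$ denote the set of all open subgroups of the compact (hence profinite) group $K$. Every member of $\mathcal{B}$ is a subgroup of $C$, and the goal is to show that $\mathcal{B}$ is a basis of identity neighborhoods for a unique group topology on $C$. The criterion requires two things: that $\mathcal{B}$ be a filter base directed downward under inclusion, and that for every $V \in \mathcal{B}$ and every $g \in C$ there exist $W \in \mathcal{B}$ with $gWg\inv \subseteq V$. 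The first holds trivially, as $K \in \mathcal{B}$ and the intersection of two open subgroups of $K$ is again one; and because the members of $\mathcal{B}$ are subgroups, the conditions $WW \subseteq V$ and $V\inv = V$ are automatic. So the only substantial point is the conjugation condition, which is exactly where commensurability enters and where I expect the real work to lie.

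For that conjugation step I would first record the key fact that for $g \in C$ the subgroup $L := K \cap g\inv K g$ is open in $K$. Indeed $L$ is closed in $K$, being an intersection of compact subgroups, and it has finite index in $K$ because $g$ (equivalently $g\inv$) commensurates $K$; a closed finite-index subgroup of a compact group is open, since its finitely many cosets are closed and partition $K$. Consequently $gLg\inv = gKg\inv \cap K$ is open in $K$ as well. Now, given $V \in \mathcal{B}$, set $V' := V \cap gLg\inv$, which is open in $K$, and put $W := g\inv V' g$. Conjugation by $g$ is a homeomorphism of $G$ carrying $gLg\inv$ onto $L$, and $V'$ is open in the open subgroup $gLg\inv$ of $K$, so $W$ is open in $L$ and hence open in $K$; thus $W \in \mathcal{B}$, and $gWg\inv = V' \subseteq V$, as required.

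With both conditions verified, the criterion yields a unique group topology $\tau$ on $C$ for which $\mathcal{B}$ is a basis of open identity neighborhoods, with each member of $\mathcal{B}$ being $\tau$-open. It then remains to read off the stated properties. Since $\bigcap_{V \in \mathcal{B}} V = \triv$, using that $K$ is profinite, the topology $\tau$ is Hausdorff. As $K$ itself lies in $\mathcal{B}$, it is a $\tau$-open subgroup, and the $\tau$-subspace topology on $K$ is generated by the open subgroups of $K$, so it coincides with the original compact topology of $K$. Hence $K$ is a compact open subgroup of $(C,\tau)$, which makes $(C,\tau)$ a \tdlc group and makes the inclusion $K \to (C,\tau)$ a continuous and open homomorphism.

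Finally, for uniqueness, I would argue that any \tdlc group topology $\tau'$ on $C$ for which the inclusion of $K$ is continuous and open realizes $K$ as a $\tau'$-open subgroup whose subspace topology is its original compact topology. Consequently the open subgroups of $K$ are $\tau'$-open and, because $K$ is $\tau'$-open, they form a basis of $\tau'$-neighborhoods of the identity. Thus $\tau$ and $\tau'$ have the same identity neighborhood filter, and since a group topology is determined by that filter, $\tau' = \tau$. The main obstacle throughout is the conjugation condition of the second paragraph: it is the only place where commensuration of $K$ by $g$, rather than mere membership of $g$ in $G$, is genuinely used, and it rests on the elementary but essential observation that $K \cap g\inv K g$ is closed of finite index, hence open, in $K$.
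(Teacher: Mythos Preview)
Your proof is correct and follows the standard Bourbaki approach to defining a group topology from a filter base of subgroups. The paper itself does not give a proof of this proposition; it simply cites \cite{Reid}, so there is no paper proof to compare against. Your argument is essentially the one that appears in Reid's original paper: the commensuration hypothesis is used exactly as you indicate, to guarantee that $K \cap gKg\inv$ is open in $K$, which is what makes the conjugation axiom go through.
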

	\begin{cor}
		The inclusion map $\iota:(\Comm_G(K),\tau)\rightarrow G$ is a continuous, injective homomorphism. 
	\end{cor}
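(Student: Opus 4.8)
The plan is to observe that $\iota$ is, set-theoretically, the inclusion of the subgroup $\Comm_G(K)$ of $G$, now equipped with the topology $\tau$ instead of the topology it inherits from $G$; the group operations on $(\Comm_G(K),\tau)$ are by construction those it carries as a subgroup of $G$. Hence injectivity and the homomorphism property are immediate, and the entire content of the statement is the \emph{continuity} of $\iota$. Since $\iota$ is a group homomorphism between topological groups, it suffices to verify continuity at the identity: given any identity neighborhood $V$ in $G$, I must produce a $\tau$-open identity neighborhood $W$ of $\Comm_G(K)$ with $\iota(W) = W \subseteq V$.

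The key input is the Proposition just proved, which asserts that the inclusion $K \to (\Comm_G(K),\tau)$ is continuous and \emph{open}. Openness supplies exactly what is needed: every subset of $K$ that is open in the original (compact) topology of $K$ is $\tau$-open in $\Comm_G(K)$. Now fix an identity neighborhood $V$ in $G$. Then $V \cap K$ is an identity neighborhood in the compact \tdlc (hence profinite) group $K$, so $K$ possesses a compact open subgroup $U$ with $U \leq V \cap K \subseteq V$. Being open in $K$, the subgroup $U$ is $\tau$-open in $\Comm_G(K)$ by the openness of the inclusion, and it is an identity neighborhood contained in $V$. Taking $W = U$ completes the verification of continuity at the identity, and continuity of $\iota$ follows.

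The argument is short, and the only genuinely delicate point is the correct interpretation of the previous Proposition: one must invoke the \emph{openness} (not merely the continuity) of the map $K \to (\Comm_G(K),\tau)$ in order to transport the profinite neighborhood basis of $K$ into a supply of $\tau$-open identity neighborhoods that are small inside $G$. Once this is seen, no further knowledge of the topology $\tau$ is required.
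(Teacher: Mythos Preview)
Your proof is correct and is exactly the argument the paper has in mind; the corollary is stated without proof in the paper, and your deduction from the openness of $K \hookrightarrow (\Comm_G(K),\tau)$ is the intended one-line justification.
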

	
The \tdlc group topology $\tau$ on $\Comm_G(K)$ is called the \textit{$K$-localized topology}. The  group  $\Comm_G(K)$ endowed with the $K$-localized topology is denoted by $G_{(K)}$ and called the localization of $G$ at $K$. There is a natural source of localizations that produces dense locally compact subgroups of the ambient group $G$.

\begin{defn}
	Let $G$ be a profinite group and let $p$ be a prime.  A \textit{pro-$p$-Sylow subgroup} of $G$ is a closed subgroup $S$ of $G$, such that $S$ is maximal among the pro-$p$ subgroups of $G$.  Equivalently, by Sylow's theorem, $S$ is a closed subgroup of $G$ with the property that for every open normal subgroup $N$ of $G$, the order of $SN/N$ is a power of $p$ and the index $|G:SN|$ is coprime to $p$.
\end{defn}

\begin{thm}[{\cite[Theorem 1.2]{Reid}}]\label{thm:Reid_localizations}
	Suppose that $G$ is a \tdlc group with $U \leq G$ a compact open subgroup. If $S \leq U$ is a pro-$p$-Sylow subgroup of $U$ for some prime $p$, then $\Comm_G(S)$ is dense. Further, the isomorphism type of $G_{(S)}$ is independent of the choice of $U$ and pro-$p$-Sylow subgroup $S$. 
\end{thm}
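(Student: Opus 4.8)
The statement has two independent assertions: density of $\Comm_G(S)$ in $G$, and independence of the isomorphism type of $G_{(S)}$ from the choices of $U$ and $S$. The plan is to isolate one profinite Sylow-theoretic lemma and use it as the engine for both. Throughout, $|A:B|$ denotes the supernatural index of a closed subgroup $B$ of a profinite group $A$, and $v_p$ its $p$-adic valuation; the two inputs from profinite Sylow theory are that every pro-$p$ subgroup lies in a (maximal) pro-$p$-Sylow and that the pro-$p$-Sylows of a fixed profinite group are conjugate. The key lemma I would prove first is: \emph{if $O\leq U$ is an open (hence finite-index) subgroup of a profinite group $U$ and $T$ is a pro-$p$-Sylow of $O$, then $T$ is an open finite-index subgroup of some pro-$p$-Sylow $S^{*}$ of $U$; in particular $T$ is commensurate to a conjugate of $S$.} For the proof, choose $S^{*}$ a maximal pro-$p$ subgroup of $U$ containing $T$. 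Comparing $p$-parts of supernatural indices through $O$ and through $S^{*}$ gives $v_p(|U:T|)=v_p(|U:O|)+0$ and $v_p(|U:T|)=0+v_p(|S^{*}:T|)$, so $v_p(|S^{*}:T|)=v_p(|U:O|)<\infty$; since $S^{*}$ is pro-$p$ the index $|S^{*}:T|$ is a power of $p$, hence finite, and Sylow conjugacy identifies $S^{*}$ with a conjugate of $S$.

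For density I would establish two facts and then combine them. First, $U\Comm_G(S)U=G$: given $g\in G$, set $O=U\cap gUg^{-1}$, a compact open subgroup of finite index in both $U$ and $gUg^{-1}$, and pick a pro-$p$-Sylow $T$ of $O$. Applying the key lemma inside $U$ gives $T\sim aSa^{-1}$ for some $a\in U$, and applying it inside $gUg^{-1}$ (whose pro-$p$-Sylows are the $g(\cdot)g^{-1}$ of those of $U$) gives $T\sim gbSb^{-1}g^{-1}$ for some $b\in U$; transitivity of commensurability and conjugation by $a^{-1}$ yield $a^{-1}gb\in\Comm_G(S)\cap UgU$. Second, $\Comm_U(S)=\Comm_G(S)\cap U$ is dense in $U$: for $u\in U$ and an open normal $V\normal U$, the subgroup $S\cap V$ is a pro-$p$-Sylow of $V$ (the \emph{normal} case, which is valid), and so is $uSu^{-1}\cap V=u(S\cap V)u^{-1}$; conjugating these two Sylows of $V$ into agreement by some $v\in V$ produces $c=vu$, which lies in $uV$ and normalizes $S\cap V$, whence $S\cap V$ is a common finite-index subgroup of $S$ and $cSc^{-1}$, i.e. $c\in\Comm_U(S)\cap uV$. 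Finally I would combine: $\overline{\Comm_G(S)}$ is a closed subgroup of $G$ which contains $U$ by the second fact and contains $\Comm_G(S)$, hence contains $U\Comm_G(S)U=G$ by the first; so $\Comm_G(S)$ is dense.

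For the independence statement I would use the uniqueness of the localized topology (the Proposition quoted just above) through two reductions. (I) For any $a\in G$, conjugation $x\mapsto a^{-1}xa$ is a topological automorphism of $G$ carrying $S$ to $aSa^{-1}$ and $\Comm_G(aSa^{-1})$ isomorphically onto $\Comm_G(S)$; pushing the localized topology forward and invoking uniqueness gives a topological isomorphism $G_{(aSa^{-1})}\cong G_{(S)}$. (II) If $S_1\sim S_2$ are commensurate compact subgroups, then $\Comm_G(S_1)=\Comm_G(S_2)$ (the commensurator depends only on the commensurability class), and since $S_1\cap S_2$ is an open finite-index subgroup of each, applying uniqueness to the compact subgroup $S_1\cap S_2$ shows both $G_{(S_1)}$ and $G_{(S_2)}$ equal the $(S_1\cap S_2)$-localized topology, so $G_{(S_1)}=G_{(S_2)}$. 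Now bridge two data $(U,S)$ and $(U',S')$: put $U''=U\cap U'$ and let $T$ be a pro-$p$-Sylow of $U''$. The key lemma inside $U$ gives $T\sim aSa^{-1}$ with $a\in U$, so $G_{(T)}=G_{(aSa^{-1})}\cong G_{(S)}$ by (II) then (I); symmetrically $G_{(T)}\cong G_{(S')}$, and transitivity gives $G_{(S)}\cong G_{(S')}$.

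I expect the main obstacle to be the key lemma and the attendant supernatural-index bookkeeping, since everything downstream is formal once it is in place. The conceptual subtlety to get right is that intersecting a Sylow with an \emph{arbitrary} finite-index subgroup need not yield a Sylow, so one cannot simply restrict $S$; one must pass to a conjugate and control the (finite) index via the valuation computation. This is precisely why density part (a) exploits a \emph{normal} $V$ (where $S\cap V$ is automatically Sylow), while the double-coset step and the whole independence argument route through the key lemma and a conjugate.
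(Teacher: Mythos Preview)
The paper does not give its own proof of this statement: it is quoted verbatim from \cite[Theorem~1.2]{Reid} and used as a black box. Your reconstruction is correct and is essentially the standard argument one would expect in the source. The Sylow bookkeeping lemma (a pro-$p$-Sylow of an open subgroup is open in a pro-$p$-Sylow of the ambient profinite group) is exactly the right engine, and your two-step density argument (meeting every $U$-double coset, then a Frattini-type argument inside $U$ using that $S\cap V$ is a genuine Sylow of a \emph{normal} open $V$) is clean; combining them via the closed subgroup $\overline{\Comm_G(S)}\supseteq U$ is the natural finish. The independence argument via (I) conjugation and (II) commensurate compact subgroups yielding the same localized topology, bridged through a Sylow of $U\cap U'$, is also correct and matches how the localized topology is characterized in the paper's quoted Proposition.

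One cosmetic slip: in (I) you write ``$x\mapsto a^{-1}xa$ \dots carrying $S$ to $aSa^{-1}$'', but that map carries $S$ to $a^{-1}Sa$. The direction is immaterial to the conclusion, but fix the notation. Otherwise there is nothing to correct.
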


In view of the previous theorem, we set $G_{(p)}$ to be the localization of $G$ at some (equivalently) any pro-$p$-Sylow subgroup of a compact open subgroup, following \cite{Reid}. We call $G_{(p)}$ the \textit{$p$-localization} of $G$. 

The $p$-localizations provide a tool to approximate arbitrary \tdlc groups by \tdlc groups that are locally pro-$p$. A useful tool to understand when the compact open subgroups of the $p$-localization are topologically finitely generated is provided by a consequence of Tate's normal $p$-complement theorem on finite groups. A profinite group is \textit{topologically finitely generated} if it admits a dense finitely generated subgroup.

For a prime $p$, the set of primes different from $p$ is denoted by $p'$. For a set of primes $\pi$, the {$\pi$-core}\index{$\pi$-core} of a profinite group $U$, denoted by $O_{\pi}(U)$\index{$O_{\pi}(U)$}, is the subgroup generated by all normal pro-$\pi$ subgroups. We write $O^p(G)$ for the smallest normal subgroup such that $G/O^p(G)$ is a pro-$p$ group.  Notice that $O^p(G) \ge O_{p'}(G)$, with equality if and only if $O^p(G)$ is a pro-$p'$ group.

For a group $L$ and $p$ a positive integer, $L^p$ denotes the subgroup $\grp{l^p\mid l\in L}$. 

\begin{thm}[\cite{Tate64}]\label{thm:Tate}
Let $G$ be a finite group and let $S$ be a $p$-Sylow subgroup of $G$.  If $S/S^p[S,S]$ is isomorphic to $G/G^p[G,G]$, then $S \cap O^p(G) = \triv$.
\end{thm}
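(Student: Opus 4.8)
The plan is to translate the purely group-theoretic hypothesis into mod-$p$ homology and then isolate the genuinely cohomological step. Writing $\mb{F}_p$ for the field of order $p$, the universal coefficient theorem identifies, for any finite group $K$, the vector space $H_1(K,\mb{F}_p)$ with $K^{\mathrm{ab}}\otimes\mb{F}_p=K/K^p[K,K]$. Thus the hypothesis says exactly that $\dim_{\mb{F}_p}H_1(S,\mb{F}_p)=\dim_{\mb{F}_p}H_1(G,\mb{F}_p)$.

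First I would exploit that $S$ is Sylow. Since $[G:S]$ is prime to $p$, the homological transfer $\mathrm{tr}\colon H_n(G,\mb{F}_p)\to H_n(S,\mb{F}_p)$ satisfies $\iota_*\circ\mathrm{tr}=[G:S]\cdot\mathrm{id}$, where $\iota\colon S\hookrightarrow G$ is the inclusion; as $[G:S]$ is invertible in $\mb{F}_p$, the map $\iota_*\colon H_n(S,\mb{F}_p)\to H_n(G,\mb{F}_p)$ is surjective for every $n$. In degree $1$ this surjection is an isomorphism because of the dimension equality above, and in degree $2$ I retain surjectivity. Next I would record the elementary reduction of the conclusion: as $G/O^p(G)$ is a $p$-group and $S$ is Sylow one has $S\,O^p(G)=G$, so $S$ surjects onto the maximal $p$-quotient $Q:=G/O^p(G)$, and $S\cap O^p(G)=\triv$ is equivalent to this surjection $S\twoheadrightarrow Q$ being injective. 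Crucially, the degree-$1$ data alone only force $S$ and $Q$ to have the same Frattini quotient, hence the same minimal number of generators; for finite $p$-groups this does not force equal order (already $\mb{Z}/p^2\twoheadrightarrow\mb{Z}/p$ fails it), which is precisely why the $H_2$ information is indispensable.

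The heart of the proof is the homological criterion of Tate (equivalently Stallings): a homomorphism of finite groups inducing an isomorphism on $H_1(-,\mb{F}_p)$ and a surjection on $H_2(-,\mb{F}_p)$ induces an isomorphism of maximal $p$-quotients. I would prove this by induction along the lower central $p$-series $D_1(K)\geq D_2(K)\geq\cdots$ of finite groups $K$, showing that $\iota$ induces an isomorphism $S/D_n(S)\to G/D_n(G)$ for every $n$: the inductive step compares the right-exact five-term $\mb{F}_p$-homology sequences of the central extensions $1\to D_n(K)/D_{n+1}(K)\to K/D_{n+1}(K)\to K/D_n(K)\to 1$ for $K=S$ and $K=G$, the isomorphism on $H_1$ propagating downward and the surjectivity on $H_2$ forcing the successive central layers $D_n/D_{n+1}$ to match. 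For finite groups the series stabilizes with $\bigcap_n D_n(K)=O^p(K)$, so passing to large $n$ gives an isomorphism $S/O^p(S)\to G/O^p(G)$. As $S$ is a $p$-group, $O^p(S)=\triv$, whence $\iota$ carries $S$ isomorphically onto $Q=G/O^p(G)$ and therefore $S\cap O^p(G)=\triv$. I expect this inductive cohomological comparison to be the main obstacle; the transfer and reduction steps are purely formal.
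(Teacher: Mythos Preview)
The paper does not prove this statement; it is quoted directly from Tate's 1964 note and used as a black box. Your outline is correct and is essentially Tate's own argument (in its homological/Stallings formulation): the transfer forces $\iota_*\colon H_n(S,\mb{F}_p)\to H_n(G,\mb{F}_p)$ to be surjective for all $n$, the hypothesis upgrades this to an isomorphism in degree~$1$, and the five-term sequence comparison along the lower $p$-central series then gives $S/D_n(S)\cong G/D_n(G)$ for every $n$, whence $S\cong G/O^p(G)$ and $S\cap O^p(G)=\triv$.

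One small point worth making explicit in your inductive step: you need to know that the induced map $S/D_n(S)\to G/D_n(G)$ is \emph{surjective} before you can conclude it is an isomorphism from a cardinality comparison of the central layers. This is immediate here because $G/D_n(G)$ is a finite $p$-group and $S$ is a $p$-Sylow subgroup of $G$, so $S$ already surjects onto $G/D_n(G)$; but it is precisely the place where the Sylow hypothesis enters a second time, and your sketch leaves it implicit.
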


The following corollary was given as \cite[Corollary~2.2.2]{Reid_PhD}; a similar observation was also used in \cite{Melnikov96}.  For clarity, we reprove it here. 

\begin{cor}\label{cor:Tate}
Let $U$ be a profinite group, $p$ be a prime, and $S\leq U$ be a pro-$p$-Sylow subgroup of $U$. If $S$ is topologically finitely generated, then $U/O_{p'}(U)$ is virtually pro-$p$.
\end{cor}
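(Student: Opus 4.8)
The plan is to reduce at once to the case where the $p'$-core is trivial, realize the group as an inverse limit of finite quotients with trivial $p'$-core, and bound their $p'$-part uniformly, using Tate's theorem as the finite-group input that converts ``full elementary abelian $p$-quotient'' into ``is a $p$-group''.

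First I would reformulate. Write $\bar U := U/O_{p'}(U)$. The preimage in $U$ of $O_{p'}(\bar U)$ is a normal subgroup that is an extension of a pro-$p'$ group by a pro-$p'$ group, hence itself pro-$p'$, so it lies in $O_{p'}(U)$ and thus $O_{p'}(\bar U)=\triv$. Since $S\cap O_{p'}(U)=\triv$, the image $\bar S:=SO_{p'}(U)/O_{p'}(U)$ is isomorphic to $S$, is again a topologically finitely generated pro-$p$-Sylow subgroup, now of $\bar U$. As a profinite group is virtually pro-$p$ exactly when some (equivalently any) pro-$p$-Sylow subgroup is open, it suffices to prove: \emph{if $U$ is a profinite group with $O_{p'}(U)=\triv$ whose pro-$p$-Sylow subgroup $S$ is topologically finitely generated, then $S$ is open in $U$}. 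I henceforth assume $O_{p'}(U)=\triv$. Note that topological finite generation makes $\Phi(S)=\overline{S^p[S,S]}$ open in $S$, so $\dim_{\mathbf{F}_p}S/\Phi(S)<\infty$.

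Next I would set up the inverse system and locate the role of Tate. For each open normal $N\normal U$ let $P_N$ be the preimage in $U$ of $O_{p'}(U/N)$; these form a descending filtered family of open normal subgroups, and since $P_N/N=O_{p'}(U/N)$ is a $p'$-group for each $N$, their intersection is a normal pro-$p'$ subgroup, hence trivial. Thus $U=\varprojlim_N U/P_N$, where each $U/P_N$ has trivial $p'$-core and pro-$p$-Sylow subgroup isomorphic to $SN/N$. Tate's theorem (Theorem~\ref{thm:Tate}) now gives a dichotomy at each level: if the inclusion of the Sylow subgroup induces an isomorphism of maximal elementary abelian $p$-quotients, then $S\cap O^p(U/P_N)=\triv$, so $O^p(U/P_N)$ is a normal $p'$-subgroup and hence trivial, forcing $U/P_N$ to be a $p$-group. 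Consequently the sole obstruction to $U/P_N$ being a $p$-group is a strict drop of $\dim_{\mathbf{F}_p}(U/P_N)^{ab}\otimes\mathbf{F}_p$ below the rank of the Sylow's Frattini quotient, and the $p'$-part $|U/P_N|_{p'}$ measures exactly this fusion-induced drop.

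The main obstacle is then to bound the $p'$-parts $|U/P_N|_{p'}$ uniformly in $N$: granting a bound $C$, the closed subgroups $SP_N$ satisfy $|U:SP_N|\leq C$ and $\bigcap_N SP_N=S$ by compactness, so $|U:S|\leq C$ and $S$ is open. This uniform bound is precisely where topological finite generation of $S$ is indispensable, and it cannot be had level by level: a single finite group with trivial $p'$-core and bounded Sylow $p$-rank can have arbitrarily large $p'$-part (for instance $C_p^{\,n}\rtimes L$ with $L$ a large simple group of order prime to $p$ acting faithfully, or a quasisimple factor such as $\mathrm{SL}_2(q)$). What defeats these examples is the interaction of three facts. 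The bound on $\dim_{\mathbf{F}_p}S/\Phi(S)$ controls both the number of composition factors of $U/P_N$ whose order is divisible by $p$ and the $p$-rank of each; triviality of $O_{p'}$ forces every $p'$-section to act faithfully on the bounded-rank $p$-part, embedding the relevant $p'$-groups into a fixed finite general linear group; and surjectivity of the transition maps of the inverse system pins down the isomorphism types of the $p$-involved nonabelian factors, which therefore stabilize along any descending chain of open normal subgroups (there being no surjections between distinct finite simple groups such as $\mathrm{SL}_2(q)$). Making this stabilization precise, and thereby extracting the uniform bound $C$, is the technical heart of the argument; Tate's theorem then certifies that the stabilized quotients are $p$-groups, completing the proof.
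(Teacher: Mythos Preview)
Your reduction to the case $O_{p'}(U)=\triv$ and the inverse-limit setup are fine, and you correctly identify that Tate's theorem, applied to a finite quotient with trivial $p'$-core, forces that quotient to be a $p$-group whenever the Sylow inclusion induces an isomorphism on Frattini quotients. The gap is in the final paragraph: the uniform bound on $|U/P_N|_{p'}$ is asserted but not proved. Your sketch---bounding the number and $p$-rank of composition factors divisible by $p$, then embedding the $p'$-sections into a fixed $\mathrm{GL}_d(\mathbf{F}_p)$, then invoking stabilization of the nonabelian factors along the inverse system---is not a proof, and making it one would require nontrivial finite group theory well beyond Tate's theorem. For instance, ``triviality of $O_{p'}$ forces every $p'$-section to act faithfully on the bounded-rank $p$-part'' is not correct as stated when the generalized Fitting subgroup has nontrivial layer; you acknowledge the quasisimple components but do not control them. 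As you yourself write, this is ``the technical heart of the argument'', and it is missing.

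The paper avoids this difficulty entirely by a single trick: rather than working with $U$ globally, it passes to the open subgroup $V=SN$, where $N\normal U$ is chosen open and small enough that $S\cap N\leq\overline{S^p[S,S]}$. This choice guarantees that in \emph{every} finite quotient $V/M$ the Sylow inclusion induces an isomorphism on Frattini quotients (because $V/N$ is already a $p$-group with the same minimal number of generators as $S$), so Tate applies at every level and yields $S\cap O^p(V)=\triv$ outright. Hence $O^p(V)=O_{p'}(V)$, and passing to the normal core of $V$ in $U$ finishes. The point is that by shrinking to $V$ one arranges for the hypothesis of Tate to hold uniformly, rather than trying to bound the failure of that hypothesis across all of $U$.
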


\begin{proof}
Since $S$ is topologically finitely generated, we see that $S/\ol{S^p[S,S]}$ is a topologically finitely generated elementary abelian group, and hence $\ol{S^p[S,S]}$ is open in $S$.  It follows that there is an open normal subgroup $N$ of $U$ such that $S \cap N \le \ol{S^p[S,S]}$.  Let $V := SN$, let $M$ be an open normal subgroup of $U$ contained in $N$, and write $\tilde{H}$ for the image of any $H \le U$ under the quotient map from $U$ to $U/M$.  The group $\tilde{S}$ is a pro-$p$-Sylow subgroup of $\tilde{V}$ with $\tilde{S} \cap \tilde{N} \le \tilde{S}^p[\tilde{S},\tilde{S}]$, and $\tilde{V} = \tilde{S}\tilde{N}$.  Thus, $\tilde{V}/\tilde{N}$ is a $p$-group of the same number of generators as $\tilde{S}$, ensuring that $\tilde{V}/\tilde{V}^p[\tilde{V},\tilde{V}] \simeq \tilde{S}/\tilde{S}^p[\tilde{S},\tilde{S}]$. Theorem~\ref{thm:Tate} now implies that $\tilde{S} \cap O^p(\tilde{V}) = \triv$.  Since $M$ is allowed to range over a base of identity neighborhoods, we conclude that $S \cap O^p(V) = \triv$.  Therefore, $O^p(V)$ is a pro-$p'$ group, so $O^p(V) = O_{p'}(V)$.  Letting $W$ be the normal core of $V$ in $U$, we see that 
\[
W \cap O_{p'}(V) = O_{p'}(W) \le O_{p'}(U).
\]
It follows that $WO_{p'}(U)/O_{p'}(U)$ is pro-$p$, and hence $U/O_{p'}(U)$ is virtually pro-$p$.
\end{proof}

\section{Connected groups and locally pro-nilpotent groups}\label{sec:denseLCLie}

Some non-discrete simple locally compact groups $H$ have the property that every dense embedding $\psi \colon H \to G$ into an arbitrary locally compact group $G$ is surjective. Examples include the simple Lie groups (see   \cite[Corollary~1.1]{Omori}) and the simple algebraic groups over local fields (see \cite[Corollary~1.4]{BadGel}). We begin our explorations by noting that  simple Lie groups and simple algebraic groups over local fields also enjoy a dual property: Denoting such a group by $G$, every dense embedding $\psi \colon H \to G$ of a non-discrete locally compact group $H$ is surjective. 

\subsection{Connected groups}

\begin{prop}[{\cite[Ch.~XVI, Theorem~2.1]{Hochschild}}]\label{prop:Hochschild}
Let $G$ and $ H$ be connected Lie groups and $\psi \colon H \to G$ be a dense embedding. Then $\psi(H)$ is normal in $G$, and the quotient $G/\psi(H)$ is abelian. 
\end{prop}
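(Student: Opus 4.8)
The plan is to pass to Lie algebras and exploit the fact that the image of a Lie group homomorphism is an analytic (immersed) subgroup determined by its Lie algebra. Since a continuous homomorphism of Lie groups is automatically smooth, $\psi$ is smooth and its differential $d\psi \colon \mf h \to \mf g$ is an injective homomorphism of Lie algebras; injectivity holds because $\Ker \psi = \triv$ forces $\Ker d\psi = 0$. Thus $\psi(H)$ is precisely the connected analytic subgroup of $G$ whose Lie algebra is the subalgebra $\mf k := d\psi(\mf h)$, and it is exactly the subgroup generated by $\exp(\mf k)$.

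For normality, I would use that conjugation $c_g$ is a Lie automorphism with differential $\Ad(g)$, so $g\psi(H)g\inv$ is the analytic subgroup with Lie algebra $\Ad(g)\mf k$. By the bijection between analytic subgroups and subalgebras, $g$ normalizes $\psi(H)$ exactly when $\Ad(g)\mf k = \mf k$. Hence $\N_G(\psi(H)) = \Ad\inv(\Stab_{\mathrm{GL}(\mf g)}(\mf k))$ is the preimage under the continuous map $\Ad$ of the stabilizer of the subspace $\mf k$, which is closed; so $\N_G(\psi(H))$ is closed. Since this closed normalizer contains the dense subgroup $\psi(H)$, it must equal $G$. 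Thus $\psi(H) \normal G$, and differentiating $\Ad(g)\mf k = \mf k$ gives $[\mf g, \mf k] \subseteq \mf k$, i.e.\ $\mf k$ is an ideal of $\mf g$.

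For the abelian quotient, I would consider the representation $\bar\rho \colon G \to \mathrm{GL}(\mf g/\mf k)$ induced by $\Ad$, which is well defined since each $\Ad(g)$ preserves the ideal $\mf k$. Its differential sends $X \in \mf g$ to the map $Y + \mf k \mapsto [X,Y] + \mf k$; for $X \in \mf k$ this vanishes because $\mf k$ is an ideal, so $\bar\rho$ is trivial on $\exp(\mf k)$ and hence on the analytic subgroup $\psi(H)$ it generates. As $\Ker \bar\rho$ is closed and $\psi(H)$ is dense, $\bar\rho$ is trivial on all of $G$; differentiating once more yields $[\mf g, \mf g] \subseteq \mf k$. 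Invoking the standard fact that the commutator subgroup of a connected Lie group is the analytic subgroup attached to the derived subalgebra $[\mf g,\mf g]$, together with the monotonicity of the analytic-subgroup correspondence under inclusion of subalgebras, I conclude $[G,G] \subseteq \psi(H)$, so $G/\psi(H)$ is abelian.

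The main obstacle—and the reason the argument must run through Lie algebras rather than directly through the groups—is that $\psi(H)$ need not be closed, so $G/\psi(H)$ carries no useful quotient topology and topological or continuity arguments cannot be applied to it directly. The crux is therefore to replace each assertion about the (dense, possibly non-closed) subgroup $\psi(H)$ by an assertion about a \emph{closed} subgroup, namely $\N_G(\psi(H))$ or $\Ker \bar\rho$, where density can be leveraged, and then to transfer the resulting infinitesimal information ($\mf k$ an ideal, $[\mf g,\mf g]\subseteq \mf k$) back to the abstract group level via the bijection between subalgebras and analytic subgroups.
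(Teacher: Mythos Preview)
The paper does not give its own proof of this proposition; it is quoted directly from Hochschild's book and used as a black box. Your argument is the standard Lie-algebraic proof and is correct: you identify $\psi(H)$ with the analytic subgroup attached to $\mf k = d\psi(\mf h)$, obtain normality by showing $\N_G(\psi(H)) = \Ad^{-1}(\Stab(\mf k))$ is closed and contains the dense set $\psi(H)$, and then obtain $[\mf g,\mf g]\subseteq \mf k$ by the same density trick applied to the induced representation on $\mf g/\mf k$, whence $[G,G]\subseteq \psi(H)$. This is essentially the approach one finds in Hochschild, so there is nothing substantive to compare.
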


\begin{cor} \label{cor:ConnectedSimple}
If $G$ is a connected locally compact group that is topologically simple, then every proper dense locally compact subgroup is discrete.
\end{cor}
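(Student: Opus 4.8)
The plan is to push everything into the Lie category, where Proposition~\ref{prop:Hochschild} becomes available, and then to exploit the simplicity of $G$ to upgrade a dense normal subgroup to all of $G$.

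First I would argue that $G$ itself is a connected Lie group. By the solution to Hilbert's fifth problem (the Gleason--Yamabe theorem), every connected locally compact group admits arbitrarily small compact normal subgroups with Lie quotient: for each identity neighborhood $U$ there is a compact normal subgroup $K \le U$ with $G/K$ a Lie group. Choosing $U \subsetneq G$ (possible since a nontrivial connected group is non-discrete), the group $K$ is a proper compact normal subgroup, hence trivial by topological simplicity, so $G = G/K$ is a Lie group. I would then record two structural consequences. First, $G$ is non-abelian, since a nontrivial connected abelian Lie group is isomorphic to $\mathbb{R}^{a}\times \mathbb{T}^{b}$ with $a+b\ge 1$ and thus always has a proper nontrivial closed subgroup, contradicting topological simplicity. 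Second, $G$ is perfect, i.e.\ $G = [G,G]$: topological simplicity forces the Lie algebra of $G$ to be semisimple (a nonzero solvable radical or a proper ideal would yield a proper nontrivial closed normal subgroup), and connected semisimple Lie groups are perfect.

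Next, let $\psi\colon H \to G$ be a dense embedding with $H$ non-discrete, and suppose toward a contradiction that $\psi(H)\ne G$; the aim is to show $\psi$ is in fact surjective. The key point is that $H$ is also a Lie group: since $G$ is a Lie group it has no small subgroups, and pulling back a witnessing neighborhood through the injective continuous map $\psi$ shows $H$ has no small subgroups either, so $H$ is a Lie group by the Gleason--Montgomery--Zippin characterization. As $H$ is non-discrete, it has positive dimension, so its identity component $H^{\circ}$ is a nontrivial connected Lie group. I would then check that $\psi(H^{\circ})$ is dense in $G$: the component $H^{\circ}$ is characteristic, hence normal, in $H$, so $\psi(H^{\circ})$ is normalized by the dense subgroup $\psi(H)$, whence a limiting argument shows its closure is normalized by all of $G$; being a nontrivial closed normal subgroup, that closure equals $G$ by topological simplicity.

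Finally, I would apply Proposition~\ref{prop:Hochschild} to the dense embedding $\psi|_{H^{\circ}}\colon H^{\circ}\to G$ of connected Lie groups, obtaining that $\psi(H^{\circ})$ is normal in $G$ with $G/\psi(H^{\circ})$ abelian. Perfectness then forces $G = [G,G]\le \psi(H^{\circ})\le \psi(H)$, so $\psi$ is surjective, contradicting properness. This establishes the contrapositive: a non-discrete dense locally compact subgroup cannot be proper, so every proper one is discrete. I expect the main obstacle to be not the final step, which Proposition~\ref{prop:Hochschild} hands over once both groups are connected Lie groups, but rather the two reductions to the Lie category (via Gleason--Yamabe for $G$ and the no-small-subgroups criterion for $H$) together with deducing perfectness of $G$ from topological simplicity.
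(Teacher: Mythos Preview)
Your proof is correct and follows essentially the same route as the paper's own argument: reduce $G$ to a connected Lie group via Hilbert's fifth problem, deduce that $H$ is a Lie group (you use the no-small-subgroups criterion where the paper cites Bourbaki), show $\psi(H^\circ)$ is dense by normality under a dense subgroup plus topological simplicity, apply Proposition~\ref{prop:Hochschild}, and conclude using $G=[G,G]$. The only difference is cosmetic: you spell out why $G$ is Lie and perfect, whereas the paper simply invokes these as classical facts with references.
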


\begin{proof} 
Suppose that $H$ is a locally compact group and that $\psi \colon H \rightarrow G$ is a dense embedding. Let us suppose further that $H$ is non-discrete. We argue that $\psi$ is onto.
	 
By the solution to Hilbert's fifth problem, see \cite[Theorem 4.6]{MZ}, $G$ is a connected Lie group over $\Rb$. Applying \cite[III.8.2 Corollary 1]{Bour_LG_89}, $H$ is also a Lie group over $\Rb$. Since $H$ is non-discrete it follows that $H^\circ$ is non-trivial. The non-trivial group $\psi(H^\circ)$ is normalized by the dense subgroup $\psi(H)$ of $G$. Therefore  $\psi(H^\circ)$ is dense since $G$ is topologically simple. We deduce from Proposition~\ref{prop:Hochschild} that $\psi(H^\circ)$ is a normal subgroup of $G$ containing $[G, G]$. Invoking the classical fact that in a topologically simple connected Lie group $G$, we have $G = [G,G]$, we conclude that $\psi(H^{\circ})=G$. The homomorphism  $\psi \colon H\rightarrow G$ is thus onto. 
\end{proof}

\subsection{Pro-nilpotent groups with topologically finitely generated compact open subgroups}	

As mentioned in the introduction, \tdlc groups in general possess non-discrete proper dense locally compact subgroups. The following result shows that the existence of such subgroups depends on the structure of compact open subgroups. The following result should be compared with  Proposition~\ref{prop:Hochschild}. 

Recall that subgroup of a topological group is called \textit{locally normal}\index{locally normal} if its normalizer is open.

\begin{prop}\label{prop:Loc-pro-nilp}
Let $G$ and $H$ be \tdlc groups and $\psi \colon H \to G$ be a dense embedding. Assume that $G$ has a compact open subgroup which is a topologically finitely generated pronilpotent pro-$\pi$ group  for a finite set of primes $\pi$.

\begin{enumerate}[label=(\roman*)]
\item \label{it:Loc-pro-nilp1} 
There is a compact open subgroup $T \leq H$ such that $\psi(T)$ is a commensurated  locally normal subgroup of  $G$, and the normal closure $\lla \psi(T) \rra_G$ is contained in $\psi(H)$. 

\item \label{it:Loc-pro-nilp2}
If $G$ is topologically simple and $H$ is non-discrete, then $\psi(H)$ is normal in $G$, and the quotient $G/\psi(H)$ is abelian.\qedhere
\end{enumerate}
\end{prop}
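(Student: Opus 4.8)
The plan is to reduce the whole proposition to a single assertion, namely that one can choose $T$ so that $\psi(T)$ is \emph{locally normal} in $G$; everything else will follow formally or from the structure of $U$. I first record the routine reductions. Since $U$ is open, $\psi^{-1}(U)$ is open in $H$, so any compact open $W\le H$ may be shrunk to an open subgroup with $\psi(W)\le U$; as $\psi|_W$ is a continuous bijection from a compact group to a Hausdorff group, it is a topological isomorphism onto the closed subgroup $\psi(W)\le U$. Density of $\psi(H)$ makes $\psi(H)\cap U$ dense in $U$, and because any two compact open subgroups of the \tdlc group $H$ are commensurate, for every such $W$ and $h\in H$ the subgroup $\psi(hWh^{-1})=\psi(h)\psi(W)\psi(h)^{-1}$ is commensurate to $\psi(W)$; hence $\psi(H)\le \Comm_G(\psi(W))$.

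Next I observe that local normality is essentially all that needs proving in (i), since the other two conclusions are formal. If $\N_G(\psi(T))$ is open, then $\psi(H)\,\N_G(\psi(T))=G$ (a dense set times an open subgroup), so every $g\in G$ can be written $g=\psi(h)n$ with $n$ normalizing $\psi(T)$; hence $g\psi(T)g^{-1}=\psi(h)\psi(T)\psi(h)^{-1}=\psi(hTh^{-1})\subseteq\psi(H)$. This shows at once that $\lla\psi(T)\rra_G\le\psi(H)$ and, since each such conjugate is commensurate to $\psi(T)$, that $\Comm_G(\psi(T))$ contains both the open subgroup $\N_G(\psi(T))$ and the dense subgroup $\psi(H)$; being open and dense it equals $G$, so $\psi(T)$ is commensurated.

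The heart of (i), and the step I expect to be the main obstacle, is therefore to produce a compact open $T\le H$ with $\psi(T)$ locally normal. Here the pronilpotent hypothesis lets me split $U=\prod_{p\in\pi}S_p$ into its topologically finitely generated pro-$p$ Sylow factors; every closed subgroup of $U$, in particular $\psi(W)$, splits correspondingly as $\prod_{p}P_p$ with $P_p=\psi(W)\cap S_p$, and one computes $\N_U(\prod_pP_p)=\prod_p\N_{S_p}(P_p)$. Since $\pi$ is finite and $\N_U\subseteq\N_G$, local normality of $\psi(W)$ in $G$ is then equivalent to each $\N_{S_p}(P_p)$ being open in $S_p$. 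The difficulty is that a closed subgroup of a topologically finitely generated pro-$p$ group can have non-open normalizer; what rescues us is that each $P_p$ is commensurated by the dense subgroup $\psi(H)\cap U$. The plan is to combine this with the rigidity of topologically finitely generated pro-$p$ groups — concretely Tate's theorem in the form of Corollary~\ref{cor:Tate}, together with the fact that finite-index subgroups are open — to show, prime by prime, that after replacing $W$ by a pro-$p$-Sylow (or a suitable open subgroup) the relevant $P_p$ acquires an open normalizer in $S_p$. This is exactly where the ``topologically finitely generated'' assumption is indispensable, and it is the technical crux of the proposition.

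For (ii), assume $G$ topologically simple and $H$ non-discrete, and take $T$ as in (i). Non-discreteness forces $T$ to be infinite, so $\psi(T)\neq\triv$, whence $M:=\lla\psi(T)\rra_G$ is a non-trivial normal subgroup of $G$ contained in $\psi(H)$; its closure is a non-trivial closed normal subgroup, so $\overline{M}=G$ by topological simplicity. Thus $\psi(H)$ contains the dense normal subgroup $M$, and it remains to upgrade this to normality of $\psi(H)$ and commutativity of $G/\psi(H)$. Writing $N:=\N_G(\psi(T))$, I again have $G=\psi(H)N$, so normality of $\psi(H)$ reduces to showing that $N$ normalizes $\psi(H)$, and — once normality is known — $G/\psi(H)\cong N/(N\cap\psi(H))$, reducing the abelian-quotient claim to $[N,N]\le\psi(H)$. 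I expect the cleanest route is to regard $\psi(H)$ as a clopen subgroup of the localized \tdlc group $G_{(\psi(T))}$ (on which every $g\in G$ acts by a topological automorphism, as $\psi(T)$ is commensurated) and to analyze the discrete quotient $N/(N\cap\psi(H))$ using that $N\cap\psi(H)$ is dense in the compact open subgroup $N\cap U$ arising from the pronilpotent $U$. Establishing commutativity of this quotient, in the spirit of Proposition~\ref{prop:Hochschild}, is the delicate point of (ii) and the part I would expect to require the most care after the local-normality step.
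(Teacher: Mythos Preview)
Your formal reductions are correct: once $\psi(T)$ is locally normal, the factorisation $G=\psi(H)\N_G(\psi(T))$ indeed yields both the commensuration of $\psi(T)$ and the containment $\lla\psi(T)\rra_G\le\psi(H)$. But the crucial step in (i) --- producing a compact open $T\le H$ with $\psi(T)$ locally normal in $G$ --- is not established, and the proposed route through Corollary~\ref{cor:Tate} does not work. That corollary tells you that a profinite group with a topologically finitely generated $p$-Sylow is virtually pro-$p$ modulo its $p'$-core; it says nothing about normalizers of an arbitrary closed subgroup $P_p\le S_p$, commensurated or not, and there is no mechanism in your sketch forcing $\N_{S_p}(P_p)$ to be open. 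A closed subgroup of a topologically finitely generated pro-$p$ group can perfectly well have non-open normalizer, so one must use more than the bare commensuration by a dense subgroup.

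The paper's argument avoids analyzing $\N_{S_p}(P_p)$ altogether. Since $U=\prod_{p\in\pi}U_p$ is topologically finitely generated pronilpotent, each Frattini subgroup $\Phi(U_p)$ is open, so $\Phi(U)$ is open in $U$; density of $\psi(H)$ then yields a finite $X\subset H$ with $U=\overline{\langle\psi(X)\rangle}$. Take any compact open $S\le\psi^{-1}(U)$ and set $J:=\langle X\cup S\rangle$: this is a compactly generated open subgroup of $H$ with $\psi(J)\le U$, hence $J$ is residually discrete, hence SIN by Proposition~\ref{prop:ResDiscr->SIN}. Choose an open $T\le S$ normal in $J$. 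Then $\psi(X)\subset\N_G(\psi(T))$, and since $\psi(T)$ is compact its normalizer is closed, so $\N_G(\psi(T))\supseteq\overline{\langle\psi(X)\rangle}=U$. The key idea you missed is this Frattini/SIN trick: rather than proving some pre-existing normalizer is open, one \emph{arranges} from the start that the fixed open subgroup $U$ normalizes $\psi(T)$.

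For (ii), your reduction to $[N,N]\le\psi(H)$ is left as an intention rather than a proof, and the analogy with Proposition~\ref{prop:Hochschild} is not a substitute for an argument. The paper proceeds directly: with $T$ as in (i) non-trivial, $M:=\lla\psi(T)\rra_G$ is a non-trivial normal subgroup of $G$, hence dense by topological simplicity, so $M\cap U$ is a dense normal subgroup of $U$. By \cite[Theorem~5.21]{CRW2}, in a topologically finitely generated pronilpotent pro-$\pi$ group every dense normal subgroup contains the (closed) derived subgroup $[U,U]$; thus $G/M\cong U/(U\cap M)$ is abelian, whence $[G,G]\le M\le\psi(H)$. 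This single containment gives normality of $\psi(H)$ and commutativity of $G/\psi(H)$ simultaneously.
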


\begin{proof}
Let $U$ be a compact open subgroup of $G$ that is a topologically finitely generated pronilpotent pro-$\pi$ group. By \cite[Proposition~2.3.8]{RibesZalesskii}, $U$ is a direct product of pro-$p$ groups $U_p$, one for each $p \in \pi$; it follows that $U_p$ is topologically finitely generated for each $p$.  By \cite[Proposition~2.8.10]{RibesZalesskii}, the Frattini subgroup $\Phi(U_p)$ of $U_p$ is open in $U_p$, and hence $\Phi(U) = \prod_{p \in \pi}\Phi(U_p)$ is open in $U$. Since $H$ has dense image in $G$, it contains a finite subset $X\subset H$ such that $U = \la \psi(X) \ra \Phi(U)$, so $U = \overline{\la \psi(X) \ra}$, by the properties of the Frattini subgroup. 

Let now $S$ be a compact open subgroup of $H$ contained in the open subgroup $\psi^{-1}(U)$. We now consider $J := \la X \cup S \ra$, which is a compactly generated open subgroup of $H$. The image $\psi(J)$ is contained in the profinite group $U$, hence $J$  is residually finite. Proposition~\ref{prop:ResDiscr->SIN} now ensures that there is an open subgroup $T \leq S$ which  is normal in $J$. The image $\psi(T)$ is a compact subgroup of $G$ whose normalizer contains $\psi(X)$, so $\N_G(\psi(T))$ contains $U$ and is thus open. In particular, $\Comm_G(\psi(T))$ is open and dense, since it contains $\psi(H)$, hence  $G=\Comm_G(\psi(T))$. We have $G = \psi(H) U = \psi(H) \N_G(\psi(T))$, so the conjugation action of $\psi(H)$ is transitive on the $G$-conjugacy class of $\psi(T)$. This confirms that  $\lla \psi(T) \rra_G$ is contained in $\psi(H)$, proving \ref{it:Loc-pro-nilp1}. 

Assume now that $G$ is topologically simple and $H$ non-discrete. Then $T$ is non-trivial. Hence the group $N := \lla \psi(T) \rra_G$ is a non-trivial normal subgroup of $G$ and is thus dense. Since $U$ is a topologically finitely generated pronilpotent pro-$\pi$ group, its derived group $[U, U]$ is closed and contained in every dense normal subgroup of $U$ by \cite[Theorem~5.21]{CRW2}. In particular, $U \cap N$ contains $[U, U]$, and it follows that $G/N = NU/N \simeq U / U \cap N$ is abelian. We conclude that $[G, G] \leq N \leq \psi(H)$ by \ref{it:Loc-pro-nilp1}, and assertion \ref{it:Loc-pro-nilp2} follows. 
\end{proof}

In Subsection~\ref{sec:ConcreteEx}, we give an example showing that ``topologically finitely generated" cannot be removed from the statement of Proposition~\ref{prop:Loc-pro-nilp}.

\begin{cor}\label{cor:fg_abs_simple}
If $G\in \Ss$ is abstractly simple and has a topologically finitely generated pro-nilpotent compact open subgroup, then every proper dense locally compact subgroup is discrete.	
\end{cor}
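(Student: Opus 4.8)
The plan is to prove the contrapositive: I will show that any \emph{non-discrete} dense locally compact subgroup of $G$ must in fact be all of $G$, so that every \emph{proper} dense locally compact subgroup is forced to be discrete. Accordingly, fix a dense embedding $\psi\colon H\to G$ with $H$ non-discrete; the goal is to prove $\psi(H)=G$, which contradicts $\psi(H)$ being proper and thereby establishes the corollary.

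The first step is to arrange the hypotheses of Proposition~\ref{prop:Loc-pro-nilp}. Let $U$ be the given topologically finitely generated pronilpotent compact open subgroup of $G$. I claim $U$ is pro-$\pi$ for some finite set of primes $\pi$. Indeed, the normal core of $U$ in $G$ is a closed normal subgroup of $G$ contained in $U$, hence compact; since $G\in\Ss$ is topologically simple and non-discrete it is non-compact (a topologically simple profinite group has no proper non-trivial open normal subgroup, so it is finite and thus discrete), whence the normal core cannot equal $G$ and must be trivial. As $G$ is compactly generated, Proposition~\ref{prop:locallypro-pi} now gives that $U$ is pro-$\pi$ for a finite set of primes $\pi$. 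Thus $G$ has a compact open subgroup that is a topologically finitely generated pronilpotent pro-$\pi$ group, exactly as required.

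With the hypotheses verified, I apply Proposition~\ref{prop:Loc-pro-nilp}\ref{it:Loc-pro-nilp2}: since $G$ is topologically simple and $H$ is non-discrete, $\psi(H)$ is normal in $G$ and $G/\psi(H)$ is abelian. Now $\psi(H)$ is non-trivial (as $\psi$ is injective and $H\neq\triv$) and normal in $G$, so the \emph{abstract} simplicity of $G$ forces $\psi(H)=G$. This is precisely the contrapositive, completing the argument.

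I expect the only genuine obstacle to be the verification that the prescribed compact open subgroup is pro-$\pi$ for a \emph{finite} set of primes: a topologically finitely generated pronilpotent profinite group need not be pro-$\pi$ for finite $\pi$ in general (for instance $\prod_p \Zb/p\Zb$ is topologically monothetic), so the topological simplicity of $G$—via the triviality of the normal core and Proposition~\ref{prop:locallypro-pi}—is used essentially at this point. Once that is in place, Proposition~\ref{prop:Loc-pro-nilp} supplies the normality of $\psi(H)$, and it is the strengthening from topological to abstract simplicity that upgrades the dense normal subgroup $\psi(H)$ to all of $G$.
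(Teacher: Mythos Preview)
Your proof is correct and follows essentially the same approach as the paper: verify via Proposition~\ref{prop:locallypro-pi} that the compact open subgroup is pro-$\pi$ for a finite set of primes, then apply Proposition~\ref{prop:Loc-pro-nilp}\ref{it:Loc-pro-nilp2} and abstract simplicity. You supply more detail than the paper does (in particular the justification that the normal core of $U$ is trivial and the example showing why the finite-$\pi$ check is not automatic), but the logical structure is identical.
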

\begin{proof}
By Proposition~\ref{prop:locallypro-pi}, every compact open subgroup of $G$ is locally pro-$\pi$ for a finite set of primes $\pi$. The required assertion is thus an immediate consequence of Proposition~\ref{prop:Loc-pro-nilp}\ref{it:Loc-pro-nilp2}. 
\end{proof}
	
\begin{cor}\label{cor:AlgGroups}
Let  $\mathbf G$ be an absolutely simple, simply connected, isotropic algebraic group over a non-archimedean local field $k$. Every proper dense locally compact subgroup of $\mathbf G(k)/\Z(\mathbf G(k))$ is discrete. 
\end{cor}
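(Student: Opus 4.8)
The plan is to deduce the statement from Corollary~\ref{cor:fg_abs_simple} applied to the group $G := \mathbf{G}(k)/\Z(\mathbf{G}(k))$. Thus I must verify three things: that $G \in \Ss$, that $G$ is abstractly simple, and that $G$ has a topologically finitely generated pro-nilpotent compact open subgroup. Since $\mathbf G(k)$ embeds as a closed subgroup of some $\mathrm{GL}_n(k)$ with $k$ non-archimedean local, it is a \tdlc group, and for isotropic semisimple $\mathbf G$ the center of the abstract group $\mathbf G(k)$ equals the finite group $\Z(\mathbf{G})(k)$, which is closed; hence the quotient $G$ is again \tdlc.

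For membership in $\Ss$ and for abstract simplicity I would invoke the structure theory of isotropic groups over local fields. Isotropy (positive $k$-rank) guarantees that $\mathbf G(k)$ is neither compact nor discrete, and Bruhat--Tits theory shows that $\mathbf G(k)$ is compactly generated; both properties pass to the quotient $G$. Abstract simplicity then comes in two steps: the Kneser--Tits theorem for local fields gives $\mathbf G(k)^+ = \mathbf G(k)$, where $\mathbf G(k)^+$ denotes the subgroup generated by the $k$-points of the unipotent radicals of the $k$-parabolics, and Tits' simplicity theorem shows that $\mathbf G(k)^+/\Z(\mathbf G(k)^+)$ is abstractly simple. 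Combining the two, this quotient is exactly $G$, so $G$ is abstractly simple and, being non-trivial, topologically simple; hence $G \in \Ss$.

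The remaining and most delicate ingredient is to produce a topologically finitely generated pro-nilpotent compact open subgroup. Let $p$ be the residue characteristic of $k$. A compact open subgroup $U$ of $\mathbf G(k)$ is virtually pro-$p$, since the principal congruence subgroups of a parahoric furnish an open normal pro-$p$ subgroup; consequently a pro-$p$-Sylow subgroup $S$ of $U$ is open, and being pro-$p$ it is pro-nilpotent. Its image in $G$ is then a compact open pro-$p$ (hence pro-nilpotent) subgroup, with topological finite generation inherited through the quotient. Everything therefore reduces to showing that $S$ is topologically finitely generated, and this is where I expect the real work to lie: when $\mathrm{char}(k)=0$ the group $\mathbf G(k)$ is $p$-adic analytic, so every compact subgroup---in particular $S$---is topologically finitely generated, and Corollary~\ref{cor:fg_abs_simple} applies verbatim. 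In positive characteristic this property fails, as the congruence subgroups have infinite Frattini quotient, so no pro-nilpotent compact open subgroup can be topologically finitely generated and the reduction to Corollary~\ref{cor:fg_abs_simple} breaks down. That case would require a separate argument, and isolating the correct substitute for $p$-adic analyticity is the main obstacle.
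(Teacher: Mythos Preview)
Your overall strategy is exactly the paper's: reduce to Corollary~\ref{cor:fg_abs_simple} by checking that $G = \mathbf{G}(k)/\Z(\mathbf{G}(k))$ is abstractly simple, lies in $\Ss$, and has a topologically finitely generated pro-nilpotent (in fact pro-$p$) compact open subgroup. Your treatment of the first two points is fine and parallels the citations the paper gives.

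The gap is in your handling of positive characteristic. You assert that when $\mathrm{char}(k) = p > 0$ the pro-$p$ congruence subgroups have infinite Frattini quotient, hence are not topologically finitely generated, so that Corollary~\ref{cor:fg_abs_simple} cannot be applied. This is false. What fails in positive characteristic is $p$-adic analyticity (finite rank in the Lazard sense), not topological finite generation. For the first congruence subgroup $K_1$ of $\mathbf{G}(\mathcal{O}_k)$, the associated graded Lie algebra $\bigoplus_i K_i/K_{i+1}$ is (essentially) $\bigoplus_i t^i\,\mathrm{Lie}(\mathbf{G})(\kappa)$, and since $\mathrm{Lie}(\mathbf{G})$ is perfect one sees that $\overline{[K_1,K_1]}$ already contains $K_2$; hence $K_1/\Phi(K_1)$ is finite. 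More to the point, the paper bypasses any such computation by invoking a structural fact: $G$ admits a non-virtually-abelian \emph{hereditarily just-infinite} compact open pro-$p$ subgroup (this is the content of the cited \cite[Theorem~2.6]{CapStu}), and every just-infinite pro-$p$ group is automatically topologically finitely generated, since otherwise its Frattini quotient would be an infinite elementary abelian $p$-group, producing non-trivial closed normal subgroups of infinite index. This argument is characteristic-free, so Corollary~\ref{cor:fg_abs_simple} applies uniformly and no separate treatment of the function-field case is needed.
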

\begin{proof}
The group $G = \mathbf G(k)/\Z(\mathbf G(k))$ belongs to the class $\Ss$ and is abstractly simple, see  \cite[Theorems 2.2 and 2.4]{CapStu} and \cite{Tits}. In view of Corollary~\ref{cor:fg_abs_simple}, it remains to show that $G$ has a compact open pro-$p$ subgroup that is topologically finitely generated. The result \cite[Theorem 2.6]{CapStu} ensures that $G(k)/\Z(G(k))$ admits a non-virtually abelian hereditarily just infinite compact open pro-$p$ subgroup. Hereditarily just-infinite pro-$p$ groups are topologically finitely generated, so the desired result follows.
\end{proof}

\begin{rmk}\label{rem:KM}
We remark that there also exist groups of a non-algebraic origin in $\Ss$ that satisfy the hypotheses of Corollary~\ref{cor:fg_abs_simple}. For example, many locally compact Kac--Moody groups do; see \cite{CaRe} and \cite{Marquis}. Similarly, the group $G\in \Ss$ admitting the profinite completion of the Grigorchuk group as a compact open subgroup satisfies the hypotheses; see \cite[Theorem 4.16]{BEW}. 
\end{rmk}

\section{Regionally expansive groups}

We here isolate the class of regionally expansive groups. The results herein suggest that regional expansiveness is a weak form of compact generation with better stability properties. This class will play a central role in the definition of the class $\Rs$ in the next section.

 \subsection{Definition and basic properties}\label{sec:rf_def}
 
 The following definition comes from the literature on topological dynamical systems (see for instance \cite{Lam70}).
 
 \begin{defn}
 Let $X$ be a uniform space and let $G$ be a group of homeomorphisms of $X$.  The action of $G$ is \emph{expansive} if there is some entourage $E$ such that, whenever $(x,y)$ is a pair of points such that $(g.x,g.y) \in E$ for all $g \in G$, then $x=y$.
 \end{defn}
 
When $G$ is a topological group, it is natural to equip $G$ with the right uniformity and let it act on itself by conjugation; we then say that $G$ is \emph{expansive} as a topological group if this action is expansive.  There is an easy equivalent characterization of what it means for a topological group to be expansive.
 
\begin{lem}\label{lem:expansive_basic}
A topological group $G$ is expansive if and only if there is an identity neighborhood $W$ in $G$ such that $\bigcap_{g \in G}gWg\inv = \{1\}$.
\end{lem}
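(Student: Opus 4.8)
The plan is to translate the abstract definition of expansiveness into the concrete group-theoretic statement by a direct computation in the right uniformity, after which the equivalence becomes essentially tautological. Recall that the right uniformity on $G$ has as a basis of entourages the sets $E_W = \{(x,y) \in G \times G : xy\inv \in W\}$, where $W$ ranges over the identity neighborhoods of $G$. Under the conjugation action $g.x = gxg\inv$, a short calculation gives $(g.x)(g.y)\inv = g(xy\inv)g\inv$, so that $(g.x, g.y) \in E_W$ holds if and only if $g(xy\inv)g\inv \in W$, equivalently $xy\inv \in g\inv W g$. Taking the intersection over all $g \in G$ and reindexing $g \mapsto g\inv$, the condition ``$(g.x,g.y) \in E_W$ for every $g \in G$'' is equivalent to ``$xy\inv \in \bigcap_{g \in G} gWg\inv$''. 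This single equivalence is the heart of the argument.

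Next I would record the harmless reduction that, in testing expansiveness, one may always assume the witnessing entourage is basic. Indeed, if expansiveness holds for an entourage $E$ and $E' \subseteq E$, then it holds for $E'$ as well, since $(g.x,g.y) \in E'$ for all $g$ forces $(g.x,g.y) \in E$ for all $g$ and hence $x = y$. As the sets $E_W$ form a basis of the uniformity, every entourage $E$ contains some $E_W$, so $G$ is expansive if and only if expansiveness is witnessed by some $E_W$.

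With these preliminaries, both implications follow immediately. For the forward direction, suppose $G$ is expansive, witnessed by $E_W$; I claim $\bigcap_{g \in G} gWg\inv = \triv$. Given $z$ in this intersection, set $x = z$ and $y = 1$, so that $xy\inv = z \in \bigcap_{g} gWg\inv$; by the equivalence above, $(g.x,g.y) \in E_W$ for all $g$, whence $x = y$, i.e.\ $z = 1$. Since $1 \in gWg\inv$ for every $g$, this yields $\bigcap_{g} gWg\inv = \triv$. Conversely, if some identity neighborhood $W$ satisfies $\bigcap_{g} gWg\inv = \triv$, I take $E = E_W$: whenever $(g.x,g.y) \in E_W$ for all $g$, the equivalence gives $xy\inv \in \bigcap_{g} gWg\inv = \triv$, so $xy\inv = 1$ and $x = y$, showing that $G$ is expansive.

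There is no real obstacle here beyond bookkeeping: the only points requiring care are fixing the convention for the right uniformity together with its basic entourages $E_W$, and keeping the reindexing $g \mapsto g\inv$ straight when passing from $g\inv W g$ to $gWg\inv$. Once the displayed equivalence between ``$(g.x,g.y)\in E_W$ for all $g$'' and ``$xy\inv \in \bigcap_g gWg\inv$'' is established, the two directions are purely formal.
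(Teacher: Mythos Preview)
Your proof is correct and follows essentially the same approach as the paper's: both unwind the definition of the right uniformity via the basic entourages $E_W=\{(x,y):xy^{-1}\in W\}$ and reduce the expansiveness condition to the statement $xy^{-1}\in\bigcap_g gWg^{-1}$. The only cosmetic difference is that you first explicitly reduce to a basic entourage $E_W$, whereas the paper extracts $W$ from a general $E$ by using that $g.1=1$ and that $(x,1)\in E$ for $x$ in a suitable $W$; the content is the same.
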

 
 \begin{proof}
Suppose $G$ is expansive.  Then there is an entourage $E$ such that whenever $x,y \in G$ are such that $(gxg\inv,gyg\inv) \in E$ for all $g \in G$, then $x=y$.  Since $E$ is an entourage, there is an identity neighborhood $W$ such that $(x,1) \in E$ for all $x \in W$.  Thus if $x \in G$ is such that $gxg\inv \in W$ for all $g \in G$, then $x=1$.  We conclude that $\bigcap_{g \in G}gWg\inv = \{1\}$.
 
Conversely, suppose that $W$ is an identity neighborhood such that $\bigcap_{g \in G}gWg\inv = \triv$, and let $E$ be the entourage $\{(x,y) \mid x,y \in G, x \in Wy\}$.  Let $x,y \in G$ be such that $(gxg\inv,gyg\inv) \in E$ for all $g \in G$.  Then for all $g \in G$ we have $gxg\inv \in Wgyg\inv$, or in other words, $gxy\inv g\inv \in W$.  Since  $\bigcap_{g \in G}gWg\inv = \{1\}$, it follows that $xy\inv = 1$, so $x=y$.  Thus $G$ is expansive.
 \end{proof}
 
In particular, every discrete group is expansive, and every group in $\Ss$ is expansive.  A more subtle example of a compactly generated expansive group is $G:=A_5^{\Zb}\rtimes \Zb$. The group $G$ has a compact open normal subgroup, namely $A_5^{\Zb}$, but any proper open subgroup of $A_5^{\Zb}$ has trivial normal core.  On the other hand, it is easy to see that a non-discrete compact group cannot be expansive: indeed, every compact group has arbitrarily small invariant neighborhoods of the identity.
 
If $G$ is a \tdlc group, then we can express the property of being expansive in terms of closed normal subgroups.  We will use these equivalent forms of the definition without further comment.

\begin{lem}\label{lem:expansive_tdlc}
Let $G$ be a \tdlc group.  Then the following are equivalent:
\begin{enumerate}[label=(\roman*)]
\item $G$ is expansive;
\item There is a compact open subgroup $W$ such that $\bigcap_{g \in G}gWg\inv = \triv$;
\item Given a filtering family $(N_i)_{i \in I}$ of non-trivial compact normal subgroups of $G$, then $\bigcap_{i \in I}N_i$ is non-trivial.\qedhere
\end{enumerate}
\end{lem}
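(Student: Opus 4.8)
The plan is to establish the cycle by first showing the easy equivalence (i)$\Leftrightarrow$(ii), which is essentially a reformulation of Lemma~\ref{lem:expansive_basic} using the local structure of a \tdlc group, and then proving the substantive equivalence (ii)$\Leftrightarrow$(iii). The bridge in both of the latter implications is the \emph{normal core} operation $W \mapsto \bigcap_{g\in G}gWg\inv$ applied to compact open subgroups $W$, together with the fact that in a \tdlc group the compact open subgroups form a basis of identity neighborhoods, so their intersection is $\triv$.

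For (i)$\Leftrightarrow$(ii): one direction is immediate, since a compact open subgroup is an identity neighborhood, so (ii) gives exactly the hypothesis of Lemma~\ref{lem:expansive_basic}. Conversely, if $G$ is expansive then Lemma~\ref{lem:expansive_basic} furnishes an identity neighborhood $W_0$ with trivial normal core; choosing a compact open subgroup $W \subseteq W_0$ gives $\bigcap_g gWg\inv \subseteq \bigcap_g gW_0g\inv = \triv$, which is (ii). For (iii)$\Rightarrow$(ii) I would argue by contraposition: if (ii) fails, then for every compact open subgroup $W$ the normal core $K_W := \bigcap_g gWg\inv$ is a non-trivial compact normal subgroup. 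The assignment satisfies $K_{W_1 \cap W_2} = K_{W_1} \cap K_{W_2}$, so indexing by compact open subgroups ordered by reverse inclusion yields a filtering family $(K_W)$ of non-trivial compact normal subgroups, and $\bigcap_W K_W \subseteq \bigcap_W W = \triv$ because the compact open subgroups form a neighborhood basis. This contradicts (iii).

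The remaining and most delicate implication is (ii)$\Rightarrow$(iii), which I would prove by contradiction. Fix a compact open subgroup $W$ with $\bigcap_g gWg\inv = \triv$, and suppose $(N_i)_{i\in I}$ is a filtering family of non-trivial compact normal subgroups with $\bigcap_i N_i = \triv$. The crux is a compactness argument showing that some member $N_i$ is contained in $W$: fixing any $i_0$, the sets $N_k \cap (G \setminus W)$ for those $k$ with $N_k \subseteq N_{i_0}$ are closed subsets of the compact set $N_{i_0}$, and since this cofinal subfamily has intersection $\triv \subseteq W$, their total intersection is empty; the finite intersection property together with the filtering condition then forces a single $N_k \cap (G\setminus W) = \emptyset$, i.e.\ $N_k \subseteq W$. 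Once such $N_i \subseteq W$ is found, normality gives $N_i = gN_ig\inv \subseteq gWg\inv$ for every $g$, whence $N_i \subseteq \bigcap_g gWg\inv = \triv$, contradicting that $N_i$ is non-trivial. The main obstacle is handling the compactness extraction cleanly: the filtering definition guarantees only the existence of \emph{smaller} members rather than a monotone chain, so the argument must be set up inside a single compact $N_{i_0}$ and invoke that a cofinal subfamily of a filtering family has the same intersection.
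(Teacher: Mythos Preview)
Your proof is correct and follows essentially the same route as the paper's. The only cosmetic difference is in (ii)$\Rightarrow$(iii): the paper argues directly that each $N_i \setminus W$ is non-empty (since $N_i \le W$ would force $N_i$ into the trivial normal core) and then applies compactness in $N_{i_0}$ to conclude $\bigcap_i N_i \supseteq \bigcap_{i\ge i_0}(N_i\setminus W)\neq\varnothing$, whereas you run the same compactness argument inside a proof by contradiction; the underlying idea is identical.
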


\begin{proof}
Suppose $G$ is expansive; given Lemma~\ref{lem:expansive_basic}, let $W$ be an identity neighborhood in $G$ such that $\bigcap_{g \in G}gWg\inv = \{1\}$.  Then by Van Dantzig's theorem, $W$ contains a compact open subgroup $W'$ of $G$; we then have $\bigcap_{g \in G}gW'g\inv = \{1\}$.  Thus $(i)$ implies $(ii)$.

Suppose $W$ is a compact open subgroup of $G$ such that $\bigcap_{g \in G}gWg\inv = \triv$, and let $(N_i)_{i \in I}$ be a filtering family of non-trivial compact normal subgroups of $G$.  Then for each $i \in I$, we see that $N_i \nleq W$.  Fix $i_0 \in I$.  Since $(N_i)_{i \in I}$ is a filtering family, we have $\bigcap_{i \in I}N_i = \bigcap_{i \ge i_0}N_i$; in turn, $\bigcap_{i \ge i_0}N_i$ contains $\bigcap_{i \ge i_0}(N_i \setminus W)$.  Now $(N_i \setminus W)_{i \ge i_0}$ is a filtering family of non-empty closed subsets of the compact set $N_{i_0}$, so it has non-empty intersection.  Since $1 \in W$ it follows that $\bigcap_{i \ge i_0}N_i$ is non-trivial.  Thus $(ii)$ implies $(iii)$.

We show that $(iii)$ implies $(i)$ via the contrapositive.  Suppose $G$ is not expansive; by Van Dantzig's theorem, there is a filtering family $(W_i)_{i \in I}$ of compact open subgroups of $G$ that forms a base of identity neighborhoods.  For each $i \in I$ let $N_i = \bigcap_{g \in G}gW_ig\inv$.  Then $N_i$ is non-trivial for each $i \in I$ by Lemma~\ref{lem:expansive_basic}.  Thus $(N_i)_{i \in I}$ is a filtering family of non-trivial compact normal subgroups of $G$ with trivial intersection, showing that $(iii)$ is false.  Thus $(iii)$ implies $(i)$, completing the cycle of implications.
\end{proof}

In particular, every \tdlc group $G$ is approximated by its expansive quotients, in the sense of being an inverse limit of them: given an identity neighborhood $O$ in $G$, there is a compact normal subgroup $K \subseteq O$ such that $G/K$ is expansive, where $K$ is obtained as the normal core in $G$ of some compact open subgroup contained in $O$.  Given this fact, we cannot expect to prove much about expansive \tdlc groups \textit{per se}.  However, assuming that $G$ is a non-discrete, non-compactly generated \tdlc group, it turns out to be a surprisingly powerful assumption to require a \emph{compactly generated} open subgroup of $G$ to be expansive.

\begin{defn}
 	A \tdlc group $G$ is called \emph{regionally expansive} if some compactly generated open subgroup $O$ is expansive. In other words, there is a compact open subgroup $W$ of $O$ with trivial normal core in $O$.  
\end{defn}

Note that our definition here is consistent with the use of ``regionally'' in Remark~\ref{rmk:regional}: if some compact open subgroup $W$ of $G$ has trivial normal core in the compactly generated open subgroup $O$ of $G$, then $W$ also has trivial normal core in every overgroup of $O$ in $G$, so $G$ is a directed union of expansive compactly generated open subgroups. Note further that a compactly generated \tdlc group is regionally expansive if and only if it is expansive, so within the class of expansive \tdlc groups (which, as noted, approximates every \tdlc group), ``regionally expansive'' is a generalization of ``compactly generated.''
 
The next lemma is one of the primary tools used throughout the present article. We stress that in this lemma the subgroup $R$ need not be closed, and we will indeed often use this lemma for non-closed $R$.

\begin{lem}\label{lem:no_cocompact}
	Let $G$ be a \tdlc group, $U$ be a compact open subgroup of $G$, and $R$ be a subgroup such that $G = RU$. If $R \cap U$ contains a non-trivial normal subgroup of $R$, then $U$ contains a non-trivial compact normal subgroup of $G$. 
\end{lem}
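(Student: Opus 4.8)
The plan is to produce an explicit candidate for the desired subgroup rather than constructing one indirectly: I would take $K := \bigcap_{g \in G} gUg\inv$, the normal core of $U$ in $G$, and show that the hypothesis forces $K \neq \triv$. The first observation is that $K$ is automatically of the right type: it is an intersection of (closed) conjugates of the compact group $U$, hence a closed subgroup contained in $U$, so it is compact, and it is manifestly invariant under conjugation by $G$, hence normal. Moreover, any non-trivial compact normal subgroup of $G$ lying in $U$ is itself contained in $K$, so the entire content of the lemma reduces to the single assertion that $K$ is non-trivial; this reformulation loses nothing.

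The key step is to use the factorization $G = RU$ to replace conjugation by arbitrary elements of $G$ with conjugation by elements of $R$ alone. Concretely, for $g \in G$ I would write $g = ru$ with $r \in R$ and $u \in U$, and note that since $uUu\inv = U$ we have
\[
gUg\inv = r(uUu\inv)r\inv = rUr\inv .
\]
Thus the conjugates of $U$ appearing in the intersection defining $K$ are exactly the conjugates $rUr\inv$ for $r \in R$, giving $K = \bigcap_{r \in R} rUr\inv$.

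Now I would bring in the normality of the given subgroup. Let $N \normal R$ be non-trivial with $N \leq R \cap U$. Because $N$ is normal in $R$ and contained in $U$, for every $r \in R$ we have $N = rNr\inv \leq rUr\inv$. Intersecting over $r \in R$ yields $N \leq K$, and since $N \neq \triv$ the core $K$ is a non-trivial compact normal subgroup of $G$ contained in $U$, which is exactly what is required.

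I do not anticipate a serious obstacle, as the argument is short once one commits to the normal core as the candidate. The one subtlety worth flagging is that $N$ itself need not be closed (and $R$ is not assumed closed), so one cannot simply output $N$ or $\overline{N}$ without further work; passing to $K$ is precisely what repairs this, since $K$ is compact by construction yet still contains $N$. The only verification demanding genuine care is the reduction $gUg\inv = rUr\inv$, which is where the hypothesis $G = RU$ is essential and which relies on nothing more than $U$ absorbing its own elements.
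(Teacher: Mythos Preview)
Your proof is correct. The approach is a minor variation on the paper's: both hinge on the same observation that $G = RU$ lets one replace $G$-conjugation by $R$-conjugation (or $U$-conjugation), but you and the paper build different witnesses. You take the normal core $\bigcap_{g \in G} gUg\inv$ of $U$ and show it contains $N$; the paper instead takes the closed normal closure $\ol{\lla N \rra_G}$ and shows it is contained in $U$, by noting that every $G$-conjugate of $N$ is a $U$-conjugate (writing $g = ur$ and using $rNr\inv = N$), hence lies in $U$. Your version has the small advantage that the candidate is manifestly closed from the outset, so no closure step is needed; the paper's version produces a slightly more informative object, namely the smallest closed normal subgroup of $G$ containing $N$, which is sometimes what one actually wants downstream.
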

\begin{proof}
	Let $K \leq R \cap U$ be a non-trivial normal subgroup of $R$. The normalizer $\N_G(K)$ contains $R$, and since $G = RU$, we see that $U$ acts transitively on the conjugacy class of $ K$ in $G$. Any conjugate of $K$ in $G$ is thus contained in $U$. We deduce that $\ol{\lla K \rra_G}$ is a subgroup of $U$, and the lemma follows.
\end{proof}

Our first observation concerning regionally expansive groups is immediate. Recall that compactly generated, and more generally $\sigma$-compact \tdlc groups, are second countable modulo a compact normal subgroup; see \cite[Theorem 8.7]{HR}. 
 
 \begin{lem}\label{lem:FirstCountable}
  If $G$ is a regionally expansive \tdlc group, then $G$ is first countable. In particular, $\sigma$-compact regionally expansive \tdlc groups are second countable.
 \end{lem}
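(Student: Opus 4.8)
The plan is to concentrate everything on one compactly generated open subgroup and use expansiveness to kill the only obstruction to second countability. Since $G$ is regionally expansive, I would first fix a compactly generated open subgroup $O \le G$ that is expansive and, via Lemma~\ref{lem:expansive_tdlc}, a compact open subgroup $W \le O$ whose normal core $\bigcap_{g\in O} gWg\inv$ in $O$ is trivial. As $O$ is open in $G$, every basis of identity neighborhoods of $O$ is also a basis of identity neighborhoods of $G$; hence it suffices to prove that $O$ is first countable, and I will in fact show that $O$ is second countable.

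For the main step, note that $O$, being compactly generated, is $\sigma$-compact, so by the Kakutani--Kodaira theorem (\cite[Theorem~8.7]{HR}) applied to $O$ with the identity neighborhood $W$, there is a compact normal subgroup $K \le W$ of $O$ with $O/K$ second countable. Expansiveness now does the essential work: since $K$ is normal in $O$ and $K \le W$, it lies in the normal core of $W$ in $O$, which is trivial, so $K = \triv$ and $O = O/K$ is second countable. Conceptually, this reflects the fact that a trivial normal core makes $O$ act faithfully on the countable coset space $O/W$, embedding $O$ as a subgroup of $\Sym(O/W)$ with its (second countable) topology of pointwise convergence. Either way, $O$ is second countable, hence first countable, and therefore $G$ is first countable.

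For the final assertion, assume in addition that $G$ is $\sigma$-compact. By the first part $G$ is first countable, so it has a descending countable basis $V_1 \ge V_2 \ge \cdots$ of compact open subgroups at the identity (refining a countable neighborhood basis using Van Dantzig's theorem). Since $G$ is $\sigma$-compact, each coset space $G/V_n$ is countable, so choosing a countable set $R_n$ of coset representatives for $V_n$ in $G$, the family $\{\, gV_n \mid n \in \Nbb,\ g \in R_n \,\}$ is a countable basis for the topology of $G$; thus $G$ is second countable.

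The one point requiring care---the main obstacle---is the promotion of ``second countable modulo a compact normal subgroup'' to outright second countability for $O$: this hinges on choosing the Kakutani--Kodaira subgroup $K$ \emph{inside} the witness $W$, so that the triviality of the normal core of $W$ in $O$ forces $K = \triv$. Equivalently, in the action-theoretic formulation, the work lies in verifying that the faithful action of $O$ on $O/W$ is a genuine topological embedding into $\Sym(O/W)$ (the point stabilizers pull back to the open subgroups $gWg\inv$, and $W$ maps homeomorphically onto the open compact stabilizer of the base coset), rather than merely a continuous injection.
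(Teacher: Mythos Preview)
Your proof is correct and follows essentially the same approach as the paper, which treats the lemma as immediate from \cite[Theorem~8.7]{HR}: pass to a compactly generated expansive open subgroup $O$, apply Kakutani--Kodaira inside the witness $W$, and use triviality of the normal core to force $K=\triv$. Your write-up simply makes explicit the details the paper leaves to the reader.
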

 
Discrete groups are regionally expansive, so we can have $G = \QZ(G)$.  However, aside from this case, no regionally expansive group has dense quasi-center. A locally compact group is called \textit{quasi-discrete} if its quasi-center is dense. 
 
 \begin{lem}\label{lem:nonqd}
 Let $G$ be a regionally expansive \tdlc group.  If $G$ is quasi-discrete, then $G$ is discrete.
 \end{lem}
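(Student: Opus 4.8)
The plan is to reduce to the compactly generated case and then manufacture an \emph{open} normal subgroup of the group that sits inside the compact open subgroup $W$ witnessing expansiveness; since any such subgroup must lie in the (trivial) normal core of $W$, it is trivial, and being open this forces discreteness.

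First I would pass to a witness of regional expansiveness: let $O\leq G$ be a compactly generated open subgroup that is expansive, so there is a compact open $W\leq O$ with $\bigcap_{g\in O}gWg\inv=\triv$. Because $\QZ(G)$ is dense in $G$ and $O$ is open, $\QZ(G)\cap O$ is dense in $O$; moreover any $x\in O$ with open centralizer in $G$ has $\CC_O(x)=\CC_G(x)\cap O$ open in $O$, so $\QZ(G)\cap O\subseteq \QZ(O)$ and hence $O$ is quasi-discrete. Since $O$ is open in $G$, it suffices to prove that $O$ is discrete; renaming, I may assume $G$ itself is compactly generated and expansive, with a compact open $W$ of trivial normal core, and $\QZ(G)$ dense.

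Now I would use density together with the identity $\QZ(G)=\QC_G(W)$: as $W$ is open and $\QZ(G)$ is dense, $G=\QZ(G)W$, so Lemma~\ref{lem:Cayley-Abels} supplies $q_1,\dots,q_n\in\QZ(G)$ with $\la q_1,\dots,q_n\ra W=G$. Each $q_i$ centralizes some open (hence finite-index) subgroup $V_i\leq W$, so $V:=\bigcap_{i=1}^n V_i$ is an open finite-index subgroup of $W$ centralized by every $q_i$. Let $N$ be the normal core of $V$ in $W$; since $[W:V]<\infty$ this is a finite intersection of conjugates of $V$, hence an open finite-index subgroup of $W$, and it is normal in $W$ by construction. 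Crucially $N\leq V$, so each $q_i$ still centralizes $N$, whence $N$ is normalized by $\la q_1,\dots,q_n,W\ra=G$; that is, $N\normal G$.

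Finally, $N$ is a (compact, indeed open) normal subgroup of $G$ contained in $W$, so it lies in the normal core of $W$ in $G$, which is trivial; thus $N=\triv$. As $N$ is open, $\triv$ is open in $G$ and $G$ is discrete. Retracing the reduction, the open subgroup $O$ is discrete, and therefore so is the original $G$. I expect no serious obstacle: the only delicate point is the simultaneous bookkeeping in the construction of $N$, which must be made normal in $W$ (achieved by passing to the normal core) while remaining centralized by the generators $q_i$ (which survives precisely because $N\leq V$). It is the interplay of these two properties with the triviality of the normal core of $W$ that forces $W$ to be finite.
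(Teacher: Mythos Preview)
Your proof is correct. Both arguments begin with the same reduction to a compactly generated expansive open subgroup $O$, but they diverge in the endgame. The paper invokes \cite[Proposition~4.3]{CM11} as a black box to conclude that $O$ is a SIN group, and then observes that a SIN group without arbitrarily small non-trivial compact normal subgroups must be discrete. You instead unfold the underlying mechanism directly: using Lemma~\ref{lem:Cayley-Abels} to extract finitely many quasi-central generators $q_1,\dots,q_n$, intersecting their open centralizers in $W$, and then passing to the normal core in $W$ to produce an explicit open subgroup $N\normal G$ inside $W$. Your route is more self-contained (it avoids the external CM11 citation) and makes the role of compact generation completely transparent; the paper's route is shorter on the page but hides the work inside the cited proposition. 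The two are morally the same argument, with yours spelling out the step that the paper outsources.
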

 
 \begin{proof}
Suppose that $G$ is quasi-discrete; that is, $\QZ(G)$ is dense in $G$. Letting $\{O_i\}_{i\in I}$ be a directed system of compactly generated open subgroups of $G$ with directed union equal to $G$, we see that each $O_i$ is quasi-discrete, and in view of \cite[Proposition 4.3]{CM11}, each $O_i$ is also a SIN group. On the other hand, for sufficiently large $i$, the $O_i$ do not have arbitrarily small non-trivial compact normal subgroups.  The only way to satisfy these conditions is that the $O_i$ are discrete, hence $G$ is discrete.
\end{proof}
 
\subsection{Minimal normal subgroups and the socle}

\begin{defn}
Let $G$ be a \tdlc group.  Let $\mc{M}(G)$ be the (possibly empty) set of minimal non-trivial closed normal subgroups of $G$. The \emph{socle} $\Soc(G)$ of $G$ is the closed subgroup generated by all minimal non-trivial closed normal subgroups of $G$. In other words, $\Soc(G) = \overline{\la \bigcup \mc{M}(G)\ra}$. 
\end{defn}

The following result implies that for a non-trivial regionally expansive group $G$ with trivial quasi-center, the set $\mc{M}(G)$ is necessarily non-empty.
   
\begin{lem}\label{lem:minimal_geofaithful}
Let $G$ be a regionally expansive \tdlc group with trivial quasi-center.
\begin{enumerate}[label=(\roman*)]
\item Every filtering family of non-trivial closed normal subgroups of $G$ has a non-trivial intersection.  In particular, every non-trivial closed normal subgroup of $G$ contains a minimal non-trivial closed normal subgroup of $G$.
\item Let $O$ be a regionally expansive open subgroup of $G$.  For every $M \in \mc{M}(G)$, there exists  $N \in \mc{M}(O)$ such that $M =  \ngrp{N}_G$.
\item If $\mc{M}(G)$ is infinite, then $G$ has a non-trivial characteristic abelian subgroup.\qedhere
\end{enumerate}
\end{lem}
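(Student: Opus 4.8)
The plan is to prove the three parts in order, with part~(i) carrying most of the weight. Two preliminary facts drive everything. Since $\QZ(G)=\triv$, the group $G$ has no non-trivial discrete normal subgroup: any such subgroup $K$ would satisfy $K\le\QC_G(G)=\QZ(G)$ by Lemma~\ref{lem:QC:basic}. Hence every non-trivial closed normal subgroup of $G$ is non-discrete and therefore meets every open subgroup of $G$ non-trivially. Secondly, for any open subgroup $P\le G$ we have $\QC_G(P)\le\QZ(G)=\triv$ and $\QZ(P)=\triv$, since an element centralizing an open subgroup of $P$ centralizes an open subgroup of $G$; thus every open subgroup of $G$ inherits the hypotheses of the lemma.

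For part~(i), I use regional expansiveness to fix a compactly generated open subgroup $O\le G$ with a compact open subgroup $W\le O$ whose normal core in $O$ is trivial. Let $(N_i)_{i\in I}$ be a filtering family of non-trivial closed normal subgroups of $G$ and suppose for contradiction that $\bigcap_i N_i=\triv$. Each $N_i$ is non-discrete by the first fact, so each $N_i\cap O$ is a non-trivial closed normal subgroup of $O$, and $(N_i\cap O)_{i\in I}$ is filtering with trivial intersection. Proposition~\ref{prop:filtering}, applied to the compactly generated group $O$ with the compact open subgroup $W$, then yields an index $i$ and a closed $K\normal O$ with $K\le W\cap(N_i\cap O)$ and $(N_i\cap O)/K$ discrete. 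Since $K\normal O$ and $K\le W$, it lies in the normal core of $W$ in $O$, which is trivial, so $N_i\cap O$ is itself a non-trivial discrete normal subgroup of $O$; by Lemma~\ref{lem:QC:basic} it lies in $\QC_G(O)=\triv$, a contradiction. The ``in particular'' clause follows by Zorn's lemma, since a chain of non-trivial closed normal subgroups below a given $N$ is filtering and so has a non-trivial intersection serving as a lower bound.

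For part~(ii), the given subgroup $O$ is regionally expansive and, being open, inherits trivial quasi-center, so the whole lemma, and in particular part~(i), applies to $O$. Any $M\in\mc{M}(G)$ is non-discrete by the first fact, so $M\cap O$ is a non-trivial closed normal subgroup of $O$ and, by part~(i) for $O$, contains some $N\in\mc{M}(O)$. Then $\ngrp{N}_G$ is a non-trivial closed normal subgroup of $G$ contained in $M$, so minimality of $M$ forces $M=\ngrp{N}_G$.

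The substance of the lemma is part~(iii), where the main obstacle is to produce a subgroup that is genuinely \emph{characteristic}, not merely normal. First, distinct members of $\mc{M}(G)$ intersect trivially by minimality and hence commute elementwise (the commutator of elements of two distinct minimal normal subgroups lies in their trivial intersection). Choosing distinct $M_1,M_2,\dots\in\mc{M}(G)$, I form the tails $L_n:=\cgrp{M_k\mid k\ge n}$, a decreasing, hence filtering, family of non-trivial closed normal subgroups; by part~(i) their intersection $D$ is non-trivial. Each $M_j$ centralizes $L_{j+1}$, hence centralizes $D\le L_{j+1}$, so $L_1=\cgrp{M_k\mid k\ge 1}\le\CC_G(D)$; as $D\le L_1$, this forces $D$ to be abelian. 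Applying part~(i) once more, $D$ contains some $M_0\in\mc{M}(G)$, which is then an abelian minimal normal subgroup. Finally, the closed subgroup generated by all abelian members of $\mc{M}(G)$ is non-trivial (it contains $M_0$), abelian (its generators pairwise commute and are individually abelian), and characteristic (every topological automorphism of $G$ permutes $\mc{M}(G)$ and preserves abelianness). This subgroup is the required non-trivial characteristic abelian subgroup.
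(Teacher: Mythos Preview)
Your proof is correct. Parts~(i) and~(ii) are essentially the paper's arguments, though in~(i) you unwind Proposition~\ref{prop:filtering} directly rather than citing Theorem~\ref{thm:minimal_normal} as a black box, which is arguably cleaner.

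Part~(iii) takes a genuinely different route. The paper works inside $R=\Soc(G)$ and, for each finite $F\subset\mc{M}(G)$, sets $C_F=\bigcap_{M\in F}\CC_R(M)$; the filtering family $\{C_F\}$ has non-trivial intersection $C$ by~(i), and $C$ is characteristic immediately because the construction is $\Aut(G)$-equivariant, and abelian because $C\le\Soc(G)$ centralizes every $M\in\mc{M}(G)$. Your approach instead chooses a countable sequence, intersects its tails to produce an abelian normal subgroup $D$, extracts an abelian $M_0\in\mc{M}(G)$ from $D$ via~(i), and only then assembles a characteristic subgroup as $\cgrp{\,M\in\mc{M}(G)\mid M\text{ abelian}\,}$. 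The paper's method gets characteristicity for free in one step; yours trades that for a more concrete output, namely an explicit abelian member of $\mc{M}(G)$, which is a slightly stronger intermediate conclusion. Both are fine.
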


\begin{proof}
Suppose $\mc{F}$ is filtering family of non-trivial closed normal subgroups of $G$ and fix $O$ a compactly generated expansive open subgroup of $G$. The induced family $\mc{F}_O:=\{N\cap O\mid N\in \mc{F}\}$ is a filtering family of closed normal subgroups of $O$. Additionally, none of the $M\in \mc{F}_O$ are discrete since $\QZ(G)=\{1\}$. The subgroup $O$ has a compact open subgroup with trivial normal core and no non-trivial discrete normal subgroups, hence Theorem~\ref{thm:minimal_normal} ensures $\bigcap \mc{F}_O$ is non-trivial. We conclude that $\bigcap\mc{F}$ is non-trivial.  The remainder of part (i) now follows by Zorn's lemma.

Let $O$ be a regionally expansive open subgroup of $G$ and let $M \in \mc{M}(G)$.  Then $M$ is non-discrete since $\QZ(G)=\triv$, so $M \cap O$ is a non-trivial closed normal subgroup of $O$.  It follows by part (i) that there exists $N \in \mc{M}(O)$ such that $N \le M \cap O$.  The group $M' = \ngrp{N}_G$ is then a non-trivial closed normal subgroup of $G$. By construction $M' \le M$, and by minimality, $M' = M$, proving (ii).

Suppose that $\mc{M}(G)$ is infinite.  Set $R := \Soc(G)$, take $\mc{F}$ to be the set of finite subsets of $\mc{M}(G)$, and for each $F \in \mc{F}$, let $C_F := \bigcap_{M \in F}\CC_R(M)$.  The subgroup $C_F$ contains $N$ for all $N \in \mc{M}(G) \setminus F$, so it is a non-trivial normal subgroup of $G$.  The set $\{C_F \mid F \in \mc{F}\}$ is a filtering family.  By part (i), it follows that $C:= \bigcap_{F \in \mc{F}}C_F$ is non-trivial.  The construction of $C$ ensures that $C$ is characteristic and that $C \le \CC_G(M)$ for all $M \in \mc{M}(G)$. Since $C \le \Soc(G)$, we infer that $C$ is abelian, proving (iii).
\end{proof}

An extension of Lemma~\ref{lem:minimal_geofaithful} will be established in Proposition~\ref{prop:minimal_A-semisimple} below.

\subsection{Minimal normal subgroups and [A]-semisimplicity}

\begin{defn}
A \tdlc group $G$ is \emph{[A]-semisimple} if it admits no non-trivial locally normal abelian subgroups and it has a trivial quasi-center.
\end{defn}

The [A]-semisimplicity condition implies in particular that $G$ is locally C-stable in the sense of \cite{CRW1}.  As shown in \cite[Theorem A]{CRW2}, every element of $\Ss$ is [A]-semisimple.  In this subsection, we discover a more general connection between [A]-semisimplicity and the normal subgroup structure in the class of regionally expansive groups.

We remark first that [A]-semisimplicity can be characterized in terms of quasi-discrete locally normal subgroups.

\begin{prop}[{\cite[Theorem~3.19 and Proposition~6.17]{CRW1}}]\label{prop:[A]-semisimplicity_LN}
For $G$ a \tdlc group, the following are equivalent.
\begin{enumerate}[label=(\roman*)]
\item $G$ is [A]-semisimple;
\item $G$ does not have any non-trivial quasi-discrete closed locally normal subgroups;
\item Every closed locally normal subgroup of $G$ has trivial quasi-center.\qedhere
\end{enumerate}
\end{prop}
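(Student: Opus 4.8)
The plan is to prove the cycle (iii) $\Rightarrow$ (i) $\Rightarrow$ (ii) $\Rightarrow$ (iii), dispatching the two formal steps first and concentrating on the single substantial one. For (iii) $\Rightarrow$ (i): since $G$ is a closed locally normal subgroup of itself, (iii) gives $\QZ(G)=\triv$; and if $A\leq G$ is locally normal and abelian, then $\overline A$ is closed, locally normal and abelian, so that $\QZ(\overline A)=\overline A$, whence (iii) forces $\overline A=\triv$. Thus $G$ is [A]-semisimple. For (ii) $\Rightarrow$ (iii): given a closed locally normal subgroup $H$, put $Q:=\overline{\QZ(H)}$. As $\QZ(H)$ is characteristic in $H$, the subgroup $Q$ is closed and normalised by the open subgroup $\N_G(H)$, hence locally normal in $G$; and since $\CC_Q(x)=\CC_H(x)\cap Q$ is open in $Q$ for every $x\in\QZ(H)$, we obtain $\QZ(H)\subseteq\QZ(Q)$. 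Therefore $\QZ(Q)$ is dense in $Q$, so $Q$ is quasi-discrete, and (ii) forces $Q=\triv$, i.e.\ $\QZ(H)=\triv$, as required.

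The substance lies in (i) $\Rightarrow$ (ii), which I would establish in contrapositive form: from a nontrivial quasi-discrete closed locally normal subgroup $H$, I would manufacture either a nontrivial element of $\QZ(G)$ or a nontrivial abelian locally normal subgroup, either of which contradicts [A]-semisimplicity. First I would set up a local model. Choose a compact open subgroup $U\leq\N_G(H)$; then $V:=H\cap U$ is a compact open subgroup of $H$ normalised by $U$, and one checks that $\QZ(H)\cap V=\QZ(V)$ (an element of $V$ has open centraliser in $H$ if and only if its $V$-conjugacy class is finite), so $\QZ(V)$ is dense in $V$ and $V$ is a quasi-discrete profinite group with $V\normal U$. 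If $H$ is discrete, then $U$ normalises the discrete subgroup $H$, so Lemma~\ref{lem:QC:basic} gives $H\leq\QC_G(U)=\QZ(G)$ and we are done. Otherwise $V\neq\triv$, and since $U$ is open in $G$ it suffices to exhibit a nontrivial abelian subgroup $A\leq U$ whose normaliser is open in $U$: for any such $A$ we have $\N_G(A)\supseteq\N_U(A)$ open, so $A$ is locally normal in $G$.

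The main obstacle is exactly this final step — producing an abelian subgroup that is locally normal in the \emph{ambient} group rather than merely inside $V$ — and the naive candidates all fail for the same reason. A quasi-central element $x\in\QZ(V)$ generates a procyclic abelian subgroup, but $\CC_G(x)$ need only be open in $V$, not in $U$, so $\overline{\langle x\rangle}$ need not be locally normal in $G$; likewise an open subgroup of $V$ with nontrivial centre (such as $\CC_V(x)$, which contains $x$ in its centre) is not open in $G$; and diagonal abelian subgroups spread across the factors of $V$ typically have non-open normaliser in $U$, because $U$ may permute those factors in infinite orbits. Controlling this is precisely where the local structure theory of \cite{CRW1} enters: one passes to the structure lattice $\LN(G)$ of locally normal subgroups modulo local equivalence, on which $U$ acts, and argues that a quasi-discrete $V\normal U$ forces an abelian element of this lattice stabilised by an open subgroup of $U$, yielding a genuine nontrivial abelian locally normal subgroup of $G$. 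This is the one non-formal ingredient — the content imported from \cite[Theorem~3.19 and Proposition~6.17]{CRW1} — while all the remaining work is the elementary bookkeeping above.
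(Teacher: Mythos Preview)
The paper does not actually prove this proposition: it is stated with a citation to \cite[Theorem~3.19 and Proposition~6.17]{CRW1} and no argument is given. So there is no ``paper's own proof'' to compare against; the result is imported wholesale.

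That said, your proposal is accurate in structure and honest about where the content lies. The implications (iii) $\Rightarrow$ (i) and (ii) $\Rightarrow$ (iii) are indeed formal, and your arguments for them are correct. You rightly identify (i) $\Rightarrow$ (ii) as the only substantial step, and your diagnosis of the obstacle --- that a quasi-central element of $V$ gives an abelian subgroup whose normalizer is open in $V$ but not obviously in $G$ --- is exactly the point. Your concluding sentence, that resolving this is precisely the content of \cite[Theorem~3.19]{CRW1} via the structure lattice, is the right attribution: this is not something one should expect to redo from scratch, and the paper itself does not attempt to. In short, your write-up is a faithful unpacking of what the citation is doing, with the easy parts made explicit and the hard part correctly delegated.
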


In an [A]-semisimple group, we have good control of quasi-centralizers of locally normal subgroups and some useful equivalent conditions for two locally normal subgroups to commute.

\begin{lem}[{See \cite[Theorem~3.19]{CRW1}}]\label{lem:[A]-semisimplicity_qcent}
Let $G$ be an [A]-semisimple \tdlc group and let $H$ be a closed locally normal subgroup of $G$.  Then
\[
\QC_G(H) = \CC_G(H).
\]
\end{lem}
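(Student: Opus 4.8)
The plan is to prove the inclusion $\QC_G(H) \supseteq \CC_G(H)$ trivially (centralizing a subgroup is a special case of centralizing an open subgroup of it), and then to establish the reverse inclusion $\QC_G(H) \subseteq \CC_G(H)$, which is the only substantive content. The key structural input is Lemma~\ref{lem:qcent_norm}, which asserts that $\QC_G(H) \cap \N_G(H) = \CC_G(H)$ whenever $\QZ(H) = \triv$. So first I would check that the hypotheses of that lemma are available: since $G$ is [A]-semisimple and $H$ is a closed locally normal subgroup of $G$, Proposition~\ref{prop:[A]-semisimplicity_LN} (implication (i)$\Rightarrow$(iii)) gives immediately that $H$ has trivial quasi-center, $\QZ(H) = \triv$, so Lemma~\ref{lem:qcent_norm} applies and yields $\QC_G(H) \cap \N_G(H) = \CC_G(H)$.

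Given that, the task reduces to showing $\QC_G(H) \subseteq \N_G(H)$; that is, every element quasi-centralizing $H$ actually normalizes $H$. This is where the main work lies. I would take $g \in \QC_G(H)$, so $g$ centralizes some open subgroup $K$ of $H$. Since $H$ is locally normal, $\N_G(H)$ is open, so I may intersect with it; more usefully, I can use that $K$ is itself a closed locally normal subgroup of $G$ (being open in the closed locally normal $H$, its normalizer contains the open set $\N_G(H) \cap \N_H(K)$, hence is open). The element $g$ centralizes $K$, so $g \in \CC_G(K) \subseteq \N_G(K)$. The goal is then to leverage the [A]-semisimplicity to transfer control from $K$ back to $H$: intuitively, $H$ and $g H g^{-1}$ both contain the open-in-$H$ subgroup $K$ which $g$ fixes, so $H$ and its conjugate share a large locally normal subgroup, and [A]-semisimplicity (via the commensurated/quasi-central structure) should force $g$ to normalize $H$ itself.

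The cleanest route is probably to argue directly that $\QC_G(H)$ normalizes $H$ using the commuting-locally-normal-subgroup machinery promised in the sentence preceding the lemma. Concretely, for $g \in \QC_G(H)$ centralizing the open subgroup $K \le H$, the conjugate $gHg^{-1}$ has $gKg^{-1} = K$ as an open subgroup, and $K$ is open in both $H$ and $gHg^{-1}$; thus $H$ and $gHg^{-1}$ are commensurate closed locally normal subgroups. One then checks that $[H, gHg^{-1}]$ centralizes, or lies in the quasi-center relative to, the common open subgroup $K$, and invoking $\QZ$-triviality forces the relevant commutators to vanish, giving $g \in \N_G(H)$. Combined with Lemma~\ref{lem:qcent_norm}, this completes the proof.

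I expect the main obstacle to be precisely the step $\QC_G(H) \subseteq \N_G(H)$: Lemma~\ref{lem:qcent_norm} handles the intersection with $\N_G(H)$ for free, but it gives nothing unless one already knows the quasi-centralizing element normalizes $H$, and a priori a $g$ centralizing only an open subgroup $K$ of $H$ need not send $H$ to itself. Resolving this requires genuinely using [A]-semisimplicity — that $G$ has no non-trivial abelian locally normal subgroups and trivial quasi-center — to rule out the possibility that $g$ moves $H$ to a genuinely different (though commensurate) locally normal subgroup. I would look to the cited \cite[Theorem~3.19]{CRW1} for the precise commutation/quasi-centralizer dichotomy that makes this transfer rigorous, since that theorem is the source from which this lemma is extracted.
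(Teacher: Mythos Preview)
The paper does not give its own proof of this lemma; it simply cites \cite[Theorem~3.19]{CRW1}. So there is nothing to compare your argument against directly, and your proposal must stand or fall on its own.

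Your reduction via Lemma~\ref{lem:qcent_norm} to the claim $\QC_G(H)\subseteq\N_G(H)$ is formally valid, but it is a detour that does not make the problem easier, and your sketched justification for that inclusion does not work. Knowing that $H$ and $gHg^{-1}$ share the common open subgroup $K$ only tells you they represent the same class in $\LN(G)$; it does not force $H=gHg^{-1}$, and your assertion that $[H,gHg^{-1}]$ centralizes (or quasi-centralizes) $K$ is unjustified and in general false: for arbitrary $h_1\in H$ and $h_2\in gHg^{-1}$ there is no reason for $[h_1,h_2]$ to commute with elements of $K$. You correctly flag this as the obstacle, but you do not overcome it.

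A cleaner route avoids $\N_G(H)$ altogether and uses the other half of \cite[Theorem~3.19]{CRW1}, namely the dichotomy recorded in Lemma~\ref{lem:[A]-semisimple:commuting}(i)$\Leftrightarrow$(ii): two closed locally normal subgroups with trivial intersection must commute. Given $g\in\QC_G(H)$, choose a compact open $V\le\N_G(H)$ and an open normal $W\trianglelefteq V$ small enough that $g$ centralizes $K:=W\cap H$; then $K$ is compact, open in $H$, and locally normal in $G$. Every element of $\CC_G(K)\cap H$ centralizes the open subgroup $K$ of $H$, hence lies in $\QZ(H)=\triv$ by Proposition~\ref{prop:[A]-semisimplicity_LN}. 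Thus $\CC_G(K)\cap H=\triv$, and since both $\CC_G(K)$ and $H$ are closed locally normal, the dichotomy gives $[\CC_G(K),H]=\triv$, i.e.\ $\CC_G(K)\le\CC_G(H)$. In particular $g\in\CC_G(H)$. This is essentially how the result is obtained in \cite{CRW1}; the point is that ``trivial intersection $\Rightarrow$ commute'' is the engine, not a normalizer computation.
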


\begin{lem}\label{lem:[A]-semisimple:commuting}
Let $G$ be an [A]-semisimple \tdlc group and let $H$ and $K$ be locally normal subgroups of $G$.  Then $\CC_H(K) = \CC_H(H \cap K)$. Moreover, the following assertions are equivalent.
\begin{enumerate}[label=(\roman*)]
\item $H \cap K = \triv$;
\item $[H,K] = \triv$;
\item There is an open subgroup of $H$ that commutes with an open subgroup of $K$.
\end{enumerate}
In particular, for any compact open subgroup $U \leq G$, we have $\CC_G(H) = \CC_G(K)$ if and only if $\CC_U(H) = \CC_U(K)$. 
\end{lem}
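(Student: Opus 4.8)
The plan is to prove the three equivalences first, since both the centralizer identity and the final \emph{in particular} clause will follow from them together with the quasi-centralizer identity $\QC_G(\,\cdot\,)=\CC_G(\,\cdot\,)$ of Lemma~\ref{lem:[A]-semisimplicity_qcent}. Throughout I treat $H$ and $K$ as \emph{closed} locally normal subgroups, as in that lemma; this is harmless, since centralizers are closed and passing to closures preserves local normality. I would run the equivalences as the cycle (i)$\Rightarrow$(iii)$\Rightarrow$(ii)$\Rightarrow$(i). For (i)$\Rightarrow$(iii), put $O:=\N_G(H)\cap\N_G(K)$, which is open; then $H\cap O$ and $K\cap O$ are open subgroups of $H$ and $K$ that are both normal in $O$, and $(H\cap O)\cap(K\cap O)\leq H\cap K=\triv$, so two normal subgroups of $O$ with trivial intersection centralize one another, giving $[H\cap O,K\cap O]=\triv$. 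For (iii)$\Rightarrow$(ii), if an open subgroup $H_0\leq H$ centralizes an open subgroup $K_0\leq K$, then $H_0\leq\CC_G(K_0)\leq\QC_G(K)=\CC_G(K)$ by Lemma~\ref{lem:[A]-semisimplicity_qcent}; hence $K$ centralizes the open subgroup $H_0$ of $H$, so $K\leq\QC_G(H)=\CC_G(H)$ and $[H,K]=\triv$. Finally (ii)$\Rightarrow$(i) is immediate from [A]-semisimplicity: if $[H,K]=\triv$ then $H\cap K$ is an abelian subgroup whose normalizer contains $O$, hence a locally normal abelian subgroup, which must be trivial.

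For the identity $\CC_H(K)=\CC_H(H\cap K)$, the inclusion $\subseteq$ is clear. The key observation for the reverse inclusion is that $P:=\CC_H(H\cap K)$ is again a closed locally normal subgroup: $O$ preserves both $H$ and $H\cap K$, hence normalizes $\CC_H(H\cap K)$. I would then compute $P\cap K=(H\cap K)\cap\CC_G(H\cap K)=\Z(H\cap K)$, which is an abelian locally normal subgroup and therefore trivial. Applying the already-proven implication (i)$\Rightarrow$(ii) to the pair of locally normal subgroups $P$ and $K$ now yields $[P,K]=\triv$, that is, $P\leq\CC_H(K)$, as desired.

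For the final clause, one direction is trivial by intersecting with $U$. For the converse I would exploit that $\CC_U(H)=U\cap\CC_G(H)$ is an \emph{open} subgroup of the closed locally normal subgroup $A:=\CC_G(H)$ (its normalizer contains $\N_G(H)$). Writing $C:=\CC_U(H)$, the inclusion $\CC_G(A)\leq\CC_G(C)$ is clear, while any element centralizing the open subgroup $C$ of $A$ lies in $\QC_G(A)=\CC_G(A)$ by Lemma~\ref{lem:[A]-semisimplicity_qcent}; hence $\CC_G(\CC_U(H))=\CC_G(\CC_G(H))$, and likewise for $K$. From $\CC_U(H)=\CC_U(K)$ I then obtain $\CC_G(\CC_G(H))=\CC_G(\CC_G(K))$, and applying $\CC_G$ once more, together with the elementary triple-centralizer identity $\CC_G\CC_G\CC_G(\,\cdot\,)=\CC_G(\,\cdot\,)$, gives $\CC_G(H)=\CC_G(K)$.

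The main obstacle, and the one genuinely new idea, is the reverse inclusion in the centralizer identity: one must notice that $\CC_H(H\cap K)$ is itself locally normal so that the commuting dichotomy can be fed back into itself, after which the computation $P\cap K=\Z(H\cap K)$ reduces the whole matter to the prohibition on non-trivial abelian locally normal subgroups. Every remaining step is a bookkeeping application of $\QC_G=\CC_G$.
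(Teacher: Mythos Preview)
Your proof is correct and essentially the same as the paper's. The one minor variation is in the reverse inclusion $\CC_H(H\cap K)\le\CC_H(K)$: you compute $P\cap K=\Z(H\cap K)=\triv$ and apply (i)$\Rightarrow$(ii), while the paper instead observes that $[K\cap U,P\cap U]$ is abelian (being contained in both $H\cap K$ and its centralizer) hence trivial, and then applies (iii)$\Rightarrow$(ii); both routes are equally short.
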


\begin{proof}
The equivalence of (i) and (ii) is given by \cite[Theorem~3.19]{CRW1}.  Clearly (ii) implies (iii).  Conversely, if (iii) holds then $[H \cap U, K \cap U] = \triv$ for some open subgroup $U$ of $G$.  Applying Lemma~\ref{lem:[A]-semisimplicity_qcent}, we see that $K \cap U$ commutes with $H$, and applying Lemma~\ref{lem:[A]-semisimplicity_qcent} again, $H$ commutes with $K$, proving (ii).

It remains to show that $\CC_H(K) = \CC_H(H \cap K)$.  Set $L: = \CC_H(H \cap K)$ and let $U$ be an open subgroup of $G$ that normalizes $H$ and $K$.  The groups $K \cap U$ and $L \cap U$ thus normalize each other, so their commutator is contained in their intersection. In other words, $M := [K \cap U,L \cap U]$ is a locally normal subgroup of $G$ such that $M \le K \cap L \cap U$.  Since $M$ is contained in both $H \cap K$ and a subgroup that centralizes $H\cap K$, we deduce that $M$ is abelian, hence it is trivial.  We conclude that $K \cap U$ and $L \cap U$ commute.  The equivalence of (ii) and (iii) then ensures that $K$ and $L$ commute, so $\CC_H(H \cap K) \le \CC_H(K)$.  On the other hand, $\CC_H(K) \le \CC_H(H \cap K)$, so equality holds.

Given a subgroup $J \leq G$, we define $\CC^0_G(J) := J$ and $\CC^{n+1}_G(J):= \CC_G(\CC^n_G(J))$. We observe that $\CC^2_G(J) \ge J$ and $\CC^3_G(J) = \CC_G(J)$.
Let $U \leq G$ be a compact open subgroup. Clearly, the equality $\CC_G(H) = \CC_G(K)$ implies that $\CC_U(H) = \CC_U(K)$. Assume conversely that $\CC_U(H) = \CC_U(K)$. By the equivalence of (ii) and (iii) above, we have $\CC_G(\CC_G(L)) = \CC_G(\CC_G(L) \cap U) = \CC_G(\CC_U(L))$ for any locally normal subgroup $L$. Therefore  
$$\CC_G(H) = \CC^3_G(H) = \CC^2_G(\CC_G(H)) = \CC^2_G(\CC_U(H)) =  \CC^2_G(\CC_U(K)) =  \CC^3_G(K) =  \CC_G(K).$$ 
\end{proof}

Our next proposition is an analogue of Proposition~\ref{prop:filtering} for regionally expansive groups without quasi-discrete normal subgroups.

\begin{prop}\label{prop:minimal_A-semisimple}
Let $G$ be a regionally expansive \tdlc group that has no non-trivial quasi-discrete closed normal subgroups.
\begin{enumerate}[label=(\roman*)]
\item $\mc{M}(G)$ is a finite set.
\item We have $\bigcap_{N \in \mc{M}(G)}\QC_G(N) = \triv$.
\item Let $O \le G$ be a regionally expansive open subgroup of $G$.  Then there is a surjective map from $\mc{M}(O)$ to $\mc{M}(G)$ given by sending $M \in \mc{M}(O)$ to $N = \ngrp{M}_G$.
\item Let $H$ be a regionally expansive subgroup of $G$ such that $\N_G(H)$ is open in $G$ and suppose that $M \in \mc{M}(H)$ is non-abelian.  Then $\ngrp{M}_G \in \mc{M}(G)$.
\item Suppose that there is a compactly generated open subgroup of $G$ with no non-trivial abelian normal subgroups.  Then the number $|\mc{M}(G)|$ is the least value of $|\mc{M}(O)|$ as $O$ ranges over expansive compactly generated open subgroups.\qedhere
\end{enumerate}
\end{prop}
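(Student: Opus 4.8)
The plan is to begin with a reduction that unlocks Lemma~\ref{lem:minimal_geofaithful}: the hypothesis that $G$ has no non-trivial quasi-discrete closed normal subgroup already forces $\QZ(G)=\triv$, since $\overline{\QZ(G)}$ is a closed normal subgroup whose quasi-center contains the dense subgroup $\QZ(G)$, hence is itself quasi-discrete and therefore trivial. Thus Lemma~\ref{lem:minimal_geofaithful} applies to $G$, and since any open subgroup $O\le G$ inherits $\QZ(O)=\triv$ (an element with open centralizer in the open subgroup $O$ has open centralizer in $G$), it applies to every regionally expansive open subgroup as well. For (i) I would argue by contraposition through Lemma~\ref{lem:minimal_geofaithful}(iii): were $\mc{M}(G)$ infinite, $G$ would have a non-trivial characteristic abelian subgroup, whose closure is a non-trivial closed characteristic — hence normal — abelian, and therefore quasi-discrete, subgroup, contradicting the hypothesis.

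For (ii) the key points are that each $\QC_G(N)$ with $N\in\mc{M}(G)$ is a normal subgroup of $G$, and that $N\cap\QC_G(N)=\QZ(N)=\triv$: indeed $\overline{\QZ(N)}$ is a closed normal subgroup of $G$ contained in the minimal $N$, so it is trivial or $N$, and the latter would make $N$ quasi-discrete. Writing $Q:=\bigcap_{N\in\mc{M}(G)}\QC_G(N)$, for each $N$ we then get $[N,Q]\subseteq N\cap Q\subseteq N\cap\QC_G(N)=\triv$, so $Q$ centralizes every $N\in\mc{M}(G)$. If $Q\neq\triv$, then $\overline{Q}$ is a non-trivial closed normal subgroup and hence, by Lemma~\ref{lem:minimal_geofaithful}(i), contains some $N_0\in\mc{M}(G)$; but $N_0\subseteq\overline{Q}\subseteq\CC_G(N_0)$ forces $N_0$ abelian, contradicting the hypothesis as in (i). Hence $Q=\triv$.

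For (iii) and (iv) the common mechanism is the following. Given $M$ (minimal normal in $O$, respectively in $H$), set $D:=\ngrp{M}_G$, pick $N_0\in\mc{M}(G)$ with $N_0\subseteq D$ via Lemma~\ref{lem:minimal_geofaithful}(i), and aim to show $M\subseteq N_0$, which forces $D=N_0\in\mc{M}(G)$ (well-definedness of the map); surjectivity is then exactly Lemma~\ref{lem:minimal_geofaithful}(ii). By minimality of $M$ either $M\subseteq N_0$ (done) or $M\cap N_0=\triv$. In the open case (iii), $M$ and $N_0\cap O$ are both normal in $O$ with trivial intersection, so they commute, and since $N_0\cap O$ is open in $N_0$ this gives $M\subseteq\QC_G(N_0)$; then $D\subseteq\QC_G(N_0)$, so $N_0=N_0\cap\QC_G(N_0)=\QZ(N_0)=\triv$, a contradiction. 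The same contradiction drives (iv), but since $H$ only has open normalizer $P=\N_G(H)$, one must verify by hand that $M$ centralizes the open subgroup $N_0\cap P$: for $x\in N_0\cap P$ the commutator $[m,x]$ lies both in $N_0$ and in $M\cdot xMx^{-1}$, and here the non-abelian hypothesis is essential — because $M$, and hence each conjugate $xMx^{-1}$, is centerless, a short direct-product computation shows $N_0\cap(M\cdot xMx^{-1})=\triv$, so $[m,x]=1$.

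The subtlest part, and the main obstacle, is (v). I would first use the extra hypothesis to pass to an expansive compactly generated open $O_0$ with no non-trivial abelian normal subgroup, noting this property is inherited by every open overgroup (a new abelian normal subgroup would meet $O_0$ trivially, hence be discrete, hence lie in $\QZ=\triv$) and that it makes $\mc{M}(O)$ finite via Lemma~\ref{lem:minimal_geofaithful}(iii); the surjection of (iii) gives $|\mc{M}(O)|\ge|\mc{M}(G)|=:d$ for every expansive compactly generated open $O$. The engine is a monotonicity statement: for $O\subseteq O'$, each $P\in\mc{M}(O')$ meets $O$ in a non-trivial normal subgroup containing some element of $\mc{M}(O)$, and distinct $P$ give disjoint such sets, yielding an injection $\mc{M}(O')\hookrightarrow\mc{M}(O)$ and hence $|\mc{M}(O')|\le|\mc{M}(O)|$. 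Choosing $O^{*}\supseteq O_0$ minimizing the integer $|\mc{M}(O^{*})|=m\ge d$, this value is stable under enlargement. If $m>d$, some fiber of the map carries two members $M\neq M'$ with $\ngrp{M}_G=\ngrp{M'}_G=N_i$; since $N_i$ is centerless and generated by the $G$-conjugates of $M$, some conjugate $gMg^{-1}$ fails to commute with $M'$. Passing to $O'=\langle O^{*},g\rangle$ keeps the count at $m$, which by the injection forces every fiber to be a singleton covering $\mc{M}(O^{*})$; tracking $M,M'$ into their hosts $P,P'\in\mc{M}(O')$ and using $gMg^{-1}\subseteq gPg^{-1}=P$ gives $[M',gMg^{-1}]\subseteq[P',P]$, trivial if $P\neq P'$ and impossible if $P=P'$ (two fiber members in one singleton). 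This contradiction yields $m=d$, so the least value is attained and equals $|\mc{M}(G)|$. I expect the bookkeeping in this monotonicity-and-fusion step to be the most delicate point of the whole proof.
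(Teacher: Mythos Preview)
Your proof is correct. Parts (i)--(iii) follow the paper's approach closely (the paper organizes (iii) slightly differently, using (ii) to pick $N$ with $M\not\le\QC_G(N)$ rather than picking $N_0\subseteq\ngrp{M}_G$, but the mechanism is the same). Part (v) shares the paper's overall strategy---minimize $|\mc{M}(O)|$ over suitable $O$, find two minimal normals $M_1,M_2$ with the same $G$-normal closure, and enlarge to force a contradiction---but the endgame differs: the paper passes to an entire directed system $(L_i)$ above the minimizer and shows that every $G$-conjugate of $M_1$ commutes with $M_2$, forcing $M_2$ abelian; you instead enlarge by a single element $g$ to $O'=\langle O^*,g\rangle$ and use the equality $|\mc{M}(O')|=|\mc{M}(O^*)|$ to turn your monotonicity injection into a bijection, which separates $M$ and $M'$ into distinct (hence commuting) members of $\mc{M}(O')$. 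Your version is a bit sharper, since it avoids the limit.

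Part (iv) is where you genuinely diverge. The paper again starts from (ii), picks $N$ with $M\not\le\QC_G(N)$, and assuming $N\cap M=\triv$ finds $k\in\N_N(H)$ with $kMk^{-1}\neq M$; it then uses the identity $[m_0,[m_1,k]]=[m_0,m_1]$ (valid because $M$ and $kMk^{-1}$ commute) to exhibit a non-trivial element of $M$ inside $N$. Your route---showing directly that $N_0\cap(M\cdot xMx^{-1})=\triv$ by projecting onto the factors and using $Z(M)=\triv$---is more structural and arguably cleaner, at the cost of an explicit case split on whether $x$ normalizes $M$. Both arguments ultimately hinge on the non-abelian hypothesis in the same way: it guarantees $Z(M)=\triv$, which is what makes the direct-product projection (yours) or the commutator identity (the paper's) do work.
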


\begin{proof}
Part (i) is immediate from Lemma~\ref{lem:minimal_geofaithful} and the fact that $G$ has no non-trivial quasi-discrete normal subgroups; note that abelian groups are quasi-discrete.

\medskip

Set $Q: = \bigcap_{N \in \mc{M}(G)}\QC_G(N)$ and suppose that $Q$ is non-trivial.  By Lemma~\ref{lem:minimal_geofaithful}, there is some $R \in \mc{M}(G)$ such that $R \le \overline{Q}$.  Notice that $Q \le \QC_G(R)$.  Our hypothesis ensures that $R$ is not quasi-discrete, so $R \cap Q$ is not dense in $R$.  Hence $R \cap Q = \triv$ by minimality.  The group $R$ then centralizes $Q$. We conclude that $R$ is central in $\overline{Q}$, which is impossible by the fact that $R$ is not quasi-discrete.  This proves (ii).

\medskip

Fix $O$ an open subgroup of $G$ that is regionally expansive.  By Lemma~\ref{lem:minimal_geofaithful}, every $N \in \mc{M}(G)$ arises as $N = \ngrp{M}_G$ for some $M \in O$.  To prove (iii), it therefore suffices to show that every $M \in \mc{M}(O)$ is contained in some $N \in \mc{M}(G)$.

Let $M \in \mc{M}(O)$.  Since $M \neq \triv$, part (ii) implies that there is some $N \in \mc{M}(G)$ such that $M \not\le \QC_G(N)$.  In particular, $M$ does not commute with $N \cap O$.  Since $M$ and $N \cap O$ are normal in $O$, it follows that $M \cap N \cap O > \triv$; since $M \in \mc{M}(O)$, in fact we must have $M \le N \cap O$, so $M \le N$.  This completes the proof of (iii).

\medskip

Let $H$ be a regionally expansive subgroup of $G$ such that $\N_G(H)$ is open and let $M \in \mc{M}(H)$ be such that $M$ is non-abelian.  By part (ii), there is some $N \in \mc{M}(G)$ such that $M$ is not contained in $\QC_G(N)$.  We will show that $N = \ngrp{M}_G$, which in particular implies that $\ngrp{M}_G \in \mc{M}(G)$.

We may assume for a contradiction that $N$ does not contain $M$.  Then the intersection $N \cap H$ is normal in $H$ and does not contain $M$, so $N \cap M = \triv$ by minimality.  In particular, we have $\N_N(H) \cap M = \triv$.  Let $N' = [\N_N(H),M]$.  Since $\N_N(H)$ is open and $M$ is not contained in $\QC_G(N)$, we see that $N'$ is non-trivial. At the same time, $M$ normalizes $\N_N(H)$, so $N' \le \N_N(H)$. Hence, $N' \nleq M$.  The  subgroup $\N_N(H)$ therefore does not normalize $M$.  Let $k \in \N_N(H)$ be such that $kMk\inv \neq M$. Hence, $kMk\inv \cap M = \triv$, and $M$ and $kMk\inv$ commute.  Since $M$ is non-abelian, we can take $m_0,m_1\in M$ such that $[m_0,m_1]\neq 1$. Thus,
\[
[m_0,[m_1,k]]=m_0m_1(km_1^{-1}k^{-1})m_0^{-1}(km_1k^{-1})m_1^{-1}=m_0m_1m_0^{-1}m_1^{-1}=[m_0,m_1].
\]
The group $N$ is normal in $G$, so $[m_0,m_1]=[m_0,[m_1,k]]\in N$; since $N \cap M = \triv$, in fact we must have $[m_0,m_1]=1$, contradicting our choice of $m_0,m_1$.  From this contradiction, we conclude that in fact $M \le N$.  Since $N$ is a minimal non-trivial closed normal subgroup of $G$, it follows that $N = \ngrp{M}_G$.  This completes the proof of (iv).

\medskip

For (v), we already know that $\mc{M}(G)$ is finite and that $|\mc{M}(G)| \le |\mc{M}(O)|$ for all regionally expansive open subgroups $O$.  All that remains to show is that there is an expansive compactly generated open subgroup $O$ such that $|\mc{M}(O)| = |\mc{M}(G)|$.

Let $O$ be a compactly generated open subgroup of $G$ with no non-trivial abelian normal subgroups.  Say that $O \le L \le G$ and let $N$ be a non-trivial closed normal subgroup of $L$. The subgroup $N$ is non-discrete since $\QZ(L) \le \QZ(G) = \triv$, so $N \cap O$ is non-trivial and hence non-abelian.  Any sufficiently large compactly generated open subgroup of $G$ thus has no non-trivial abelian normal subgroups. We may thus assume that $O$ is regionally expansive. By Lemma~\ref{lem:minimal_geofaithful}, it follows that $\mc{M}(L)$ is finite whenever $O\le L \le G$.

Let $L$ be a compactly generated open subgroup of $G$ such that $|\mc{M}(L)|$ is minimal among the compactly generated open subgroups of $G$ containing $O$ and suppose toward a contradiction that $|\mc{M}(L)| > |\mc{M}(G)|$.  There  thus exist $M_1,M_2 \in \mc{M}(L)$ such that $N:= \ngrp{M_1}_G = \ngrp{M_2}_G$.  Let $(L_i)_{i \in I}$ be a directed family of compactly generated open subgroups of $G$ with union $G$ such that $L \le L_i$ for all $i$.  The minimality of $\mc{M}(L)$ ensures that $\ngrp{M_1}_{L_i}$ and $\ngrp{M_2}_{L_i}$ are distinct, hence they have trivial intersection for all $i \in I$.  In particular, $\ngrp{M_1}_{L_i}$ and $\ngrp{M_2}_{L_i}$ commute, so $uM_1u\inv$ centralizes $M_2$ for all $u \in L_i$.  Since $G$ is the union of the $L_i$, it follows that every $G$-conjugate of $M_1$ centralizes $M_2$, and $N$ thus centralizes $M_2$.  However, $M_2 \le N$, so $M_2$ is abelian, which contradicts our earlier conclusion that $L$ has no non-trivial abelian normal subgroups.  We conclude that $|\mc{M}(L)|=|\mc{M}(G)|$, proving (v).
\end{proof}

In particular, Proposition~\ref{prop:minimal_A-semisimple} applies to all regionally expansive \tdlc groups that are [A]-semisimple.

\begin{cor}\label{cor:minimal_A-semisimple}
Let $G$ be a regionally expansive, [A]-semisimple \tdlc group.
\begin{enumerate}[label=(\roman*)]
\item $\mc{M}(G)$ is a finite set, and every non-trivial closed normal subgroup of $G$ contains some $M \in \mc{M}(G)$.
\item Let $O \le G$ be a regionally expansive open subgroup of $G$.  Then there is a surjective map from $\mc{M}(O)$ to $\mc{M}(G)$ given by sending $M \in \mc{M}(O)$ to $N = \ngrp{M}_G$.
\item The number $|\mc{M}(G)|$ is the least value of $|\mc{M}(O)|$ as $O$ ranges over expansive compactly generated open subgroups of $G$.\qedhere
\end{enumerate}
\end{cor}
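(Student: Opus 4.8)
The plan is to deduce the corollary directly from Proposition~\ref{prop:minimal_A-semisimple}, so the work reduces to checking that a regionally expansive, [A]-semisimple group satisfies the relevant hypotheses. First I would observe that [A]-semisimplicity supplies the two standing assumptions of Proposition~\ref{prop:minimal_A-semisimple}: by definition $\QZ(G) = \triv$, and by Proposition~\ref{prop:[A]-semisimplicity_LN} the group $G$ has no non-trivial quasi-discrete closed \emph{locally} normal subgroups. Since every closed normal subgroup is in particular closed and locally normal (its normalizer is all of $G$, which is open), it follows that $G$ has no non-trivial quasi-discrete closed normal subgroups. Thus the hypotheses of Proposition~\ref{prop:minimal_A-semisimple} are met, and all of its conclusions are available.

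Parts (i) and (ii) of the corollary are then essentially immediate. The finiteness of $\mc{M}(G)$ is Proposition~\ref{prop:minimal_A-semisimple}(i), while the assertion that every non-trivial closed normal subgroup contains some $M \in \mc{M}(G)$ is the second sentence of Lemma~\ref{lem:minimal_geofaithful}(i), which applies since $\QZ(G) = \triv$ (a minimal non-trivial closed normal subgroup being precisely a member of $\mc{M}(G)$). Part (ii) of the corollary is exactly Proposition~\ref{prop:minimal_A-semisimple}(iii).

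For part (iii), I would invoke Proposition~\ref{prop:minimal_A-semisimple}(v), whose only additional hypothesis is the existence of a compactly generated open subgroup with no non-trivial abelian normal subgroups. The point is that in the [A]-semisimple setting this hypothesis is automatic: by Van Dantzig's theorem $G$ has a compact open subgroup $U$, which is itself compactly generated and open, and any non-trivial abelian normal subgroup of $U$ would be a non-trivial abelian subgroup of $G$ whose normalizer contains the open subgroup $U$ — that is, a non-trivial abelian locally normal subgroup of $G$, contradicting [A]-semisimplicity. Hence $U$ has no non-trivial abelian normal subgroups, Proposition~\ref{prop:minimal_A-semisimple}(v) applies, and part (iii) follows.

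The content of the argument is not a genuine obstacle but rather the bookkeeping of matching hypotheses to the earlier proposition. The one place where the full strength of [A]-semisimplicity is genuinely used — as opposed to the weaker assumption ``no quasi-discrete closed normal subgroups'' that drives parts (i)--(iv) — is precisely in verifying the extra hypothesis of part (v): it is the absence of abelian locally normal subgroups that forces every compactly generated open subgroup to have no non-trivial abelian normal subgroups, which is what makes the minimality statement in (iii) accessible.
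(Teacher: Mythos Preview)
Your proof is correct and is essentially the same approach as the paper, which treats the corollary as an immediate consequence of Proposition~\ref{prop:minimal_A-semisimple} together with Lemma~\ref{lem:minimal_geofaithful}(i). Your explicit verification of the extra hypothesis in part~(v) via a compact open subgroup~$U$ (noting that an abelian normal subgroup of~$U$ would be abelian locally normal in~$G$) is exactly the intended reduction.
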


\begin{cor}\label{cor:monolith_is_regional}
Let $G$ be a regionally expansive \tdlc group that is [A]-semisimple.  Then $G$ is monolithic if and only if some compactly generated open subgroup of $G$ is monolithic.
\end{cor}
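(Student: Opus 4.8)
The plan is to recast monolithicity in terms of the finite invariant $\mc{M}$ and then transfer this invariant between $G$ and its compactly generated open subgroups via Corollary~\ref{cor:minimal_A-semisimple}. The first step I would record is the elementary equivalence that, for any regionally expansive, [A]-semisimple \tdlc group $X$, the group $X$ is monolithic if and only if $|\mc{M}(X)| = 1$. One direction is immediate: if $\mc{M}(X) = \{M\}$, then by Corollary~\ref{cor:minimal_A-semisimple}(i) every non-trivial closed normal subgroup of $X$ contains $M$, so the intersection of all such subgroups equals $M \neq \triv$. Conversely, if $X$ is monolithic, then $\Mon(X)$ is a non-trivial closed normal subgroup and hence, again by Corollary~\ref{cor:minimal_A-semisimple}(i), contains some $M \in \mc{M}(X)$; minimality of $M$ forces $\Mon(X) = M$, and any further member of $\mc{M}(X)$ would contain, hence equal, $M$.

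Two auxiliary observations make the corollary applicable to the compactly generated open subgroups appearing below. First, every open subgroup $O$ of an [A]-semisimple group is itself [A]-semisimple: a locally normal abelian subgroup of $O$ is locally normal in $G$ since $O$ is open, and an element with open centralizer in $O$ has open centralizer in $G$, so both defining conditions transfer directly. Second, any monolithic \tdlc group $X$ is automatically expansive: given a filtering family $(N_i)_{i\in I}$ of non-trivial compact normal subgroups of $X$, each $N_i$ contains $\Mon(X)$, whence $\bigcap_{i\in I} N_i \supseteq \Mon(X) \neq \triv$, and expansiveness follows from Lemma~\ref{lem:expansive_tdlc}. In particular a monolithic compactly generated open subgroup is regionally expansive.

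With these in hand the equivalence splits cleanly. For the forward implication, I would suppose $G$ is monolithic, so $|\mc{M}(G)| = 1$ by the first step; Corollary~\ref{cor:minimal_A-semisimple}(iii) then furnishes an expansive compactly generated open subgroup $O$ with $|\mc{M}(O)| = |\mc{M}(G)| = 1$, and since $O$ is regionally expansive and [A]-semisimple, the first step applied to $O$ shows $O$ is monolithic. For the converse, let $O$ be a compactly generated open subgroup that is monolithic; by the auxiliary observations $O$ is [A]-semisimple and regionally expansive, so $|\mc{M}(O)| = 1$, and Corollary~\ref{cor:minimal_A-semisimple}(ii) provides a surjection $\mc{M}(O) \to \mc{M}(G)$, forcing $|\mc{M}(G)| \le 1$ and in particular $\mc{M}(G) \neq \emptyset$; thus $|\mc{M}(G)| = 1$ and $G$ is monolithic. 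I expect the only real subtlety to be the backward direction's reliance on the fact that monolithicity already implies expansiveness — without this observation one could not legitimately invoke Corollary~\ref{cor:minimal_A-semisimple}(ii), whose hypotheses require the relevant open subgroup to be regionally expansive.
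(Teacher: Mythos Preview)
Your proof is correct and follows the route the paper intends: the corollary is stated without proof immediately after Corollary~\ref{cor:minimal_A-semisimple}, and the implicit argument is precisely to translate ``monolithic'' into $|\mc{M}|=1$ and then invoke parts (ii) and (iii) of that result. Your explicit verification that a monolithic compactly generated open subgroup is automatically expansive (via Lemma~\ref{lem:expansive_tdlc}) is exactly the check needed to legitimize the appeal to Corollary~\ref{cor:minimal_A-semisimple}(ii) in the converse direction, and the paper tacitly relies on the same observation.
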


We next observe that regionally expansive topologically characteristically simple groups are [A]-semisimple.  Our proof requires a small adaptation of a lemma from \cite{W_E_14}.  The proof is the same as in \cite{W_E_14}, so we leave the details to the reader.

\begin{lem}[See {\cite[Lemma~9.11]{W_E_14}}]\label{lem:CRW_comm1}
	Suppose that $G$ is a \tdlc group with a compact open subgroup $U$. Suppose further that $\mc{K}$ is a finite set of infinite compact locally normal subgroups of $G$ such that $\mc{K}$ is stable under conjugation by $U$.  Defining 
\[
	H :=\cgrp{\mc{K}}, \; V:=\bigcap_{K \in \mc{K}} \N_{H\cap U}(K), \; \text{and} \; L:=\cgrp{V\cap K\mid K \in \mc{K} },
\]
the following holds:
	\begin{enumerate}[label=(\roman*)]
		\item $U\sleq \N_G(H)$;
		\item $L$ is an infinite closed normal subgroup of $U$; and
		\item $\Comm_{UH}(L)=UH$.\qedhere
	\end{enumerate}
\end{lem}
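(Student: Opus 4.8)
The plan is to establish the three assertions in the order listed, using the fact that $\mc{K}$ is finite and $U$-stable to make everything compact-open-subgroup computations. For (i), I would observe that since $\mc{K}$ is stable under conjugation by $U$, each $u \in U$ permutes the finite set $\mc{K}$ and hence sends the generating set of $H = \cgrp{\mc{K}}$ to itself; thus $uHu^{-1} = H$, giving $U \leq \N_G(H)$. This is the easy part.

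For (ii), the goal is to show $L = \cgrp{V \cap K \mid K \in \mc{K}}$ is an infinite closed normal subgroup of $U$. First I would check that $V := \bigcap_{K \in \mc{K}} \N_{H \cap U}(K)$ is open in $H \cap U$: since $\mc{K}$ is finite and each $K$ is compact (hence each $\N_{H \cap U}(K)$ is open, because the normalizer of a compact, and in particular locally normal, subgroup is open), $V$ is a finite intersection of open subgroups, so it is open in $H \cap U$. Because $V$ normalizes each $K$, each $V \cap K$ is normal in $V$, and so $L$ is normalized by $V$. The more delicate point is normality in all of $U$: since each $K$ is infinite compact and locally normal, $V \cap K$ is open in $K$ (as $V$ is open in $H \cap U$ and $K$ is a subgroup of $H$), hence infinite, so $L$ is infinite. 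To get that $L$ is normal in $U$, I would argue that $L$ is in fact the subgroup generated by the $V$-fixed parts of the members of $\mc{K}$, and that $U$, permuting $\mc{K}$ and normalizing $V$ (since $V$ is defined $U$-invariantly as an intersection over the $U$-stable family $\mc{K}$), permutes the generators $\{V \cap K\}$ among themselves. The key observation is that $V$ is itself normalized by $U$: for $u \in U$, conjugation permutes the $K$'s, so $u V u^{-1} = \bigcap_K \N_{H \cap U}(uKu^{-1}) = V$. Then $u(V \cap K)u^{-1} = V \cap uKu^{-1}$, which is again one of the generators, so $uLu^{-1} = L$.

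For (iii), I would show $\Comm_{UH}(L) = UH$ by proving $U$ and $H$ each commensurate $L$. The element-by-element computation uses Lemma~\ref{lem:QC} and the localization apparatus: since $L$ is a compact subgroup, $\Comm_{UH}(L)$ is a genuine subgroup, and it contains $U$ because $U$ normalizes $L$ (from part (ii)), hence commensurates it. The substantive content is that $H \leq \Comm_{UH}(L)$: for $h \in H$ one wants $hLh^{-1} \cap L$ to have finite index in both. Here I would exploit that $L$ contains an open subgroup of each $K \in \mc{K}$ and that $H$ is topologically generated by $\mc{K}$, so conjugating $L$ by a generator $k \in K$ only moves $L$ within the commensurability class controlled by the finitely many compact groups in $\mc{K}$; finiteness of $\mc{K}$ and compactness then force commensurability.

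The main obstacle I anticipate is the verification in (iii) that arbitrary elements of the topologically generated group $H = \cgrp{\mc{K}}$ commensurate $L$, rather than just the generators. Since commensuration is not obviously a closed condition, I would first prove commensuration for each $K \in \mc{K}$ (equivalently, for elements of the generators), then use that $\Comm_{UH}(L)$ is a subgroup to obtain the result for the abstract group $\grp{\mc{K}}$, and finally upgrade to the closure $H$ by showing $\Comm_{UH}(L)$ is already open (it contains the open subgroup $U$), hence closed, and therefore contains $\overline{\grp{\mc{K}}} = H$. This last topological step is where I would be most careful, but since I am told the proof mimics \cite[Lemma~9.11]{W_E_14} and may be left to the reader, the structure above should suffice.
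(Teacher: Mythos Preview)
The paper does not actually give a proof of this lemma; it states only that ``the proof is the same as in \cite{W_E_14}, so we leave the details to the reader.'' Your outline is a faithful reconstruction of that omitted argument: part~(i) is immediate from $U$-stability of $\mc{K}$; part~(ii) follows exactly as you say, using that $U$ normalizes $V$ (since $V$ is defined by an intersection over the $U$-stable family $\mc{K}$) and hence permutes the generators $V\cap K$; and for~(iii) your three-step plan---show each $K\in\mc{K}$ commensurates $L$, pass to $\grp{\mc{K}}$ by the subgroup property of the commensurator, then to $H=\cgrp{\mc{K}}$ using that $\Comm_{UH}(L)\supseteq U$ is open hence closed---is precisely the structure of the proof in \cite{W_E_14}. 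You have also correctly identified the one step that needs genuine work (that each $K$ commensurates $L$) and the topological closure step, so there is nothing further to compare.
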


\begin{thm}\label{thm:A-semisimple1}
Let $G$ be a \tdlc group that is topologically characteristically simple, regionally expansive, and non-discrete.  Then $G$ is [A]-semisimple.
\end{thm}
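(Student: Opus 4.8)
The plan is to first pin down the quasi-center, then use characteristic simplicity to show $G$ has no non-trivial compact normal subgroup, and finally — assuming $G$ is not [A]-semisimple — to manufacture such a compact normal subgroup from an abelian locally normal subgroup, a contradiction. For the first step I would show $\QZ(G)=\triv$: the closure $\overline{\QZ(G)}$ is a closed characteristic subgroup, so by topological characteristic simplicity it is trivial or all of $G$. If it is all of $G$ then $G$ is quasi-discrete, and since $G$ is regionally expansive, Lemma~\ref{lem:nonqd} forces $G$ to be discrete, against hypothesis. Hence $\QZ(G)=\triv$, which makes Lemma~\ref{lem:minimal_geofaithful} available.

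Next I would read off the global structure. If $\mc{M}(G)$ were infinite, Lemma~\ref{lem:minimal_geofaithful}(iii) would give a non-trivial closed characteristic abelian subgroup, whose closure would be $G$, forcing $G$ abelian and hence $G=\QZ(G)=\triv$; so $\mc{M}(G)$ is finite, and it is non-empty by Lemma~\ref{lem:minimal_geofaithful}(i) applied to $G$ itself. Thus $\Soc(G)$ is a non-trivial closed characteristic subgroup, so $\Soc(G)=G$; distinct members of $\mc{M}(G)$ commute and meet trivially, and none is abelian (an abelian minimal normal subgroup would be central, contradicting $\QZ(G)=\triv$). Moreover $G$ is non-compact, since a non-discrete compact group is not expansive whereas a compactly generated regionally expansive group is. Finally, no member of $\mc{M}(G)$ is compact: characteristic simplicity makes $\Aut(G)$ transitive on $\mc{M}(G)$, so if one member were compact all would be, and then $G=\Soc(G)$, a finite product of compact subgroups, would be compact. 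Since every non-trivial closed normal subgroup contains a member of $\mc{M}(G)$, I conclude that $G$ has no non-trivial compact normal subgroup. This is the statement I aim to contradict.

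Assume now, for contradiction, that $G$ is not [A]-semisimple. As $\QZ(G)=\triv$, Proposition~\ref{prop:[A]-semisimplicity_LN} gives a non-trivial abelian locally normal subgroup, which I may take to be an infinite compact subgroup $A$: replacing it by $A\cap U$ for a compact open $U\le\N_G(A)$ preserves abelianness and local normality, and it cannot be finite, since a finite locally normal subgroup lies in $\QZ(G)=\triv$. Fixing a compact open subgroup $U$, the set $\mc{K}$ of $U$-conjugates of $A$ is finite (its stabilizer $U\cap\N_G(A)$ is open) and $U$-conjugation invariant, so Lemma~\ref{lem:CRW_comm1} applies and yields an infinite compact subgroup $L$ that is normal in $U$ and satisfies $\Comm_{UH}(L)=UH$ for the associated subgroup $H=\cgrp{\mc{K}}$. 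In particular $L$ is an infinite compact locally normal subgroup of $G$ whose commensurator is open.

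The main obstacle is this final step: converting $L$ into a genuine compact normal subgroup of $G$. The tool is Lemma~\ref{lem:no_cocompact}: it suffices to exhibit a \emph{dense} subgroup $R\le G$ — so that $G=RU'$ for every compact open $U'$ — together with a non-trivial normal subgroup of $R$ contained in $U'$; the lemma then places a non-trivial compact normal subgroup of $G$ inside $U'$, contradicting the previous paragraph. Producing such a dense $R$ from the abelian locally normal subgroup $A$ (equivalently, from $L$) is the delicate heart of the matter, and it is here that global characteristic simplicity must re-enter: the closed subgroup generated by all abelian locally normal subgroups is characteristic and non-trivial, hence all of $G$, so the conjugates of $A$ are abundant enough that the relevant commensurator or normalizer should contain a dense subgroup with a non-trivial normal part inside a compact open subgroup. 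Establishing this density while keeping the relevant subgroup infinite is the step I expect to require the most care; by contrast, the reductions to $\QZ(G)=\triv$ and to the absence of compact normal subgroups are comparatively routine consequences of the standing hypotheses.
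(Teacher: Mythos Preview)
Your opening is sound: $\QZ(G)=\triv$ via Lemma~\ref{lem:nonqd} is exactly how the paper begins, and your structural observations about $\mc{M}(G)$ and the absence of compact normal subgroups are correct (though, as it turns out, the last of these is not the target the paper aims at). You have also correctly identified that the subgroup generated by all abelian compact locally normal subgroups is characteristic, hence dense --- the paper uses this too.

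The gap you yourself flag is a real one, and it is not a matter of care but of a missing idea. Your plan is to produce a dense $R$ with a non-trivial normal subgroup inside a compact open $U'$, then apply Lemma~\ref{lem:no_cocompact} to get a compact normal subgroup of $G$. But there is no evident candidate for such an $R$: neither $\Comm_G(L)$ nor the group generated by the conjugates of $A$ has any reason to carry a non-trivial normal subgroup that lives inside a fixed compact open subgroup. The paper does \emph{not} seek a compact normal subgroup of $G$; instead it works at the level of a fixed expansive compactly generated open subgroup $O$ and derives the contradiction there. The mechanism is a filtering family of closed subgroups $Q_{\mc{B},F,U}:=\overline{\QC_G(L_{\mc{B},F,U})}$, where the $L$'s are nilpotent groups built via Lemma~\ref{lem:CRW_comm1} from finite sets of conjugates of abelian locally normal subgroups. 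This family is characteristic, so its intersection $M$ is $\triv$ or $G$, giving a dichotomy your sketch does not anticipate. If $M=\triv$, one restricts to $O$ and invokes Proposition~\ref{prop:filtering} (which needs compact generation) to force some $Q$ to be discrete, contradicting non-discreteness of the $Q$'s. If $M=G$, then $\QC_G(L)$ is dense for every such $L$, and \emph{now} one finds finitely many elements of $\QC_O(L)$ that, together with a compact open $W$, generate $O$; their common centralized open piece of $L$ is then normal in $O$ and sits inside $W$, contradicting the trivial normal core of $W$ in $O$. So the contradiction lives in $O$, not in $G$, and the passage from your single $L$ to the filtering family of quasi-centralizers (indexed so as to be $\Aut(G)$-invariant) is the missing ingredient.
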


\begin{proof}
By Lemma~\ref{lem:nonqd}, the quasi-center of $G$ is not dense.  Since $G$ is topologically characteristically simple, we have $\QZ(G)=\triv$.  

Let $\mc{A}$ be the collection of non-trivial abelian compact locally normal subgroups. Suppose for a contradiction that $\mc{A}$ is non-empty, so $\grp{\mc{A}}$ is non-trivial.  Since the set $\mc{A}$ is invariant under automorphisms of $G$, it follows that $\grp{\mc{A}}$ is dense in $G$.

For the convenience of this proof, we will say an admissible triple $(\mc{B},F,U)$ consists of the following: $\mc{B}$ is a non-empty finite subset of $\mc{A}$, $F$ is a finite subset of $G$ containing $1$, and $U$ is a compact open subgroup of $G$. Fix an admissible triple $(\mc{B},F,U)$ and define the following:
\[
\mc{B}_{F,U}: = \{bAb^{-1}\mid b\in UFU\text{ and } A \in \mc{B}\};
\]
\[
V := V_{\mc{B},F,U}:=\bigcap_{A \in \mc{B}_{F,U}}\N_{\cgrp{\mc{B}_{F,U}}\cap U}(A);
\]
\[
L := L_{\mc{B},F,U}:=\cgrp{V_{\mc{B},F,U}\cap A \mid A \in \mc{B}_{F,U}}; \text{ and }
\]
\[
Q := Q_{\mc{B},F,U} :=\overline{\QC_G(L_{\mc{B},F,U})}.
\]
The set $\mc{B}_{F,U}$ is invariant under conjugation by elements of $U$. It is also a finite set, as $UFU = C_FU$ for some finite subset $C_F$ and each $A \in \mc{B}$ has only finitely many $U$-conjugates.  Lemma~\ref{lem:CRW_comm1} then ensures that $L$ is commensurated by $\cgrp{\mc{B}_{F,U}}U$.  It follows by Lemma~\ref{lem:QC} that $Q$ is normalized by $\cgrp{\mc{B}_{F,U}}U$.

For each $A \in \mc{B}_{F,U}$, the group $V \cap A$ is an abelian normal subgroup of $V$. As $\mc{B}_{F,U}$ is finite, Fitting's theorem  ensures that $L$ is nilpotent. In particular, $\Z(L)$ is non-trivial. In view of Lemma~\ref{lem:CRW_comm1}, $L$ is normal in $U$, so $\Z(L)$ is normal in $U$. Since $G$ has trivial quasi-center, the same is true of $U$; the action of $U$ on $\Z(L)$ by conjugation then has an infinite orbit, and in particular $\Z(L)$ is infinite. The group $\Z(L)$ is thus an infinite compact subgroup of $Q$, so $Q$ is not discrete.

Let us now consider how $Q_{\mc{B},F,U}$ depends on the choices of $\mc{B}$, $F$ and $U$. The set
\[
\mc{C}:= \{\mc{B}_{F,U} \mid (\mc{B},F,U) \text{ admissible}\}
\]
is a directed family.  Indeed, given finite subsets $F_1,\dots,F_n$ of $G$ and compact open subgroups $U_1,\dots,U_n$ of $G$, the set $\bigcup^n_{i=1}U_iF_iU_i$ is compact, and hence given any compact open subgroup $U'$ of $G$, there is some $F \subseteq G$ finite such that $\bigcup^n_{i=1}U_iF_iU_i \subseteq U'FU'$.  For finite subsets $\mc{B}^{(1)},\dots,\mc{B}^{(k)}$ of $\mc{A}$, we see that
\[
\bigcup^n_{i=1}\bigcup^k_{j=1}(\mc{B}^{(j)})_{F_i,U_i} \subseteq \mc{B}_{F,U'},
\]
where $\mc{B} = \bigcup^k_{j=1}\mc{B}^{(j)}$.

Now suppose that $\mc{B}_{F,U} \subseteq \mc{B}'_{F',U'}$ for admissible triples $(\mc{B},F,U)$ and $(\mc{B}',F',U')$.  The group
\[
\cgrp{V_{\mc{B}',F',U'} \cap A \mid A \in \mc{B}_{F,U} }
\]
is a subgroup of $L_{\mc{B}',F',U'}$ that contains a finite index open subgroup of every $A \in \mc{B}_{F,U}$. The group $L_{\mc{B},F,U}$ is an internal (not necessarily direct) product of the normal subgroups $V_{\mc{B},F,U}\cap A$ where $A\in \mc{B}_{F,U}$. Since $I:=L_{\mc{B},F,U}\cap L_{\mc{B}',F',U'}$ contains a finite index open subgroup of $V_{\mc{B},F,U}\cap A$ for each $A \in \mc{B}_{F,U}$, it follows that  $I$ is of finite index in $L_{\mc{B},F,U}$. Thus, $L_{\mc{B}',F',U'}$  contains a finite index subgroup of $L_{\mc{B},F,U}$, and $Q_{\mc{B}',F',U'} \le Q_{\mc{B},F,U}$.  Since $\mc{C}$ is a directed family, we conclude that
\[
\mc{F} := \{Q_{\mc{B},F,U} \mid (\mc{B},F,U) \text{ admissible}\}
\]
is a filtering family.

Set $M: = \bigcap_{Q \in \mc{F}}Q$.  By construction, $M$ is a closed characteristic subgroup of $G$, so it is either trivial or equal to $G$; we will derive a contradiction in both cases.  Fix an expansive compactly generated open subgroup $O$ of $G$.

Suppose $M = \triv$.  Let $(\mc{B},F,U)$ be an admissible triple.  Since $O$ is compactly generated and $\mc{A}$ generates a dense subgroup of $G$, there is a finite subset $\mc{B}'$ of $\mc{A}$ containing $\mc{B}$ such that $O \le \cgrp{\mc{B}'_{F,U}}U$.  As noted above, $Q_{\mc{B}',F,U}$ is normalized by  $\cgrp{\mc{B}'_{F,U}}U$. A fortiori, $Q_{\mc{B}',F,U}$ is normalized by $O$, and we have $Q_{\mc{B}',F,U} \le Q_{\mc{B},F,U}$.  Given the freedom of choice of $(\mf{B},F,U)$, we conclude that every element of $\mc{F}$ contains an element of $\mc{F}'$, where $\mc{F}'$ consists of the elements of $\mc{F}$ normalized by $O$. The family $\mc{F}'$ has a trivial intersection, since we assume $M=\{1\}$, and we obtain a trivial intersection of closed normal subgroups of $O$ as follows: $\triv = \bigcap_{Q \in \mc{F}'}(Q \cap O)$.  Applying Proposition~\ref{prop:filtering}, $Q \cap O$ is discrete for some $Q \in \mc{F}'$, and since $O$ is open, it follows that $Q$ is discrete.  However, we have already shown that $Q_{\mc{B},F,U}$ is non-discrete for every admissible triple $(\mc{B},F,U)$, which is a contradiction.

Let us now suppose that $M = G$.  Fix an admissible triple $(\mc{B},F,U)$ and let $L = L_{\mc{B},F,U}$; recall that $L$ is non-discrete. Since $O$ is expansive, we can find a compact open subgroup $W$ of $O\cap U$ which has a trivial normal core in $O$. The group $L$ has dense quasi-centralizer in $G$, ensuring that $O = \grp{C}W$ for some finite subset $C$ of $\QC_O(L)$.  We may then find a finite index $\widehat{W}\normal W$ such that $C$ centralizes $L \cap \widehat{W}$, and since $L \cap \widehat{W}$ is normalized by both $C$ and $W$, it is normal in $O$.  We have thus obtained a non-trivial normal subgroup of $O$ contained in $W$, contradicting our assumption that $W$ has trivial normal core in $O$.

From the contradictions in the previous two paragraphs, we conclude that no admissible triples exist, so in fact $\mc{A}$ is empty; that is, $G$ has no non-trivial locally normal abelian subgroups.  Since also $\QZ(G)=\triv$, we conclude that $G$ is [A]-semisimple.
\end{proof}

\subsection{Permanence properties}

In general, regional expansiveness does not pass from a dense locally compact subgroup to the ambient group; consider discrete dense locally compact subgroups.  However, regional expansiveness can be transmitted from a dense locally compact subgroup up to the ambient group under suitable assumptions. 

A \textit{normal compression} from a topological group $H$ to a topological group $G$ is a dense embedding $\psi:H\rightarrow G$ such that the image is \textit{normal}.

\begin{lem}\label{lemprop:normal-compressions}
Let $G$ and $H$ be \tdlc groups and $\psi \colon H \rightarrow G$ be a normal compression. If $H$ is regionally expansive with trivial quasi-center, then $G/\Z(G)$ is regionally expansive with trivial quasi-center.
\end{lem}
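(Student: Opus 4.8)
The plan is to understand how a normal compression $\psi\colon H \to G$ relates the expansive structure of $H$ to that of the quotient $G/\Z(G)$. Since regional expansiveness is witnessed by a compactly generated open subgroup $O \leq H$ together with a compact open $W \leq O$ having trivial normal core in $O$, I would first transport such data across $\psi$. The natural move is to consider $\overline{\psi(O)}$, the closure of the image of a witnessing compactly generated expansive open subgroup, and try to show it is a compactly generated open subgroup of $G$ whose image in $G/\Z(G)$ is expansive.

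First I would set up the identification. Let $U \leq G$ be a compact open subgroup with $\psi^{-1}(U)$ open in $H$. Because $\psi(H)$ is dense and normal in $G$, density gives $G = \psi(H)U$, so I can apply Lemma~\ref{lem:Cayley-Abels} (with the abstract subgroup $\psi(H)$) to extract a finitely generated subgroup of $\psi(H)$ that is cocompact, hence produce a compactly generated open subgroup $O' \leq G$ arising from the image. The key structural input should be that for a normal compression, $\psi(H)$ sees all the "interior" normal subgroup structure of $G$: any compact normal subgroup of $G$ contained in the image, pulled back along $\psi$, is a compact normal subgroup of $H$. I would then reduce the triviality of the relevant normal core to a statement inside $H$, where the expansiveness hypothesis applies.

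The second thread is handling the center. The conclusion is about $G/\Z(G)$, not $G$, so I expect $\Z(G)$ to absorb exactly the obstruction to expansiveness coming from the potential failure of $\psi$ to be a homeomorphism. Concretely, I would show that any compact normal subgroup $K$ of the relevant open subgroup of $G$, with trivial intersection with the image of the expansive witness, must be central: since $\psi(H)$ is dense and normalizes $K$, and $K \cap \psi(H)$ is a compact normal subgroup of $H$ that must be trivial by the trivial-normal-core hypothesis on $H$, the group $K$ commutes with the dense subgroup $\psi(H)$ via Lemma~\ref{lem:QC:basic} (noting $K \cap \psi(H)$ discrete in the relevant topology), hence $K \leq \Z(G)$. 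For the trivial-quasi-center claim, I would invoke that $\QZ$ behaves well: an element of $\QZ(G/\Z(G))$ would have open centralizer, pull back to give an element quasi-centralizing a dense subgroup coming from $H$, contradicting $\QZ(H) = \triv$ via Lemma~\ref{lem:qcent_norm} and the fact that quasi-centralizing an open subgroup of a dense image forces centralizing.

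The main obstacle will be the topological bookkeeping around the fact that $\psi$ need not be open, so $\psi(O)$ need not be open in $G$ and its closure may be strictly larger; controlling $\overline{\psi(O)} \cap U$ versus $\psi(O \cap \psi^{-1}(U))$ requires care, and this is precisely where Lemma~\ref{lem:no_cocompact} becomes the right tool---taking $R = \psi(H)$ (a non-closed subgroup, which is why that lemma was stated for non-closed $R$) and $G = RU$, so that a non-trivial normal subgroup of $R$ inside $R \cap U$ would force a non-trivial compact normal subgroup of $G$ inside $U$. Combining this with the reduction of expansiveness to the already-assumed expansiveness of $H$, and then quotienting out the central part identified above, should yield that $G/\Z(G)$ is regionally expansive with trivial quasi-center.
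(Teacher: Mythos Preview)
Your proposal has a genuine gap at the step where you claim that ``$K \cap \psi(H)$ is a compact normal subgroup of $H$ that must be trivial by the trivial-normal-core hypothesis on $H$.'' The map $\psi$ is continuous and injective but \emph{not} open, so $\psi^{-1}(K)$ need not be compact in $H$, and there is no reason it should land inside the specific witness $W \leq L$. The trivial-normal-core hypothesis only controls normal subgroups of $L$ that are contained in $W$; an arbitrary closed normal subgroup of $\psi^{-1}(O') \supseteq L$, even one mapping into a compact set in $G$, is not constrained by that hypothesis. You correctly flag this topological mismatch as ``the main obstacle,'' but your proposed fix via Lemma~\ref{lem:no_cocompact} is applied in the wrong direction: that lemma produces a compact normal subgroup of $G$ \emph{from} a normal subgroup of $R$, whereas you need to go from a compact normal subgroup of (an open subgroup of) $G$ \emph{back} to something controllable in $H$.

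The paper's proof avoids transporting the witness $(L,W)$ altogether. It first establishes $\QZ(G/\Z(G))=\triv$ (essentially by your quasi-center argument), which guarantees that every open subgroup $O$ of $G/\Z(G)$ has trivial center and that $\theta\circ\psi$ remains injective. Then, instead of checking a single normal core, it uses the filtering-family characterization of expansiveness (Lemma~\ref{lem:expansive_tdlc}(iii)): given a filtering family $(N_i)$ of non-trivial closed normal subgroups of $O$, each $N_i$ is non-central, hence meets the dense normal image $\theta\circ\psi(H)\cap O$ non-trivially; the pullbacks $M_i:=(\theta\circ\psi)^{-1}(N_i)\cap L$ are then non-trivial (in fact non-discrete, since $\QZ(H)=\triv$), and Lemma~\ref{lem:minimal_geofaithful}(i) forces $\bigcap M_i\neq\triv$, whence $\bigcap N_i\neq\triv$. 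This sidesteps the compactness mismatch entirely: one never needs $\psi^{-1}(N_i)$ to be compact or to sit inside $W$, only to be a non-trivial closed normal subgroup of $L$.
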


\begin{proof}
Let $\theta:G\rightarrow G/\Z(G)$ be the usual projection. Since $H$ has trivial quasi-center, $\theta\circ\psi:H\rightarrow G/\Z(G)$ is injective. It follows that $\theta\circ\psi(H)\cap \QZ(G/\Z(G))$ is trivial. Set $J:=\theta^{-1}(\QZ(G/\Z(G)))$. The subgroup $J$ is normal in $G$ and intersects $\psi(H)$ trivially. As $\psi(H)$ is dense and normal, we infer that $J\leq \Z(G)$. Hence, $G/\Z(G)$ has trivial quasi-center.  

Fix a compactly generated expansive open subgroup $L$ of $H$.  Let $O$ be a compactly generated open subgroup of $G/\Z(G)$ that contains $\theta\circ \psi(L)$, where $\theta \colon G \to G/\Z(G)$ is again the projection, and note that $\Z(O)$ is quasi-central in $G/\Z(G)$, hence trivial.  We argue that $O$ is expansive.

Let $(N_i)_{i \in I}$ be a filtering family of non-trivial closed normal subgroups of $O$ and fix $i \in I$.  The subgroup $N_i$ is not central in $O$, since $O$ has trivial center, and in particular, $N_i$ does not centralize $P:= \theta\circ\psi(H) \cap O$, as $P$ is dense in $O$. Seeing as $N_i$ and $P$ are both normal in $O$, we deduce that they must have a non-trivial intersection.  The preimage $(\theta\circ\psi)\inv(N_i)$ is then non-trivial, and it is non-discrete, since $H$ has a trivial quasi-center. The group $M_i := (\theta\circ \psi)\inv(N_i) \cap L$ is then also non-trivial.  Thus, $(M_i)_{i \in I}$ is a filtering family of non-trivial closed normal subgroups of $L$.  Lemma~\ref{lem:minimal_geofaithful}(i) ensures that $\bigcap_{i \in I}M_i$ is non-trivial, and hence $\bigcap_{i \in I}N_i$ is non-trivial.  By Lemma~\ref{lem:expansive_tdlc}, we have proved that $O$ is expansive; hence, $G/\Z(G)$ is regionally expansive.
\end{proof}

With some adjustments, we can consider a useful, more general situation where $\psi \colon  H \rightarrow G$ is a continuous, injective homomorphism such that $\psi(H)$ is normal in $G$. 

\begin{lem}\label{lem:RF_overgroup}
	Let $G$ be a \tdlc group and $H$ be a closed locally normal subgroup of $G$. If $H$ is regionally expansive and $\QC_G(H) \le H$, then $G$ is regionally expansive.
\end{lem}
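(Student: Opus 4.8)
The plan is to exhibit a single compactly generated open subgroup $O \le \N_G(H)$ together with a compact open subgroup whose normal core in $O$ is trivial; by definition this shows $G$ is expansive on $O$, hence regionally expansive. Since $H$ is regionally expansive, I first fix a compactly generated open subgroup $L$ of $H$ and a compact open subgroup $W \le L$ whose normal core in $L$ is trivial. Because $W$ is open in $H$ and $\N_G(H)$ is open in $G$, Van Dantzig's theorem lets me choose a compact open subgroup $U$ of $G$ with $U \le \N_G(H)$ and $U \cap H \le W$: indeed $W = Y \cap H$ for some open $Y \subseteq G$, and any compact open subgroup of $G$ contained in $Y \cap \N_G(H)$ works. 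I then set $O := \la L \cup U \ra$, which is open in $G$ (it contains $U$), contained in $\N_G(H)$ (as $L \le H \le \N_G(H)$ and $U \le \N_G(H)$), and compactly generated (generated by $U$ together with a compact generating set of $L$). My claim is that the compact open subgroup $U$ has trivial normal core $K := \bigcap_{o \in O}oUo\inv$ in $O$.

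The heart of the argument is two observations about $K$. First, $K \cap H = \triv$: since $O$ normalizes $H$, the subgroup $K \cap H$ is normal in $O$, hence normal in $L$; and $K \cap H \le U \cap H \le W \le L$, so $K \cap H$ is a normal subgroup of $L$ contained in $W$, forcing $K \cap H$ into the (trivial) normal core of $W$ in $L$. Second, $K$ centralizes the open subgroup $H \cap O$ of $H$: because $K \normal O$ and $H \cap O \normal O$ (the latter as $O \le \N_G(H)$), the commutator $[K, H \cap O]$ lies in $K \cap (H \cap O) = K \cap H = \triv$.

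These combine with the hypothesis $\QC_G(H) \le H$ to finish the proof: since $K$ centralizes the open subgroup $H \cap O$ of $H$, every element of $K$ lies in $\QC_G(H)$, so $K \le \QC_G(H) \le H$; but then $K = K \cap H = \triv$. Thus $U$ has trivial normal core in $O$, witnessing that $G$ is regionally expansive. The step I expect to carry the real weight is this final collision: regional expansiveness of $H$ only controls normal subgroups that already sit inside $H$ (via the core of $W$ in $L$), so the crux is to first show that the ambient normal core $K$ meets $H$ trivially and then use the quasi-centralizer hypothesis to pull $K$ back into $H$, where the two facts annihilate it. Everything else — the choice of $U$ with $U \cap H \le W$ and the compact generation of $O$ — is routine.
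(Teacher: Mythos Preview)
Your proof is correct and follows the same overall strategy as the paper: generate a compactly generated open subgroup of $\N_G(H)$ from the expansive piece of $H$ together with a compact open subgroup of $G$, then show the normal core $K$ quasi-centralizes $H$ and so lands back in $K \cap H$. Your execution is in fact cleaner: by arranging $U \cap H \le W$ from the start you obtain $K \cap H = \triv$ directly, whereas the paper only gets $K \cap H$ discrete (hence finite, as $K$ is compact) and then must pass to finite-index subgroups of $K$ and $H \cap W$ to force $K$ itself finite before finishing with a smaller compact open subgroup $W' \le W$ avoiding $K$.
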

%
%

\begin{proof}
	Note that $\QZ(G) \le \QC_G(K)$ for any subgroup $K$ of $G$.  In particular, since $\QC_G(H) \le H$, we have $\QZ(G) \le H$. Every discrete locally normal subgroup of $G$ is thus contained in $H$.
	
Fix a compactly generated open $O \le H$ such that $O$ is expansive and say that $W\leq G$ is a compact open subgroup of $G$ for which $O\cap W$ has trivial normal core in $O$.  Since $\N_G(H)$ is open, we may take $W \le \N_G(H)$.  The group $L:=\grp{O,W}$ is then a compactly generated open subgroup of $G$.  Letting $K$ be the normal core of $W$ in $L$, $O \cap K = \triv$, and thus, $H \cap K$ is discrete; since $K$ is compact, $H \cap K$ is indeed finite.  Now $H\cap W$ and $K$ normalize each other, so $[H\cap W,K] \le H\cap K$; in particular, $[H\cap W,K]$ is finite.  There are thus finite index open subgroups $H'$ of $H\cap W$ and $K'$ of $K$ such that $[H',K'] = \triv$, so  $K' \le \QC_G(H) \le H$.  Since $H \cap K$ is finite and $K'$ has finite index in $K$, it follows that $K$ is finite.  There is then an open subgroup $W'$ of $W$ such that $W' \cap K = \triv$, and we see that $W'$ has trivial normal core in $L$.  Thus $L$ is expansive, showing that $G$ is regionally expansive.
\end{proof}

\begin{prop}\label{prop:normal-compressions}
Let $G$ and $H$ be \tdlc groups and $\psi \colon H \rightarrow G$ be a continuous, injective homomorphism. Assume that  $H$ is regionally expansive with trivial quasi-center and that $\psi(H)$ is normal in $G$. Then the subgroup $Q:=\QC_G(\overline{\psi(H)})$ equals $\CC_G(\psi(H))$ and is closed in $G$, and $G/Q$ is regionally expansive with trivial quasi-center. In fact, $\ol{\psi(H)Q}/Q$ has trivial quasi-centralizer in $G/Q$.
\end{prop}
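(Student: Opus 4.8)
The plan is to route everything through a single \emph{transport} observation, which sidesteps the two features that make a direct approach awkward: $\psi$ need not be a topological embedding, and $K:=\overline{\psi(H)}$ may a priori have non-trivial (quasi-)center. The transport lemma I would record and prove once, then apply twice, is: if $\Gamma$ is a topological group and $\rho\colon H\to\Gamma$ is a continuous injective homomorphism with $\rho(H)\normal\Gamma$, and if $\bar g\in\Gamma$ centralizes some open subgroup of $M:=\overline{\rho(H)}$, then $\bar g$ already centralizes $\rho(H)$, hence $M$. The mechanism is that conjugation by $\bar g$ preserves $\rho(H)$ by normality, so it transports through the abstract isomorphism $\rho$ to an automorphism $\beta$ of $H$; the hypothesis forces $\beta$ to fix the open subgroup $T=\rho^{-1}(\text{open subgroup of }M)$ pointwise, and then for each $h\in H$ the element $h^{-1}\beta(h)$ centralizes the open subgroup $T\cap h^{-1}Th$, so it lies in $\QZ(H)=\triv$; thus $\beta=\mathrm{id}$. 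In short, $\QC_\Gamma(M)=\CC_\Gamma(M)=\CC_\Gamma(\rho(H))$.

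Applying this with $(\Gamma,\rho)=(G,\psi)$ and $M=K$ gives the first assertion at once: since $\CC_G(\psi(H))=\CC_G(K)\le\QC_G(K)\le\CC_G(\psi(H))$, all three coincide, so $Q=\QC_G(K)=\CC_G(\psi(H))$, which is closed (a centralizer) and normal in $G$ (the centralizer of the normal subgroup $K$). For the rest I would pass to $\theta\colon G\to G/Q$ and set $P:=\overline{\psi(H)Q}/Q$; a short saturation argument identifies $P$ with the closure $\overline{\theta(\psi(H))}$ of the normal subgroup $\theta(\psi(H))\normal G/Q$, so $P\normal G/Q$ is closed. The key computation is $\CC_{G/Q}(P)=\triv$: if $gQ$ centralizes $\theta(\psi(H))$ then $[g,\psi(h)]\in Q$ for all $h$, while normality of $\psi(H)$ gives $[g,\psi(h)]\in\psi(H)$, so $[g,\psi(h)]\in\psi(H)\cap Q=\Z(\psi(H))=\triv$ (the last step because $\Z(H)\le\QZ(H)=\triv$); hence $g\in\CC_G(\psi(H))=Q$. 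The transport lemma with $(\Gamma,\rho)=(G/Q,\theta\circ\psi)$ then upgrades this to $\QC_{G/Q}(P)=\CC_{G/Q}(P)=\triv$, which is the final assertion, and trivial quasi-center of $G/Q$ comes for free since $\QZ(G/Q)\le\QC_{G/Q}(P)=\triv$.

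It remains to establish regional expansiveness of $G/Q$, which I would obtain by applying Lemma~\ref{lem:RF_overgroup} to the closed locally normal subgroup $P\normal G/Q$: the hypothesis $\QC_{G/Q}(P)\le P$ is already secured by $\QC_{G/Q}(P)=\triv$, so I only need $P$ itself to be regionally expansive. For this, $\theta\circ\psi\colon H\to P$ is a normal compression, whence Lemma~\ref{lemprop:normal-compressions} yields that $P/\Z(P)$ is regionally expansive; and $\Z(P)\le\QZ(P)\le\QC_{G/Q}(P)=\triv$, so $P=P/\Z(P)$ is regionally expansive. Lemma~\ref{lem:RF_overgroup} then delivers that $G/Q$ is regionally expansive, completing all the claims.

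I expect the main difficulty to be more conceptual than computational: the natural first instinct is to prove $\QZ(K)=\triv$ (equivalently $\Z(K)=\triv$) so as to invoke Lemma~\ref{lem:qcent_norm} on $K\normal G$, but this is genuinely hard to arrange because $\psi$ is not a topological embedding and $K$ need not inherit regional expansiveness from its dense subgroup $\psi(H)$. The transport lemma avoids any control of $\Z(K)$, and the one identity that makes all the centralizer computations collapse is the normality relation $[g,\psi(h)]\in\psi(H)\cap Q=\triv$; isolating and exploiting that identity is, I anticipate, the real crux of the argument.
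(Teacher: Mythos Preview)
Your proof is correct, and for the back half (regional expansiveness of $G/Q$ via Lemma~\ref{lemprop:normal-compressions} applied to the normal compression $\theta\circ\psi\colon H\to P$ and then Lemma~\ref{lem:RF_overgroup}) it coincides with the paper's argument. The genuine difference is in how you establish $Q=\CC_G(\psi(H))$. The paper does not try to control $\QZ(K)$ directly either, but it takes a different detour: it first applies Lemma~\ref{lemprop:normal-compressions} to the normal compression $\psi\colon H\to K$ to conclude that $K/\Z(K)$ has trivial quasi-center, then invokes Lemma~\ref{lem:qcent_norm} inside $G/\Z(K)$ to get $[Q,K]\le\Z(K)$, and finally uses $\psi(H)\cap\Z(K)\le\Z(\psi(H))=\triv$. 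Your transport lemma replaces this two-lemma detour by a single elementary computation that never mentions $\Z(K)$: pulling conjugation by $g\in Q$ back to an abstract automorphism $\beta$ of $H$ fixing an open subgroup pointwise, and then using $h^{-1}\beta(h)\in\QZ(H)=\triv$. This is cleaner and more self-contained; the paper's route has the advantage of reusing already-proved lemmas rather than introducing a new device. Your observation that ``the natural first instinct is to prove $\QZ(K)=\triv$'' is well taken, but note that the paper also sidesteps this---just by quotienting out $\Z(K)$ rather than by transporting to $H$.
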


\begin{proof}
Set $K:=\overline{\psi(H)}$. Lemma~\ref{lemprop:normal-compressions} ensures that $K/\Z(K)$ has a trivial quasi-center. The quasi-centralizer of $K/\Z(K)$ in $G/\Z(K)$ then equals the centralizer of $K/\Z(K)$ in $G/\Z(K)$ by Lemma~\ref{lem:qcent_norm}. Therefore, $[Q ,K] \le \Z(K)$, and in particular, 
\[
[Q,\psi(H)] \le \psi(H) \cap \Z(K) = \triv.
\]
We conclude that $Q $ centralizes $\psi(H)$. In fact, $Q  = \CC_G(K) = \CC_G(\psi(H))$, establishing the first claim.  Since $Q$ is a centralizer, it is a closed subgroup of $G$.

Seeing that $Q \cap \psi(H) = \triv$, we have a continuous, injective homomorphism $\psi^*: H \rightarrow G^*$ where $G^*: = G/Q$.  We argue that $K^* := \overline{\psi^*(H)}$ has trivial quasi-centralizer in $G^*$.  Let $x \in G$ be such that $xQ$ centralizes an open subgroup $L/Q$ of $K^*/Q$.  The element $x$ then centralizes $L \cap \psi(H)$, because $Q \cap \psi(H) = \triv$.  Noting that $L \cap \psi(H)$ is dense in $L \cap K$, the element $x$ centralizes $L \cap K$. We infer that $x$ centralizes an open subgroup of $K$, and hence $x \in Q$.  Therefore, $xQ$ is the trivial element of $G^*$.

The map $\psi^*$ is a normal compression from $H$ to $K^*$. The subgroup $K^*$ has a trivial quasi-center in $G^*$, so a fortiori, $\Z(K^*)=\triv$. We apply Lemma~\ref{lemprop:normal-compressions} to deduce that $K^*$ is regionally expansive. On the other hand, $K^*$ is normal, and $\QC_{G^*}(K^*)$ is trivially contained in $K^*$. Lemma~\ref{lem:RF_overgroup} thus ensures that that $G^*=G/Q$ is regionally expansive.
\end{proof}

Regional expansiveness is also inherited by cocompactly embedded subgroups.
 
\begin{prop}\label{prop:cocompact_RF}
Let $G$ and $ H$ be \tdlc groups and $\psi \colon H \rightarrow G$ be a continuous, injective homomorphism such that $\psi(H)$ is cocompact in $G$.  If $G$ regionally expansive, then $H$ is regionally expansive.
\end{prop}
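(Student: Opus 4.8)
The plan is to produce, directly, a compactly generated open subgroup $P \le H$ together with a compact open subgroup $T \le P$ whose normal core in $P$ is trivial; this is exactly what it means for $H$ to be regionally expansive. Since $G$ is regionally expansive, fix a compactly generated open subgroup $O \le G$ and a compact open subgroup $W \le O$ with $\bigcap_{o \in O} oWo\inv = \triv$ (note that $W$ is then open in $G$). Because $\psi(H)$ is cocompact in $G$, Lemma~\ref{lem:cocompact_restriction} supplies $g_1, \dots, g_n \in \psi(H) \cap O$ such that $R := \la g_1, \dots, g_n \ra$ is cocompact in $O$; write $O = RY$ with $Y \subseteq O$ compact. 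Setting $h_i := \psi\inv(g_i)$ and $P_0 := \la h_1, \dots, h_n \ra$, we have $\psi(P_0) = R$.

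The key point is to choose the right compact open subgroup to pull back. The naive candidate $W$ is inadequate, since $\bigcap_{r \in R} rWr\inv$ could a priori be non-trivial: the conjugates of $W$ by the merely cocompact subgroup $R$ need not already witness the trivial normal core. I would instead replace $W$ by $W_0 := \bigcap_{y \in Y} yWy\inv$. As $W$ is open, its normalizer is open, so $y \mapsto yWy\inv$ is constant on left cosets of $\N_O(W)$ and takes only finitely many values on the compact set $Y$; hence $W_0$ is a finite intersection of compact open subgroups and is itself compact open (in $O$, hence in $G$). Using $O = RY$ and the fact that conjugation commutes with intersection, one computes
\[
\triv = \bigcap_{o \in O} oWo\inv = \bigcap_{r \in R}\bigcap_{y \in Y} (ry)W(ry)\inv = \bigcap_{r \in R} r W_0 r\inv,
\]
so that the $R$-conjugates of $W_0$ already have trivial intersection.

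It remains to transport this down to $H$. Since $W_0$ is open in $G$, the subgroup $\psi\inv(W_0)$ is open in $H$, so by Van Dantzig's theorem it contains a compact open subgroup $T$ of $H$; by construction $\psi(T) \subseteq W_0$. Put $P := \la h_1, \dots, h_n, T \ra$, which is open (it contains $T$) and compactly generated, and which satisfies $P \supseteq P_0$. Because $\psi$ is injective, it commutes with intersections, so
\[
\psi\Big(\bigcap_{p \in P} pTp\inv\Big) = \bigcap_{p \in P} \psi(p)\,\psi(T)\,\psi(p)\inv \subseteq \bigcap_{p \in P_0} \psi(p)\,W_0\,\psi(p)\inv = \bigcap_{r \in R} rW_0r\inv = \triv,
\]
using $\psi(T) \subseteq W_0$ and $\psi(P_0) = R$. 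Injectivity of $\psi$ then forces the normal core of $T$ in $P$ to be trivial, so $P$ is an expansive compactly generated open subgroup and $H$ is regionally expansive.

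The only real subtlety, and the step I would be most careful about, is the passage from $W$ to $W_0$: one must check that replacing $W$ by the intersection of its conjugates over the compact transversal $Y$ keeps it compact and open while turning the full normal core into the intersection over $R$-conjugates alone. Everything else is a routine pullback along the continuous injection $\psi$, exploiting that injective maps preserve intersections and that open subgroups of \tdlc groups contain compact open subgroups.
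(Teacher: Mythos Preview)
Your proof is correct and follows essentially the same route as the paper: both arguments use Lemma~\ref{lem:cocompact_restriction} to obtain a finitely generated cocompact subgroup of the expansive $O$, and both replace the original compact open subgroup by the intersection of its conjugates over a compact transversal so that the remaining conjugates (over the cocompact subgroup) already have trivial intersection. The paper first reduces to the case where $G$ and $H$ are both compactly generated and then intersects over a transversal for $\psi(H)$ in $G$, whereas you work directly inside $O$ with a transversal for $R$; this is a cosmetic difference only.
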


\begin{proof}
Let $O$ be an expansive compactly generated open subgroup of $G$ and say that $U\leq O$ is a compact open subgroup with trivial normal core in $O$.  By Lemma~\ref{lem:cocompact_restriction}, there exist $h_1,\dots,h_n \in \psi\inv(O)$ such that $\la \psi(h_1),\dots,\psi(h_n) \ra$ is cocompact in $O$.  Setting $H_2: = \langle h_1,\dots,h_k, V \rangle$ where $V$ is a compact open subgroup of $\psi\inv(O)$, the group $H_2$ is a compactly generated open subgroup of $H$ such that $\psi(H_2)$ is cocompact in $O$.  Let us replace $G$ with $O$ and $H$ with $H_2$ to reduce to the case when both $G$ and $H$ are compactly generated and $U$ has trivial normal core in $G$.  In this case, it suffices to show that $H$ has an open subgroup that contains no non-trivial normal subgroup of $H$.

Since $\psi(H)$ is cocompact in $G$, we can write $G$ as $\psi(H)X$ where $X$ is compact.  The subgroup $U$ has trivial normal core in $G$, so
\[
\triv = \bigcap_{g \in G}gUg\inv = \bigcap_{h \in H}\bigcap_{x \in X}\psi(h)xUx\inv \psi(h)\inv = \bigcap_{h \in H}\psi(h)\left(\bigcap_{x \in X}xUx\inv\right)\psi(h)\inv.
\]
We conclude that $W := \bigcap_{x \in X}xUx\inv$ does not contain any non-trivial normal subgroup of $\psi(H)$, hence $\psi\inv(W)$ does not contain any non-trivial normal subgroup of $H$.  At the same time, $W$ is open in $G$, since $X$ is contained in a finite union of left cosets of $U$, so $\psi\inv(W)$ is open in $H$. The group $H$ therefore has an open subgroup with trivial normal core.  
\end{proof}

\subsection{An application}
We pause to generalize a prior result on the structure of topologically characteristically simple groups (\cite[Corollary D]{CM11}) from the compactly generated case to the regionally expansive case. 

\begin{thm}\label{thm:tcs_quasiproduct}
	Suppose that $G$ is a non-discrete topologically characteristically simple \tdlc group.  Then $G$ is regionally expansive if and only if $G = \cgrp{\mc{M}(G)}$, where $\mc{M}(G) = \{S_1,\dots,S_n\}$ is finite and each $S_i$ is isomorphic to some \tdlc group $S$ that is non-discrete, regionally expansive, and topologically simple.
\end{thm}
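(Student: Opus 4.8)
The plan is to prove both implications, treating the reverse direction first since it assembles the structural pieces, and then the forward direction, where the real work lies.

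For the reverse implication, assume $G=\cgrp{\mc{M}(G)}$ with $\mc{M}(G)=\{S_1,\dots,S_n\}$ as described. First I record that distinct $S_i$ commute and meet trivially (their pairwise intersection and commutator are closed normal subgroups of $G$ sitting inside a minimal one), that each $S_i$ is non-abelian with trivial center, and that each has trivial quasi-center (else $S_i$ would be quasi-discrete, hence discrete by Lemma~\ref{lem:nonqd}). I then form the external direct product $P:=S_1\times\dots\times S_n$, which is regionally expansive (a product of expansive compactly generated open subgroups is expansive, its normal core being the product of the normal cores) and has trivial quasi-center. The multiplication map $\phi\colon P\to G$ is a continuous homomorphism; it is injective, since a relation $s_1\cdots s_n=1$ forces each $s_i\in S_i\cap\CC_G(S_i)=\triv$, and its image $S_1\cdots S_n$ is normal and dense, so $\phi$ is a normal compression. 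Lemma~\ref{lemprop:normal-compressions} then gives that $G/\Z(G)$ is regionally expansive, and as $\Z(G)$ is characteristic while $G$ is non-abelian, $\Z(G)=\triv$, so $G$ itself is regionally expansive.

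For the forward implication, suppose $G$ is non-discrete, topologically characteristically simple, and regionally expansive. Theorem~\ref{thm:A-semisimple1} makes $G$ be [A]-semisimple, so by Corollary~\ref{cor:minimal_A-semisimple} the set $\mc{M}(G)=\{S_1,\dots,S_n\}$ is finite and non-empty and $\Soc(G)$ is a non-trivial characteristic subgroup; characteristic simplicity forces $\Soc(G)=G$, that is, $G=\cgrp{\mc{M}(G)}$. As before the $S_i$ pairwise commute, are non-discrete, and have trivial center, and $\Aut(G)$ permutes $\mc{M}(G)$ transitively (a proper sub-union would generate a proper non-trivial characteristic subgroup, since distinct factors commute and have trivial center), so all $S_i$ are isomorphic to a single group $S$. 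To see that $S_1$ is regionally expansive I reuse the product map $\phi\colon P\to G$: now its image is cocompact, because for any compact open $U$ the open, hence closed, subgroup $\phi(P)U$ contains the dense subgroup $\phi(P)$ and so equals $G$; thus $P$ is regionally expansive by Proposition~\ref{prop:cocompact_RF}. Regional expansiveness then passes to the direct factor $S_1$ by replacing an expansive compactly generated open subgroup $O\le P$ with the product of its projections and checking that $W\cap S_1$ has trivial normal core in the projection of $O$ to $S_1$.

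The crux is upgrading ``$S_i$ topologically characteristically simple'' to ``$S_i$ topologically simple''. Having shown $S_1$ regionally expansive, Theorem~\ref{thm:A-semisimple1} makes it [A]-semisimple, so $\mc{M}(S_1)$ is finite and non-empty and $\Soc(S_1)=S_1$ by characteristic simplicity; moreover $G$ permutes $\mc{M}(S_1)=\{T_1,\dots,T_k\}$ transitively by conjugation (each $T_j$ is non-abelian, and $\ngrp{T_1}_G=S_1$ by minimality of $S_1$ in $G$). If $k\ge 2$, then $K:=\bigcap_j\N_G(T_j)$ is a proper open normal subgroup of finite index, so $\Res(G)\le K$; but $\Res(G)$ is characteristic, hence trivial, making $G$ residually discrete. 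The main obstacle is to see that this contradicts the hypotheses: for any expansive compactly generated open $O\le G$ one has $\Res(O)\le O\cap\Res(G)=\triv$, so $O$ is residually discrete and compactly generated, hence a SIN-group by Proposition~\ref{prop:ResDiscr->SIN}, which therefore has arbitrarily small compact open normal subgroups, contradicting that its chosen compact open subgroup has trivial normal core unless $O$, and so $G$, is discrete. Thus $k=1$, whence $\mc{M}(S_1)=\{S_1\}$, and since every non-trivial closed normal subgroup of $S_1$ contains a member of $\mc{M}(S_1)$, the group $S_1$, and hence $S$, is topologically simple, completing the proof.
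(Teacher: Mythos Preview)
Your proof is correct, and it diverges from the paper's in three interesting ways.

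\medskip

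\textbf{Reverse direction.} The paper builds an explicit expansive compactly generated open subgroup $O=\langle O_1,\dots,O_n,U\rangle$ and shows directly that $U$ has trivial normal core in $O$, using that any compact normal $K\le U$ must commute with each $O_i$ and hence lie in $\Z(G)=\triv$. Your route through the external product $P=S_1\times\dots\times S_n$ and Lemma~\ref{lemprop:normal-compressions} is cleaner and reuses existing machinery rather than repeating a centralizer argument.

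\medskip

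\textbf{Regional expansiveness of $S_i$.} The paper argues by contradiction: assuming all $S_i$ fail, it manufactures for each generator $f\in S_i$ a compactly generated open $A_f\le S_i$ and a non-trivial $K_f\normal A_f$ inside $W$, then invokes Lemma~\ref{lem:no_cocompact} to contradict expansiveness of $O$. Your argument---$\phi(P)U=G$ since $\phi(P)$ is dense normal and $\phi(P)U$ is open hence closed, so $\phi(P)$ is cocompact, then apply Proposition~\ref{prop:cocompact_RF} and pass to a direct factor---is shorter and more conceptual.

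\medskip

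\textbf{Topological simplicity of $S_i$.} The paper simply cites \cite[Proposition~5.13]{RW_P_15}. Your internal argument via $\mc{M}(S_1)$ is a genuine alternative: you correctly observe that $G$ permutes $\mc{M}(S_1)$ transitively (each $\N_G(T_j)$ is a closed normalizer of finite index, hence open), so $k\ge 2$ would yield a proper open normal subgroup of $G$, forcing $\Res(G)=\triv$ by characteristic simplicity; then $\Res(O)\le \Res(G)\cap O=\triv$ for an expansive compactly generated $O$, and Proposition~\ref{prop:ResDiscr->SIN} contradicts expansiveness unless $G$ is discrete. This makes the theorem self-contained within the paper.

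\medskip

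Overall your argument is more modular and avoids the external reference, at the cost of a slightly longer path to topological simplicity.
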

\begin{proof}
Suppose that $G$ is regionally expansive. The group $G$ is [A]-semisimple by Theorem~\ref{thm:A-semisimple1}, and hence $\mc{M}(G)$ is finite by Corollary~\ref{cor:minimal_A-semisimple}. Letting $\mc{M}(G)=\{S_1,\dots,S_n\}$, every non-trivial closed normal subgroup of $G$ contains some $S_i$ by Lemma~\ref{lem:minimal_geofaithful}.  Since $G$ is topologically characteristically simple, we see that $G = \cgrp{\mc{M}(G)}$ and that $\Aut(G)$ permutes $\mc{M}(G)$ transitively, so the elements of $\mc{M}(G)$ are isomorphic.  By \cite[Proposition 5.13]{RW_P_15}, each $S_i$ is topologically simple. Each $S_i$ is also non-discrete since the quasi-center of $G$ is trivial.

Let us suppose for contradiction that some, equivalently all, $S_i$ fail to be regionally expansive.  Fix a compactly generated $O\leq G$ and $W$ a compact open subgroup of $O$ such that $W$ has trivial normal core in $O$. The intersection $\grp{\mc{M}(G)}\cap O$ is dense in $O$, so we may find a finite set $F\subseteq \bigcup\mc{M}(G)$ such that $\grp{F}W=O$, by Lemma~\ref{lem:Cayley-Abels}. For each $f\in F$ with $f\in S_i$, let $X_f$ be a compact neighborhood of $f$ in $S_i$ and let $A_f: = \cgrp{wX_fw\inv \mid w \in W}$; note that $A_f$ is a compactly generated open subgroup of $S_i$ such that $f \in A_f$ and $W \le \N_G(A_f)$. The group $O': = \grp{A_f\mid f\in F}W$ is a supergroup of $O$, and so $W$ has trivial normal core in $O'$. On the other hand, each $A_f$ fails to be regionally expansive, so there is a closed non-trivial $K_f\leq W$ such that $K_f\normal A_f$. The subgroup $\prod_{f\in F}K_f$ is normalized by $\grp{A_f\mid f\in F}$ and contained in $W$. Lemma~\ref{lem:no_cocompact} then supplies a non-trivial normal subgroup of $O'$ contained in $W$. This contradicts the fact that $W$ has trivial normal core in $O'$, and we deduce that each $S_i$ is regionally expansive.


Conversely, suppose that $G = \cgrp{S_1,\dots,S_n}$ where each of the subgroups $S_i$ is a non-discrete, regionally expansive, topologically simple closed normal subgroup of $G$.  Each $S_i$ has trivial quasi-center by Theorem~\ref{thm:A-semisimple1} and thus is not central in $G$. In particular, $\Z(G) \neq G$, and since $G$ is characteristically simple, it must be the case that $\Z(G)=\triv$.   For $1 \le i \le n$, let $O_i$ be an expansive compactly generated open subgroup of $S_i$ and let $U$ be a compact open subgroup of $G$.  Note that $U$ has compact orbits on $S_i$ for each $i$, so by enlarging the subgroups $O_i$ as necessary, we may assume that $U \le \N_G(O_i)$ for all $i$.  By choosing $U$ sufficiently small, we may also ensure that $O_i \cap U$ contains no non-trivial closed normal subgroup of $O_i$.  

Now let $O := \langle O_1,\dots,O_n,U\rangle$; the group $O$ is a compactly generated open subgroup of $G$.  Let $K$ be a closed normal subgroup of $O$ such that $K \le U$.  The choice of $U$ ensures that $O_i \cap K = \triv$ for all $i$. Both $O_i$ and $K$ are normal in $O$, so they must commute.  Thus, $K \le \CC_G(O_i) \le \QC_G(S_i)$ for all $i$.  Since $\QC_G(S_i) \cap S_i = \QZ(S_i) = \triv$, we see that $\QC_G(S_i)$ actually centralizes $S_i$.  The subgroup $K$ thereby centralizes $S_i$ for all $i$, so $K \le \Z(G) = \triv$.  We conclude that $U$ contains no non-trivial compact normal subgroup of $O$, and hence, $G$ is regionally expansive.
\end{proof}

\begin{rmk}In \cite[Corollary D]{CM11} it is stated that a compactly generated topologically characteristically simple locally compact group must be compact, discrete, $\Rb^n$ or generated topologically by a finite set of isomorphic topologically simple closed normal subgroups.  Let us explain how this follows easily from Theorem~\ref{thm:tcs_quasiproduct} in the totally disconnected case.  Suppose that $G$ is a compactly generated and topologically characteristically simple \tdlc group.  If $G$ discrete, there is nothing to prove.  If $G$ is non-discrete and not regionally expansive, then $G$ has a non-trivial compact normal subgroup and is thus generated by compact normal subgroups, since it is characteristically simple.  Since $G$ is compactly generated, there is then some compact open subgroup $U$ of $G$ and a finite set $K_1,\dots,K_n$ of compact normal subgroups such that $G = \langle K_1,\dots,K_n\rangle U$; from here it is easy to see that $G$ must be compact.  If $G$ is non-discrete and regionally expansive, then Theorem~\ref{thm:tcs_quasiproduct} applies.\end{rmk}

\section{Robustly monolithic groups}

\subsection{Definition and basic properties}
 
\begin{defn}
	A \tdlc group $G$ is called \emph{robustly monolithic} if $G$ is monolithic and the monolith is non-discrete, regionally expansive, and topologically simple. We denote by $\Rs$ the class of robustly monolithic groups.
\end{defn}

\begin{prop}\label{prop:Rs_is_RF_[a]} Every element of $\Rs$ is regionally expansive and $[A]$-semisimple.
\end{prop}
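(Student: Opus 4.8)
The plan is to derive both properties of $G$ from the structure of its monolith $M := \Mon(G)$, which by the definition of $\Rs$ is non-discrete, regionally expansive, and topologically simple. First I would note that $M$, being topologically simple, is \emph{a fortiori} topologically characteristically simple, so Theorem~\ref{thm:A-semisimple1} applies to $M$ and shows that $M$ is itself $[A]$-semisimple. In particular $\QZ(M) = \triv$ and $M$ has no non-trivial abelian locally normal subgroups; since $M$ would otherwise be such a subgroup, $M$ is non-abelian.

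The crux of the argument, and the step I expect to be the main obstacle, is to establish the identity $\QC_G(M) = \triv$. First, $\CC_G(M)$ is a closed normal subgroup of $G$; were it non-trivial it would contain the monolith, forcing $M \le \CC_G(M)$ and hence $M$ abelian, a contradiction. Thus $\CC_G(M) = \triv$. Now I would apply Lemma~\ref{lem:qcent_norm} with $H = M$: since $M$ is closed with $\QZ(M) = \triv$ and $\N_G(M) = G$, it gives $\QC_G(M) = \QC_G(M) \cap \N_G(M) = \CC_G(M) = \triv$.

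With $\QC_G(M) = \triv$ in hand, both parts of $[A]$-semisimplicity follow quickly. For the quasi-center, any $x \in \QZ(G)$ centralizes the open subgroup $\CC_G(x) \cap M$ of $M$, so $x \in \QC_G(M) = \triv$; hence $\QZ(G) = \triv$. For the absence of abelian locally normal subgroups, I would take a non-trivial abelian locally normal subgroup $A$ (closed, after replacing it by its closure), set $U := \N_G(A)$ (open), and observe that $A$ and the open subgroup $M \cap U$ of $M$ are both normal in $U$, so $[A, M \cap U] \le A \cap M$. The intersection $A \cap M$ is abelian and locally normal in $M$, since its normalizer in $M$ contains $M \cap U$; by $[A]$-semisimplicity of $M$ it is therefore trivial. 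Thus $A$ centralizes the open subgroup $M \cap U$ of $M$, giving $A \le \QC_G(M) = \triv$, a contradiction. Hence $G$ has no non-trivial abelian locally normal subgroups, and together with $\QZ(G) = \triv$ this shows $G$ is $[A]$-semisimple.

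Finally, regional expansiveness of $G$ is an immediate application of Lemma~\ref{lem:RF_overgroup}: the subgroup $M$ is closed and (locally) normal, it is regionally expansive by hypothesis, and $\QC_G(M) = \triv \le M$, so the lemma yields that $G$ is regionally expansive. Thus the entire proof hinges on the single identity $\QC_G(M) = \triv$; once it is available, both conclusions are short deductions.
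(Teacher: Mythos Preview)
Your proof is correct and follows the same overall strategy as the paper: establish $\QC_G(M) = \triv$ and then derive both regional expansiveness and $[A]$-semisimplicity from it. The argument for $[A]$-semisimplicity is essentially identical to the paper's.

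The difference lies in the tools you use at two steps. For $\QC_G(M) = \triv$, the paper invokes Proposition~\ref{prop:normal-compressions} to show that $\QC_G(M)$ is closed, and then observes it must be trivial since $M$ is not quasi-discrete; you instead use Lemma~\ref{lem:qcent_norm} to identify $\QC_G(M)$ directly with $\CC_G(M)$ and kill the latter via the standard monolith argument. For regional expansiveness, the paper again appeals to Proposition~\ref{prop:normal-compressions}, whereas you use the more elementary Lemma~\ref{lem:RF_overgroup}. Your route is slightly lighter: it avoids the machinery of Proposition~\ref{prop:normal-compressions} (which in turn rests on Lemma~\ref{lemprop:normal-compressions}) in favour of two short lemmas. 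The paper's route, on the other hand, illustrates Proposition~\ref{prop:normal-compressions} in action and keeps the dependency graph a bit more uniform. Either is perfectly adequate here.
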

\begin{proof}
	Take $G\in \Rs$.  The monolith $M$ of $G$ is regionally expansive, and by Theorem~\ref{thm:A-semisimple1}, it has a trivial quasi-center. Proposition~\ref{prop:normal-compressions} now ensures that $\QC_G(M)$ is closed. The monolith $M$ is not quasi-discrete, so $\QC_G(M)$ is in fact trivial.  A second application of Proposition~\ref{prop:normal-compressions} implies that $G$ is regionally expansive.
	
	 To see that $G$ is $[A]$-semisimple, we first note that $\QZ(G)\leq \QC_G(M)=\{1\}$, so $G$ has a trivial quasi-center.  Let $A$ be a closed locally normal abelian subgroup of $G$; it remains to show that $A$ is trivial.  The intersection $M \cap A$ is a closed locally normal abelian subgroup of $M$.  By Theorem~\ref{thm:A-semisimple1}, $M$ is $[A]$-semisimple, so $M \cap A = \triv$.  Let $U$ be an open subgroup of $\N_G(A)$.  The groups $M \cap U$ and $A \cap U$ normalize each other and have trivial intersection, so $[M \cap U,A \cap U] = \triv$; in particular, $A \cap U$ centralizes an open subgroup of $M$.  Since $\QC_G(M)=\triv$, we have $A \cap U = \triv$, so $A$ is discrete.  Since every discrete locally normal subgroup of a \tdlc group lies in the quasi-center, we have $A \le \QZ(G) = \triv$.  This completes the proof that $G$ is $[A]$-semisimple.
\end{proof}

All results about regionally expansive groups thus apply to $\Rs$. It is also immediate that $\Ss \subseteq \Rs$; Section~\ref{sec:examples1} gives examples showing this inclusion is strict.
 
We make a trivial observation about monolithic groups that will be used frequently. The monolith of a group in $\Rs$ is infinite and non-abelian, since the monolith is non-discrete and regionally expansive, so the following lemma applies to groups in $\Rs$.

\begin{lem}\label{lem:monolith_centralizer}
Let $G$ be a monolithic topological group such that the monolith $M$ is infinite and non-abelian.  Then given a commuting pair of closed normal subgroups $A$ and $B$ of $G$, at least one of $A$ and $B$ is trivial.  In particular, $\CC_G(M)=\triv$.
\end{lem}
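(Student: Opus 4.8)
The plan is to exploit the defining property of the monolith: in a monolithic group, every non-trivial closed normal subgroup contains the monolith $M$. First I would argue by contradiction, assuming that both $A$ and $B$ are non-trivial. Since $A$ and $B$ are then non-trivial closed normal subgroups of $G$, monolithicity forces $M \le A$ and $M \le B$. Because $A$ and $B$ commute, we obtain $[M,M] \le [A,B] = \triv$, so $M$ is abelian. This contradicts the hypothesis that $M$ is non-abelian, and I conclude that at least one of $A$ and $B$ must be trivial.

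For the final assertion, I would apply this dichotomy to the pair $A := M$ and $B := \CC_G(M)$. The centralizer $\CC_G(M)$ is closed, being the centralizer of a subset in a Hausdorff topological group, and it is normal in $G$ because it is the centralizer of the normal subgroup $M$ (a routine conjugation computation, using that $g^{-1}mg \in M$ for $g \in G$, $m \in M$). By construction $M$ and $\CC_G(M)$ commute, so the first part applies; since $M$ is infinite and hence non-trivial, it follows that $\CC_G(M) = \triv$.

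The argument is essentially immediate once the monolith property is invoked, so there is no serious obstacle here. The only points requiring (routine) care are verifying that $\CC_G(M)$ is a closed normal subgroup, so that it is an admissible choice for the pair in the first part, and observing that the non-abelianness of $M$ is precisely what rules out the degenerate situation in which $M$ sits inside both commuting subgroups.
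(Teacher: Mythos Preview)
Your proof is correct and follows essentially the same approach as the paper: both use that if $A$ and $B$ were non-trivial then $M \le A \cap B$, forcing $M$ to be abelian, and both deduce $\CC_G(M)=\triv$ by applying the first part to the pair $M$, $\CC_G(M)$. Your version is slightly more direct (the paper detours through showing $A \cap B$ is abelian first), but the content is the same.
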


\begin{proof}
Let $A$ and $B$ be closed normal subgroups of $G$ such that $[A,B] = \triv$.  The intersection $A \cap B$ is central in $A$, so $A \cap B$ is abelian. In particular, $M \nleq A \cap B$.  Since $M$ is the monolith of $G$, it follows that $A \cap B = \triv$.  Therefore, $A$ and $B$ do not both contain $M$. If $M \nleq A$, then $A = \triv$, and if $M \nleq B$ then $B = \triv$.  We see that $\CC_G(M)=\triv$ by considering the case $A = M$, $B = \CC_G(M)$.
\end{proof}

Let us also note that the monoliths of groups in $\Rs$ again lie in $\Rs$. 

\begin{lem}\label{lem:monolith_overgroup}
Let $G \in \Rs$ and $H$ be a non-trivial closed subgroup of $G$ such that $\Mon(G) \le \N_G(H)$.  Then $\Mon(G) \le H$ and $H \in \Rs$.  In particular, $\Mon(G)$ is a topologically simple group in $\Rs$.
\end{lem}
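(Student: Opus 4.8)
The plan is to prove the three assertions in order: first $\Mon(G) \le H$, then $H \in \Rs$, and finally deduce the statement about $\Mon(G)$ itself. The key structural fact driving everything is that $M := \Mon(G)$ is topologically simple, non-discrete, and regionally expansive (by definition of $\Rs$), and that $G$ is $[A]$-semisimple with $\QC_G(M) = \CC_G(M) = \triv$ by Proposition~\ref{prop:Rs_is_RF_[a]} and Lemma~\ref{lem:monolith_centralizer}.

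First I would establish $\Mon(G) \le H$. Since $M$ normalizes $H$, the commutator $[M, H]$ lies in $H \cap M$. Because $H$ is non-trivial and $M = \Mon(G)$, I want to rule out the possibility that $M$ centralizes $H$: if $[M,H] = \triv$, then $H \le \CC_G(M) = \triv$ (using Lemma~\ref{lem:monolith_centralizer}), contradicting that $H$ is non-trivial. Hence $[M,H] \neq \triv$, so $H \cap M$ is a non-trivial closed normal subgroup of the topologically simple group $M$, forcing $\overline{H \cap M} = M$. As $H$ is closed, this gives $M \le H$.

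Next I would show $H \in \Rs$. Since $M \le H$ and $M$ is normal in $G$, $M$ is normal in $H$; moreover $M$ is topologically simple, non-discrete, and regionally expansive. To conclude that $M$ is the monolith of $H$, I must show every non-trivial closed normal subgroup $N$ of $H$ contains $M$. For this, I would argue that $N$ cannot centralize $M$: applying $\CC_G(M) = \triv$ shows $\CC_H(M) = \triv$, and then the same commutator argument as above (with $N$ in place of $H$) yields $N \cap M \neq \triv$, hence $M \le N$ by simplicity of $M$. This shows $H$ is monolithic with monolith $M$, which is non-discrete, regionally expansive, and topologically simple; thus $H \in \Rs$ by definition. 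The main obstacle here is verifying that $\CC_H(M) = \triv$ and that the commutator argument goes through verbatim inside $H$ rather than $G$; this should follow cleanly since $\CC_H(M) \le \CC_G(M) = \triv$, but care is needed because normality of $N$ is only in $H$, not in $G$.

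Finally, for the ``in particular'' clause, I would apply the first two parts with $H = M = \Mon(G)$: the group $M$ is non-trivial, closed, and normalized by $\Mon(G)$ (indeed $M \le \N_G(M)$ trivially), so $M \in \Rs$. Since $M$ is topologically simple by hypothesis, $M$ is a topologically simple member of $\Rs$, as required. I expect the genuinely substantive step to be the second part, ensuring $M$ is the monolith of $H$; the containment $\Mon(G) \le H$ and the final deduction are short consequences of topological simplicity of $M$ and the vanishing of $\CC_G(M)$.
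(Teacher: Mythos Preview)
Your proposal is correct and follows essentially the same approach as the paper: both arguments use that $M$ and $H$ (respectively $N$) normalize each other, so $[M,H] \le M \cap H$, and the vanishing of $\CC_G(M)$ forces this intersection to be non-trivial, whence $M \le H$ by topological simplicity. The only cosmetic difference is that the paper cites $\QC_G(M)=\triv$ from the proof of Proposition~\ref{prop:Rs_is_RF_[a]} whereas you cite $\CC_G(M)=\triv$ from Lemma~\ref{lem:monolith_centralizer}; note also that $H \cap M$ is already closed, so your $\overline{H\cap M}=M$ can simply read $H\cap M = M$.
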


\begin{proof}
As in the proof of Proposition~\ref{prop:Rs_is_RF_[a]}, the quasi-centralizer of $M:=\Mon(G)$ in $G$ is trivial.  Since $\Mon(G)$ and $H$ normalize each other and do not commute, they must have non-trivial intersection, and as $\Mon(G)$ is topologically simple, $\Mon(G) \le H$.  By the same argument, for any non-trivial closed normal subgroup $N$ of $H$, it is the case that $\Mon(G) \le N$.  Therefore, $H$ is monolithic with $\Mon(H) \ge \Mon(G)$.  Since $\Mon(G)$ is itself a non-trivial closed normal subgroup of $H$, we have $\Mon(H) = \Mon(G)$.  Thus, $H \in \Rs$.
\end{proof}

Lemma~\ref{lem:monolith_overgroup} implies that every closed $H\leq G$ containing $\Mon(G)$ is an element of $\Rs$. From the proof of Lemma~\ref{lem:monolith_overgroup}, we see additionally that $\Mon(H)=\Mon(G)$.
\subsection{Passing to regional subgroups}

We now argue that the property of being robustly monolithic is in fact a regional property. This turns out to be an important feature. 

\begin{lem}\label{lem:monolith_union}
Let $G$ be a \tdlc group and  $\{O_i\}_{i\in I}$ be a directed system of compactly generated open subgroups of $G$ with $\bigcup_{i\in I}O_i=G$.  Suppose that $O_i$ is monolithic for all $i \in I$ and that $\QZ(G)=\triv$.  Then $G$ is monolithic, and the monolith $M$ of $G$ is given by $M = \ol{\bigcup_{i\in I}M_i}$ where $M_i$ is the monolith of $O_i$, and $\bigcup_{i\in I}M_i$ is normal in $G$.
\end{lem}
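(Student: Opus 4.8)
We have a directed system $\{O_i\}_{i \in I}$ of compactly generated open subgroups with union $G$, each $O_i$ monolithic with monolith $M_i$, and $\QZ(G) = \triv$. We want to show $G$ is monolithic with monolith $M = \ol{\bigcup_{i \in I} M_i}$, and that $\bigcup_{i \in I} M_i$ is normal in $G$.

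The plan is to first understand how the monoliths $M_i$ interact as $i$ varies, then to assemble them into a single normal subgroup and verify it is the monolith.

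First I would compare $M_i$ and $M_j$ for $i \le j$. Since $O_i \le O_j$ is open and $O_j$ is monolithic, the monolith $M_j$ is the unique minimal non-trivial closed normal subgroup of $O_j$. The trick is to relate $M_i$ (a normal subgroup of $O_i$) to $M_j$. Because $\QZ(G) = \triv$, no $M_i$ is discrete (a minimal closed normal subgroup that were discrete would lie in the quasi-center), so $M_j \cap O_i$ is a non-trivial closed normal subgroup of $O_i$, hence contains $M_i$ by minimality; thus $M_i \le M_j$. So the family $\{M_i\}$ is a directed (in fact increasing along the order of $I$) system, and $\bigcup_{i} M_i$ is an increasing union of subgroups, hence a subgroup; set $M := \ol{\bigcup_i M_i}$.

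Next I would show $\bigcup_i M_i$ is normal in $G$, which also forces $M$ to be normal. Fix $g \in G$; then $g \in O_k$ for some $k$. For any $M_i$ with $i \le k$ (using directedness to compare indices), $g M_i g^{-1}$ is a conjugate of a closed normal subgroup of $O_i$; I would argue $g M_i g^{-1} \le M_k$ by the same minimality argument applied inside $O_k$ — indeed $g M_i g^{-1}$ is a non-trivial closed normal subgroup of $g O_i g^{-1}$, and after intersecting with $O_k$ one gets a non-trivial closed normal subgroup of $O_k$ (again non-discrete since $\QZ(G)=\triv$), so it contains $M_k$; conversely $M_i \le M_k$ gives $g M_i g^{-1} \le g M_k g^{-1}$, and since $g \in O_k$ normalizes $M_k$ we get $g M_i g^{-1} \le M_k \subseteq \bigcup_j M_j$. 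Taking closures shows $M$ is normalized by $G$. The main obstacle will be handling the conjugation-and-intersection step cleanly: one must ensure that $g M_i g^{-1} \cap O_k$ is genuinely non-trivial (using non-discreteness and openness of $O_k$ in $g O_i g^{-1}$ up to enlarging $k$), so I would invoke directedness to pick $k$ large enough that both $g \in O_k$ and $O_i \le O_k$, and use that a non-discrete closed normal subgroup meets every open subgroup non-trivially.

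Finally I would verify monolithicity. Let $N$ be any non-trivial closed normal subgroup of $G$. Since $\QZ(G) = \triv$, $N$ is non-discrete, so for some (hence all sufficiently large) $i$ the intersection $N \cap O_i$ is a non-trivial closed normal subgroup of $O_i$, whence $M_i \le N \cap O_i \le N$. As this holds for cofinally many $i$ and the $M_i$ increase, we get $\bigcup_i M_i \le N$, and taking closures $M \le N$. Thus $M$ is contained in every non-trivial closed normal subgroup of $G$; since $M$ itself is non-trivial and closed normal, $M$ is the monolith and $G$ is monolithic, as required. I expect the comparison step $M_i \le M_j$ and its conjugated analogue to be the crux, since everything else is a routine application of minimality together with the non-discreteness guaranteed by $\QZ(G) = \triv$.
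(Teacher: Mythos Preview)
Your proof is essentially correct and follows the same strategy as the paper: use non-discreteness of the $M_i$ (from $\QZ(G)=\triv$) to get $M_i \le M_j$ whenever $O_i \le O_j$, show the union is normal, and check minimality by intersecting an arbitrary non-trivial closed normal $N \trianglelefteq G$ with each $O_i$.

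One caveat on the normality step. Your first attempt --- intersecting $gM_ig^{-1}$ with $O_k$ and claiming this is a non-trivial closed normal subgroup of $O_k$ --- does not work as written: $gM_ig^{-1}$ is normal in $gO_ig^{-1}$, and $gM_ig^{-1}\cap O_k$ is normal only in $gO_ig^{-1}\cap O_k$, which need not equal $O_k$. (And even if it did, the conclusion ``it contains $M_k$'' points the wrong way.) Fortunately your second argument is the right one and is all you need: by directedness choose $k$ with $g\in O_k$ and $O_i \le O_k$; then $M_i \le M_k$ by the comparison step, and since $g\in O_k$ normalizes $M_k$ you get $gM_ig^{-1}\le M_k$. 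Drop the first attempt entirely. The paper handles this step by a slightly different but equivalent device: it observes that $\{gO_ig^{-1}\}_{i\in I}$ is again a directed exhaustion of $G$, finds $j$ with $O_i \le gO_jg^{-1}$ (using compact generation of $O_i$), and concludes $M_i \le gM_jg^{-1}$; your version is a touch more direct.

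Minor point: since each $O_i$ is open and $N$ is non-discrete, $N\cap O_i$ is non-trivial for \emph{every} $i$, not just cofinally many; you can tighten that sentence.
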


\begin{proof}
Set $N:= \bigcup_{i\in I}M_i$. Since $\QZ(G)=\triv$, we also have $\QZ(O_i)=\triv$ for each $i \in I$, so each $M_i$ is not discrete.  Let $i,j \in I$ be such that $O_i \le O_j$.  The group $M_j$ is non-discrete, so $M_j \cap O_i$ is a non-trivial closed normal subgroup of $O_i$ and hence contains $M_i$.  By the same argument, any non-trivial closed normal subgroup $R$ of $G$ must contain $M_i$ for every $i \in I$. Therefore, $G$ is monolithic, and the monolith of $G$ contains $\ol{N}=M$.

For $g \in G$, the family $\{gO_ig\inv\}_{i\in I}$ is a directed system of compactly generated open subgroups of $G$ with union $G$. Taking $i\in I$ and $X_i$ a compact generating set for $O_i$, there is some $j \in I$ such that $gO_jg\inv$ contains $X_i$ and hence contains $O_i$.  We infer that $gM_jg\inv \ge M_i$, so $gNg\inv \ge M_i$.  As $i \in I$ is arbitrary, $gNg\inv \ge N$, and hence $gNg\inv = N$ by symmetry.  Therefore, $N$ is normal in $G$.  The group $M$ is then a non-trivial closed normal subgroup of $G$, and it  is the unique minimal such, by the previous paragraph. We conclude that $G$ is monolithic with monolith $M$.
\end{proof}

\begin{thm}\label{thm:approx_R} 
Let $G$ be a \tdlc group and let  $\{O_i\}_{i\in I}$ be a directed system of compactly generated open subgroups of $G$ with $\bigcup_{i\in I}O_i=G$. The following are equivalent. 
\begin{enumerate}[label=(\roman*)]
	\item \label{it:approx_R:1}
	$G \in \Rs$. 	
	
	\item \label{it:approx_R:2}
	There is $i\in I$ such that $O_j\in \Rs$ for all $j\geq i$. \qedhere
\end{enumerate}
\end{thm}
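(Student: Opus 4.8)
The plan is to prove the two implications separately, using the structural results established for regionally expansive and $[A]$-semisimple groups. The key technical fact I will lean on throughout is that membership in $\Rs$ forces $\QC_G(M) = \triv$ (where $M = \Mon(G)$), as extracted in the proof of Proposition~\ref{prop:Rs_is_RF_[a]}, together with the surjection of minimal normal subgroups from open subgroups down to the ambient group in Corollary~\ref{cor:minimal_A-semisimple}.

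For the implication \ref{it:approx_R:1} $\Rightarrow$ \ref{it:approx_R:2}, suppose $G \in \Rs$ with monolith $M$. Since $G$ is $[A]$-semisimple by Proposition~\ref{prop:Rs_is_RF_[a]}, and $M$ is regionally expansive and topologically simple, I would first use Corollary~\ref{cor:monolith_is_regional}: $G$ is monolithic (which forces a single minimal normal subgroup) precisely when some compactly generated open subgroup is monolithic. More concretely, $|\mc{M}(G)| = 1$, and by Corollary~\ref{cor:minimal_A-semisimple}(iii) the value $|\mc{M}(O)|$ achieves this minimum for all sufficiently large expansive compactly generated open $O$; since the $O_j$ are cofinal, I can fix $i$ so that $|\mc{M}(O_j)| = 1$ for all $j \geq i$, i.e.\ each such $O_j$ is monolithic with monolith $M_j$. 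It then remains to show that $M_j$ is topologically simple. The monolith $M_j$ satisfies $\ngrp{M_j}_G = M$ by Corollary~\ref{cor:minimal_A-semisimple}(ii); since $M_j$ is non-abelian (as $O_j$ is $[A]$-semisimple and non-discrete), I would apply the argument underlying Proposition~\ref{prop:minimal_A-semisimple}(iv) to identify $M_j = M \cap O_j$, and then use that $M$ is topologically simple with $\QC_G(M)=\triv$ to transfer simplicity to the open subgroup $M_j$ of $M$ (an open subgroup of a topologically simple regionally expansive group that is itself a minimal normal subgroup of $O_j$ must be topologically simple, via Lemma~\ref{lem:monolith_overgroup} applied inside $M$). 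Thus $O_j \in \Rs$ for all $j \geq i$.

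For the converse \ref{it:approx_R:2} $\Rightarrow$ \ref{it:approx_R:1}, fix $i$ as given and discard the indices below $i$, so we may assume every $O_j \in \Rs$ with monolith $M_j$. First, each $O_j$ being $[A]$-semisimple gives $\QZ(O_j) = \triv$, and since the $O_j$ exhaust $G$ and are open, $\QZ(G) = \triv$. Lemma~\ref{lem:monolith_union} then directly applies: $G$ is monolithic with monolith $M = \ol{\bigcup_{j} M_j}$, and $\bigcup_j M_j$ is normal in $G$. It remains to verify that $M$ is non-discrete, regionally expansive, and topologically simple. Non-discreteness is immediate since each $M_j$ is non-discrete. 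For regional expansiveness of $M$, observe that the $M_j = M \cap O_j$ form a directed system of compactly generated (being closed in compactly generated $O_j$, hence compactly generated by Proposition~\ref{prop:cocompact_generation} applied to the monolith) expansive open subgroups exhausting $M$, so $M$ is regionally expansive essentially by definition. The main obstacle is topological simplicity of $M$: I expect this to be the delicate step, since a closed normal subgroup $N \trianglelefteq M$ need not restrict compatibly to the $M_j$. I would argue that for any nontrivial closed normal $N \trianglelefteq G$ with $N \le M$, the intersection $N \cap O_j$ is normal in $O_j$ and, being non-discrete (as $\QZ(G)=\triv$), contains $M_j$ for all large $j$; hence $N \supseteq \bigcup_j M_j$, giving $N = M$. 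Finally, topological characteristic simplicity combined with regional expansiveness invokes Theorem~\ref{thm:tcs_quasiproduct} to conclude $M$ is topologically simple (the quasiproduct decomposition is trivial because $|\mc{M}(M)| = 1$, inherited from the monolithicity of the $O_j$ and Corollary~\ref{cor:minimal_A-semisimple}). Assembling these, $G$ is robustly monolithic, completing the proof.
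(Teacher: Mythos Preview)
Your outline has the right scaffolding in both directions (use $[A]$-semisimplicity, the surjections in Corollary~\ref{cor:minimal_A-semisimple}, and Lemma~\ref{lem:monolith_union}), but there are genuine gaps at exactly the two hardest points.

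\textbf{Direction \ref{it:approx_R:1} $\Rightarrow$ \ref{it:approx_R:2}.} Getting each $O_j$ monolithic for large $j$ is fine. However, for $O_j \in \Rs$ you must show that $M_j := \Mon(O_j)$ is both \emph{regionally expansive} and \emph{topologically simple}, and you address neither correctly. You never argue regional expansiveness of $M_j$ at all. For topological simplicity, you claim $M_j = M \cap O_j$, but this is generally false: when $G$ itself is simple, $M \cap O_j = O_j$, yet $M_j$ can be a proper subgroup of $O_j$. Your appeal to Lemma~\ref{lem:monolith_overgroup} also fails, since it requires $\Mon(M) \leq \N_M(M_j)$, i.e.\ $M_j \trianglelefteq M$, which you do not have. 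The paper's proof is substantially more involved here: it first shows (using Lemma~\ref{lem:no_cocompact} and the density of $\bigcup_j M_j$ in $M$) that $M_j$ is regionally expansive for large $j$, then invokes Theorem~\ref{thm:tcs_quasiproduct} to decompose each $M_j$ as a quasiproduct over $\mc{M}(M_j)$, constructs surjections $\mc{M}(M_i) \to \mc{M}(M_j)$ by hand, and finally derives a contradiction with $\CC_G(M) = \triv$ if $|\mc{M}(M_i)| > 1$.

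\textbf{Direction \ref{it:approx_R:2} $\Rightarrow$ \ref{it:approx_R:1}.} Your argument that $M$ is regionally expansive is broken. You assert that the $M_j$ are compactly generated ``by Proposition~\ref{prop:cocompact_generation} applied to the monolith'', but that proposition requires cocompactness, not merely closedness; the monolith of $O_j \in \Rs$ is only \emph{regionally} expansive and need not be compactly generated. You also repeat the unjustified identification $M_j = M \cap O_j$, and there is no reason the $M_j$ are even open in $M$. The paper instead works inside $U M_i$ for a fixed $i$: it uses Lemma~\ref{lem:RF_overgroup} to make $UM_i$ regionally expansive, extracts a suitable compactly generated $P \leq U M_i$, observes $M_i \cap P$ is cocompact (hence compactly generated) in $P$, deduces $M \cap P = (M\cap U)(M_i \cap P)$ is compactly generated, and finally shows the normal core of $M \cap W$ in $M \cap P$ is trivial by a transitivity argument on conjugates. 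Your argument for topological simplicity of $M$ is closer to correct, but note that you only check normality in $G$, not in $M$; the paper handles $N \trianglelefteq M$ directly by splitting on whether $N \cap \bigcup_j M_j$ is trivial, avoiding the detour through Theorem~\ref{thm:tcs_quasiproduct} (which would in any case require the regional expansiveness of $M$ you have not established).
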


\begin{proof}
\ref{it:approx_R:1} $\Rightarrow$ \ref{it:approx_R:2}. 
	By passing to $I':=\{i\in I\mid i\geq j \}\subseteq I$ for some $j$, we may assume that there is a unique minimal $0\in I$ and that every $O_i$ is expansive, since $G$ is regionally expansive by Proposition~\ref{prop:Rs_is_RF_[a]}.  In view of Proposition~\ref{prop:Rs_is_RF_[a]}, we see that $G$, and also every open subgroup of $G$, is [A]-semisimple. We are now in a position to apply Corollary~\ref{cor:monolith_is_regional}, and thus we may choose $O_0$ to be monolithic. Corollary~\ref{cor:minimal_A-semisimple} then ensures that $O_i$ is monolithic for all $i \in I$.
	
Letting $M_i$ be the monolith of $O_i$, by Lemma~\ref{lem:monolith_union} the union $\bigcup_{i\in I}M_i$ is a dense normal subgroup of $M$, the monolith of $G$. The monolith $M$ is regionally expansive since $G \in \Rs$, so there is a compactly generated open subgroup $R\leq M$ and a compact open subgroup $W\leq R$ such that $W$ has trivial normal core in $R$. As the union of the groups $M_i$ is dense in $M$, there is $j\in I$ such that $(M_j\cap R)W=R$. The group $M_j\cap R$ is thus a compactly generated open subgroup of $M_j$ by Proposition~\ref{prop:cocompact_generation}. In view of Lemma~\ref{lem:no_cocompact}, the normal core of $M_j\cap W$ in $M_j\cap R$ must be trivial. Hence, $M_j$ is regionally expansive. The same argument applies to any $M_k$ for $k\geq j$, so passing to the system $\{O_k\}_{k\geq j}$, we may assume every $M_k$ is regionally expansive.

Via Theorem~\ref{thm:tcs_quasiproduct}, each $M_i$ is generated by a finite set $\mc{M}(M_i)$ of minimal non-trivial closed normal subgroups, and each $N \in \mc{M}(M_i)$ is non-discrete, regionally expansive, and topologically simple. Since $|\mc{M}(M_i)|$ is finite, $W_i:=\bigcap_{N\in \mc{M}(M_i)}\N_{O_i}(N)$ is of finite index in $O_i$. A fortiori, each $N\in \mc{M}(M_i)$ has  open normalizer in $M_j$ for $j\geq i$.  Applying Proposition~\ref{prop:minimal_A-semisimple}, each $N\in \mc{M}(M_i)$ is such that $\ngrp{N}_{M_j} \in \mc{M}(M_j)$ for $j\geq i$.  On the other hand, take $N' \in \mc{M}(M_j)$ for some $j\geq i$.  The intersection $N' \cap W_i$ is non-trivial, since $N'$ is non-discrete, so $[N' \cap W_i,M_i] \neq \triv$. There is thus some $N''\in \mc{M}(M_i)$ such that $[N' \cap W_i,N''] \neq \triv$.  The group $W_j$ has finite index in $O_j$, so $N''$ is contained in $W_j$. That is to say, $N''$ normalizes $N'$. It follows that $N''\leq N'$, and we infer that $N' = \ngrp{N''}_{M_j}$.  For $j\geq i$, we thus obtain a surjective map from $\mc{M}(M_i)$ to $\mc{M}(M_j)$ given by $N \mapsto \ngrp{N}_{M_j}$.

It remains to show that $M_i$ is topologically simple for all sufficiently large $i \in I$; by Theorem~\ref{thm:tcs_quasiproduct}, $M_i$ is topologically simple if and only if $|\mc{M}(M_i)|=\triv$.  Let $i \in I$ be such that $|\mc{M}(M_i)|$ is minimized and suppose toward a contradiction that $N_1$ and $N_2$ are distinct elements of $\mc{M}(M_i)$. For $j\geq i$, the minimality of $|\mc{M}(M_i)|$ ensures that the groups $\ngrp{N_1}_{M_j}$ and $\ngrp{N_2}_{M_j}$ are distinct, hence they have trivial intersection and so commute.  The group $N_1$ thereby centralizes every $ \bigcup_{i \in I}M_i$-conjugate of $N_2$.  Since $ \bigcup_{i \in I}M_i$ is dense in $M$, in fact $N_1$ centralizes every $M$-conjugate of $N_2$, so $N_1$ centralizes a non-trivial normal subgroup of $M$. Since $M$ is topologically simple, we in fact have $N_1 \le \CC_G(M)$.  By Lemma~\ref{lem:monolith_centralizer}, $\CC_G(M)=\triv$, so $N_1=\triv$, which contradicts the hypothesis that $N_1 \in \mc{M}(M_i)$.  We conclude that $|\mc{M}(M_i)|=1$, and it follows that $|\mc{M}(M_j)|=1$ for all $j\geq i$. Therefore, $M_j$ is topologically simple for all $j\geq i$, and $O_j \in \Rs$ for all $j \ge i$.

\medskip

\noindent \ref{it:approx_R:2} $\Rightarrow$ \ref{it:approx_R:1}. 
Since $G$ has a regionally expansive open subgroup, it is itself regionally expansive. For $g \in \QZ(G)$, there is $j \geq i$ such that $g \in O_j$. Since $O_j$ is open, we have $g \in \QZ(O_j)$. Robustly monolithic groups have trivial quasi-center by Proposition~\ref{prop:Rs_is_RF_[a]}, so we infer that $\QZ(G)= \triv$. Lemma~\ref{lem:minimal_geofaithful} now implies that the set $\mathcal M(G)$ of minimal closed normal subgroups of $G$ is non-empty. Moreover, for each $M \in \mathcal M(G)$, there is $N \in \mathcal M(O_i)$ with $M = \overline{\lla N \rra_G}$. Since $O_i$ is robustly monolithic,  the set $\mathcal M(O_i)$ has only one element, and it follows that $\mathcal M(G)$ also has a single element. The group $G$ is thus monolithic by Lemma~\ref{lem:minimal_geofaithful}. Setting $M_j := \Mon(O_j)$ for each $j$, Lemma~\ref{lem:monolith_union} ensures that $D :=\bigcup_{j \geq i} M_j$ is a dense subgroup of $M := \Mon(G)$ and is normal in $G$.

Take $N$ to be a closed normal subgroup of $M$. If $D \cap N \neq \triv$, then $M_j \cap N \neq \triv$ for some $j \geq i$, hence $M_ j \leq N$ since $M_j$ is topologically simple. Therefore, $M_{j'} \leq N$ for all $j' \geq j$. We conclude that $D \leq N$, and hence $M = N$, since $D$ is dense in $M$. On the other hand, if $D \cap N = \triv$, then $[D, N]= \triv$, and $N \leq \Z(M)$, since $D$ is dense. The group $M$ is topologically characteristically simple and non-abelian, since each $M_j$ is non-abelian. We thus have $\Z(M) = \triv$, so $N =\triv$. This shows that $M$ is topologically simple.  

It remains to show that $M$ is regionally expansive. Let $U \leq O_i$ be a compact open subgroup. Since $M_i$ is [A]-semisimple by Proposition~\ref{prop:Rs_is_RF_[a]}, $\QZ(M_i)=\triv$. Hence, $\QC_{O_i}(M_i) = \CC_{O_i}(M_i) $ via Lemma~\ref{lem:qcent_norm}. The latter centralizer is trivial, since $O_i$ is [A]-semisimple, hence $\QC_{O_i}(M_i)=\triv$. Lemma~\ref{lem:RF_overgroup} ensures that $UM_i$ is regionally expansive.  There must then exist a compactly generated open subgroup $P \leq UM_i$ containing $U$ such that the normal core of some open subgroup $W\normal U$ in $P$ is trivial. We see that $P = U (M_i \cap P)$, so  $M_i \cap P$ is a cocompact closed subgroup of $P$ and is thus compactly generated, by Proposition~\ref{prop:cocompact_generation}. As $M_i \leq M$, we infer that $M \cap P = (M\cap U)(M_i \cap P)$. In particular, $M_i \cap P$ is a cocompact subgroup of $M \cap P$, hence $M \cap P$ is compactly generated.  Let $K: = \bigcap_{g \in M \cap P} g (M\cap W )g\inv$. We will conclude the proof by showing that $K = \triv$; this will show that $M$ is regionally expansive: $M \cap P$ is a compactly generated open subgroup of $M$ with a compact open subgroup $M \cap W$ that has trivial normal core $K$.

Since $U$ normalizes $M \cap W$ and $P = U(M_i \cap P)$, we see that $M_i \cap P$ acts transitively on the conjugacy class of $M \cap W$ in $P$. Since $M_i \cap P \le M \cap P$, the group $M \cap P$ also acts transitively on the conjugacy class of $M \cap W$ in $P$.  Thus $K = \bigcap_{g \in P}g(M \cap W)g\inv$, and since $\bigcap_{g \in P}gWg\inv=\{1\}$, $K = \triv$ as required.
\end{proof}

\subsection{Dense embeddings with normal image}
Similar to Proposition~\ref{prop:normal-compressions}, we obtain a general circumstance in which groups in $\Rs$ extend to a larger group in $\Rs$.  In order to appeal to some results from \cite{RW_P_15}, we must make the additional assumption that we are starting from a second countable group in $\Rs$. By Lemma~\ref{lem:FirstCountable}, every member of $\Rs$ is first countable. Hence, an element of $\Rs$ is second countable if and only if it is $\sigma$-compact, via classical point-set topology.

\begin{prop}\label{prop:R_overgroup}
Let $G$ and $H$ be \tdlc groups and $\psi \colon H \rightarrow G$ be a continuous, injective homomorphism. Assume that $H \in \Rs$, $H$ is second countable, and $\psi(H)$ is normal in $G$. The subgroup $Q:=\QC_G(\overline{\psi(H)})$ equals $\CC_G(\psi(H))$ and is closed in $G$, and $G/Q$ is in $\Rs$.
\end{prop}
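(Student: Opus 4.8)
The first assertion needs no new work. Since $H\in\Rs$ is regionally expansive with trivial quasi-center by Proposition~\ref{prop:Rs_is_RF_[a]}, and $\psi(H)$ is normal in $G$, Proposition~\ref{prop:normal-compressions} applies directly: it gives that $Q=\QC_G(\overline{\psi(H)})$ equals $\CC_G(\psi(H))$ and is closed, that $G^*:=G/Q$ is regionally expansive with trivial quasi-center, and that $K^*:=\overline{\psi^*(H)}$ satisfies $\QC_{G^*}(K^*)=\triv$, where $\pi\colon G\to G^*$ is the projection and $\psi^*:=\pi\circ\psi$. As $\CC_H(\Mon(H))=\triv$ by Lemma~\ref{lem:monolith_centralizer}, $H$ has trivial center, so $\psi(H)\cap Q=\triv$ and $\psi^*$ is injective; moreover $\psi^*(H)\normal G^*$. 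Writing $M:=\Mon(H)$ and $L:=\overline{\psi^*(M)}$, the remaining task is to prove $G^*\in\Rs$ with $\Mon(G^*)=L$.

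The engine of the argument is a lifting principle: for any $g\in G$ and $m\in M$, normality of $\psi(H)$ in $G$ forces $[g,\psi(m)]\in\psi(H)$. First I would deduce $\CC_{G^*}(L)=\triv$. If $x\in G^*$ centralizes $\psi^*(M)$, a lift $\tilde x$ satisfies $[\tilde x,\psi(m)]\in Q$ (its image is trivial) and $[\tilde x,\psi(m)]\in\psi(H)$, hence $[\tilde x,\psi(m)]\in Q\cap\psi(H)=\triv$; so $\tilde x\in\CC_G(\psi(M))$. Since $\psi(H)$ normalizes $\CC_G(\psi(M))$ while $[\psi(H),\CC_G(\psi(M))]\le\CC_G(\psi(M))\cap\psi(H)=\psi(\CC_H(M))=\triv$, one gets $\CC_G(\psi(M))=\CC_G(\psi(H))=Q$, whence $x=1$. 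Second, pulling locally normal abelian subgroups back along the continuous $\psi^*$ and using that $\psi^*(H)$ is dense and normal with $\QZ(G^*)=\triv$, I would check that both $G^*$ and $K^*$ are $[A]$-semisimple; in particular $\QZ(K^*)=\triv$, so the normal compression $\psi^*\colon H\to K^*$ and Lemma~\ref{lemprop:normal-compressions} make $K^*$ regionally expansive (and, being $\sigma$-compact and first countable, second countable, which is what will license the structure theory below).

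Next I would pin down the monolith. As $K^*$ is regionally expansive and $[A]$-semisimple, $\mc{M}(K^*)$ is finite and nonempty by Corollary~\ref{cor:minimal_A-semisimple}. For $N\in\mc{M}(K^*)$, the closed subgroup $(\psi^*)^{-1}(N)$ is normal in $H$, hence is $\triv$ or contains $M$. The first case is impossible: the lifting principle gives $[\psi^*(M),N]\le N\cap\psi^*(H)=\triv$, so $N\le\CC_{K^*}(\psi^*(M))=\triv$. Thus $\psi^*(M)\subseteq N$, i.e.\ $L\le N$; as $L\normal K^*$ is nontrivial and $N$ is minimal, $N=L$. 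Hence $K^*$ is monolithic with $\Mon(K^*)=L$, so $L$ is characteristic in $K^*$ and therefore normal in $G^*$. Finally $G^*$ is monolithic with $\Mon(G^*)=L$: for any nontrivial closed $R\normal G^*$, the subgroup $R\cap K^*$ is normal in $G^*$; if it is nontrivial it contains $\Mon(K^*)=L$, and if $R\cap K^*=\triv$ then $R\le\CC_{G^*}(K^*)\le\QC_{G^*}(K^*)=\triv$.

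It remains to show that $L$ — so far only known to be a non-discrete, topologically characteristically simple monolith — is regionally expansive and topologically simple; this is the heart of the proof and where second countability (through the chief-factor structure theory of \cite{RW_P_15}, as used in Theorem~\ref{thm:tcs_quasiproduct}) ultimately enters. For regional expansiveness I would note that $H_L:=(\psi^*)^{-1}(L)$ is a closed normal subgroup of $H$ containing $M$, so $H_L\in\Rs$ with $\Mon(H_L)=M$ by Lemma~\ref{lem:monolith_overgroup}; since $\psi^*(H_L)=\psi^*(H)\cap L$ is dense and normal in $L$, the normal compression $H_L\to L$ and Lemma~\ref{lemprop:normal-compressions} (with $\Z(L)\le\CC_{G^*}(L)=\triv$) make $L$ regionally expansive, hence $[A]$-semisimple by Theorem~\ref{thm:A-semisimple1}, so $\mc{M}(L)=\{S_1,\dots,S_n\}$ is finite with $L=\cgrp{\mc{M}(L)}$. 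The main obstacle is ruling out $n\ge 2$. Here I would use that $K^*$ acts continuously on the finite set $\mc{M}(L)$ with open normal kernel $K_0$ (note $L\le K_0$): since $M$ is topologically simple, $\psi^*(M)\cap S_i=\triv$, and then $(\psi^*)^{-1}(S_i)$ is a closed normal subgroup of the $\Rs$-group $H_0:=(\psi^*)^{-1}(K_0)$ meeting $\Mon(H_0)=M$ trivially, forcing $(\psi^*)^{-1}(S_i)=\triv$, i.e.\ $\psi^*(H)\cap S_i=\triv$; a last application of the lifting principle then shows $\psi^*(H_0)$, hence $K_0=\overline{\psi^*(H_0)}$, centralizes $S_i$, so $S_i\le\Z(K_0)\le\QZ(K_0)=\triv$, a contradiction. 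Therefore $n=1$ and $L=S_1$ is topologically simple and regionally expansive, giving $G^*\in\Rs$.
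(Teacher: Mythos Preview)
Your argument is correct and takes a genuinely different route from the paper's. One minor overclaim: you assert that $G^*$ and $K^*$ are $[A]$-semisimple by ``pulling locally normal abelian subgroups back along $\psi^*$'', but $\psi^*$ is not open, so this pullback need not preserve local normality. Fortunately you do not actually use $[A]$-semisimplicity: for $\mc{M}(K^*)$ to be non-empty you only need Lemma~\ref{lem:minimal_geofaithful}(i) (regionally expansive with trivial quasi-center), which you have already established for $K^*$ via Lemma~\ref{lemprop:normal-compressions} and $\Z(K^*)\le\QC_{G^*}(K^*)=\triv$. With that substitution the proof goes through cleanly.

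The paper's proof is shorter and more direct in its use of \cite{RW_P_15}: after showing $K^*$ is monolithic (by the same intersection argument you use), it invokes \cite[Corollary~3.7]{RW_P_15} to see that $\psi^*(S)$ is already normal in $K^*$, so the monolith $T$ equals $\overline{\psi^*(S)}$, and then \cite[Theorem~1.4]{RW_P_15} gives topological simplicity of $T$ in one stroke from the normal compression $S\to T$. Your approach instead pushes everything through the commutator ``lifting principle'' $[g,\psi(m)]\in\psi(H)$ together with Theorem~\ref{thm:tcs_quasiproduct}, thereby avoiding direct appeals to those two external results (second countability still enters, but only via the use of \cite[Proposition~5.13]{RW_P_15} inside Theorem~\ref{thm:tcs_quasiproduct}). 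The trade-off: the paper's route is more economical, while yours is more self-contained relative to the tools already developed in the article and makes transparent exactly how normality of $\psi(H)$ in $G$ drives the conclusion.
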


\begin{proof}
The hypotheses are a special case of Proposition~\ref{prop:normal-compressions}. Thus, $Q=\CC_G(\psi(H))$ and is closed in $G$ and $G/Q$ is regionally expansive with trivial quasi-center. We have a continuous homomorphism $\psi^*:H\rightarrow G/Q$ by $h\mapsto hQ$.

Set $K: = \overline{\psi^*(H)}$; by Proposition~\ref{prop:normal-compressions}, the quasi-centralizer of $K$ in $G/Q$ is trivial.   Consider a non-trivial closed normal subgroup $N$ of $K$.  Since $K$ has trivial center and $\psi^*(H)$ is dense in $K$, the subgroup $N$ cannot centralize $\psi^*(H)$, and thus $N \cap \psi^*(H) > 1$.  The preimage $(\psi^*)\inv(N)$ contains the monolith $S$ of $H$, hence $N \ge \overline{\psi^*(S)}$. The intersection $T$ of all non-trivial closed normal subgroups of $K$ contains $\overline{\psi^*(S)}$, and we infer that $K$ is monolithic with monolith $T$. As $S$ is infinite and topologically simple, $S$ cannot be injectively mapped into a profinite group, so $T$ is also non-compact.  

As $H$ is $\sigma$-compact, $K$ is $\sigma$-compact.  The group $K$ is indeed second countable, because it does not have arbitrarily small non-trivial compact normal subgroups.  Appealing to \cite[Corollary~3.7]{RW_P_15}, the image $\psi^*(S)$ is normal in $K$.  It follows that $T = \overline{\psi^*(S)}$ and that $T$ is a normal compression of $S$.  The monolith $T$ is non-abelian and topologically characteristically simple, so it has trivial center.  Applying \cite[Theorem~1.4]{RW_P_15}, we infer that $T$ is topologically simple, and Lemma~\ref{lemprop:normal-compressions} ensures that $T$ is regionally expansive. We thus deduce that $K \in \Rs$.

It remains to show that $G/Q \in \Rs$.  The subgroup $T$ is topologically characteristic in $K$ and hence normal in $G/Q$. Therefore, $\QC_{G/Q}(T)\normal G/Q$. As $T$ is the monolith of $K$ and $K$ is in $\Rs$, it follows that $\QC_{G/Q}(T)\cap K=\{1\}$. We deduce that $\QC_{G/Q}(T)$ centralizes $K$, hence $\QC_{G/Q}(T)=\{1\}$. For any closed non-trivial $M\normal G/Q$, it now follows that $M\cap T$ must be non-trivial. The subgroup $T$ is thus the monolith of $G/Q$, so $G/Q \in \Rs$ as required.
\end{proof}

We highlight the following special case,  where \tdlcsc stands for \textit{totally disconnected locally compact second countable} and \lcsc is defined likewise.

\begin{cor}\label{cor:RM_extension}
	Suppose that $G$ is a robustly monolithic \tdlcsc group with monolith $M$. If $H$ is a \lcsc group acting continuously and faithfully on $G$ by topological group automorphisms, then $(G\rtimes H)/\CC_{G\rtimes H}(M)$ is robustly monolithic.
\end{cor}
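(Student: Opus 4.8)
The plan is to deduce the corollary from Proposition~\ref{prop:R_overgroup}, applied to the inclusion of the monolith $M$ of $G$ into the semidirect product $G\rtimes H$. The one preliminary point is that $H$ is only assumed to be \lcsc, whereas the tools to be invoked are stated for \tdlc groups; so the first step is to observe that $H$ is in fact \tdlcsc. Since the action of $H$ on $G$ is continuous and $G$ is locally compact, the induced homomorphism $\sigma\colon H\to\Aut(G)$ is continuous by \cite[Theorem~(26.7)]{HR}, and it is injective because the action is faithful. By Proposition~\ref{prop:tdlc_action}, $\Aut(G)$ is totally disconnected, so the image $\sigma(H^\circ)$ of the identity component is a connected subgroup of a totally disconnected group and is therefore trivial. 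Injectivity of $\sigma$ then forces $H^\circ=\triv$, so $H$ is totally disconnected and hence \tdlcsc.

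With $H$ now \tdlc, Lemma~\ref{lem:centralizer_action}(i) shows that $L:=G\rtimes H$ is a \tdlc group under the product topology. I next locate the monolith inside $L$. Write $M:=\Mon(G)$. As the monolith, $M$ is a closed normal subgroup of $G$, so it is closed in $L$; and, being a closed subgroup of the \tdlcsc group $G$, it is itself \tdlcsc, in particular second countable. By Lemma~\ref{lem:monolith_overgroup}, $M$ is a topologically simple group in $\Rs$. Finally, $M$ is normal in $L$: it is characteristic in $G$, hence preserved by every $\sigma(h)$, and $G$ is normal in $L$, so the conjugation formula in Lemma~\ref{lem:centralizer_action}(iii) shows that every $L$-conjugate of $M$ lies in $M$.

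It now suffices to apply Proposition~\ref{prop:R_overgroup} to the inclusion $\psi\colon M\hookrightarrow L$, which is a continuous injective homomorphism with normal image, taking $M\in\Rs$ as the (second countable) group being embedded and $L$ as the ambient \tdlc group. The proposition yields that $Q:=\QC_L(\overline{\psi(M)})$ equals $\CC_L(\psi(M))$, is closed in $L$, and satisfies $L/Q\in\Rs$. Since $\psi(M)=M$ is closed, we have $\overline{\psi(M)}=M$ and hence $Q=\CC_L(M)=\CC_{G\rtimes H}(M)$. Therefore $(G\rtimes H)/\CC_{G\rtimes H}(M)=L/Q$ lies in $\Rs$, as required. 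The only delicate point in the argument is the initial reduction from the \lcsc hypothesis on $H$ to the \tdlcsc case; once $H$ is known to be totally disconnected, the statement is a direct instance of Proposition~\ref{prop:R_overgroup}.
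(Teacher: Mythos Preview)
Your proof is correct and follows the same approach as the paper: apply Proposition~\ref{prop:R_overgroup} to the inclusion of the monolith $M$ into $G\rtimes H$, after first verifying that $G\rtimes H$ is \tdlc. You have simply unpacked the paper's terse invocation of Proposition~\ref{prop:tdlc_action} (which only asserts $\Aut(G)$ is totally disconnected) into the explicit argument that $H^\circ$ maps injectively into a totally disconnected group and must therefore be trivial; this is exactly the missing step the paper leaves implicit.
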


\begin{proof}
Proposition~\ref{prop:tdlc_action} ensures that $G \rtimes H$ is a \tdlcsc group, and $M$ is normal in $G \rtimes H$.  Proposition~\ref{prop:R_overgroup} now implies that $(G\rtimes H)/\CC_{G\rtimes H}(M) \in \Rs$.
\end{proof}

\subsection{Passing to a dense locally compact subgroup}

A more striking closure property is that $\Rs$ is closed under taking non-discrete dense locally compact subgroups, without any assumptions on normalizers.

\begin{thm}\label{thm:R_dense_embedding}
Suppose that $G\in \Rs$ and that $H$ is a non-discrete \tdlc group. If $H$ admits a dense embedding into $G$, then $H\in \Rs$. In particular, $H$ has a trivial quasi-center.
\end{thm}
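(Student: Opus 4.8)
The plan is to verify directly that $H$ satisfies the three defining conditions of $\Rs$: that it is monolithic, with a monolith that is non-discrete, regionally expansive, and topologically simple; the triviality of the quasi-center will then be automatic from Proposition~\ref{prop:Rs_is_RF_[a]}. A convenient first move, using Theorem~\ref{thm:approx_R}, is to reduce to the case where $G$ is compactly generated: writing $G=\bigcup_i O_i$ with each $O_i\in\Rs$ compactly generated open, the restrictions $\psi\colon\psi\inv(O_i)\to O_i$ are dense embeddings of the open subgroups $\psi\inv(O_i)\le H$, so it suffices to treat each of these and reassemble via Theorem~\ref{thm:approx_R}. I may therefore assume $G$ is compactly generated, hence topologically almost simple, with topologically simple monolith $M=\Mon(G)$, and I record that $\CC_G(M)=\triv$ by Lemma~\ref{lem:monolith_centralizer}.

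The first property, that $H$ is regionally expansive, admits a clean argument that I will reuse. Fixing an expansive compactly generated open $O\le G$ with a compact open $W\le O$ of trivial normal core in $O$, density of $\psi(H)$ yields $(\psi(H)\cap O)W=O$, and Lemma~\ref{lem:Cayley-Abels} then extracts finitely many $\psi(h_1),\dots,\psi(h_n)\in\psi(H)\cap O$ generating a cocompact subgroup of $O$. Pulling these back and adjoining a compact open subgroup of $\psi\inv(O)$ produces a compactly generated open $P\le H$ with $\psi(P)$ cocompact in $O$, so $P$, and hence $H$, is regionally expansive by Proposition~\ref{prop:cocompact_RF}. This in fact shows that the source of any dense embedding into a regionally expansive group is itself regionally expansive, a principle I will invoke again for the monolith.

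Granting for a moment that $H$ has trivial quasi-center and no non-trivial quasi-discrete closed normal subgroup, the monolith is pinned down as follows. By Lemma~\ref{lem:minimal_geofaithful} and Proposition~\ref{prop:minimal_A-semisimple}, the set $\mc{M}(H)$ is finite and non-empty; if $N,N'\in\mc{M}(H)$ were distinct they would commute, and since each is normal in $H$ and $\psi(H)$ is dense, both $\overline{\psi(N)}$ and $\overline{\psi(N')}$ are normal in $G$, hence contain $M$. Commutation would then force $M$ to centralize $M$, contradicting that $M$ is non-abelian, so $\mc{M}(H)=\{N_1\}$ and $H$ is monolithic. Now $\overline{\psi(N_1)}$ is a closed normal subgroup of $G$ containing $M$, so it lies in $\Rs$ with monolith $M$ by Lemma~\ref{lem:monolith_overgroup}; applying the dense-embedding principle above to $\psi\colon N_1\to\overline{\psi(N_1)}$ shows $N_1$ is regionally expansive, and $N_1$ is non-discrete since $\QZ(H)=\triv$. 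Finally, $N_1$ is topologically characteristically simple, so by Theorem~\ref{thm:tcs_quasiproduct} it is a finite quasi-product of topologically simple pieces permuted by $H$; were there two or more pieces, the normalizer of one would have finite index in $H$, its closed image would contain $M$, and combined with $\CC_G(M)=\triv$ this would force that piece into $\CC_G(M)$ and hence be trivial, a contradiction. Thus $N_1$ is topologically simple and $H\in\Rs$.

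The main obstacle is the bracketed hypothesis of the previous paragraph: that $H$ has trivial quasi-center and no non-trivial quasi-discrete closed normal subgroup. These are essentially \emph{local} conditions, and they do not transfer along a dense embedding by any soft argument, precisely because a compact open subgroup $V\le H$ maps to a compact subgroup $\psi(V)$ of $G$ that is typically not open, so that small neighborhoods of the identity in $H$ are not controlled inside $G$. The step I expect to require the most care is to pass to the commensuration/localization framework of \S\ref{sec:Localization}: setting $K=\psi(V)$, one has $\psi(H)\subseteq\Comm_G(K)$, and in the $K$-localized topology $\psi$ identifies $H$ with an open subgroup of $G_{(K)}$ in which $\psi(V)$ is a compact \emph{open} subgroup, so that local invariants are genuinely preserved. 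Combining this with the triviality of the relevant quasi-centralizers of $M$ in $G\in\Rs$ (as established in the proof of Proposition~\ref{prop:Rs_is_RF_[a]}) is what should rule out both quasi-central elements and quasi-discrete closed normal subgroups of $H$, completing the verification that $H\in\Rs$.
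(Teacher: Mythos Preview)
Your outline is largely on the right track, and the arguments for regional expansiveness of $H$, for monolithicity, and for regional expansiveness of $\Mon(H)$ match the paper's approach. The topological simplicity argument via Theorem~\ref{thm:tcs_quasiproduct} is more elaborate than needed but can be made to work; the paper instead observes that $\overline{\psi(\Mon(H))}=M$ and simply reruns the monolithicity argument for the dense embedding $\Mon(H)\to M$, which is cleaner. Note also that you only need $\QZ(H)=\triv$, not the stronger absence of quasi-discrete closed normal subgroups: Lemma~\ref{lem:minimal_geofaithful} already yields minimal closed normal subgroups, and that is all the commuting-pair argument requires.

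The genuine gap is exactly where you flag it: establishing $\QZ(H)=\triv$. Your localization idea is circular. Identifying $H$ with an open subgroup of $G_{(K)}$ for $K=\psi(V)$ only reduces the problem to showing $\QZ(G_{(K)})=\triv$; but $G_{(K)}$ is itself a non-discrete dense locally compact subgroup of $G$, so you are back where you started. The triviality of $\QC_G(M)$ you invoke is computed in the $G$-topology and says nothing about elements of $H$ centralizing a subgroup that is open in $H$ but not in $G$.

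The paper's argument here is quite different and uses Lemma~\ref{lem:no_cocompact} directly. Suppose $\QZ(H)\neq\triv$ and set $L:=\overline{\psi(\QZ(H))}$; this is a non-trivial closed normal subgroup of $G$, so $L\in\Rs$ by Lemma~\ref{lem:monolith_overgroup}. Choose a compactly generated expansive open $O\le L$ with a compact open $U\le O$ of trivial normal core. Since $\psi(\QZ(H))$ is dense in $L$, Lemma~\ref{lem:Cayley-Abels} produces a finite $A\subseteq\QZ(H)$ with $\psi(\langle A\rangle)U=O$. Now use that $A$ lies in $\QZ(H)$: there is an open $W\le\psi\inv(U)$ centralized by every element of $A$, so $\psi(W)$ is a non-trivial normal subgroup of the cocompact subgroup $\psi(\langle A,W\rangle)$ of $O$, and Lemma~\ref{lem:no_cocompact} then manufactures a non-trivial compact normal subgroup of $O$ inside $U$, contradicting expansiveness. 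This is the missing mechanism in your proposal.
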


\begin{proof}
Let $\psi:H\rightarrow G$ be a continuous, dense embedding.	In view of Proposition~\ref{prop:Rs_is_RF_[a]}, $G$ is regionally expansive, and applying Proposition~\ref{prop:cocompact_RF}, the group $H$ is also regionally expansive.  Write $M:= \Mon(G)$.  Note that $\CC_G(M) = \triv$ by Lemma~\ref{lem:monolith_centralizer}.

Let $L = \ol{\psi(\QZ(H))}$ and suppose toward a contradiction that $L$ is non-trivial.  Then $L$ is a non-trivial normal subgroup of $G$, so $M \le L$.  It follows by Lemma~\ref{lem:monolith_overgroup} that $L \in \Rs$, and hence $L$ is regionally expansive by Proposition~\ref{prop:Rs_is_RF_[a]}.  Fix $O\leq L$ an expansive compactly generated open subgroup of $L$ and let $U\leq O$ be a compact open subgroup with trivial normal core. 	

Since $\QZ(H)$ has dense image in $L$, there is a finite subset $A$ of $\QZ(H)$ such that $\psi(\grp{A})U=O$.  Then there is an open subgroup $W$ of $\psi\inv(U)$ centralized by $A$; in particular, $\psi(W)$ is normal in $\psi(\grp{A,W})$, and furthermore, $\psi(\grp{A,W})U=O$. Lemma~\ref{lem:no_cocompact} then implies that $O$ has a non-trivial compact normal subgroup contained in $U$, which contradicts our choice of $O$ and $U$.  From this contradiction, we see that $L = \triv$; since $\psi$ is injective, it follows that $\QZ(H)=\triv$.
	
Lemma~\ref{lem:minimal_geofaithful} now ensures the presence of minimal non-trivial closed normal subgroups of $H$. Suppose that $A$ and $B$ are distinct minimal non-trivial closed normal subgroups. The subgroups $A$ and $B$ commute and are normal in $H$, so $\overline{\psi(A)}$ and $\overline{\psi(B)}$ commute and are normal in $\overline{\psi(H)} = G$.  By Lemma~\ref{lem:monolith_centralizer}, at least one of $\overline{\psi(A)}$ and $\overline{\psi(B)}$ is trivial, so (since $\psi$ is injective) at least one of $A$ and $B$ is trivial, a contradiction. The group $H$ is thus monolithic.
	
Let $N$ be the monolith of $H$. Since $M$ is the monolith of $G$, $M\leq\overline{\psi(N)}$, and on the other hand, $N\leq \psi^{-1}(M)$, since $N$ is the monolith of $H$. We deduce that $\overline{\psi(N)}=M$. Applying Proposition~\ref{prop:cocompact_RF}, $N$ is regionally expansive, and as above, $N$ has a trivial quasi-center. Since $M$ is topologically simple, in particular monolithic, the same argument as in the previous paragraph shows that $N$ is monolithic. Since $N$ must be topologically characteristically simple, we conclude that $N$ is topologically simple. Thus, $H\in \Rs$.
\end{proof}

\subsection{Dense subgroups of compactly generated simple groups}
The following result shows several of the previous results can be upgraded in the case of dense locally compact subgroups of $G\in \Ss$.  Recall that a topological group $G$ is \emph{regionally compact} if it is expressible as a directed union of compact open subgroups.  Equivalently, $G$ is regionally compact if every compactly generated closed subgroup of $G$ is compact. Recall additionally that the discrete residual of $G$, denoted by $\Res{}(G)$, is the intersection of all open normal subgroups.

\begin{prop}\label{prop:DENSE_IN_S}
	Let $G\in \Ss$ and $H$ be a non-discrete \tdlc group with a dense embedding $\psi \colon H \to G$. Let $U \leq G$ be a compact open subgroup and $\{O_i\}_{i\in I}$ be a directed system of compactly generated open subgroups  of $H$ with $\bigcup_{i\in I}O_i=H$. There is $j \in I$ such that the following assertions hold for all $i\geq j$ . 
	\begin{enumerate}[label=(\roman*)]
		\item \label{it:DENSE_IN_S:1}
		 $O_i\in \Rs$.
		 
		\item  \label{it:DENSE_IN_S:2}
		$O_i = \Mon(O_i)(\psi\inv(U)\cap O_i)$. 
	
		\item \label{it:DENSE_IN_S:3}
		$\Res{}(O_i) = \Mon(O_i)$. 
		
		\item \label{it:DENSE_IN_S:4}
		If $\psi\inv(U)$ is regionally compact, then $\Mon(O_i) \in \Ss$ and $O_i/\Mon(O_i)$ is compact. In particular, $H$ is regionally $\Ss$-by-compact.\qedhere
	\end{enumerate}
\end{prop}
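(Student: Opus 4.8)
The plan is to bootstrap from the two structural theorems already proved for $\Rs$ and then analyze the coset geometry of each $O_i$ over $\psi^{-1}(U)$ inside the ambient simple group $G$. Since $\Ss\subseteq\Rs$, Theorem~\ref{thm:R_dense_embedding} applies and gives $H\in\Rs$; because $\{O_i\}_{i\in I}$ is a directed system of compactly generated open subgroups exhausting $H$, Theorem~\ref{thm:approx_R} then furnishes $j_0\in I$ with $O_i\in\Rs$ for all $i\geq j_0$, which is part~\ref{it:DENSE_IN_S:1}. I write $M_i:=\Mon(O_i)$ and $V_i:=\psi^{-1}(U)\cap O_i$, an open subgroup of $O_i$.

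Next I record two preliminary facts used in the remaining parts. Density of $\psi(H)$ together with openness of $U$ gives $G=\psi(H)U$; applying Lemma~\ref{lem:Cayley-Abels} inside $G$ produces finitely many elements of $\psi(H)$ generating $G$ modulo $U$, and these lie in some $\psi(O_{j_1})$, so $\psi(O_i)U=G$ for all $i\geq j_1$. Feeding $R:=\psi(O_i)$ and the compact open subgroup $U$ (which has trivial normal core, as $G$ is topologically simple) into Lemma~\ref{lem:no_cocompact} shows that $\psi(O_i)\cap U=\psi(V_i)$ contains no non-trivial normal subgroup of $\psi(O_i)$; since $\psi$ is an abstract isomorphism onto its image, $V_i$ has trivial normal core in $O_i$. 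From now on I fix $j\geq j_0,j_1$.

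Part~\ref{it:DENSE_IN_S:2} is the crux. Set $P_i:=M_iV_i$; since $M_i\normal O_i$, this is an open subgroup of $O_i$ containing the monolith, so $P_i\in\Rs$ with $\Mon(P_i)=M_i$ by Lemma~\ref{lem:monolith_overgroup}, and it remains only to prove $P_i=O_i$. The plan is to pass to coset spaces: $\psi$ induces an $O_i$-equivariant injection $O_i/V_i\hookrightarrow G/U$, and the identity $O_i=M_iV_i$ is precisely the assertion that the normal subgroup $M_i$ acts transitively on the $O_i$-orbit of the base point. I would prove this transitivity by showing that $\overline{\psi(M_i)}$ is cocompact in $\overline{\psi(O_i)}$ with the complementary part absorbed into $U$, using the cocompactness $\psi(O_i)U=G$ established above together with the minimality and regional expansiveness of $M_i$. \emph{This is the main obstacle:} the image $\psi(M_i)$ is a non-closed subgroup of $G$, so the delicate point is to control it well enough to conclude that the monolith is cocompact in $O_i$ modulo $V_i$; the triviality of the normal core of $V_i$ obtained in the previous paragraph is the lever I expect to use to exclude a proper $P_i$.

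Finally, parts~\ref{it:DENSE_IN_S:3} and~\ref{it:DENSE_IN_S:4} follow from \ref{it:DENSE_IN_S:2}. For \ref{it:DENSE_IN_S:3}, note first that $\Res(O_i)\neq\triv$: otherwise $O_i$ would be residually discrete and hence, by Proposition~\ref{prop:ResDiscr->SIN}, have a basis of compact open normal subgroups, which would place a non-trivial compact open normal subgroup inside any expansive witness and contradict the regional expansiveness of the non-discrete group $O_i$. As $M_i$ is the monolith, $\Res(O_i)\supseteq M_i$. For the reverse inclusion I would use \ref{it:DENSE_IN_S:2} to write $O_i/M_i\cong V_i/(V_i\cap M_i)$ and exploit that $\psi$ restricts to a continuous injection of $V_i$ into the profinite group $U$; the care needed here is that residual discreteness must survive passage to the quotient by $V_i\cap M_i$, which I would obtain from the SIN structure coming from $U$, yielding $\Res(O_i)\subseteq M_i$. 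For \ref{it:DENSE_IN_S:4}, if $\psi^{-1}(U)$ is regionally compact then so is its closed subgroup $V_i$, and by \ref{it:DENSE_IN_S:2} the group $O_i/M_i\cong V_i/(V_i\cap M_i)$ is simultaneously compactly generated and regionally compact, hence compact. Thus $M_i$ is cocompact in $O_i$, so compactly generated by Proposition~\ref{prop:cocompact_generation}, and being non-discrete and topologically simple it lies in $\Ss$. Consequently each $O_i$ with $i\geq j$ is $\Ss$-by-compact, and as these form a directed union of open subgroups exhausting $H$, the group $H$ is regionally $\Ss$-by-compact.
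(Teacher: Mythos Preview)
Your treatment of parts~\ref{it:DENSE_IN_S:1}, \ref{it:DENSE_IN_S:3}, and~\ref{it:DENSE_IN_S:4} is essentially the paper's, and correct. The gap is in part~\ref{it:DENSE_IN_S:2}, which you explicitly flag as ``the main obstacle'' and do not resolve. Your plan---showing $V_i$ has trivial normal core in $O_i$ and then arguing via cocompactness of $\overline{\psi(M_i)}$ in $\overline{\psi(O_i)}$---is not how the paper proceeds, and it is not clear how to complete it: the trivial normal core of $V_i$ tells you $O_i$ is expansive, but it does not by itself force $M_iV_i=O_i$.

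The paper's idea is simpler and avoids this obstacle entirely. You applied Lemma~\ref{lem:Cayley-Abels} to the dense subgroup $\psi(H)$; instead, apply it to the dense subgroup $\psi\bigl(\bigcup_i M_i\bigr)$. The point is that $K:=\bigcup_i M_i$ is \emph{normal} in $H$ by Lemma~\ref{lem:monolith_union}, so $\psi(K)$ has dense normalizer in $G$, and since $G$ is topologically simple, $\psi(K)$ is itself dense. Lemma~\ref{lem:Cayley-Abels} then gives a finite subset of $\psi(K)$ generating $G$ modulo $U$; this finite set lies in some $\psi(M_j)$, so $\psi(M_j)U=G$. Pulling back through $\psi$ immediately yields $O_i=M_i(\psi^{-1}(U)\cap O_i)$ for all $i\geq j$, with no further work needed. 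The key move you missed is to exploit the directed, normal union of the monoliths rather than the ambient group $H$.
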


\begin{proof}
	\ref{it:DENSE_IN_S:1}.
	Theorem~\ref{thm:R_dense_embedding} ensures that $H \in \Rs$.  Via Theorem~\ref{thm:approx_R}, there is $j\in I$ such that each $O_i$ is monolithic with a topologically simple and regionally expansive monolith $M_i:= \Mon(O_i)$ for all $i\geq j$. Replacing $I$ by $\{i\in I\mid i\geq j\}$, we may assume that every $O_i$ is an element of $\Rs$.
	
	\medskip \noindent
	\ref{it:DENSE_IN_S:2}.
	 The set of monoliths $\{M_i\}_{i\in I}$ is directed, and $K:=\bigcup_{i\in I}M_i$ is a normal subgroup of $H$, by Lemma~\ref{lem:monolith_union}. The image $\psi(K)$ has a dense normalizer in $G$, so $\psi(K)$ is dense in $G$. Fixing a compact open subgroup $U$ of $G$, it follows from  Lemma~\ref{lem:Cayley-Abels} that there is some $j$ such that $\psi(M_j)U=G$, since $G$ is compactly generated. We conclude that $M_i(\psi\inv(U)\cap O_i)=O_i$ for all $i \geq j$. Replacing $I$ by $\{i\in I\mid i\geq j\}$, we may assume every $M_i$ has this property.
	 
	 	\medskip \noindent
	 \ref{it:DENSE_IN_S:3}.
	Set  $E:=\psi^{-1}(U)$. Since $U$ is residually finite and $\psi$ is continuous and injective, it follows that $E$ is residually discrete. We deduce from Proposition~\ref{prop:ResDiscr->SIN} that every compactly generated open subgroup of $E$ is a SIN-group.  Since $O_i$ is compactly generated and $O_i = M_i(E \cap O_i)$, there is a compactly generated open subgroup $E_i$ of $E$ such that $O_i = M_iE_i$. The quotient $O_i/M_i \simeq E_i/E_i\cap M_i$ is then SIN by Lemma~\ref{lem:SIN_quotient}. Assertion \ref{it:DENSE_IN_S:3} is now clear.
	
	\medskip \noindent
	\ref{it:DENSE_IN_S:4}.
	If $E$ is regionally compact, then the quotient $O_i/M_i \simeq E\cap O_i/E\cap M_i$ is compactly generated and regionally compact, hence it is compact. In particular, $M_i$ is compactly generated by Proposition~\ref{prop:cocompact_generation} and thus belongs to the class $\Ss$.
\end{proof}

Let us provide a general  criterion ensuring that the hypothesis of Proposition~\ref{prop:DENSE_IN_S}\ref{it:DENSE_IN_S:4} is fulfilled.

\begin{lem}\label{lem:E_locally_elliptic}
	Let $G$ and $H$ be \tdlc groups with compact open subgroups $U$ and $T$, respectively, and $\psi \colon H \to G$ be a dense embedding with $\phi(T)\leq U$. Suppose that $G$ has a basis of identity neighborhoods $(U_n)_{n\in I}$ consisting of open normal subgroups of $U$ such that $\N_{U_n}(\psi(T) \cap U_n)/\psi(T) \cap U_n$ is a locally finite group for all $n\in I$. Then $ \psi\inv(U)$ is regionally compact. 
\end{lem}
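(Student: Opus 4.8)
The plan is to set $E := \psi\inv(U)$, an open subgroup of $H$, and to show directly that $E$ is a directed union of compact open subgroups. Since the restriction $\psi\rest_E \colon E \to U$ is a continuous injection into the profinite group $U$, the subgroups $\psi\inv(U_n) \cap E$ are open, normal in $E$, and have trivial intersection, so $E$ is residually discrete. The compactly generated open subgroups $O$ of $E$ that contain $\psi\inv(U)\cap$-witnessing data, and in particular that contain $T$, form a directed family with union $E$ (each $e \in E$ lies in the compactly generated open subgroup $\grp{T,e}$), so it suffices to prove that every such $O$ is compact.

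Fix such an $O$. Because $O$ is compactly generated and, as an open subgroup of the residually discrete group $E$, is itself residually discrete, Proposition~\ref{prop:ResDiscr->SIN} furnishes a compact open normal subgroup $T_0 \normal O$ with $T_0 \le T$. The map $\psi\rest_T$ is a continuous bijection from the compact group $T$ onto $\psi(T)$, hence a homeomorphism, so $\psi(T_0)$ is open in $\psi(T)$; since $(U_n)_{n\in I}$ is a neighborhood basis of the identity, I can choose $n$ with $\psi(T) \cap U_n \le \psi(T_0)$. Writing $S_n := \psi(T) \cap U_n$, this gives $S_n = \psi(T_0) \cap U_n$. The crucial observation is then that $\psi(O) \cap U_n$ normalizes $S_n$: an element $x \in \psi(O) \cap U_n$ normalizes $\psi(T_0)$ (as $T_0 \normal O$) and normalizes $U_n$ (as $U_n \normal U$), hence normalizes $\psi(T_0) \cap U_n = S_n$. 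Since also $S_n \le \psi(O) \cap U_n$, the quotient $(\psi(O) \cap U_n)/S_n$ embeds into $\N_{U_n}(S_n)/S_n$, which is locally finite by hypothesis.

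To finish, I transport this back through $\psi$. Set $O_n := \psi\inv(U_n) \cap O$ and $T_n := \psi\inv(U_n) \cap T$; under the abstract isomorphism $\psi \colon O \to \psi(O)$ these correspond to $\psi(O) \cap U_n$ and $S_n$, so $O_n/T_n \cong (\psi(O)\cap U_n)/S_n$ is locally finite. Now $O_n$ is open of finite index in $O$ (because $[\psi(O):\psi(O)\cap U_n] \le |U/U_n| < \infty$), hence compactly generated, and $T_n$ is a compact open subgroup of $O_n$; therefore $O_n/T_n$ is a finitely generated discrete group. A finitely generated locally finite group is finite, so $O_n/T_n$ is finite, whence $O_n$, and thus $O$, is compact. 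This shows $E = \psi\inv(U)$ is a directed union of compact open subgroups, i.e.\ regionally compact. The main obstacle is the middle step: arranging, via the normal subgroup $T_0$ and a careful choice of $n$, that a finite-index piece of $\psi(O)$ actually lands in the normalizer $\N_{U_n}(S_n)$, so that the local-finiteness hypothesis can be brought to bear.
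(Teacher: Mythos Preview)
Your proof is correct and follows essentially the same route as the paper's: both reduce to compactly generated open $O \ge T$, invoke Proposition~\ref{prop:ResDiscr->SIN} to find a compact open $T_0 \normal O$ inside $T$, choose $n$ so that $\psi(T)\cap U_n = \psi(T_0)\cap U_n$, and then combine normality of $T_0$ in $O$ with $U_n \normal U$ to land inside the relevant normalizer and apply the local-finiteness hypothesis. The only cosmetic difference is that the paper shows all of $\psi(O)$ lies in $\N_U(\psi(T)\cap U_n)$ (upgrading the hypothesis from $\N_{U_n}$ to $\N_U$ via the finite index $[U:U_n]$), whereas you restrict to $\psi(O)\cap U_n \le \N_{U_n}(\psi(T)\cap U_n)$ and instead use $[O:O_n]<\infty$ at the end; both are equally clean.
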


\begin{proof}
		By  Proposition~\ref{prop:ResDiscr->SIN},  every compactly generated open subgroup of $E:=\psi\inv(U)$ is a SIN group. Let $X \subset E$ be any finite set. The group $\la T \cup X\ra$ is a compactly generated open subgroup of $E$, so there is a compact open subgroup $T' \leq T$ that is normal in $\la T \cup X\ra$.  The map $\psi$ restricts to a homeomorphism from $T$ to $\psi(T)$ and the subgroups $\psi(T) \cap U_n$ form a basis of identity neighborhoods in $\psi(T)$.  For $n \geq 0$ large enough, we have $U_n \cap \psi(T) = U_n \cap \psi(T')$. Thus, $\psi(\la T \cup X\ra) \leq \N_U(U_n \cap \psi(T))$. 
		
		By hypothesis, $\N_{U_n}(\psi(T) \cap U_n)/\psi(T) \cap U_n$ is a locally finite group, and thus $\N_{U}(\psi(T) \cap U_n)/\psi(T) \cap U_n$ is also locally finite. The finitely generated subgroup $\psi(\la T \cup X\ra)/\psi(T)\cap U_n$ is therefore finite.  Since $\psi$ is injective, it follows that $T$ has finite index in $\la T \cup X \ra$, so $\la T \cup X \ra$ is compact.  We conclude that every compactly generated open subgroup of $E$ is compact. That is to say, $E$ is regionally compact.
\end{proof}

\subsection{Solvable subgroups of robustly monolithic groups}

Using localizations, we restrict the solvable subgroups of groups in $\Rs$. 

\begin{cor}\label{cor:dense_comm}
	Suppose that $G \in \Rs$. If $S \leq G$ is an infinite compact subgroup such that $\Comm_G(S)$ is dense, then the only virtually solvable normal subgroup of $S$ is the trivial group. 
\end{cor}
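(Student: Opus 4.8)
The plan is to pass to the localization $G_{(S)}$ and there exploit [A]-semisimplicity. Since $\Comm_G(S)$ is dense by hypothesis, the localization $H := G_{(S)}$ --- that is, $\Comm_G(S)$ equipped with the $S$-localized topology --- is a \tdlc group for which the inclusion $\iota \colon H \to G$ is a continuous, injective homomorphism with dense image, i.e.\ a dense embedding. By construction $S$ is a compact \emph{open} subgroup of $H$, and since $S$ is infinite, $H$ is non-discrete. Theorem~\ref{thm:R_dense_embedding} then yields $H \in \Rs$, so by Proposition~\ref{prop:Rs_is_RF_[a]} the group $H$ is [A]-semisimple; in particular $\QZ(H) = \triv$ and $H$ has no non-trivial abelian locally normal subgroup. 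Because $S$ is literally the same compact group inside $G$ and inside $H$ (the $S$-localized topology restricts to the original topology on the compact set $S$), a normal subgroup of $S$ is the same object in either ambient group. Thus it suffices to show that $S$, regarded as a compact open subgroup of the [A]-semisimple group $H$, has no non-trivial virtually solvable normal subgroup.

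So suppose toward a contradiction that $N \trianglelefteq S$ is a non-trivial virtually solvable normal subgroup. First I would replace $N$ by its closure in $S$: this is still normal in $S$, still virtually solvable (the closure of a solvable subgroup is solvable, and finite index passes to closures), and still non-trivial, so we may assume $N$ is closed and hence profinite. If $N$ is finite, then $N$ is a non-trivial discrete locally normal subgroup of $H$, hence contained in $\QZ(H) = \triv$, a contradiction; so $N$ is infinite. Choosing a solvable finite-index subgroup of $N$ and passing to its closure, we obtain a closed solvable subgroup $N_0 \le N$ of finite index, which is therefore open in the infinite profinite group $N$, and in particular infinite.

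Now consider the closed derived series $N_0 = N_0^{[0]} \ge N_0^{[1]} \ge \cdots$, where $N_0^{[i+1]} = \ol{[N_0^{[i]}, N_0^{[i]}]}$. As $N_0$ is solvable this series reaches $\triv$, so there is a least $e \ge 1$ with $N_0^{[e]} = \triv$, and $A := N_0^{[e-1]}$ is a non-trivial closed \emph{abelian} subgroup that is \emph{characteristic} in $N_0$. The crux is to verify that $A$ is locally normal in $H$, i.e.\ that $\N_H(A)$ is open. Since $A$ is characteristic in $N_0$ we have $\N_H(A) \ge \N_S(N_0)$, so it is enough to show that the setwise stabilizer $\N_S(N_0) = \{s \in S : sN_0 s\inv = N_0\}$ is open in $S$. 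This is the one genuinely technical point: $N$ is normal in $S$, so $S$ acts on $N$ by conjugation continuously, and because $N_0$ is compact open --- hence clopen --- in the compact group $N$, a tube-lemma argument shows that $s \mapsto sN_0 s\inv$ is locally constant on the compact group $S$ and therefore takes only finitely many values, so $N_0$ has open stabilizer. Granting this, $\N_H(A) \supseteq \N_S(N_0)$ is open in $H$, so $A$ is a non-trivial closed abelian locally normal subgroup of $H$, contradicting the [A]-semisimplicity of $H$. This contradiction forces $N = \triv$, proving the corollary. I expect every step except the openness of $\N_S(N_0)$ to be routine: the reduction to a closed $N$, the disposal of the finite case via the quasi-center, and the derived-series construction of a characteristic abelian subgroup are all standard, whereas the continuity-plus-compactness argument controlling the stabilizer of the open subgroup $N_0$ is where the real work lies.
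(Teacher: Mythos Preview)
Your proof is correct and follows exactly the paper's approach: localize at $S$, invoke Theorem~\ref{thm:R_dense_embedding} to get $G_{(S)}\in\Rs$, then Proposition~\ref{prop:Rs_is_RF_[a]} for [A]-semisimplicity, and derive a contradiction from a putative non-trivial virtually solvable normal subgroup of the compact open subgroup $S$. The paper compresses the final implication into a single sentence, whereas you spell it out; your argument is fine, but the step you flag as ``the real work'' is in fact routine and can be bypassed entirely: replace $N_0$ by its normal core in $N$, which is an open (finite-index) solvable subgroup that is \emph{characteristic} in $N$ and hence already normal in $S$, so the last non-trivial term of its closed derived series is an abelian normal subgroup of $S$ and thus locally normal in $H$ without any analysis of $\N_S(N_0)$.
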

\begin{proof}
Form $G_{(S)}$ the localization of $G$ at $S$. The group $G_{(S)}$ is a dense locally compact subgroup of $G$, so $G_{(S)}\in \Rs$ by Theorem~\ref{thm:R_dense_embedding}. Proposition~\ref{prop:Rs_is_RF_[a]} ensures $G_{(S)}$ is [A]-semisimple, so $G_{(S)}$ admits no non-trivial locally normal abelian subgroups. The only virtually solvable normal subgroup of $S$ is thus the trivial group.
\end{proof}

For $G$ a \tdlc group and $U$ a compact open subgroup, Theorem~\ref{thm:Reid_localizations} shows that the group $\Comm_G(U_p)$ is dense in $G$ for any pro-$p$-Sylow subgroup $U_p$ of $U$. The next corollary is then immediate from Corollary~\ref{cor:dense_comm}.

\begin{cor}\label{cor:ab_sylow_R}
	Suppose that $G\in \Rs$, $U$ is a compact open subgroup of $G$, and $S$ is a pro-$p$-Sylow subgroup of $U$. If $S$ is infinite, then the only virtually solvable normal subgroup of $S$ is the trivial group. In particular, $S$ is not solvable.
\end{cor}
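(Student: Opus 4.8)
The plan is to reduce directly to Corollary~\ref{cor:dense_comm} via the density statement recorded in Theorem~\ref{thm:Reid_localizations}, so essentially no new work is required beyond checking that the hypotheses of those two results are met. First I would observe that since $S$ is a pro-$p$-Sylow subgroup of the compact open subgroup $U \leq G$, Theorem~\ref{thm:Reid_localizations} applies verbatim and yields that the commensurator $\Comm_G(S)$ is dense in $G$. Next I would note that $S$, being a closed subgroup of the compact group $U$, is itself compact, and that it is infinite by hypothesis. Thus $S$ is an infinite compact subgroup of $G \in \Rs$ whose commensurator is dense; in other words, $S$ satisfies precisely the hypotheses of Corollary~\ref{cor:dense_comm}.

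Applying Corollary~\ref{cor:dense_comm} then immediately gives the main assertion: the only virtually solvable normal subgroup of $S$ is the trivial group. For the final clause, I would argue self-referentially, using that $S$ is a normal subgroup of itself. Since $S$ is infinite, we have $S \neq \triv$; were $S$ solvable, then $S$ would be a non-trivial virtually solvable (indeed solvable) normal subgroup of $S$, contradicting what has just been established. Hence $S$ cannot be solvable.

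I expect no substantial obstacle, as the real content is entirely packaged in the two cited results: Theorem~\ref{thm:Reid_localizations} supplies the density of the commensurator of a Sylow subgroup, while Corollary~\ref{cor:dense_comm} transfers the \textup{[A]}-semisimplicity of the localization $G_{(S)}$ (which lies in $\Rs$ by Theorem~\ref{thm:R_dense_embedding}) into the prohibition on non-trivial virtually solvable normal subgroups of $S$. The only points needing any care are the purely formal verifications that $S$ is compact and non-trivial, ensuring both cited results genuinely apply, and the short deduction of non-solvability of $S$ from the absence of non-trivial virtually solvable normal subgroups.
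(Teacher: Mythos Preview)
Your proposal is correct and follows exactly the paper's approach: the paper states that the corollary is immediate from Corollary~\ref{cor:dense_comm} once Theorem~\ref{thm:Reid_localizations} supplies the density of $\Comm_G(S)$. Your additional verification that $S$ is compact and the explicit deduction of the ``in particular'' clause are welcome but routine details.
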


\section{Regionally elementary groups}

\subsection{Definition and basic properties}

We recall the class of elementary \tdlcsc groups as defined in \cite{W_E_14}.

\begin{defn}\label{defn:elementary}
	The collection of \emph{elementary groups} is the smallest class $\Es$ of \tdlcsc groups such that 
	\begin{enumerate}
		\item $\Es$ contains the second countable profinite groups and countable discrete groups.
		\item $\Es$ is closed under group extensions within the class of \tdlcsc groups.
		\item $\Es$ is closed under taking closed subgroups.
		\item $\Es$ is closed under taking Hausdorff quotients.
		\item $\Es$ is closed under countable directed unions of open subgroups.\qedhere
	\end{enumerate}
\end{defn}

The definition of elementary groups easily extends to groups that are not $\sigma$-compact.

\begin{defn} 
	A \tdlc group $G$ is called \emph{regionally elementary} if every compactly generated open subgroup is elementary. It is called \emph{regionally SIN} if every compactly generated open subgroup is a SIN group.
\end{defn}

Regionally elementary groups are necessarily first countable.  Since the class $\Es$ is closed under closed subgroups and countable directed unions, one sees that a second countable \tdlc group is elementary if and only if it is regionally elementary.

The class of regionally elementary groups enjoys several closure properties. These all follow from the definition of the class of elementary groups except for claim (ii). This claim follows from \cite[Theorem 3.8]{W_E_14}, which shows claim (ii) holds for the class of elementary groups.

\begin{prop}\label{prop:reg_elem:closure_props}
The class of regionally elementary groups enjoys the following closure properties within the class of \tdlc groups:
	\begin{enumerate}[label=(\roman*)]
		\item It is closed under forming group extensions.
		\item It is closed under taking preimages via continuous, injective maps. In particular, it is closed under taking closed subgroups.
		\item It is closed under taking Hausdorff quotients.
		\item It is closed under taking directed unions of open subgroups.\qedhere
	\end{enumerate}
\end{prop}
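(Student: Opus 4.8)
The plan is to reduce all four claims to the corresponding closure properties of the class $\Es$ recorded in Definition~\ref{defn:elementary}, using throughout the defining reformulation that a \tdlc group is regionally elementary precisely when each of its compactly generated open subgroups is elementary. So in each case I would fix an arbitrary compactly generated open subgroup $O$ of the target group and aim to show $O \in \Es$. The recurring bookkeeping is second countability: a compactly generated \tdlc group is $\sigma$-compact, $\sigma$-compactness passes to closed subgroups and to Hausdorff quotients, and a countable directed union of second countable open subgroups is again second countable (a countable union of countable bases is a basis).

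For (iv), if $G = \bigcup_k H_k$ is a directed union of open regionally elementary subgroups, then the compact generating set of $O$ is covered by finitely many $H_k$, so by directedness $O \le H_{k_0}$ for some $k_0$; then $O$ is a compactly generated open subgroup of the regionally elementary group $H_{k_0}$ and hence lies in $\Es$. For (iii), write $q \colon G \to G/N$ for the quotient map and let $O'$ be compactly generated open in $G/N$; choosing a compact open $U \le q\inv(O')$ and a compact generating set $S'$ of $O'$, one lifts $S'$ to a compact set $S \subset G$ (compact sets lift through open continuous surjections) and sets $O := \la U, S\ra$. Then $O$ is compactly generated open in $G$, hence elementary, and $q(O) = O'$, so $O' \in \Es$ by closure of $\Es$ under Hausdorff quotients.

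For (i), given a closed normal $N \normal G$ with $N$ and $G/N$ regionally elementary, I would analyze the extension $\triv \to O \cap N \to O \to q(O) \to \triv$. The quotient $q(O)$ is a compactly generated open subgroup of $G/N$, hence in $\Es$. For the kernel, $O \cap N$ is open in $N$ and, being closed in the $\sigma$-compact group $O$, is itself $\sigma$-compact; writing it as a countable increasing union of compactly generated open subgroups, each such subgroup is a compactly generated open subgroup of $N$ and hence elementary, so $O \cap N \in \Es$ by closure of $\Es$ under countable directed unions of open subgroups. As $O \cap N$ and $q(O)$ are second countable, so is $O$, and closure of $\Es$ under extensions gives $O \in \Es$.

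Claim (ii) is the substantial one and the expected main obstacle, since it is exactly where permanence of $\Es$ under continuous injective preimages (\cite[Theorem 3.8]{W_E_14}) is needed rather than the formal closure axioms. Let $\phi \colon H \to G$ be continuous and injective with $G$ regionally elementary, and fix $O \le H$ compactly generated open with compact generating set $S$. The first and most delicate task is to verify that $O$ is itself second countable, so that Theorem 3.8 can be invoked: restricting $\phi$ to a compact open subgroup $U \le O$ gives a homeomorphism onto the compact subgroup $\phi(U) \le G$, which lies in some compactly generated—hence elementary, hence second countable—open subgroup of $G$; thus $U$ is second countable, and since $O/U$ is countable, $O$ is second countable. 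Now $\phi(O) = \la \phi(S)\ra$ is contained in the compactly generated open subgroup $P := \la U_G, \phi(S)\ra$ of $G$ for any compact open $U_G \le G$, and $P \in \Es$ by regional elementarity of $G$. The corestriction $\phi \colon O \to P$ is then a continuous injective homomorphism of \tdlcsc groups into an elementary group, so $O \in \Es$ by \cite[Theorem 3.8]{W_E_14}. The ``in particular'' statement follows by taking $\phi$ to be the inclusion of a closed subgroup. The guiding difficulty throughout, and sharpest in (ii), is maintaining second countability at each step so that the second-countable theory of $\Es$ remains applicable.
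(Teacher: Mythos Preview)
Your proof is correct and follows the same approach as the paper, which simply remarks (just before the statement) that (i), (iii), (iv) follow from the closure properties in Definition~\ref{defn:elementary} and that (ii) follows from \cite[Theorem~3.8]{W_E_14}. You have spelled out the reductions to compactly generated open subgroups and the second-countability bookkeeping that the paper leaves implicit.
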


\subsection{Decomposition rank}

The class of elementary groups admits an ordinal valued rank called the \textit{decomposition rank} (see \cite[Section 4]{W_E_14}). It is the unique ordinal valued function $\xi:\Es\rightarrow \omega_1$ with the following properties:

\begin{enumerate}[label=(\alph*)]
	\item $\xi(\{1\})=1$;
	\item If $G\in \Es$ is non-trivial and $(O_i)_{i\in \Nbb}$ is an $\subseteq$-increasing exhaustion of $G$ by compactly generated open subgroups, then
	\[
	\xi(G)=\sup_{i\in \Nbb}(\xi(\Res{}(O_i)))+1.\qedhere
	\]
\end{enumerate}

The lowest possible rank of a non-trivial group is two.  Given a non-trivial \tdlcsc group $G$, we have $\xi(G)=2$ if and only if $G$ is a directed union of compactly generated open SIN-groups; see \cite[Lemma~3.15]{RW_LC_15}. 

The  decomposition rank on elementary groups extends to regionally elementary groups: For $G$ a non-trivial regionally elementary group, we define the decomposition rank to be
\[
\xi_r(G):=\sup\{\xi(\Res{}(O))\mid O\leq G\text{ compactly generated and open}\}+1.
\]
If $G$ is trivial, we put $\xi_r(G)=1$. For an elementary group $G$, it is clear that that $\xi_r(G)=\xi(G)$.  We will thus abuse notation and simply write $\xi(G)$ for the decomposition rank of a regionally elementary group. We note also that a non-trivial group $G$ is regionally SIN if and only if $\xi_r(G)=2$.

The decomposition rank is well-behaved for regionally elementary groups. These results follow easily from the corresponding facts for elementary groups.
\begin{prop}\label{prop:xi_regionally_ele}
	The decomposition rank for regionally elementary groups enjoys the following properties:
	\begin{enumerate}[label=(\roman*)]
	\item If $\{1\}\rightarrow K\rightarrow G\rightarrow Q\rightarrow\{1\}$ is a short exact sequence of regionally elementary groups, then $\xi(G)\leq \xi(K)+\xi(Q)$. \textup{(cf. \cite[Lemma 3.8]{RW_LC_15})}
	\item Suppose that $H$ is a \tdlc group and $G$ is regionally elementary. If there is a continuous embedding of $H$ into $G$, then $H$ is regionally elementary with $\xi(H)\leq \xi(G)$. \textup{(cf. \cite[Corollary 4.10]{W_E_14})}\qedhere
	\end{enumerate}
\end{prop}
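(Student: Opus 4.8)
The plan is to reduce both statements to the corresponding facts for genuinely elementary ($\sigma$-compact) groups by restricting attention to the compactly generated open subgroups that appear in the definition of the decomposition rank, and then to reassemble the per-subgroup inequalities into global ones. The single bookkeeping fact that makes this reassembly work is that, directly from the defining formula $\xi_r(\cdot)=\sup\{\cdots\}+1$, the decomposition rank of any regionally elementary group is a \emph{successor} ordinal. I would prove (ii) first, since the argument for (i) invokes it.

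For (ii), the regional elementarity of $H$ is immediate from Proposition~\ref{prop:reg_elem:closure_props}(ii), so only the rank bound requires work. Fixing a compactly generated open $O\le H$ (which is then elementary, being $\sigma$-compact and regionally elementary), I would enclose its image by choosing a compact open $V\le G$ and setting $G':=\cgrp{\psi(O)\cup V}$, a compactly generated open --- hence elementary --- subgroup of $G$ into which $O$ continuously injects. Applying the elementary embedding result \cite[Corollary~4.10]{W_E_14} gives $\xi(O)\le\xi(G')$, and $\xi(G')=\xi(\Res(G'))+1\le\xi(G)$ holds because $G'$ is one of the compactly generated open subgroups over which the supremum defining $\xi(G)$ is taken. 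This yields $\xi(\Res(O))<\xi(G)$ for every such $O$, and the successor-ordinal observation then upgrades this to $\xi(H)=\sup_O\xi(\Res(O))+1\le\xi(G)$.

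For (i), writing $\pi\colon G\to Q$ for the (open) quotient map, I would fix a compactly generated open $O\le G$ and use the short exact sequence $\triv\to O\cap K\to O\to\pi(O)\to\triv$ of elementary groups, where $O\cap K$ is a closed subgroup of $K$ and $\pi(O)$ is compactly generated open in $Q$. The elementary extension bound \cite[Lemma~3.8]{RW_LC_15} gives $\xi(O)\le\xi(O\cap K)+\xi(\pi(O))$; part (ii) applied to the inclusion $O\cap K\hookrightarrow K$ bounds the first summand by $\xi(K)$, while $\xi(\pi(O))=\xi(\Res(\pi(O)))+1\le\xi(Q)$ as in the previous paragraph. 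Hence $\xi(\Res(O))<\xi(K)+\xi(Q)$ for all such $O$, and since $\xi(Q)$ --- and therefore $\xi(K)+\xi(Q)$ --- is a successor ordinal, taking the supremum gives $\xi(G)=\sup_O\xi(\Res(O))+1\le\xi(K)+\xi(Q)$.

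The main obstacle I anticipate is not any single group-theoretic step but the ordinal arithmetic in passing from the strict per-subgroup inequalities $\xi(\Res(O))<\beta$ to the global bound $\xi(G)=\sup_O\xi(\Res(O))+1\le\beta$; this is exactly where the successor-ordinal observation is essential, ensuring that the trailing $+1$ in the definition of $\xi_r$ does not cost an extra unit. Everything else is a routine transfer of the elementary-group results along compactly generated open exhaustions, as the authors indicate.
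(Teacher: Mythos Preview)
Your proposal is correct and follows precisely the route the paper indicates: the paper gives no detailed proof, merely remarking that ``these results follow easily from the corresponding facts for elementary groups,'' and you have carried out exactly this reduction by restricting to compactly generated open subgroups and invoking the cited elementary-group results. Your observation that $\xi_r$ always takes successor values is the right device for reassembling the per-subgroup bounds, and the edge cases (trivial $H$, $G$, or $Q$) are harmless.
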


The recursion that produces the decomposition rank gives a chain condition that characterizes regionally elementary groups.

\begin{prop}\label{prop:reg_elem_char}
Let $G$ be a   \tdlc group.  Then $G$ is regionally elementary if and only if $G$ is first countable and for every descending chain of compactly generated closed subgroups
\[
K_0\geq K_1\geq K_2\geq \dots
\]
such that $K_{i+1} \le \Res(K_i)$ and for all $i\in \Nb$, there exists $i \in \Nb$ with $K_i = \{1\}$.
\end{prop}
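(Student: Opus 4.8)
The plan is to prove the two implications separately: the forward direction using the ordinal-valued decomposition rank $\xi$, and the converse by a contrapositive argument resting on a structural reduction for discrete residuals.

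\textbf{Forward direction.} Assume $G$ is regionally elementary. Then $G$ is first countable, as recorded after the definition of regional elementarity. For the chain condition, take a descending chain $K_0 \ge K_1 \ge \cdots$ of compactly generated closed subgroups with $K_{i+1}\le \Res(K_i)$ for all $i$. Each $K_i$ is a closed subgroup of $G$, hence regionally elementary by Proposition~\ref{prop:reg_elem:closure_props}(ii); being compactly generated it is $\sigma$-compact, and being a subgroup of the first countable group $G$ it is first countable, hence second countable, so $K_i$ is in fact elementary and carries a well-defined rank $\xi(K_i)<\omega_1$. Because $K_i$ is itself compactly generated, applying the recursion~(b) to the constant exhaustion $O_j=K_i$ yields $\xi(K_i)=\xi(\Res(K_i))+1$ whenever $K_i\neq\triv$. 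Since $\Res(K_i)\in\Es$ (closed subgroups of elementary groups are elementary) and $K_{i+1}$ is a closed subgroup of $\Res(K_i)$, monotonicity of the rank (Proposition~\ref{prop:xi_regionally_ele}(ii)) gives $\xi(K_{i+1})\le \xi(\Res(K_i))<\xi(K_i)$. Thus, as long as the terms remain non-trivial, $(\xi(K_i))$ is a strictly decreasing sequence of ordinals, which is impossible; hence $K_i=\triv$ for some $i$.

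\textbf{Converse direction.} I would argue the contrapositive: suppose $G$ is first countable but \emph{not} regionally elementary, and construct a chain violating the condition. First countability guarantees that every compactly generated (hence $\sigma$-compact) open subgroup is second countable, so that ``elementary'' and ``regionally elementary'' agree for these subgroups. The heart of the argument is the following reduction: if a compactly generated second countable \tdlc group $K$ is not elementary, then $\Res(K)$ is not regionally elementary. Indeed $K/\Res(K)$ is compactly generated and residually discrete, so by Proposition~\ref{prop:ResDiscr->SIN} it has a compact open normal subgroup; it is therefore an extension of a finitely generated discrete group by a second countable profinite group, hence elementary. Were $\Res(K)$ regionally elementary, it would be elementary (being second countable), and closure of $\Es$ under extensions (Definition~\ref{defn:elementary}(2)) would force $K\in\Es$, a contradiction. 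Consequently $\Res(K)$ is not regionally elementary, so it contains a compactly generated open subgroup that is not elementary; such a subgroup is closed in $\Res(K)$ and thus closed in $G$.

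Now choose a compactly generated open $K_0$ of $G$ that is not elementary, and iterate the reduction: having produced a non-elementary compactly generated closed $K_i$, extract $K_{i+1}\le \Res(K_i)$ compactly generated, closed, and non-elementary. Since $\triv$ is elementary, every $K_i$ is non-trivial, so this chain satisfies $K_{i+1}\le\Res(K_i)$ yet never reaches $\triv$, contradicting the chain condition. This completes the contrapositive and hence the converse. The main obstacle is precisely the reduction lemma of the previous paragraph: passing non-elementarity from $K$ down to $\Res(K)$, which hinges on recognizing $K/\Res(K)$ as elementary via Proposition~\ref{prop:ResDiscr->SIN} and on closure under extensions. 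The secondary technical point, to be handled carefully, is the second-countability bookkeeping forced by first countability, which is what makes membership in $\Es$ a meaningful question for the compactly generated subgroups in play.
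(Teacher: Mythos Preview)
Your proof is correct and follows essentially the same route as the paper's: both directions match the paper's argument closely, with the forward direction using the strict descent of $\xi$ along the chain and the converse proceeding by contrapositive via the observation that $K/\Res(K)$ is SIN (hence elementary) so non-elementarity of $K$ forces $\Res(K)$ to contain a non-elementary compactly generated open subgroup. Your additional bookkeeping on second countability is more explicit than the paper's but not a substantive departure.
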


\begin{proof}
Suppose that $G$ is regionally elementary. Then $G$ has an open subgroup that is second countable, so $G$ is first countable. Let now $(K_i)_{i\in \Nb}$ be an infinite descending chain of compactly generated closed subgroups such that $K_{i+1}\leq \Res(K_i)$ for all $i$.  In particular, $\xi(K_{i+1}) \le \xi(\Res(K_i))$ by Proposition~\ref{prop:xi_regionally_ele}. Whenever $K_i$ is non-trivial, we see that
\[
\xi(K_i) = \xi(\Res(K_i)) +1 \ge \xi(K_{i+1})+1.
\]
Therefore $\xi(K_i)> \xi(K_{i+1})$ for all $i$ such that $K_i$ is non-trivial.  Since $\xi$ takes ordinal values, we cannot have an infinite descending chain $\xi(K_1) > \xi(K_2) > \dots$, so the sequence $(K_i)_{i\in \Nb}$ eventually stabilizes at $\{1\}$. We conclude that $G$ satisfies the chain condition.

\medskip

Conversely,  suppose that $G$ is first countable but not regionally elementary.  There is thus some compactly generated closed subgroup $K_0\leq G$ that is not regionally elementary.  The quotient $K_0/\Res(K_0)$ is regionally elementary; indeed it is compact-by-discrete by Proposition~\ref{prop:ResDiscr->SIN}.  It follows by Proposition~\ref{prop:reg_elem:closure_props} that $\Res(K_0)$ is not regionally elementary. There thus exists a compactly generated open $K_1\leq \Res(K_0)$ that is non-elementary. Continuing in this fashion produces an infinite chain $K_0\geq K_1\geq K_2\dots$ such that $K_{i+1}\leq \Res(K_i)$ and $K_i$ is non-elementary for all $i\in \Nb$. We conclude that $G$ does not satisfy the chain condition.
\end{proof}

\subsection{Robustly monolithic groups are not regionally elementary}

Using the permanence properties, we here see that $\Rs$ contains no regionally elementary groups.  In fact, we find that $\Rs$ has a recursive property of its own that is incompatible with the class of regionally elementary groups.

\begin{prop}\label{prop:R_chain}
Let $G \in \Rs$.  Then at least one of the following holds:
\begin{enumerate}[label=(\roman*)]
\item There is a closed $K \le G$ such that $K \in \Ss$.
\item There is an infinite descending chain
\[
M_1 > M_2 > M_3 > \dots
\]
of topologically simple closed subgroups of $G$ such that for all $i$, $M_i\in \Rs$, $M_i$ is not compactly generated, and $M_{i+1}=\Mon(K_{i})$ for $K_i$ some compactly generated open subgroup of $M_i$.\qedhere
\end{enumerate}
\end{prop}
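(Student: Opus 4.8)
The plan is to build the data by a recursion and let the dichotomy fall out of whether the recursion halts. I would begin by setting $M_1 := \Mon(G)$. By the ``in particular'' clause of Lemma~\ref{lem:monolith_overgroup}, $M_1$ is a topologically simple closed subgroup of $G$ lying in $\Rs$, and it is non-discrete by the definition of $\Rs$. The invariant I would carry through the recursion is that $M_i$ is a non-discrete, topologically simple, closed subgroup of $G$ with $M_i \in \Rs$, so that $\Mon(M_i) = M_i$.

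Given such an $M_i$, I would split into two cases. If $M_i$ is compactly generated, then $M_i$ is a non-discrete, compactly generated, topologically simple \tdlc group, that is, $M_i \in \Ss$; taking $K := M_i$ places us in alternative (i), and the recursion halts. If $M_i$ is \emph{not} compactly generated, I would invoke Theorem~\ref{thm:approx_R}: writing $M_i$ as the directed union of its compactly generated open subgroups (a directed system with union $M_i$, since $M_i$ is \tdlc), the theorem supplies a compactly generated open subgroup $K_i \le M_i$ with $K_i \in \Rs$. I would then set $M_{i+1} := \Mon(K_i)$, which is topologically simple and lies in $\Rs$ by Lemma~\ref{lem:monolith_overgroup}, and is non-discrete by the definition of $\Rs$; this reinstates the invariant.

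Two bookkeeping points complete the step. First, the descent is strict: since $M_i$ is not compactly generated while $K_i$ is, we have $K_i \subsetneq M_i$, whence $M_{i+1} \le K_i \subsetneq M_i$. Second, each term is closed in $G$: $M_1 = \Mon(G)$ is closed; an open subgroup of a topological group is also closed, so $K_i$ is closed in $M_i$ and hence in $G$; and $M_{i+1} = \Mon(K_i)$ is closed in $K_i$ and hence in $G$. Thus closedness propagates by induction. If the recursion never halts, the resulting sequence $M_1 > M_2 > \dots$ is exactly the chain required in alternative (ii): each $M_i$ is topologically simple, closed in $G$, a member of $\Rs$, and not compactly generated (otherwise we would have stopped), with $M_{i+1} = \Mon(K_i)$ for $K_i$ a compactly generated open subgroup of $M_i$.

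I do not anticipate a serious obstacle: the argument is a clean recursion whose only substantive inputs are Theorem~\ref{thm:approx_R} (to drop down to a compactly generated open subgroup still in $\Rs$) and Lemma~\ref{lem:monolith_overgroup} (to guarantee the new monolith is again topologically simple and in $\Rs$). The points requiring care are the strict inclusions and the propagation of closedness, both handled above, together with the observation that a compactly generated member of the chain automatically lies in $\Ss$, which is immediate from the definitions.
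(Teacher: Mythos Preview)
Your proof is correct and follows essentially the same recursion as the paper: alternately pass to the monolith (which is topologically simple and in $\Rs$ by Lemma~\ref{lem:monolith_overgroup}) and to a compactly generated open subgroup in $\Rs$ (via Theorem~\ref{thm:approx_R}), halting precisely when a term of the chain lands in $\Ss$. The only cosmetic difference is that the paper first drops to a compactly generated open $K_0 \le G$ in $\Rs$ before taking $M_1 = \Mon(K_0)$, whereas you start directly with $M_1 = \Mon(G)$; this changes nothing.
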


\begin{proof}
Let $G \in \Rs$.  Theorem~\ref{thm:approx_R} supplies a compactly generated open $K_0 \le G$ such that $K_0 \in \Rs$.  Letting $M_1$ be the monolith of $K_0$, Lemma~\ref{lem:monolith_overgroup} ensures that $M_1 \in \Rs$.  There is then a compactly generated open $K_1 \le M_1$ such that $K_1 \in \Rs$, and so on.

If we can choose $K_{i+1} = K_{i}$ for some $i$, then $K_i$ must be compactly generated and equal to its own monolith, so $K_i \in \Ss$ and (i) holds.  If instead we are forced to choose $K_{i+1} < K_i$ for all $i$, then we obtain a chain as in (ii) by taking $M_{i+1} = \Mon(K_i)$.
\end{proof}

\begin{cor}\label{cor:R_no_elementary}
	The class $\Rs$ contains no regionally elementary groups.
\end{cor}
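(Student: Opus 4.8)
The plan is to assume, for contradiction, that some $G \in \Rs$ is regionally elementary, and to play the dichotomy of Proposition~\ref{prop:R_chain} against the chain-condition characterization of regional elementarity in Proposition~\ref{prop:reg_elem_char}. Proposition~\ref{prop:R_chain} gives two cases. In case (i) there is a closed subgroup $K \le G$ with $K \in \Ss$; since regional elementarity is inherited by closed subgroups (Proposition~\ref{prop:reg_elem:closure_props}), such a $K$ is regionally elementary, and as $K$ is compactly generated it is then elementary, being a compactly generated open subgroup of itself. This contradicts the fact that no group in $\Ss$ is elementary \cite{W_E_14}, so this case cannot occur.

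The real work is in case (ii), where Proposition~\ref{prop:R_chain} produces an infinite strictly descending chain $M_1 > M_2 > \cdots$ of topologically simple members of $\Rs$ together with compactly generated open subgroups $K_i \le M_i$ satisfying $M_{i+1} = \Mon(K_i)$. I would run the chain $(K_i)$ through Proposition~\ref{prop:reg_elem_char}. Each $K_i$ is a compactly generated closed subgroup of $G$; since $K_{i+1} \le M_{i+1} = \Mon(K_i) \le K_i$, the $K_i$ form a descending chain, and each $K_i$ is non-trivial because it contains the non-discrete monolith $M_{i+1}$.

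The one genuinely non-formal step, which I also expect to be the main obstacle, is to verify the hypothesis $K_{i+1} \le \Res(K_i)$ of the chain condition, that is, to show that the monolith sits inside the discrete residual. For this I would argue that $K_i$ is monolithic and non-discrete (its monolith $M_{i+1}$ is non-discrete), so every open normal subgroup of $K_i$ is a non-trivial closed normal subgroup and therefore contains $\Mon(K_i) = M_{i+1}$ by minimality of the monolith; intersecting over all open normal subgroups yields $M_{i+1} \le \Res(K_i)$. Combined with $K_{i+1} \le M_{i+1}$ this gives $K_{i+1} \le \Res(K_i)$. Thus $(K_i)$ is an infinite descending chain of non-trivial compactly generated closed subgroups with $K_{i+1} \le \Res(K_i)$ that never terminates at $\triv$, directly contradicting Proposition~\ref{prop:reg_elem_char}. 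Once the containment of the monolith in the discrete residual is established, both branches of Proposition~\ref{prop:R_chain} are closed and the corollary follows.
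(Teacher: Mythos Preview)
Your proof is correct and follows essentially the same route as the paper: feed the chain $(K_i)$ into Proposition~\ref{prop:reg_elem_char} after observing that $\Mon(K_i)\le\Res(K_i)$ (exactly your argument that every open normal subgroup, being non-trivial, contains the monolith). The only difference is organizational: the paper does not split on the dichotomy of Proposition~\ref{prop:R_chain} but instead uses the chain $(K_i)$ constructed in its \emph{proof}, which exists in both cases and may stabilize at some non-trivial $K_i\in\Ss$; since Proposition~\ref{prop:reg_elem_char} only requires the chain to reach $\{1\}$, a stabilizing non-trivial chain already gives the contradiction, so case~(i) needs no separate appeal to \cite{W_E_14}.
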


\begin{proof}
Let $(K_i)_{i\in \Nb}$ be the chain produced in the proof of Proposition~\ref{prop:R_chain}. Each $K_i$ is compactly generated, and $K_{i+1} \le \Mon(K_i) \le \Res(K_i)$ for all $i$.  In view of Proposition~\ref{prop:reg_elem_char}, $G$ is not regionally elementary; note that it can be the case that $K_{i+1}= \Res(K_i)=K_i$ for all sufficiently large $i$, but Proposition~\ref{prop:reg_elem_char} nonetheless applies.
\end{proof}

\begin{cor} 
Let  $G\in \Rs$ and $H$ be a regionally elementary \tdlc group admitting a dense embedding into  $G$. Then $H$ is discrete.
\end{cor}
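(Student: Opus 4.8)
The plan is to argue by contradiction, exploiting the two results immediately preceding this corollary. First I would suppose, for contradiction, that $H$ is \emph{not} discrete. Since $\psi \colon H \to G$ is a dense embedding and $H$ is a non-discrete \tdlc group, Theorem~\ref{thm:R_dense_embedding} applies directly and yields $H \in \Rs$.

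On the other hand, $H$ is assumed to be regionally elementary. This puts $H$ simultaneously in $\Rs$ and in the class of regionally elementary \tdlc groups, which directly contradicts Corollary~\ref{cor:R_no_elementary}, asserting that $\Rs$ contains no regionally elementary groups. From this contradiction I conclude that $H$ must be discrete.

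There is essentially no obstacle here: the corollary is a formal consequence of combining the stability of $\Rs$ under non-discrete dense embeddings (Theorem~\ref{thm:R_dense_embedding}) with the incompatibility of $\Rs$ and regional elementariness (Corollary~\ref{cor:R_no_elementary}). The only point requiring mild care is confirming that the hypotheses of Theorem~\ref{thm:R_dense_embedding} are met, namely that $H$ is a \tdlc group admitting a dense embedding into $G \in \Rs$ and that, under the contradiction hypothesis, $H$ is non-discrete; all of these are immediate from the statement and the assumption being negated.
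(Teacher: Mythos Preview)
Your proof is correct and is exactly the intended argument: the paper states this corollary without proof because it follows immediately by combining Theorem~\ref{thm:R_dense_embedding} with Corollary~\ref{cor:R_no_elementary}, precisely as you do.
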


\begin{cor} 
If $G$ is a non-discrete topologically simple regionally elementary group, then $G$ is not regionally expansive.
\end{cor}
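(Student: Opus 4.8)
The plan is to argue by contradiction, reducing the statement directly to Corollary~\ref{cor:R_no_elementary} via the elementary observation that a topologically simple group coincides with its own monolith. The entire content of the earlier development (that $\Rs$ avoids regionally elementary groups) does the work; the only thing to check here is that $G$ lands in $\Rs$ once we assume it is regionally expansive.

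First I would suppose, toward a contradiction, that $G$ \emph{is} regionally expansive. Since $G$ is non-discrete, it is in particular non-trivial, and being topologically simple its only closed normal subgroups are $\triv$ and $G$. Consequently the intersection of all non-trivial closed normal subgroups of $G$ equals $G$ itself, which is non-trivial, so $G$ is monolithic with $\Mon(G) = G$.

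Next I would read off that $G$ meets every clause in the definition of a robustly monolithic group: the monolith $\Mon(G) = G$ is non-discrete by hypothesis, topologically simple by hypothesis, and regionally expansive by our standing contradiction assumption. Hence $G \in \Rs$. But $G$ was assumed regionally elementary, which contradicts Corollary~\ref{cor:R_no_elementary}, stating that $\Rs$ contains no regionally elementary groups. Therefore $G$ cannot be regionally expansive, as claimed.

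The main obstacle is essentially nonexistent at this stage: once one recognizes that topological simplicity forces $\Mon(G) = G$, all the hypotheses defining membership in $\Rs$ are immediate, and the genuine difficulty has already been absorbed into Corollary~\ref{cor:R_no_elementary} (and ultimately into Proposition~\ref{prop:R_chain} and the chain characterization of Proposition~\ref{prop:reg_elem_char}). The only point requiring a moment's care is the harmless one that non-discreteness guarantees $G \neq \triv$, so that the monolith is genuinely non-trivial and the definition of $\Rs$ applies.
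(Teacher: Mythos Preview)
Your proof is correct and is precisely the intended argument: the paper states this corollary without proof, as an immediate consequence of Corollary~\ref{cor:R_no_elementary}, and your verification that a non-discrete, topologically simple, regionally expansive \tdlc group lies in $\Rs$ (since it is its own monolith) is exactly the missing one-line check.
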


Examples satisfying the hypotheses of the previous corollary may be found in \cite[Proposition 3.2]{Wi07}. We remark that all known examples have decomposition rank two.

We also have some control over the rank of $G/\Mon(G)$ with respect to continuous dense embeddings.

\begin{thm}\label{thm:R_finite_rank}
Let  $G\in \Rs$ and $H$ be a non-discrete \tdlc group with a dense embedding $\psi \colon H \to G$.    If $G/\Mon(G)$ is regionally elementary of finite rank, then so is $H/\Mon(H)$.
\end{thm}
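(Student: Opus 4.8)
The plan is to relate the decomposition rank of $H/\Mon(H)$ to that of $G/\Mon(G)$ by showing that the quotient $H/\Mon(H)$ embeds, regionally, into $G/\Mon(G)$ in a rank-controlled way. By Theorem~\ref{thm:R_dense_embedding} we already know $H \in \Rs$, so $H$ is monolithic with monolith $N := \Mon(H)$, and from the proof of Theorem~\ref{thm:R_dense_embedding} we have $\overline{\psi(N)} = M := \Mon(G)$. The essential point is to control what happens on compactly generated open subgroups, since decomposition rank and regional elementarity are regional notions (Proposition~\ref{prop:reg_elem_char} and the definition of $\xi_r$).

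\emph{First} I would fix a directed system $\{O_i\}_{i\in I}$ of compactly generated open subgroups of $H$ with $\bigcup_i O_i = H$, and apply Theorem~\ref{thm:approx_R} to pass to a cofinal subsystem in which every $O_i \in \Rs$ with monolith $M_i := \Mon(O_i)$; by Lemma~\ref{lem:monolith_union} the union $\bigcup_i M_i$ is dense in $N$ and normal in $H$. \emph{Next}, fixing a compact open $U \le G$ and setting $E := \psi^{-1}(U)$, I would invoke Proposition~\ref{prop:DENSE_IN_S} (with $G\in\Ss$ — but note the present theorem only assumes $G\in\Rs$, so I would instead use the analogous reasoning directly): for each $i$ one gets $O_i = M_i(E\cap O_i)$, and hence the quotient $O_i/M_i$ is isomorphic to $E_i/(E_i\cap M_i)$ for a compactly generated open $E_i\le E$, which is a SIN group of decomposition rank $2$ by Proposition~\ref{prop:ResDiscr->SIN}. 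This identifies $H/N$ regionally with a group whose compactly generated open subgroups are extensions of rank-$2$ pieces.

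\emph{The key step} is to bound $\xi(H/N)$ using the hypothesis on $G/M$. Since $\overline{\psi(N)} = M$, the map $\psi$ descends to a continuous homomorphism $\bar\psi: H/N \to G/M$ with dense image; I would want this to be injective, or at least rank-nonincreasing on compactly generated open subgroups. Injectivity follows because $\psi^{-1}(M) = N$: if $\psi(h) \in M$ then $h$ lies in the preimage of the monolith, which equals $N$ by the monolith identification. Thus $\bar\psi$ is a continuous injection, and by Proposition~\ref{prop:xi_regionally_ele}(ii), $H/N$ is regionally elementary with $\xi(H/N) \le \xi(G/M)$, which is finite by hypothesis. This simultaneously establishes that $H/\Mon(H)$ is regionally elementary and that its rank is finite (indeed bounded by the rank of $G/\Mon(G)$).

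\emph{The main obstacle} I anticipate is verifying that $\bar\psi$ is genuinely a \emph{continuous injective homomorphism of topological groups} between $H/N$ and $G/M$ to which Proposition~\ref{prop:xi_regionally_ele}(ii) applies — the subtlety is that $\psi$ is only a dense embedding, not a homeomorphism onto its image, so one must check that the quotient topology on $H/N$ maps continuously (and that $N = \psi^{-1}(M)$ exactly, giving injectivity of the descent). Establishing $\psi^{-1}(M) = N$ requires care: the inclusion $N \le \psi^{-1}(M)$ is immediate since $\overline{\psi(N)} = M$, while the reverse inclusion uses that $\psi^{-1}(M)$ is a non-trivial closed normal subgroup of $H$ (hence contains the monolith $N$) together with the fact that $\psi^{-1}(M)$ cannot be strictly larger than $N$ without $\overline{\psi(\psi^{-1}(M))}$ exceeding $M$, contradicting topological simplicity of $M$. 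Once this is in place, Proposition~\ref{prop:xi_regionally_ele}(ii) does the remaining work.
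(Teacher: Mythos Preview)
Your argument has a genuine gap at the ``key step'': the claim that $\psi^{-1}(M) = N$, and hence that $\bar\psi: H/N \to G/M$ is injective, is not correct in general, and your justification for the reverse inclusion is circular. You argue that if $\psi^{-1}(M)$ were strictly larger than $N$ then $\overline{\psi(\psi^{-1}(M))}$ would exceed $M$; but $\psi(\psi^{-1}(M)) = \psi(H) \cap M \subseteq M$ always, so its closure is contained in $M$ regardless of how large $\psi^{-1}(M)$ is. Monolithicity of $H$ only guarantees $N \le \psi^{-1}(M)$, not equality: there is no reason every $h \in H$ with $\psi(h) \in M$ must already lie in $N$, since $\psi(N)$ is merely dense in $M$, not equal to it.

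The paper's proof avoids this trap by setting $E := \psi^{-1}(M)$ and treating the short exact sequence $1 \to E/N \to H/N \to H/E \to 1$. The quotient $H/E$ genuinely embeds continuously and injectively into $G/M$, so Proposition~\ref{prop:xi_regionally_ele}(ii) gives it finite rank. For the kernel $E/N$, one uses that $M = \overline{\psi(N)} \subseteq \psi(N)U$ for any compact open $U \le G$, whence $E \le N\,\psi^{-1}(U)$; since $\psi^{-1}(U)$ is residually discrete (it injects into a profinite group) and therefore regionally SIN, the quotient $E/N$ is regionally elementary of rank~$2$. Subadditivity of the rank under extensions (Proposition~\ref{prop:xi_regionally_ele}(i)) then finishes the argument. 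Your earlier paragraphs about $O_i = M_i(E \cap O_i)$ were heading in the right direction for this rank-$2$ bound, but you abandoned that line in favour of the flawed injectivity claim.
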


\begin{proof}
We have $H \in \Rs$ by Theorem~\ref{thm:R_dense_embedding}.  Set $M: = \Mon(G)$ and $N := \Mon(H)$.  Setting $E:=\psi^{-1}(M)$, we see that $H/E\injects G/M$, and therefore $H/E$ is regionally elementary of finite rank. For $U\leq G$ a compact open subgroup, we have $M\leq \overline{\psi(N)}\leq \psi(N)U$. Setting $V:=\psi^{-1}(U)$, we deduce that $E\leq NV$. The subgroup $V$ is residually discrete and so is a regionally SIN group. It follows the quotient $E/N$ is then regionally elementary with rank two. Since regionally elementary groups are closed under group extension and the decomposition rank is subadditive, we conclude that $H/N$ is regionally elementary with finite rank.
\end{proof}

\section{The centralizer lattice}

\subsection{Preliminaries}
Following \cite{CRW1}, let us define the \textit{structure lattice} $\LN(G)$. For a \tdlc group $G$, let $\LN(G)$ be the set of closed locally normal subgroups of $G$ modulo the equivalence relation $\sim$ where $K \sim L$ if $K \cap L$ is open in both $K$ and $L$. Define a partial ordering on $\LN(G)$ by $\alpha\leq\beta$ if there are representatives $K\in \alpha$ and $L\in \beta$ such that $K\subseteq L$.

As explained in \cite{CRW1}, associated to any [A]-semisimple \tdlc group there is a canonical Boolean algebra called the \textit{centralizer lattice}.  This lattice can be defined either locally or globally.

\begin{itemize}

\item[] \textit{Locally:}  Define the centralizer map $\bot: \LN(G) \rightarrow \LN(G)$ by $[K]^{\bot} := [\QC_G(K)]$.  The centralizer lattice $\LC(G)$ is defined to be the image of $\LN(G)$ under $\bot$ with the partial order inherited from $\LN(G)$.

\medskip
\item[] \textit{Globally:} We define $\mathrm{LC}(G)$ as the set
\[
\{ \CC_G(K) \mid K \le G\text{ and } \N_G(K) \text{ is open in } G\},
\]
ordered by inclusion.
\end{itemize}
These two versions of the centralizer lattice are canonically isomorphic. Indeed, given a locally normal subgroup $K$, the centralizer $\CC_G(K)$ depends only on $[K]$ by Lemma~\ref{lem:[A]-semisimplicity_qcent}. Moreover,  the global version maps onto the local version via $\CC_G(K) \mapsto [\CC_G(K)]$ by Lemma~\ref{lem:[A]-semisimple:commuting}. We shall freely switch between the local and global perspective of the centralizer lattice.  It is easy to verify that $\Aut(G)$ admits an action on $\LC(G)$, and in particular, $G$ acts on $\LC(G)$.  We also remark that the meet operation for the centralizer lattice is especially straightforward: in $\LC(G)$, it is the operation induced by intersecting representatives, and in $\mathrm{LC}(G)$ it is simply intersection.

Let us note a general situation in which the structure and centralizer lattices can be pulled back along a homomorphism.  

\begin{lem}\label{lem:PullBack}
Let $G$ and $H$ be [A]-semisimple \tdlc groups, $\psi \colon H \to G$ be a continuous, injective homomorphism, and $V \leq H$ be a compact open subgroup. Then the map $\psi^* \colon \LN(G) \to \LN(H)$ defined by
\[
 [L] \mapsto  [\psi^{-1}(L \cap \psi(V))]
\]
is well-defined, order-preserving, meet-preserving, and $H$-equivariant where $H$ acts on $\LN(G)$ via $h.[L] := [\psi(h)L\psi(h\inv)]$. Let us assume furthermore that $\mathrm C_{\psi(V)}(L \cap \psi(H))= \mathrm C_{\psi(V)}(L)$ for every compact locally normal subgroup $L$ of $G$. Then:

\begin{enumerate}[label=(\roman*)]
\item $\psi^*(\alpha^\perp) = (\psi^*(\alpha))^\perp$ for all $\alpha \in \LN(G)$, and
\item if in addition $\psi^*(\alpha) >0$ for all $\alpha >0$, then $\psi^*$ induces an injective homomorphism $\LC(G) \to \LC(H)$ of Boolean algebras. \qedhere
\end{enumerate}
\end{lem}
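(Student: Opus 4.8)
The plan is to dispatch the purely lattice-theoretic assertions (well-definedness, monotonicity, meet-preservation, $H$-equivariance) first, and then to combine the extra centralizer hypothesis with the $[A]$-semisimplicity of $H$ to obtain compatibility with $\perp$ in (i), from which (ii) follows formally. The one tool underpinning nearly every step is that, since $V$ is compact and $\psi$ is a continuous injective homomorphism, the restriction $\psi\rest_V \colon V \to \psi(V)$ is a homeomorphism onto its image, and more generally $\psi\rest_W \colon W \to \psi(W)$ is a topological isomorphism for every compact subgroup $W \le H$. Hence $\psi^{-1}$ carries relatively open subsets of $\psi(V)$ to open subsets of $V$, commutes with finite intersections and with conjugation by elements of $V$, and identifies the $\sim$-relation on subgroups of $\psi(V)$ with that on subgroups of $V$.

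First I would check that $\psi^{-1}(L \cap \psi(V))$ is a compact locally normal subgroup of $H$: it lies in $V$ and is closed there because $L \cap \psi(V)$ is compact, and setting $W := V \cap \psi^{-1}(\N_G(L))$, for $w \in W$ one has $\psi(w)\psi(V)\psi(w)\inv = \psi(V)$ (as $w \in V$) and $\psi(w)L\psi(w)\inv = L$, so $w$ normalizes $\psi^{-1}(L \cap \psi(V))$; as $W$ is open, the subgroup is locally normal. Well-definedness on $\sim$-classes, monotonicity, and meet-preservation are then immediate from the corresponding properties of $M \mapsto M \cap \psi(V)$ and of $\psi^{-1}$, using $\psi\rest_V$ to transfer openness. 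For $H$-equivariance the only wrinkle is that $h.[L]$ produces $\psi(hVh\inv)$ rather than $\psi(V)$; but $V \cap hVh\inv$ is open of finite index in both, so the two candidate representatives already agree after intersecting with $\psi(V \cap hVh\inv)$ and are therefore $\sim$-equivalent.

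The heart of the argument is (i). Fix $\alpha = [L]$ with $L$ a compact locally normal representative, write $\tilde L := \psi^{-1}(L)$, so that $K := \psi^{-1}(L \cap \psi(V)) = \tilde L \cap V$ and $\psi^*(\alpha) = [K]$. Since $H$ is $[A]$-semisimple, $(\psi^*(\alpha))^\perp = [\QC_H(K)] = [\CC_H(K)] = [\CC_V(K)]$ by Lemma~\ref{lem:[A]-semisimplicity_qcent} together with the fact that $\CC_V(K) = \CC_H(K) \cap V$ is open in $\CC_H(K)$. On the other hand $\psi^*(\alpha^\perp) = [\psi^{-1}(\CC_G(L) \cap \psi(V))] = [\psi^{-1}(\CC_{\psi(V)}(L))]$. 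Transporting centralizers through the isomorphism $\psi\rest_V$ gives $\psi(\CC_V(K)) = \CC_{\psi(V)}(L \cap \psi(V))$ and $\psi(\CC_V(\tilde L)) = \CC_{\psi(V)}(L \cap \psi(H))$. Now Lemma~\ref{lem:[A]-semisimple:commuting}, applied in $H$ to the locally normal subgroups $V$ and $\tilde L$, yields $\CC_V(\tilde L) = \CC_V(V \cap \tilde L) = \CC_V(K)$, whence $\CC_{\psi(V)}(L \cap \psi(H)) = \CC_{\psi(V)}(L \cap \psi(V))$. Combining this with the standing hypothesis $\CC_{\psi(V)}(L \cap \psi(H)) = \CC_{\psi(V)}(L)$ forces $\CC_{\psi(V)}(L) = \CC_{\psi(V)}(L \cap \psi(V)) = \psi(\CC_V(K))$, so that $\psi^*(\alpha^\perp) = [\CC_V(K)] = (\psi^*(\alpha))^\perp$. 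I expect this step to be the main obstacle: the hypothesis is phrased in terms of $L \cap \psi(H)$, and the role of Lemma~\ref{lem:[A]-semisimple:commuting} is precisely to convert it into the statement about $L \cap \psi(V) = \psi(K)$ that matches $(\psi^*(\alpha))^\perp$.

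Finally, (ii) is formal. By (i), $\psi^*$ commutes with $\perp$, so it maps $\perp\perp$-closed elements to $\perp\perp$-closed elements, i.e.\ it carries $\LC(G)$ into $\LC(H)$. Since meets in $\LC$ coincide with meets in $\LN$ and complements are given by $\perp$, the meet-preservation from the first part together with (i) shows that $\psi^*$ preserves meets and complements, hence joins via $\alpha \vee \beta = (\alpha^\perp \wedge \beta^\perp)^\perp$; it also sends $0 = [\triv]$ to $0$ and $\top = 0^\perp$ to $\top$. Thus $\psi^*\rest_{\LC(G)}$ is a homomorphism of Boolean algebras. Injectivity amounts to having trivial kernel, i.e.\ $\psi^*(\alpha) = 0 \Rightarrow \alpha = 0$ for $\alpha \in \LC(G)$ (if $\psi^*(\alpha) = \psi^*(\beta)$ then $\psi^*(\alpha \mathbin{\triangle} \beta) = 0$, so $\alpha \mathbin{\triangle} \beta = 0$ and $\alpha = \beta$), and this is exactly the added hypothesis that $\psi^*(\alpha) > 0$ whenever $\alpha > 0$.
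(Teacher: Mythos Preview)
Your proof is correct and follows essentially the same route as the paper's. Both arguments reduce (i) to the identity $\CC_V(\psi^{-1}(L)) = \CC_V(\psi^{-1}(L) \cap V)$, which follows from Lemma~\ref{lem:[A]-semisimple:commuting} applied to the locally normal subgroup $\psi^{-1}(L)$ of $H$, and both derive (ii) formally from (i) and meet-preservation via a symmetric-difference argument; the paper merely treats the preliminary lattice-theoretic assertions more tersely.
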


\begin{proof}
That $\psi^*$ is well-defined, order-preserving, and  $H$-equivariant is straightforward.  For any locally normal subgroups $L$ and $M$ of $G$, we have
\[
\psi\inv(L \cap M \cap \psi(V)) = \psi\inv(L \cap \psi(V)) \cap \psi\inv(M \cap \psi(V)).
\]
It follows that $\psi^*$ preserves meets.

For (i), take $U \leq G$ a compact open subgroup containing $\psi(V)$ and fix $[L]=\alpha\in \LN(G)$. By definition, $\alpha^\perp = [\CC_U(L)]$. Moreover,
\[
\psi^*(\alpha^\perp)  = [\psi^{-1}(\CC_U(L) \cap \psi(V))] = [\psi^{-1}(\CC_{\psi(V)}(L))].
\]
Invoking the extra hypothesis that  $\mathrm C_{\psi(V)}(L \cap \psi(H))= \mathrm C_{\psi(V)}(L)$, we have 
$$ [\psi^{-1}(\CC_{\psi(V)}(L))] = [\psi^{-1}(\CC_{\psi(V)}(L \cap \psi(H)))] = [\CC_V(\psi^{-1}(L))],$$ 
since $\psi$ is injective. On the other hand, 
\[
(\psi^* (\alpha))^\perp = [\CC_{V}(\psi^{-1}(L \cap \psi(V)))] = [\CC_{V}(\psi^{-1}(L) \cap V)].
\]	
The group $H$ is [A]-semisimple and $\psi\inv(L)$ is locally normal in $H$, so by Lemma~\ref{lem:[A]-semisimple:commuting}, we have $\CC_V(\psi^{-1}(L)) = \CC_{V}(\psi^{-1}(L) \cap V)$.  We conclude that $\psi^*(\alpha^\perp) = (\psi^*(\alpha))^\perp$ for all $\alpha \in \LN(G)$.

For (ii), we assume in addition that $\psi^*(\gamma) >0$ for all $\gamma >0$. Take $\alpha$ and $\beta$ in $\LC(G)$ and suppose that $\psi^*(\alpha)=\psi^*(\beta)$. Set  $\gamma := \alpha \wedge \beta^\perp$.  Since $\gamma \leq \alpha$, we have $\psi^*(\gamma) \leq \psi^*(\alpha)$.  We also see that 
\[
\psi^*(\beta)  \wedge \psi^*(\gamma) \le \psi^*(\beta) \wedge \psi^*(\beta^\perp) = \psi^*(\beta) \wedge \psi^*(\beta)^\perp = 0.
\]
With these observations in hand, we deduce that
\[
\psi^*(\gamma)  = \psi^*(\gamma)   \wedge \psi^*(\alpha)  = \psi^*(\gamma)  \wedge \psi^*(\beta)  = 0.
\]
Thus, $\alpha \wedge \beta^\perp=\gamma = 0$. Reversing the roles of $\alpha$ and $\beta$, it is also the case that $\alpha^\perp \wedge \beta = 0$. It now follows that $\alpha = \alpha \wedge \beta = \beta$, and so the map $\LC(H) \to \LN(G)$ via $\alpha \mapsto \psi^*(\alpha)$ is injective.  Given part (i) and the fact that $\psi^*$ preserves meets, we see that it restricts to a Boolean algebra homomorphism from $\LC(H)$ to $\LC(G)$, verifying (ii).
\end{proof}

Lemma~\ref{lem:PullBack} implies in particular that one can restrict the structure and centralizer lattices to suitable locally normal subgroups.

\begin{prop}\label{prop:FaithfulLC}
Let $G$ be an $[A]$-semisimple \tdlc group, let $K$ be a closed locally normal subgroup of $G$, and let $V$ be a compact open subgroup of $K$.  Suppose that $\QC_G(K) = \triv$ and that $K$ is $[A]$-semisimple.  Then the map $\chi:\LN(G) \to \LN(K)$ defined by 
\[
[L] \mapsto [L\cap K]=:[L]_{(K)}
\]
enjoys the following properties.
\begin{enumerate}[label=(\roman*)]
 \item \label{prop:FaithfulLC:1} It is order-preserving, meet-preserving, $\Comm_G(V)$-equivariant, and every non-zero element has a non-zero image;
 \item \label{prop:FaithfulLC:2} $(\alpha^\perp)_{(K)} = (\alpha_{(K)})^\perp$; and
 \item \label{prop:FaithfulLC:3} the restriction to $\LC(G)$ yields an injective homomorphism  $\LC(G) \to \LC(K)$ of Boolean algebras.\qedhere
\end{enumerate}
\end{prop}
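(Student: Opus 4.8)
The plan is to recognize $\chi$ as the pullback map $\psi^*$ of Lemma~\ref{lem:PullBack} associated to the inclusion $\psi \colon K \injects G$ of the closed subgroup $K$, which is continuous and injective. Since $V \le K$ and $\psi(V)=V$, the lemma's map is $\psi^*([L]) = [\psi^{-1}(L\cap V)] = [L \cap V]$, and this agrees with $\chi([L]) = [L \cap K]$: indeed $L \cap V = (L\cap K)\cap V$ is open in $L \cap K$, so $[L\cap V]=[L\cap K]$ in $\LN(K)$, where $L \cap K$ is closed and locally normal in $K$ because the open subgroup $\N_G(L)\cap K$ of $K$ normalizes it. Thus, once the hypotheses of Lemma~\ref{lem:PullBack} are verified, we obtain at once that $\chi$ is well-defined, order-preserving and meet-preserving (first part of the lemma), that $(\alpha^\perp)_{(K)} = (\alpha_{(K)})^\perp$ (part (i) of the lemma, giving \ref{prop:FaithfulLC:2}), and that the restriction to $\LC(G)$ is an injective homomorphism $\LC(G)\to \LC(K)$ of Boolean algebras (part (ii), giving \ref{prop:FaithfulLC:3}).

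Before verifying those hypotheses, I would make a harmless normalization. The value $\chi([L])=[L\cap K]$ does not reference $V$, and any two compact open subgroups of $K$ are commensurate, so $\Comm_G(V)$ is independent of the chosen compact open $V \le K$. I may therefore replace $V$ by $V_0 := U \cap K$, where $U$ is a compact open subgroup of $G$ contained in the open subgroup $\N_G(K)$; this $V_0$ is a compact open subgroup of $K$ that is locally normal in $G$, since it is normalized by the open subgroup $U$. Hence I assume from the outset that $V$ is locally normal in $G$. The extra hypothesis of Lemma~\ref{lem:PullBack}, namely $\CC_V(L\cap K)=\CC_V(L)$ for every compact locally normal $L\le G$, then follows from Lemma~\ref{lem:[A]-semisimple:commuting}: applying it to the locally normal pair $(V,L)$ gives $\CC_V(L)=\CC_V(V\cap L)$, and applying it to $(V,L\cap K)$ gives $\CC_V(L\cap K)=\CC_V(V\cap L\cap K)=\CC_V(V\cap L)$, the last equality because $V\le K$ forces $V\cap L\le K$.

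The conceptual heart, and the place where $\QC_G(K)=\triv$ is used, is the statement that $\chi$ carries non-zero elements to non-zero elements. Suppose $[L]\neq 0$ while $[L\cap K]=0$, that is, $L$ is non-discrete but $L\cap K$ is discrete. Fixing a compact open $U\le G$ that normalizes both $L$ and $K$, the group $L\cap K\cap U$ is finite, and choosing an open normal subgroup $N\normal U$ with $N\cap(L\cap K\cap U)=\triv$, the subgroup $L_0 := L\cap N$ is open in $L\cap U$, locally normal in $G$ (being normalized by $U$), and satisfies $L_0\cap K=\triv$. By the equivalence of (i) and (ii) in Lemma~\ref{lem:[A]-semisimple:commuting}, $[L_0,K]=\triv$, so $L_0\le \CC_G(K)\le \QC_G(K)=\triv$. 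Hence $L\cap U$ is finite and $L$ is discrete, contradicting $[L]\neq 0$. This supplies the remaining hypothesis of Lemma~\ref{lem:PullBack} and completes \ref{prop:FaithfulLC:2}, \ref{prop:FaithfulLC:3}, and all of \ref{prop:FaithfulLC:1} except equivariance.

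For $\Comm_G(V)$-equivariance, I would use that $K$ contains the compact open subgroup $V$ and is therefore an open subgroup of the localization $G_{(V)}=\Comm_G(V)$; consequently $\LN(K)$ is canonically identified with $\LN(G_{(V)})$, on which $\Comm_G(V)$ acts by conjugation. Writing $\chi([L])=[L\cap V]$ and taking $g\in \Comm_G(V)$, one has $\chi(g.[L])=[gLg\inv \cap V]$ while $g.\chi([L])=[g(L\cap V)g\inv]=[gLg\inv\cap gVg\inv]$; since $g$ commensurates $V$, the subgroup $gLg\inv\cap V\cap gVg\inv$ has finite index in each of $gLg\inv\cap V$ and $gLg\inv\cap gVg\inv$, so these define the same class in $\LN(K)$, yielding equivariance. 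I expect the \emph{main obstacle} to be exactly this last step: making the $\Comm_G(V)$-action on $\LN(K)$ precise through the localization identification and pushing the commensurability bookkeeping through correctly, whereas the substantive use of the hypotheses $K$ locally normal, $K$ and $G$ being [A]-semisimple, and $\QC_G(K)=\triv$ is concentrated in the non-zero-image step above.
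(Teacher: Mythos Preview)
Your proof is correct and follows essentially the same strategy as the paper: reduce to Lemma~\ref{lem:PullBack}, verify the centralizer hypothesis via Lemma~\ref{lem:[A]-semisimple:commuting}, and use $\QC_G(K)=\triv$ together with $[A]$-semisimplicity for the non-zero-image step. The one organizational difference is the choice of domain for $\psi$ in Lemma~\ref{lem:PullBack}: you take $\psi\colon K\hookrightarrow G$, which only yields $K$-equivariance from the lemma and forces you to argue $\Comm_G(V)$-equivariance by hand afterwards; the paper instead takes $\psi\colon G_{(V)}\to G$ (the inclusion of the localization), so that $H=G_{(V)}=\Comm_G(V)$ and the equivariance comes for free from the lemma, while $\LN(K)$ is identified with $\LN(G_{(V)})$ since $K$ is open in $G_{(V)}$. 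Your normalization $V=U\cap K$ and your separate equivariance argument are both fine, but they are exactly the overhead that the paper's choice of $\psi$ avoids.
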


\begin{proof}
Let $G_{(V)}$ be the group $\Comm_G(V)$ equipped with the $V$-localized topology. The structure lattice $\LN(K)$ can be identified with $\LN(G_{(V)})$, since $K$ is open in $G_{(V)}$. In view of this identification, $\chi$ is $\Comm_G(V)$-equivariant.  Suppose that $L$ is a non-trivial closed locally normal subgroup of $G$ and let $\alpha := [L]$. Lemma~\ref{lem:[A]-semisimple:commuting} ensures that $\mathrm C_{K}(L) = \mathrm C_{K}(L \cap K)$. Moreover 
 $L \cap K$ is non-trivial, since $\QC_G(K)=\{1\}$.  In fact, $L \cap K$ must be non-discrete, so $[L \cap K] > 0$.  Every non-zero element of $\LN(G)$ thus has a non-zero image.  Finally, the inclusion map $\psi: G_{(V)} \rightarrow G$ is injective and continuous, and the structure and centralizer lattices of $G_{(V)}$ can be identified with those of $V$. Thus all the hypotheses of Lemma~\ref{lem:PullBack} are satisfied, and the remaining parts of the proposition follow.
\end{proof}

Proposition~\ref{prop:FaithfulLC} applies in particular when $G \in \Rs$ and $K$ is the monolith of $G$.

\subsection{The structure of the Stone space}

Associated to any Boolean algebra $\mc{A}$ is a topological space $\mf{S}(\mc{A})$, called the \textit{Stone space} of $\mc{A}$.  The points of $\mf{S}(\mc{A})$ are the ultrafilters of $\mc{A}$, and the basic open sets of the topology are the sets of the form $\tilde{\alpha} := \{\mf{p} \in \mf{S}(\mc{A}) \mid \alpha \in \mf{p}\}$.  The complement of $\tilde{\alpha}$ as a subset of $\mf{S}(\mc{A})$ is exactly the open set $\tilde{\beta}$ where $\beta = \alpha^\perp$, so $\tilde{\alpha}$ is clopen for all $\alpha \in \mc{A}$.  The Stone space is a compact zero-dimensional space, and its set of clopen subsets is exactly $\{\tilde{\alpha} \mid \alpha \in \mc{A}\}$. The collection of clopen sets ordered by inclusion is canonically isomorphic with $\mc{A}$ as a Boolean algebra.  We will abuse notation and identify $\alpha$ with $\tilde{\alpha}$, so given $\alpha \in \mc{A}$ and $\mf{p} \in \mf{S}(\mc{A})$, the expressions ``$\mf{p} \in \alpha$" and ``$\alpha \in \mf{p}$" should be understood as synonymous.  In particular, we will regard elements of $\LC(G)$ as subsets of $\mf{S}(\LC(G))$; this allows us to define infinite intersections and unions of elements of $\LC(G)$. If a group $G$ acts on $\mc{A}$, we say that $G$ fixes $\alpha\in \mc{A}$ if $g.\alpha=\alpha$ for all $g\in G$. Equivalently, $G$ fixes \textit{setwise} the clopen set $\tilde{\alpha}$ in the Stone space.  If $G\acts \LC(G)$ faithfully, then the action of $G$ on $\LC(G)$ is \textit{micro-supported}, via \cite[Theorem II]{CRW1}. That is to say, for every non-empty, proper clopen $\upsilon \subseteq \mf{S}(\LC(G))$, $\rist_G(\upsilon):=\{g\in G\mid \forall x\notin \upsilon\; g(x)=x\}$ is non-trivial.

Given a group $G$ and subgroups $H$ and $K$ of $G$, define $\CC^0_H(K) := K$ and $\CC^{n+1}_H(K):= \CC_H(\CC^n_H(K))$.  If $H \ge K$, we observe that $\CC^2_H(K) \ge K$ and $\CC^3_H(K) = \CC_H(K)$. In an [A]-semisimple \tdlc group, the elements of the global centralizer lattice can be characterized as the locally normal subgroups $K$ of $G$ that satisfy $K = \CC^2_G(K)$. The set of elements of $\LC(G)$ that are fixed under the action of $G$ is denoted by $\LC(G)^G$\index{$\LC(G)^G$}.

\begin{prop}\label{prop:lcent_fixedpt:mono}
Let $G$ be a \tdlc group that is [A]-semisimple and monolithic.  Then $\LC(G)^G = \{0,\infty\}$, and the action of $G$ on $\mf{S}(\LC(G))$ is topologically transitive.
\end{prop}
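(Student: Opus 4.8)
The plan is to prove the two assertions separately, using in both the fact that the monolith $M := \Mon(G)$ is infinite, non-abelian, and self-centralizing. First I would record this. Since $G$ is [A]-semisimple, $\QZ(G) = \triv$, so in particular $G$ has trivial center and no non-trivial abelian locally normal subgroups. If $M$ were abelian it would be a non-trivial abelian (hence locally) normal subgroup, and if $M$ were finite then each $m \in M$ would have open centralizer, forcing $M \le \QZ(G) = \triv$; both are absurd, so $M$ is infinite and non-abelian. Thus Lemma~\ref{lem:monolith_centralizer} applies, giving $\CC_G(M) = \triv$ and the dichotomy that any commuting pair of closed normal subgroups of $G$ has a trivial member.

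For the equality $\LC(G)^G = \{0,\infty\}$, note that $0$ and $\infty$ correspond globally to $\triv$ and $G$, which are normal, hence fixed. Conversely, suppose $\alpha \in \LC(G)^G$. Since $G$ acts on $\LC(G)$ by Boolean-algebra automorphisms (conjugation preserves centralizers, so it commutes with $\perp$), the complement $\alpha^\perp$ is also fixed. Viewing $\alpha$ and $\alpha^\perp$ globally as closed subgroups $A$ and $A^\perp$, both are normal in $G$ (being $G$-invariant) and locally normal, and $\alpha \wedge \alpha^\perp = 0$ translates, as meets are intersections in the global lattice, into $A \cap A^\perp = \triv$. By Lemma~\ref{lem:[A]-semisimple:commuting} this forces $[A, A^\perp] = \triv$. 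Now the dichotomy from Lemma~\ref{lem:monolith_centralizer} gives $A = \triv$ or $A^\perp = \triv$, i.e.\ $\alpha = 0$ or $\alpha^\perp = 0$; in the latter case $\alpha = (\alpha^\perp)^\perp = 0^\perp = \infty$. Hence $\alpha \in \{0,\infty\}$.

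For topological transitivity I would first translate the statement into the Boolean algebra. The clopen sets $\tilde\alpha$ (for $\alpha \in \LC(G)$) form a basis of $\mf{S}(\LC(G))$, and $g.\tilde\alpha \cap \tilde\beta = \widetilde{(g.\alpha)\wedge\beta}$, which is non-empty exactly when $(g.\alpha)\wedge\beta > 0$. It therefore suffices to show: for all $\alpha,\beta \in \LC(G)$ with $\alpha,\beta > 0$ there is $g \in G$ with $(g.\alpha)\wedge\beta > 0$. I would argue by contradiction, assuming $(g.\alpha)\wedge\beta = 0$ for all $g$. Writing $A,B$ for the global subgroups representing $\alpha,\beta$, this reads $gAg\inv \cap B = \triv$ for every $g \in G$; by Lemma~\ref{lem:[A]-semisimple:commuting} each $gAg\inv$ then commutes with $B$, so $B$ centralizes the normal closure $N := \overline{\lla A \rra_G}$. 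Since $\alpha > 0$ gives $A \neq \triv$, the subgroup $N$ is a non-trivial closed normal subgroup, whence $M \le N$; therefore $B \le \CC_G(M) = \triv$, contradicting $\beta > 0$.

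The Stone-duality translations are routine; the real content is the uniform reduction, in both assertions, of a disjointness/commuting condition on centralizer-lattice elements to the commuting-subgroups statement via [A]-semisimplicity (Lemma~\ref{lem:[A]-semisimple:commuting}), after which monolithicity (Lemma~\ref{lem:monolith_centralizer}) closes the argument. The step I would be most careful about is verifying that $G$ acts by Boolean-algebra automorphisms, so that $\perp$ and meets are respected and $\alpha > 0$ indeed corresponds to a non-trivial global subgroup; this is what makes the passage between the lattice picture and the subgroup picture legitimate.
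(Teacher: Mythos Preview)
Your proof is correct and follows essentially the same strategy as the paper: reduce both claims to Lemma~\ref{lem:[A]-semisimple:commuting} (trivial intersection $\Leftrightarrow$ commuting for locally normal subgroups) and Lemma~\ref{lem:monolith_centralizer} (the commuting-normal-subgroups dichotomy). The paper argues topological transitivity directly rather than by contradiction and then says ``the same argument'' handles $\LC(G)^G$, whereas you treat the fixed-point claim explicitly via $\alpha^\perp$; one small simplification in your first part is that $A^\perp = \CC_G(A)$ already commutes with $A$ by definition, so the detour through $A\cap A^\perp=\triv$ and Lemma~\ref{lem:[A]-semisimple:commuting} is unnecessary there.
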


\begin{proof}
Let $\upsilon_1$ and $\upsilon_2$ be non-empty open subspaces of $\mf{S}(\LC(G))$; we must show that there is $g \in G$ such that $g\upsilon_1$ has non-empty intersection with $\upsilon_2$.  Since the Stone space is zero-dimensional, we can take $\upsilon_1$ and $\upsilon_2$ to be clopen, hence they correspond to non-trivial elements of $\mathrm{LC}(G)$. Say that $\upsilon_i$ corresponds to $L_i \in \mathrm{LC}(G)$ for $i=1,2$, and note that $L_i$ is a non-trivial closed locally normal subgroup of $G$.  Since $G$ is $[A]$-semisimple, the monolith of $G$ is non-abelian, and it follows by Lemma~\ref{lem:monolith_centralizer} that $\ngrp{L_1}$ and $\ngrp{L_2}$ do not commute.  There is therefore some $g \in G$ such that $gL_1g\inv$ and $L_2$ do not commute.  By Lemma~\ref{lem:[A]-semisimple:commuting}, the intersection $gL_1g\inv \cap L_2$ is non-discrete. We deduce that $g\upsilon_1 \wedge \upsilon_2 > 0$ as elements of $\LC(G)$, so $g\upsilon_1$ and $\upsilon_2$ have non-empty intersection as subspaces of $\mf{S}(\LC(G))$.

We conclude that the action of $G$ on $\mf{S}(\LC(G))$ is topologically transitive; the same argument shows that $\LC(G)^G = \{0,\infty\}$.
\end{proof}

In general, the action of $G$ on $\LC(G)$ need not be faithful. We can, however,  say something about fixed points of topologically simple closed normal subgroups. Recall that $\mc{M}(G)$ is the set of non-trivial minimal closed normal subgroups of $G$. For $M\in \mc{M}(G)$, recall also that $[\CC_G^2(M)]$ is the least upper bound of $[M]$ in $\LC(G)$; this follows by considering the map $\bot:\LN(G)\rightarrow \LC(G)$.

\begin{lem}\label{lem:lcent_fixedpt_of_simple}
Let $G$ be a \tdlc group that is [A]-semisimple, $M \in \mc{M}(G)$, and $\alpha_M:=[\CC_G^2(M)]$. 
\begin{enumerate}[label=(\roman*)]
\item Let $K$ be a non-trivial closed locally normal subgroup of $G$.  Suppose that $K \cap M > \triv$, $\CC_M(K) > \triv$, and $[\CC_M(K)]$ is fixed by the conjugation action of $M$.  Then $\CC_M(K)$ is a proper non-trivial closed normal subgroup of $M$.  In particular, $M$ cannot be topologically simple.
\item If $M$ is topologically simple, then there are no fixed points $\beta$ of the action of $M$ on $\LC(G)$ such that $0 < \beta < \alpha_M$.\qedhere
\end{enumerate}
\end{lem}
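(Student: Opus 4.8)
The plan is to prove the two parts independently, relying on two standing facts about an [A]-semisimple group $G$. First, by Lemma~\ref{lem:[A]-semisimplicity_qcent} the centralizer $\CC_G(L)$ of a closed locally normal subgroup $L$ depends only on the commensurability class $[L]$: anything centralizing an open subgroup of $L$ lies in $\QC_G(L)=\CC_G(L)$, so $[L_1]=[L_2]$ forces $\CC_G(L_1)=\CC_G(L_2)$. Second, a minimal non-trivial closed normal subgroup $M$ is never abelian (it would be a non-trivial locally normal abelian subgroup), so when $M$ is topologically simple one has $\Z(M)=\triv$ and every closed normal subgroup of $M$ equals $\triv$ or $M$.

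For part (i), set $E:=M\cap K$ and $D:=\CC_M(K)$, noting $D=\CC_M(E)$ by Lemma~\ref{lem:[A]-semisimple:commuting} and that $E$ is a non-trivial closed locally normal subgroup contained in $M$. Properness of $D$ is quick: if $D=M$ then $M$ centralizes $E\le M$, making $E$ a non-trivial locally normal abelian subgroup, which is impossible. The real content is normality. Because $[D]$ is $M$-fixed and centralizers are constant on commensurability classes, $m\CC_G(D)m\inv=\CC_G(mDm\inv)=\CC_G(D)$ for all $m\in M$, so $\CC_G(D)$, and hence $\CC_G^2(D)$, is normalized by $M$; since $M\normal G$, it follows that $M\cap\CC_G^2(D)\normal M$. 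I would then identify $D$ with this group: one always has $D\le M\cap\CC_G^2(D)$, while $E\le\CC_G(D)$ gives $\CC_G^2(D)\le\CC_G(E)$ and therefore $M\cap\CC_G^2(D)\le\CC_M(E)=D$. Hence $D=M\cap\CC_G^2(D)$ is a proper non-trivial closed normal subgroup of $M$, so $M$ is not topologically simple.

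For part (ii), argue by contradiction: suppose $\beta\in\LC(G)$ is $M$-fixed with $0<\beta<\alpha_M$, and choose the representative $B=\CC_G^2(B)$ of $\beta$ provided by the global description of $\LC(G)$. As above, $M$-invariance of $[B]$ makes $\CC_G(B)$ $M$-invariant, so $\CC_M(B)=M\cap\CC_G(B)$ is a closed normal subgroup of $M$ and hence equals $\triv$ or $M$. If $\CC_M(B)=M$, then $M\le\CC_G(B)$, equivalently $B\le\CC_G(M)$, so $\beta=[B]\le[\CC_G(M)]=\alpha_M^{\perp}$ (using $\CC_G^3(M)=\CC_G(M)$); combined with $\beta\le\alpha_M$ this yields $\beta\le\alpha_M\wedge\alpha_M^{\perp}=0$, a contradiction. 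If instead $\CC_M(B)=\triv$, then applying Lemma~\ref{lem:[A]-semisimple:commuting} to the locally normal subgroups $M$ and $\CC_G(B)$ gives $[M,\CC_G(B)]=\triv$, i.e.\ $M\le\CC_G^2(B)=B$; thus $[M]\le\beta$, and since $\alpha_M$ is the least upper bound of $[M]$ in $\LC(G)$ we get $\alpha_M\le\beta$, contradicting $\beta<\alpha_M$. Both alternatives failing, no such $\beta$ exists.

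I expect the main obstacle in (i) to be the identity $D=M\cap\CC_G^2(D)$, namely checking that $\CC_M(K)$ is recovered by a manifestly $M$-normal double centralizer; the $M$-fixedness hypothesis is precisely what makes $\CC_G^2(D)$ stable under $M$, and the inclusion $E\le\CC_G(D)$ is what forces the reverse containment. In (ii) the crux is recognizing that topological simplicity of $M$ collapses $\CC_M(B)$ to one of its two extreme values, each of which collides with one end of the strict chain $0<\beta<\alpha_M$: the value $M$ pushes $\beta$ below $\alpha_M^{\perp}$, while the value $\triv$ pushes $[M]$ below $\beta$ and hence $\alpha_M\le\beta$.
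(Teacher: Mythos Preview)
Your proof is correct in both parts, but the approaches diverge from the paper's in interesting ways.

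For part (i), the paper works entirely inside $M$: it considers $L:=\QC_M(\CC_M(K))$, shows this is normal in $M$ directly from the $M$-invariance of $[\CC_M(K)]$, identifies $L$ with $\CC_M^2(K)$ via Lemma~\ref{lem:[A]-semisimplicity_qcent}, and then concludes that $\CC_M(K)=\CC_M^3(K)=\CC_M(L)$ is normal as the centralizer of a normal subgroup. Properness comes from the observation that $\CC_M(K)\cap(K\cap M)\le\Z(K)=\triv$. You instead pass to the ambient group $G$: the $M$-invariance of $[D]$ makes the global double centralizer $\CC_G^2(D)$ literally $M$-invariant, and you then pin down $D$ as $M\cap\CC_G^2(D)$ via the sandwich $E\le\CC_G(D)\Rightarrow\CC_G^2(D)\le\CC_G(E)$. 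The two arguments are close cousins; yours has the slight advantage of exhibiting $D$ as the intersection of two $M$-stable subgroups in one stroke, while the paper's stays within $M$ and uses the $\CC^3=\CC$ trick.

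For part (ii) the difference is more substantial. The paper reduces to (i): given $\beta=[\CC_G(K)]$ with $0<\beta<\alpha_M$, it verifies the three hypotheses of (i) (the subtle ones being $K\cap M>\triv$ from $M\nleq\CC_G(K)$, and $\CC_M(K)>\triv$ by ruling out $M\cap\CC_G(K)=\triv$ via $\CC_G(K)\le\CC_G(M)\cap\CC_G^2(M)=\triv$), and then invokes the contrapositive of (i). Your argument is self-contained: the dichotomy $\CC_M(B)\in\{\triv,M\}$ from simplicity, together with Lemma~\ref{lem:[A]-semisimple:commuting} and the least-upper-bound property of $\alpha_M$, collapses each case against one of the strict inequalities $0<\beta<\alpha_M$. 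This is cleaner and makes (ii) logically independent of (i); the paper's route, on the other hand, shows why (i) is the ``engine'' behind (ii) and reuses the work already done.
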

 
\begin{proof}
Take $K$ as hypothesized for (i).  Let $L := \QC_M(\CC_M(K))$, take $g \in L$, and let $m \in M$.  The conjugate $mgm\inv$ centralizes an open subgroup of $m\CC_M(K)m\inv$, and  $m\CC_M(K)m\inv$ contains an open subgroup of $\CC_M(K)$, since $[\CC_M(K)]$ is fixed by the conjugation action of $M$. Thus, $mgm\inv \in L$, and we deduce that $L$ is normal in $M$. Applying Lemma~\ref{lem:[A]-semisimplicity_qcent}, it follows that $L = \CC^2_M(K)$.

We see that $\CC_M(L) = \CC^3_M(K) = \CC_M(K)$.  The centralizer $\CC_M(K)$ is thus a non-trivial closed normal subgroup of $M$, since $L$ is a normal subgroup of $M$. On the other hand, $\CC_M(K)$ intersects $K \cap M$ trivially, so $\CC_M(K)$ is a proper subgroup of $M$, proving (i).

For (ii), let $0 < \beta < \alpha_M$ and say that $\beta = [\CC_G(K)]$ where $\CC_G(K) < \CC^2_G(M)$.  Since $\alpha_M$ is the least upper bound of $[M]$ in $\LC(G)$, we see that $M \nleq \CC_G(K)$, so by Lemma~\ref{lem:[A]-semisimple:commuting}, we infer that $K \cap M > \triv$.

Suppose that $M \cap \CC_G(K) = \triv$.  Both $M$ and $\CC_G(K)$ are locally normal subgroups, so Lemma~\ref{lem:[A]-semisimple:commuting} ensures that $M$ and $\CC_G(K)$ commute. Hence, $\CC_G(K) \le \CC_G(M) \cap \CC^2_G(M)$.  Since $\CC_G(M) \cap \CC^2_G(M) = \Z(\CC_G(M)) = \triv$, we have a contradiction to the hypothesis that $\beta > 0$.  We conclude that $M \cap \CC_G(K) =\CC_M(K) > \triv$.  Since $M$ is topologically simple, part (i) implies that $\beta$ is not fixed by the action of $M$.
\end{proof}

\begin{prop}\label{prop:LC_faithful}
Let $G$ be a \tdlc group that is [A]-semisimple and monolithic with a topologically simple monolith. Then
\[
\LC(G)^G = \LC(G)^M = \{0,\infty\},
\]
and $G$ acts faithfully on $\LC(G)$ unless $|\LC(G)|=2$.
\end{prop}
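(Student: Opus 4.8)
The plan is to reduce the entire statement to the behaviour of the monolith $M := \Mon(G)$ on $\LC(G)$, exploiting that $M$ is topologically simple and is contained in every non-trivial closed normal subgroup of $G$. First I would pin down $M$ itself. Since $G$ is [A]-semisimple, $\QZ(G)=\triv$ and $G$ has no non-trivial abelian locally normal subgroups; as $M$ is normal, hence locally normal, it is non-abelian, and it is infinite (a finite normal subgroup is centralized by an open subgroup and so would lie in $\QZ(G)=\triv$). Thus $M$ is infinite and non-abelian, so Lemma~\ref{lem:monolith_centralizer} gives $\CC_G(M)=\triv$. Consequently $\CC^2_G(M)=\CC_G(\CC_G(M))=\CC_G(\triv)=G$, so in the notation of Lemma~\ref{lem:lcent_fixedpt_of_simple} the element $\alpha_M=[\CC^2_G(M)]$ is the top element $\infty$ of $\LC(G)$.

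Next I would read off the fixed points. Lemma~\ref{lem:lcent_fixedpt_of_simple}(ii), applied to the topologically simple $M\in\mc{M}(G)$, says that $M$ fixes no $\beta$ with $0<\beta<\alpha_M$; since $\alpha_M=\infty$, there are no $M$-fixed points strictly between $0$ and $\infty$, whence $\LC(G)^M=\{0,\infty\}$. The equality $\LC(G)^G=\{0,\infty\}$ then follows from the inclusions $\{0,\infty\}\subseteq\LC(G)^G\subseteq\LC(G)^M=\{0,\infty\}$, the middle one holding because $M\le G$ forces every $G$-fixed element to be $M$-fixed (alternatively, one may cite Proposition~\ref{prop:lcent_fixedpt:mono} directly).

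For faithfulness, let $N$ be the kernel of the action $G\acts\LC(G)$, which is a normal subgroup of $G$. The crux is that $N$ is closed. For $\alpha=[\CC_G(K)]\in\LC(G)$, the representative $\CC_G(K)$ is locally normal: if $g\in\N_G(K)$ then $g\CC_G(K)g\inv=\CC_G(gKg\inv)=\CC_G(K)$, so $\N_G(K)\le\N_G(\CC_G(K))$, and $\N_G(K)$ is open since $K$ is locally normal. Hence $\N_G(\CC_G(K))$ is an open subgroup stabilizing $\alpha$, so $\Stab_G(\alpha)$ is open, hence closed, and $N=\bigcap_{\alpha\in\LC(G)}\Stab_G(\alpha)$ is closed. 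Now suppose $|\LC(G)|\neq 2$. If $N$ were non-trivial, monolithicity would force $M\le N$, so $M$ would fix every element of $\LC(G)$, i.e.\ $\LC(G)^M=\LC(G)$, contradicting $\LC(G)^M=\{0,\infty\}$. Therefore $N=\triv$, and the action is faithful.

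The step I expect to be the main obstacle is precisely the closedness of $N$: the monolith is only guaranteed to sit inside every non-trivial \emph{closed} normal subgroup, so passing from $N\neq\triv$ to $M\le N$ genuinely needs $N$ closed. The observation that each point-stabilizer $\Stab_G(\alpha)$ contains the open normalizer of a locally normal representative of $\alpha$ — and is thus open — is what delivers this without any appeal to the continuity of the $G$-action on the Stone space $\mf{S}(\LC(G))$.
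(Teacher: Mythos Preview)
Your proof is correct and follows essentially the same route as the paper: both arguments reduce to Lemma~\ref{lem:lcent_fixedpt_of_simple}(ii) after observing that $\alpha_M=\infty$ because $\CC_G(M)=\triv$, and then deduce faithfulness from $\LC(G)^M=\{0,\infty\}$. The paper states this in one line, while you have usefully spelled out the details the paper leaves implicit---in particular the verification that the kernel of the action is closed (via openness of point stabilizers in $\LC(G)$), which is indeed what is needed to invoke the defining property of the monolith.
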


\begin{proof}
Recalling that $\alpha_M = \infty$ when $M$ is the monolith of $G$, the proposition is immediate from Lemma~\ref{lem:lcent_fixedpt_of_simple}.
\end{proof}

\subsection{Dynamics on the Stone space}

The centralizer lattice is a local invariant, in the sense that $\LC(G)$ is canonically isomorphic to $\LC(U)$ for $U$ any open subgroup of $G$.  In particular, if $G$ acts faithfully on its centralizer lattice, then every open subgroup of $G$ acts faithfully on its own centralizer lattice.  In the case that $G$ is regionally expansive, we can use the action of an expansive compactly generated open subgroup $O$ of $G$ to impose important homogeneity properties on the action of $G$.

In our first proposition, we consider the slightly more general setting of $G$-invariant subalgebras of $\LC(G)$. We do so for two reasons. First, there is a canonical subalgebra of $\LC(G)$ called the \textit{decomposition lattice}, see \cite{CRW1, CRW2}. Second, the full centralizer lattice can be very large and difficult to determine, so it can be convenient to reduce to a $G$-invariant subalgebra that is countable or can be obtained more explicitly (for example, such a subalgebra can be obtained from a micro-supported action of $G$ on a compact zero-dimensional space). 

\begin{prop}\label{cor:rf_compressible:mono}
Let $G$ be a \tdlc group that is regionally expansive, [A]-semisimple, and monolithic. Set $M := \Mon(G)$, say that $\mc{A}\subseteq \LC(G)$ is a $G$-invariant subalgebra, and suppose  that $M$ acts non-trivially on $\mc{A}$.
\begin{enumerate}[label=(\roman*)]
\item \label{cor:rf_compressible:1} There exists $\beta \in \mc{A}$ with $0 < \beta$ such that for all $\gamma \in \mc{A}$ with $0 < \gamma$, there is $g \in G$ such that $g\beta < \gamma$.
\item \label{cor:rf_compressible:2} Regarded as a subspace of $\mf{S}(\mc{A})$, the set $\upsilon: = \bigcup_{g \in G}g\beta$, for $\beta$ as in (i), is a dense subset that is both the unique smallest non-empty $G$-invariant open subset and the complement of the fixed-point set of $M\acts\mf{S}(\mc{A})$.\qedhere
\end{enumerate}
\end{prop}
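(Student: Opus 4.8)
The plan is to treat (i) as the crux and then derive (ii) from it by soft dynamical arguments. Throughout, write $\mf{S} := \mf{S}(\mc{A})$ and identify each $\gamma \in \mc{A}$ with the clopen set $\tilde\gamma \subseteq \mf{S}$; recall that in the global picture $\gamma$ is literally a closed locally normal subgroup $L_\gamma = \CC^2_G(L_\gamma)$ with $\gamma^\perp = \CC_G(L_\gamma)$ and $\widetilde{\gamma^\perp} = \mf{S}\setminus\tilde\gamma$, and that meets are intersections of representatives. First I would record the two standing facts I need: that $\CC_G(M)=\QC_G(M)=\triv$ — since $M$ is non-discrete (as $\QZ(G)=\triv$) and non-abelian (by [A]-semisimplicity), this is Lemma~\ref{lem:monolith_centralizer} together with Lemma~\ref{lem:[A]-semisimplicity_qcent} — and that $G$ acts topologically transitively on $\mf{S}$, for which the proof of Proposition~\ref{prop:lcent_fixedpt:mono} applies verbatim to the $G$-invariant subalgebra $\mc{A}$ (it only uses Lemma~\ref{lem:monolith_centralizer} and Lemma~\ref{lem:[A]-semisimple:commuting}, and $\mc{A}$ is $G$-invariant).

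For (i) I would fix, using regional expansiveness, a compactly generated open $O \le G$ and a compact open $W \le O$ whose normal core in $O$ is trivial (Lemma~\ref{lem:expansive_tdlc}); since $\LC$ is a local invariant, $\mc{A}$ is an $O$-invariant subalgebra of $\LC(G)=\LC(O)$. The basic consequence of expansiveness is that no non-zero $\gamma \in \mc{A}$ admits a positive $O$-invariant lower bound: a representative of $\gamma$ may be taken inside $W$, and a non-zero $O$-invariant $\delta \le \gamma$ would contribute a non-discrete normal subgroup of $O$ lying in $W$, contradicting triviality of the normal core. Combining this \emph{shrinking} phenomenon with topological transitivity of $G$, I would extract --- by a compactness argument in $\mf{S}$ together with a minimality argument for the preorder ``$\alpha$ compresses into $\gamma$'' --- a clopen $\beta > 0$ compressing into every non-zero $\gamma$, i.e.\ for each $\gamma>0$ some $g \in G$ with $g\beta \le \gamma$ (the strict inequality of the statement is then arranged by first compressing into a proper non-zero subelement of $\gamma$). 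This extraction --- upgrading the subgroup-level statement $\bigcap_{o \in O} oL_\gamma o\inv = \triv$ to a genuine finite compression of one clopen inside another --- is exactly the step I expect to be the main obstacle; it is the analogue for $\mc{A}$ of the construction of a compressible open set carried out for $\Ss$ in \cite{CRW2}.

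Granting (i), set $\upsilon := \bigcup_{g \in G} g\tilde\beta$, an open $G$-invariant subset. It is the smallest non-empty $G$-invariant open set: given any such $\upsilon'$, choose a clopen $\tilde\gamma \subseteq \upsilon'$ with $\gamma > 0$, compress $g\beta \le \gamma$ so that $g\tilde\beta \subseteq \upsilon'$, and then $\tilde\beta = g\inv(g\tilde\beta) \subseteq \upsilon'$ by $G$-invariance, whence $\upsilon \subseteq \upsilon'$. Uniqueness of a smallest element is automatic, and density follows formally: $\overline\upsilon$ is closed and $G$-invariant, so if $\upsilon$ were not dense then $\mf{S}\setminus\overline\upsilon$ would be a non-empty $G$-invariant open set disjoint from $\upsilon$, contradicting minimality.

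Finally I would identify $\upsilon$ with the complement of $\mathrm{Fix} := \{p \in \mf{S} \mid mp = p \text{ for all } m \in M\}$. Since $M \normal G$, the set $\mathrm{Fix} = \bigcap_{m \in M}\{p \mid mp=p\}$ is closed and $G$-invariant, and it is proper because $M$ acts non-trivially; thus $\mf{S}\setminus\mathrm{Fix}$ is a non-empty $G$-invariant open set and $\upsilon \subseteq \mf{S}\setminus\mathrm{Fix}$ by minimality. For the reverse inclusion I would use the global lattice element $L_\beta = \CC_G(L_{\beta^\perp})$ as a subgroup of $G$: for $x \in L_\beta$ and any $\eta \le \beta^\perp$, a representative of $\eta$ can be taken inside $L_{\beta^\perp}$ and is therefore centralized, hence normalized, by $x$, so $x$ fixes $\eta$; a short ultrafilter computation (for $p$ with $\beta^\perp \in p$ and $\delta \in p$, note $\delta \wedge \beta^\perp \in p$ is fixed by $x$ and lies below $x\delta$) then shows $x$ fixes every point of $\widetilde{\beta^\perp} = \mf{S}\setminus\tilde\beta$. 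Consequently, if $p \notin \upsilon$ then $g\inv p \notin \tilde\beta$ for every $g$, so every conjugate $gL_\beta g\inv$ fixes $p$; as the point-stabilizer $G_p$ is closed, it contains $\ngrp{L_\beta}_G$, a non-trivial closed normal subgroup of $G$, which therefore contains the monolith $M$. Hence $p \in \mathrm{Fix}$, giving $\mf{S}\setminus\mathrm{Fix}\subseteq\upsilon$ and the desired equality.
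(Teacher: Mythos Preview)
Your treatment of (ii) is correct and essentially the same as the paper's: the minimality, density, and identification with the complement of the $M$-fixed-point set all go through as you describe.

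The genuine gap is in (i), and you have correctly identified where it lies. Two points. First, your ``shrinking'' justification is not right as stated: if $\delta \le \gamma$ is $O$-invariant in $\LC(G)$, then the global representative $L_\delta$ is normalized by $O$, but $L_\delta \cap O$ need not lie in $W$, and $L_\delta \cap W$ need not be normal in $O$; so you do not get a non-trivial normal subgroup of $O$ inside $W$ this way. Second, and more seriously, even granting that there are no non-trivial $O$-fixed elements, the passage from this together with topological transitivity to the existence of a single compressible $\beta$ is not a soft compactness-plus-preorder extraction; this is precisely where the work is.

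The paper's argument for (i) is quite different and more concrete. It first uses Corollary~\ref{cor:monolith_is_regional} to reduce to the case where $G$ itself is compactly generated and monolithic with $M$ acting faithfully on~$\mc{A}$; your choice of an arbitrary expansive compactly generated $O$ misses this. It then fixes a compact open $U$ not containing $M$, sets $V := U \cap M$ and $W := \bigcap_{x \in X} xVx^{-1}$ for a compact symmetric generating set $X$, and uses faithfulness of the $V$-action to find a finite $V$-invariant partition $\mc{B} = \{\beta_1,\dots,\beta_d\}$ of $\mc{A}$ whose pointwise stabilizer in $V$ is contained in $W$. Given $\gamma > 0$ with representative $R$, one sets $S := \CC^2_U(R \cap U)$ and, by a word-length minimization, finds $g \in G$ with $g^{-1}(S \cap M)g \le V$ but $g^{-1}(S \cap M)g \nleq W$; this forces some $v \in S$ and $\beta_i \in \mc{B}$ with $vg\beta_i \wedge g\beta_i = 0$, and since $S$ fixes $\gamma$ and everything below $\gamma^\perp$, one deduces $g\beta_i < \gamma$. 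A final minimality argument on $|\mc{B}|$, using topological transitivity, reduces to a single $\beta$. None of this structure is present in your outline.
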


\begin{proof}
For the proof of (i), let us begin with a reduction. In view of Corollary~\ref{cor:minimal_A-semisimple}, we may find an expansive compactly generated open subgroup $O$ of $G$ such that $O$ is monolithic. Taking $O$ perhaps larger, we may also assume that the monolith of $O$ acts non-trivially on $\mc{A}$. The Boolean algebra $\LC(O)$ is naturally isomorphic to $\LC(G)$. We are thus free to assume that $G$ is compactly generated. Say that $G =\langle X \rangle$ where $X$ is a compact open symmetric identity neighborhood.  Note also that as $G$ is monolithic and $M$ acts non-trivially on $\mc{A}$, it follows that $G$ acts faithfully on $\mc{A}$.

Fix a compact open subgroup $U$ of $G$ that does not contain $M$ and set $V: = U \cap M$. The group $V$ is commensurated and locally normal in $G$.  We see additionally that
\[
 W := \bigcap_{x \in X}xVx\inv=\bigcap_{x\in X}x(U\cap M)x\inv=\bigcap_{x\in X}xUx\inv \cap M.
 \]
 The compactness of $X$ ensures that $\bigcap_{x\in X}xUx\inv $ is actually an intersection of finitely many conjugates of $U$, so $W$ is an open subgroup of $V$.  Since $V$ is compact and acts faithfully on $\mc{A}$, there is a finite subset $\mc{B}$ of $\mc{A}$ such that the pointwise stabilizer $V_{(\mc{B})}$ is contained in $ W$. Given that we are working in a Boolean algebra on which $V$ has finite orbits, we can take $\mc{B} = \{\beta_1,\dots,\beta_d\}$ to be a partition of $\mc{A}$ that is preserved setwise by $V$. 
 
Take $\gamma \in \mc{A}$ with $0 < \gamma$.  Let $R$ be a representative of $\gamma$ and put $S := \CC^2_U(R \cap U)$. The group $S$ has non-trivial intersection with $M$, since $\QC_G(M)=\{1\}$ by Proposition~\ref{prop:[A]-semisimplicity_LN}. Since $V$ is a proper subgroup of the monolith $M$, there is some conjugate $g(S \cap M)g\inv$ of $S \cap M$ that is not contained in $V$.  Choose $h \in G$ of minimal word length with respect to $X$ such that $h(S \cap M)h\inv$ is not in $V$.  We may write $h = xg\inv$ for some $g \in G$ of shorter length. Thus, $g\inv(S \cap M)g \le V$, and since $xWx\inv \le V$, we see that $g\inv(S \cap M)g$ is a subgroup of $V$ but not $W$. There is $\beta_i\in \mc{B}$ and $v\in S\cap M$ such that $g\inv vg\beta_i \neq \beta_i$.  The conjugate $g\inv v g$ preserves the partition $\mc{B}$ setwise, so we have $g\inv v g\beta_i \wedge \beta_i = 0=v g\beta_i \wedge g\beta_i$. Seeing as $\gamma^\bot = [\CC_G(S)]$, the subgroup $S$ fixes every $\delta \in \LC(G)$ such that $\delta \le \gamma^\bot$, and also $S$ fixes $\gamma$. It follows that $g\beta_i < \gamma$.

At this point, we have a set $\mc{B}$ of non-zero elements of $\mc{A}$ such that for all $\gamma \in \mc{A}$ with $0 < \gamma$, there exists $g \in G$ and $\beta_i \in \mc{B}$ such that $g\beta_i < \gamma$.  Let us consider $\mc{B}$ of smallest possible size, and suppose there exist $\beta_i, \beta_j \in \mc{B}$ and $g \in G$ such that $g\beta_i \wedge \beta_j = \gamma > 0$.  Then $h\beta_k < \gamma$ for some $\beta_k \in \mc{B}$ and $h\in G$, and the minimality of $\mc{B}$ ensures $\beta_i = \beta_k$ and $\beta_j=\beta_k$.  Thus $g\beta_i \wedge \beta_j = 0$ for all $g \in G$ and $\beta_i, \beta_j \in \mc{B}$ distinct. On the other hand, by Proposition~\ref{prop:lcent_fixedpt:mono}, $G$ acts topologically transitively on $\mf{S}(\LC(G))$. For any non-empty open subsets $\beta_i,\beta_j$ of $\mf{S}(\mc{A})$, some $G$-translate of $\beta_i$ must intersect $\beta_j$.  We deduce that $|\mc{B}| = 1$, completing the proof of claim (i).

We now consider claim (ii); here we do not assume that $G$ is compactly generated. Let $\beta$ be as in part (i) and let $\upsilon := \bigcup_{g \in G}g\beta$ be regarded as a subspace of $\mf{S}(\mc{A})$.  The set $\upsilon$ is contained in every non-empty $G$-invariant open subset of $\mc{A}$, and $\upsilon$ is itself non-empty and $G$-invariant. Hence, $\upsilon$ is the unique smallest such set.  In particular, the complement of $\upsilon$ has empty interior, so $\upsilon$ is dense in $\mf{S}(\mc{A})$.  Let $\upsilon'$ be the complement of the fixed-point set of $M$.  Then $\upsilon'$ is open, non-empty, and $G$-invariant, so $\upsilon \subseteq \upsilon'$.  On the other hand, $M = \ngrp{T \cap M}_G$ where $T$ is a representative of $\beta$.  The support of $T$ is contained in $\beta$, so every point of $\mf{S}(\mc{A})$ outside of $\upsilon$ is fixed by $\lla T \rra_G$. We conclude that $M$ fixes the complement of $\upsilon$. We deduce that $\upsilon^c\subseteq (\upsilon')^c$, so $\upsilon = \upsilon'$ as claimed.
\end{proof}

\begin{rmk}
One can consider more generally a \tdlc group $G$ that is regionally expansive, [A]-semisimple, but not necessarily monolithic.  In this case Proposition~\ref{prop:minimal_A-semisimple} applies and $\mc{M}(G)$ is non-empty but finite.  The elements of $\mc{M}(G)$ then give rise to a canonical partition of $\mf{S}(\LC(G))$ with blocks corresponding to $\alpha_M = [\CC^2_G(M)]$ for $M \in \mc{M}(G)$.

Given $M \in \mc{M}(G)$, and given a $G$-invariant subalgebra $\mc{A}$ of $\LC(G)$ containing $\alpha_M$ such that $M$ acts faithfully on $\mc{A}$, then $\alpha_M$ corresponds to a clopen $G$-invariant subspace of $\mf{S}(\mc{A})$, and the dynamics of $G$ on this subspace are as described in Proposition~\ref{cor:rf_compressible:mono}, via a similar proof.  Alternatively, given $M \in \mc{M}(G)$ that acts faithfully on $\LC(G)$, the kernel of the action of $G$ on the subspace $X$ of $\LC(G)$ corresponding to $\alpha_M$ is exactly $\CC_G(M)$, so one can pass to the monolithic quotient $G/\CC_G(M)$ of $G$.  The action of $G/\CC_G(M)$ on $X$ is faithful weakly decomposable in the sense of \cite[Theorem~II]{CRW1}, so $G/\CC_G(M)$ is [A]-semisimple.  In addition, $G/\CC_G(M)$ is regionally expansive by Proposition~\ref{prop:normal-compressions}.  Thus Proposition~\ref{cor:rf_compressible:mono} applies directly to the quotient $G/\CC_G(M)$ of $G$.
\end{rmk}

We conclude by invoking the results from  \cite[Section 6]{CRW2} to derive the following for robustly monolithic groups, which was established in loc.\ cit.\ for groups in the class $\Ss$. 

\begin{thm}\label{thm:StronglyProx}
For $G \in \Rs$, the $\Mon(G)$-action on $\mf{S}(\LC(G))$, hence also the $G$-action, is minimal, strongly proximal, and has a compressible open set. In particular, if $G$ is amenable, then $\LC(G) = \{0, \infty\}$. 
\end{thm}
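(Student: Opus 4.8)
The plan is to extract the single new ingredient needed---a compressible open set for the $\Mon(G)$-action---from Proposition~\ref{cor:rf_compressible:mono}, and then to run the dynamical arguments of \cite[Section~6]{CRW2} essentially unchanged. Write $M := \Mon(G)$ and $X := \mf{S}(\LC(G))$. If $\LC(G) = \{0,\infty\}$, then $X$ is a single point and every assertion, including the amenable conclusion, is trivial; so I would assume $|\LC(G)| > 2$. By Proposition~\ref{prop:LC_faithful} both $G$ and $M$ then act faithfully on $\LC(G)$, with $\LC(G)^G = \LC(G)^M = \{0,\infty\}$; in particular $M$ acts non-trivially.

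To obtain compression by elements of $M$ (rather than merely of $G$), I would apply Proposition~\ref{cor:rf_compressible:mono} to $M$ itself. By Lemma~\ref{lem:monolith_overgroup}, $M \in \Rs$ is topologically simple, hence regionally expansive, [A]-semisimple, and monolithic with $\Mon(M) = M$ (Proposition~\ref{prop:Rs_is_RF_[a]}). Fixing a compact open subgroup $V \le M$, Proposition~\ref{prop:FaithfulLC} applied with $K = M$ (noting $\QC_G(M) = \triv$, as in the proof of Proposition~\ref{prop:Rs_is_RF_[a]}) yields an injective, meet-preserving, $\Comm_G(V)$-equivariant embedding $\LC(G) \hookrightarrow \LC(M)$; since $M \le \Comm_G(V)$, its image $\mc{A}$ is an $M$-invariant subalgebra of $\LC(M)$ and $\mf{S}(\mc{A})$ is $M$-equivariantly homeomorphic to $X$. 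As $M = \Mon(M)$ acts faithfully, hence non-trivially, on $\mc{A} \cong \LC(G)$ with $|\mc{A}| > 2$, Proposition~\ref{cor:rf_compressible:mono} applies to the pair $(M,\mc{A})$ and produces a non-zero $\beta \in \mc{A}$ such that every non-zero $\gamma \in \mc{A}$ admits $m \in M$ with $m\beta < \gamma$; equivalently, the clopen set associated with $\beta$ is a compressible open set for $M \acts X$. Part (ii) further identifies $\upsilon := \bigcup_{m \in M} m\beta$ as the unique smallest non-empty $M$-invariant open subset of $X$, dense and equal to the complement of the $M$-fixed-point set.

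With this compressible open set in hand, I would invoke \cite[Section~6]{CRW2}. The minimality and strong proximality established there for $G \in \Ss$ rest only on two features of the action on the Stone space: that it is faithful and micro-supported---here supplied by \cite[Theorem~II]{CRW1} together with $|\LC(G)| > 2$---and that it admits a compressible open set, supplied in \cite{CRW2} by compact generation and here by the previous paragraph. As both features hold for $M \acts X$, those arguments carry over to show that $M \acts X$ is minimal and strongly proximal, with $\beta$ compressible; note that minimality is equivalent to the vanishing of the $M$-fixed-point set $X \setminus \upsilon$, so it records in particular that $\upsilon = X$. Since $M \le G$, any closed $G$-invariant subset of $X$ is $M$-invariant and $\overline{M\mu} \subseteq \overline{G\mu}$ for every probability measure $\mu$; hence minimality, strong proximality, and compressibility pass up to $G \acts X$ as well.

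For the final clause I would use the $G$-action: if $G$ is amenable it fixes a probability measure $\mu$ on the compact space $X$, so $\overline{G\mu} = \{\mu\}$; strong proximality forces this closure to contain a point mass, whence $\mu = \delta_x$ and $x$ is a global fixed point, and minimality then gives $X = \{x\}$, i.e.\ $\LC(G) = \{0,\infty\}$. The step I expect to be the main obstacle is the transfer in the third paragraph: one must verify that the $M$-action on $X = \mf{S}(\mc{A})$ (for the subalgebra $\mc{A} \subseteq \LC(M)$) is genuinely micro-supported, and that the \cite[Section~6]{CRW2} deductions depend only on micro-support and compression---so that one obtains \emph{full} minimality, namely the emptiness of the fixed-point set, rather than only the weaker conclusion, immediate from compression, that every proper closed invariant subset is contained in it.
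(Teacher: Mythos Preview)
Your overall architecture matches the paper's---reduce to the monolith via Proposition~\ref{prop:FaithfulLC}, assume $|\LC(G)|>2$, extract a compressible clopen from Proposition~\ref{cor:rf_compressible:mono}, then feed into \cite[\S6]{CRW2}---but the step you yourself flag as the obstacle is a genuine gap, and it is precisely the one the paper has to work to close. Micro-support together with the existence of a compressible open set do \emph{not} by themselves force the $M$-fixed-point set on $\mf{S}(\mc A)$ to be empty: Proposition~\ref{cor:rf_compressible:mono}(ii) only gives that this set has empty interior. In \cite{CRW2} the passage from ``empty interior'' to ``empty'' for $G\in\Ss$ uses compact generation, which is exactly what you lack in $\Rs$; so the sentence ``those arguments carry over'' is not justified as written.

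The paper fills this gap with an argument that genuinely uses the regional structure of $\Rs$. Having reduced to $G$ topologically simple, suppose $\mf p$ is a $G$-fixed point. Micro-support makes the subgroup $H$ of elements fixing some neighbourhood of $\mf p$ non-trivial; it is normal since $\mf p$ is fixed, hence dense by simplicity. Now invoke Theorem~\ref{thm:approx_R} to find a compactly generated open $O\le G$ with $O\in\Rs$, pick a clopen $\alpha$ avoiding $\mf p$ and a compact open $U\le O$ normalizing $\rist_O(\alpha)$, and use Lemma~\ref{lem:Cayley-Abels} with the dense subgroup $H$ to write $O=\langle g_1,\dots,g_n\rangle U$ with all $g_i\in H\cap O$. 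Then $\langle g_1,\dots,g_n\rangle$ meets every $O$-conjugate of $L:=\rist_U(\alpha)$, so $\langle\!\langle L\rangle\!\rangle_O$ fixes a clopen neighbourhood $\beta$ of $\mf p$ pointwise, and hence so does $\Mon(O)$. But Proposition~\ref{cor:rf_compressible:mono}(ii) applied to $O$ says the $\Mon(O)$-fixed-point set has empty interior---contradiction. With no fixed points, part~(ii) now gives minimality, and then \cite[Propositions~6.24 and~6.25]{CRW2} give strong proximality and the amenability conclusion directly; your alternative argument for the amenable clause is fine too, but it presupposes minimality, which is exactly the missing piece.
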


\begin{proof}
By Proposition~\ref{prop:FaithfulLC}, there is a $\Mon(G)$-equivariant injective homomorphism of Boolean algebras from $\LC(G)$ to $\LC(\Mon(G))$.  Thus $\mf{S}(\LC(G))$ occurs as a quotient of $\mf{S}(\LC(\Mon(G))$, so to show that the $\Mon(G)$-action is minimal, strongly proximal, or has a compressible open set on $\mf{S}(\LC(G))$, it suffices to show that the $\Mon(G)$-action on $\mf{S}(\LC(\Mon(G))$ has the corresponding property. The monolith $\Mon(G)$ is an element of $\Rs$ by Lemma~\ref{lem:monolith_overgroup}, and it is topologically simple. Furthermore, if $G$ is amenable, then so is $\Mon(G)$.  Replacing $G$ with $\Mon(G)$, we may thus assume that $G$ is topologically simple.  We may also assume $|\LC(G)| > 2$, as all the conclusions are trivial in the case that $\LC(G) = \{0, \infty\}$.

Suppose for a contradiction that $\mf{p} \in \mf{S}(\LC(G))$ is fixed by $G$.  Let $H$ be the subgroup of $G$ consisting of those elements which fix pointwise some neighborhood of $\mf{p}$.  The group $H$ is normalized by $G$ since $G$ fixes $\mf{p}$, and $H$ is non-trivial, as $G$ is micro-supported on $\mf{S}(\LC(G))$.  Thus $H$ is dense in $G$.

By Theorem~\ref{thm:approx_R}, there exists a compactly generated open subgroup $O \leq G$ such that $O \in \Rs$.  Let $\alpha \in \LC(G)$ represent a non-empty clopen set of $\mf{S}(\LC(G))$ not containing $\mf{p}$, let $U$ be a compact open subgroup of $O$ normalizing $\rist_O(\alpha)$, and set $L := \rist_U(\alpha)$; note that $L$ is non-trivial.  Since $O$ is compactly generated and $H$ is dense in $G$, Lemma~\ref{lem:Cayley-Abels} ensures the existence of elements $g_1, \dots, g_n \in H \cap O$ such that $O = \langle g_1, \dots, g_n\rangle U$. It follows that $ \langle g_1, \dots, g_n\rangle $ is transitive on the $O$-conjugacy class of $L$. The group $\langle \{g_1, \dots, g_n\} \cup L \rangle$ thus contains the abstract normal closure $\langle \langle L \rangle \rangle_O$ of $L$ in $O$. On the other hand, there exists a clopen neighborhood $\beta$ of $\mf{p}$ which is pointwise fixed by $ \{g_1, \dots, g_n\} \cup L$. Therefore, $\ngrp{L}_O$ fixes $\beta$ pointwise, so the monolith $\Mon(O)$ also fixes $\beta$ pointwise.  However, by Proposition~\ref{cor:rf_compressible:mono}, the set of fixed points of $\Mon(O)$ on $\mf{S}(\LC(G))$ has empty interior, a contradiction.

The group $G$ thus has no fixed points on $\mf{S}(\LC(G))$.  Applying Proposition~\ref{cor:rf_compressible:mono} (recalling that $G = \Mon(G)$), $G$ acts minimally with a non-empty compressible open set.  The action is strongly proximal by \cite[Proposition~6.24]{CRW2}.  If $\LC(G) \neq \{0,\infty\}$, then $G$ is non-amenable by \cite[Proposition~6.25]{CRW2}.
\end{proof}

Similar to \cite{CRW2}, we obtain the following consequence regarding abstract simplicity. 

\begin{cor}\label{cor:AbstractSimplicity}
Let $G\in \Rs$ be topologically simple.  If $G$ has an open subgroup of the form $K \times L$ such that $K$ and $L$ are non-trivial closed subgroups, then $G$ is abstractly simple.
\end{cor}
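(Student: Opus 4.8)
The plan is to reduce to the dynamical situation governed by Theorem~\ref{thm:StronglyProx} and then run a commutator argument on the Stone space. First I would record what the hypotheses give. Since $G\in\Rs$ is topologically simple, $G=\Mon(G)$ and $G$ is non-discrete, regionally expansive, and $[A]$-semisimple by Proposition~\ref{prop:Rs_is_RF_[a]}. The one essential use of the product hypothesis is to guarantee $|\LC(G)|>2$: the factors $K$ and $L$ of the open subgroup $K\times L$ are non-trivial commuting closed locally normal subgroups, and $[A]$-semisimplicity forces each of them to be non-discrete and non-abelian. Since $L\le\CC_G(K)$, we have $[\CC_G(K)]\ge[L]>0$, so the element $\alpha_K:=[\CC^2_G(K)]$ of $\LC(G)$ satisfies $0<\alpha_K<\infty$. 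Proposition~\ref{prop:LC_faithful} then shows that $G$ acts faithfully on $\LC(G)$, whence the action on $X:=\mf{S}(\LC(G))$ is micro-supported by \cite[Theorem~II]{CRW1}, and Theorem~\ref{thm:StronglyProx} shows that it is minimal, strongly proximal, and admits a compressible open set.

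Next I would set up the commutator trick. Let $N\normal G$ be a non-trivial abstract normal subgroup; its closure is a non-trivial closed normal subgroup of $G$, hence equals $G$ by topological simplicity, so $N$ is dense. Choose $1\neq g\in N$. As $G$ acts faithfully and $X$ is compact, Hausdorff, and zero-dimensional, $g$ moves some point and there is a non-empty proper clopen set $U$ with $gU\cap U=\emptyset$. Because $U$ and $gU$ are disjoint, $\rist_G(U)$ and $\rist_G(gU)=g\,\rist_G(U)\,g\inv$ commute elementwise. For $h_1,h_2\in\rist_G(U)$ one then computes $[[g,h_1],h_2]=[h_1\inv,h_2]$, using that $gh_1g\inv\in\rist_G(gU)$ commutes with every element of $\rist_G(U)$. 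Since $[g,h_1]\in N$ and $N$ is normal, it follows that $[h_1\inv,h_2]\in N$; hence $[\rist_G(U),\rist_G(U)]\le N$. The clopen set $U$ corresponds to an element $0<\alpha<\infty$ of $\LC(G)$ whose representative is a non-trivial, non-abelian (by $[A]$-semisimplicity) closed locally normal subgroup contained in $\rist_G(U)$, so $[\rist_G(U),\rist_G(U)]\neq\triv$; and by normality $N$ contains $[\rist_G(gU),\rist_G(gU)]$ for every $g\in G$.

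Finally I would upgrade this containment to $N=G$ using the compression dynamics. By Proposition~\ref{cor:rf_compressible:mono} there is a compressible clopen set $\beta$ that can be translated inside any non-empty clopen set, and, the action being minimal on a space with more than one point, it has no global fixed point. The idea is that, modulo $N$, every element of $G$ can be localised: using strong proximality and compression one writes an arbitrary $g\in G$ as a product of elements each of whose non-trivial behaviour is confined to a translate of $\beta$, and each such piece already lies in $N$ because $N$ contains the derived rigid stabilizers of all $G$-translates of such clopen sets. This is exactly the passage carried out for the class $\Ss$ in \cite{CRW2}, all of whose hypotheses we have just verified for $G$, and I would reproduce that argument in the present setting.

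I expect this last step to be the main obstacle. The commutator trick only yields that $N$ contains the derived subgroups of a single conjugacy class of rigid stabilizers, and turning this into the equality $N=G$—rather than into mere density, which is automatic from topological simplicity—requires the full strength of the compressible, minimal, strongly proximal, micro-supported action, precisely as in the abstract-simplicity argument of \cite{CRW2}.
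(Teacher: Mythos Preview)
Your setup is essentially the same as the paper's: the product hypothesis furnishes a non-trivial element of the centralizer lattice (the paper phrases this via the decomposition lattice, a subalgebra of $\LC(G)$, but the distinction is immaterial here), and then Proposition~\ref{prop:LC_faithful} and Theorem~\ref{thm:StronglyProx} yield a faithful, minimal, strongly proximal action with a compressible clopen set.

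From that point on, however, you diverge from the paper, and the divergence is exactly where your gap lies. The paper does not use a commutator trick at all. Instead, from minimality and compressibility it extracts an element $\alpha$ of the lattice and some $g\in G$ with $g\alpha<\alpha$, invokes \cite[Proposition~6.29]{CRW2} to conclude that the group $G^\dagger=\langle\,\overline{\con(g)}\mid g\in G\,\rangle$ generated by closures of contraction groups is \emph{open} in $G$, and then applies \cite[Corollary~6.28]{CRW2}, which states that a topologically simple \tdlc group with $G^\dagger$ open is abstractly simple. The passage from ``$N$ dense'' to ``$N=G$'' is thus handled entirely by the theory of contraction groups, not by decomposing elements into pieces supported on small clopen sets.

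Your double-commutator computation is correct and gives $[\rist_G(U),\rist_G(U)]\le N$ for a conjugacy class of clopen sets $U$. But the step you yourself flag as the main obstacle is a genuine gap, and your description of how to close it misidentifies what is done in \cite{CRW2}. There is no fragmentation argument there that writes an arbitrary $g\in G$ as a product of elements supported in translates of a compressible clopen set; such fragmentation is available in specific settings (topological full groups, Thompson-type groups) but is not known for general groups in $\Rs$. What you actually need---that the derived rigid stabilizers generate $G$ as an abstract group, not merely a dense subgroup---is precisely what the contraction-group route of \cite{CRW2} circumvents. To complete the proof you should either prove that generation statement directly (which would be new) or drop the commutator argument and follow the paper in citing \cite[Proposition~6.29 and Corollary~6.28]{CRW2}.
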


\begin{proof}
The hypotheses ensure that $G$ has a non-trivial decomposition lattice $\mc{A}$. The action of $G$ on $\mc{A}$ is faithful by Proposition~\ref{prop:LC_faithful}. The action is minimal by Theorem~\ref{thm:StronglyProx}, and by the same theorem, there exists $\alpha \in \mc{A}$ such that $g\alpha < \alpha$.  By \cite[Proposition~6.29]{CRW2}, it follows that the group $G^\dagger:= \langle \overline{\con(g)} \mid g \in G\rangle$ is open in $G$. Applying \cite[Corollary~6.28]{CRW2}, we deduce that $G$ is abstractly simple.
\end{proof}

\section{The local prime content}

\subsection{Preliminaries}
Let $\mathbb{P}$ be the set of primes.
\begin{defn}
The \emph{local prime content} $\eta(G)$ of a \tdlc group $G$ is the subset of $\mathbb{P}$ such that $p \in \eta(G)$ if $G$ contains an infinite pro-$p$ subgroup.  
Given a set $\pi \subset \mathbb P$, we say that $G$ is \emph{locally pro-$\pi$} if $G$ has an open pro-$\pi$ subgroup. 
\end{defn}

The local prime content is a local invariant of $G$.  If $G$ contains an infinite pro-$p$ subgroup, then so does every compact open subgroup of $G$. 
For any set of primes $\pi$, if $G$ is locally pro-$\pi$, then $\eta(G) \subseteq \pi$. A \tdlc group $G$ is discrete if and only if it is locally pro-$\varnothing$. However,  a non-discrete \tdlc group $G$ can have an empty local prime content. An example is given by the pro-cyclic group $\prod_{p \in \mathbb P} C_p$. That group is not locally pro-$\pi$ for any finite set of primes $\pi$.

 
\begin{prop}\label{prop:RF_pro-pi}
For every regionally expansive \tdlc group $G$, the local   prime content $\eta(G)$ is finite, and $G$ is locally pro-$\eta(G)$. In particular, for each $p\in \eta(G)$ and compact open subgroup $U \leq G$, the group $U$ has an infinite pro-$p$ subgroup, and thus $G_{(p)}$ is non-discrete.
 \end{prop}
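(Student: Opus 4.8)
The plan is to reduce everything to a single compact open subgroup furnished by regional expansiveness and then analyze its pro-$p$-Sylow subgroups. First I would invoke the definition of regional expansiveness: there is a compactly generated open subgroup $O \le G$ together with a compact open subgroup $W \le O$ whose normal core in $O$ is trivial. Applying Proposition~\ref{prop:locallypro-pi} to the compactly generated group $O$ and its compact open subgroup $W$ shows that $W$ is pro-$\pi$ for some finite set of primes $\pi$. Since $W$ is open in $G$, this already yields that $G$ is locally pro-$\pi$. To identify $\eta(G)$, observe that because $W$ is open, any infinite pro-$p$ subgroup $P \le G$ meets $W$ in an open, hence infinite, pro-$p$ subgroup, while conversely an infinite pro-$p$ subgroup of $W$ witnesses $p \in \eta(G)$. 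Thus $p \in \eta(G)$ if and only if the pro-$p$-Sylow subgroups of $W$ are infinite. As $W$ is pro-$\pi$, this forces $\eta(G) \subseteq \pi$, so $\eta(G)$ is finite, and for every $q \in \pi \setminus \eta(G)$ the pro-$q$-Sylow subgroups of $W$ are finite.

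The substantive step is to upgrade ``locally pro-$\pi$'' to ``locally pro-$\eta(G)$'', that is, to produce an open subgroup of $W$ whose supernatural order involves only the primes of $\eta(G)$. Here the finiteness of the Sylow subgroups at the primes $q \in \pi \setminus \eta(G)$ is decisive. For each such $q$, fix a finite pro-$q$-Sylow subgroup $S_q$ of $W$. Since $W$ is profinite and $S_q$ is finite, I would separate $S_q$ from the identity: for each nonidentity $s \in S_q$ choose an open normal $N^{(s)} \normal W$ with $s \notin N^{(s)}$, and intersect these finitely many subgroups to obtain an open normal $N_q \normal W$ with $N_q \cap S_q = \triv$. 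Because $N_q$ is normal, $N_q \cap S_q$ is a pro-$q$-Sylow subgroup of $N_q$ (the index $|N_q : N_q \cap S_q| = |N_q S_q : S_q|$ divides $|W : S_q|$, which is coprime to $q$), so $N_q$ has trivial pro-$q$-Sylow and is therefore pro-$(\pi \setminus \{q\})$. Intersecting the finitely many $N_q$ over $q \in \pi \setminus \eta(G)$ produces an open subgroup $V \le W$ of order coprime to every $q \in \pi \setminus \eta(G)$. Hence $V$ is pro-$\eta(G)$ and open in $G$, establishing that $G$ is locally pro-$\eta(G)$.

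The ``in particular'' clause then follows directly. For $p \in \eta(G)$ and any compact open $U \le G$, intersecting an infinite pro-$p$ subgroup of $G$ with $U$ (as above) shows that $U$ has an infinite pro-$p$ subgroup, so every pro-$p$-Sylow subgroup $S$ of $U$ is infinite. By Theorem~\ref{thm:Reid_localizations}, $\Comm_G(S)$ is dense and $S$ is open in $G_{(p)} = G_{(S)}$; since $S$ is an infinite compact open subgroup, $G_{(p)}$ is non-discrete.

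The step I expect to be the main obstacle is the passage from pro-$\pi$ to pro-$\eta(G)$ in the second paragraph, namely justifying that one may excise each finite Sylow $q$-subgroup by an open normal subgroup and that doing so annihilates the entire $q$-part of the order. This rests on two standard but slightly delicate profinite facts that I would state carefully: that a finite subgroup can be separated from the identity by an open normal subgroup, and that $S \cap N$ is a Sylow subgroup of a closed normal subgroup $N$ whenever $S$ is Sylow in the ambient profinite group. The remaining arguments are routine applications of the cited results.
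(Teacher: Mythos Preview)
Your proof is correct and follows the same opening move as the paper: use regional expansiveness to find a compactly generated open $O$ with a compact open $W$ of trivial normal core, then invoke Proposition~\ref{prop:locallypro-pi} to conclude that $W$ is pro-$\pi$ for some finite $\pi$. The paper's proof stops essentially there, writing only ``Therefore, $G$ is locally pro-$\eta$'' and leaving the passage from ``locally pro-$\pi$ for some finite $\pi$'' to ``locally pro-$\eta(G)$'' as implicit. You have correctly identified this as the only non-formal step and supplied the standard argument: for each $q \in \pi \setminus \eta(G)$ the pro-$q$-Sylow $S_q$ of $W$ is finite, one separates $S_q$ from the identity by an open normal $N_q \normal W$, and the fact that Sylow subgroups restrict to closed normal subgroups gives $N_q$ trivial $q$-Sylow. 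Intersecting over the finitely many bad primes yields the desired open pro-$\eta(G)$ subgroup. Your treatment of the ``in particular'' clause is also correct and more explicit than the paper's, which omits it entirely.
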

 \begin{proof}
  Suppose that $G$ is a regionally expansive \tdlc group. We can find $O\leq G$ a compactly generated open subgroup such that $O$ admits a compact open subgroup $W\leq O$ which has a trivial normal core in $O$. Applying Proposition~\ref{prop:locallypro-pi}, we conclude that $W$ is pro-$\eta$ for some finite set of primes $\eta$. Therefore, $G$ is locally pro-$\eta$. 
 \end{proof}
 
\subsection{The local prime content of locally normal subgroups}

\begin{lem}\label{lem:O^p}
	Let $G$ be a profinite group, $R \leq G$ be a closed normal subgroup, and $S \leq G$ be a pro-$p$-Sylow subgroup for some prime $p$. Then the closure of $\Comm_{R}(S)$  in $G$ contains $O^p(R)$. 
\end{lem}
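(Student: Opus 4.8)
The plan is to prove the stronger statement that $\Comm_R(S)$ is in fact \emph{dense} in $R$; since $O^p(R) \le R = \overline{\Comm_R(S)}$, this immediately yields the lemma. The key reduction is the observation that, for every open normal subgroup $N$ of $G$, the normalizer $\N_R(S \cap N)$ is contained in $\Comm_R(S)$. Indeed, $S \cap N$ is open in $S$, so if $g \in R$ satisfies $g(S\cap N)g^{-1} = S \cap N$, then $S \cap N \le S \cap gSg^{-1}$, and applying the same reasoning to $g^{-1}$ shows that $S$ and $gSg^{-1}$ are commensurate. Since the subgroups $R \cap N$ (as $N$ ranges over the open normal subgroups of $G$) form a basis of identity neighborhoods of $R$ consisting of normal subgroups, it suffices to show that $\N_R(S\cap N)\,(R \cap N) = R$ for every such $N$.

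To prove this, I would fix $\delta \in R$ and set $P := S \cap N$, which is a pro-$p$-Sylow subgroup of $N$. The crucial point is a confinement coming from normality: as $R$ and $N$ are both normal in $G$, one has $[R,N] \le R \cap N$, and therefore $P^{\delta} := \delta^{-1}P\delta \le P(R\cap N)$. The set $H := P(R\cap N)$ is a closed subgroup of $N$, since $R \cap N \normal G$, and both $P$ and $P^{\delta}$ are pro-$p$-Sylow subgroups of $N$ lying inside $H$, hence pro-$p$-Sylow subgroups of $H$. By the conjugacy of pro-$p$-Sylow subgroups in the profinite group $H$, there is $\kappa \in H$ with $\kappa^{-1}P\kappa = P^{\delta}$; writing $\kappa = p_0 c$ with $p_0 \in P$ and $c \in R\cap N$ (possible because $R \cap N \normal H$), we obtain $\delta^{-1}P\delta = c^{-1}Pc$, so that $(\delta c^{-1})^{-1}P(\delta c^{-1}) = P$, i.e.\ $\delta c^{-1} \in \N_G(P) \cap R = \N_R(S \cap N)$. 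As $c \in R \cap N$, this gives $\delta \in \N_R(S\cap N)(R\cap N)$, which is the required equality. Passing to closures over all $N$ then shows $\Comm_R(S)$ is dense in $R$, so that $\overline{\Comm_R(S)} = R \supseteq O^p(R)$.

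The main obstacle is precisely this last genuinely group-theoretic step. A naive Frattini argument applied to $P \normal N$ only shows that $\N_G(P)$ surjects onto $G/N$, which is far too weak: what is needed is a conjugating element lying in $R \cap N$, so that it disappears modulo $N$ and leaves the image of $\delta$ unchanged. This is exactly what the confinement $P^\delta \le P(R \cap N)$ provides, forcing the Sylow conjugacy to take place inside $H = P(R\cap N)$; the conjugating element then lies in $P(R\cap N)$ and hence differs from an element of $R \cap N$ only by a factor in $P \le \N_G(P)$. I expect verifying the standard facts that $S\cap N$ is a pro-$p$-Sylow subgroup of $N$ and that $[R,N]\le R\cap N$ to be routine, with all the content concentrated in the interplay between this confinement and profinite Sylow conjugacy.

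I note in passing that the argument establishes the optimal conclusion $\overline{\Comm_R(S)} = R$, of which the stated containment $\overline{\Comm_R(S)} \supseteq O^p(R)$ is a special case; the weaker form is presumably all that is required in the sequel.
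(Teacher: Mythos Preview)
Your argument is correct and in fact establishes the stronger conclusion $\overline{\Comm_R(S)} = R$. The key confinement $P^\delta \le P(R\cap N)$ follows from $[R,N] \le R \cap N$ exactly as you say, and the subsequent Sylow conjugacy inside $H = P(R\cap N)$ produces the required conjugating element in $R \cap N$; all the verifications you flag as routine are indeed routine.

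The paper takes a different route: it sets $H = RS$, invokes Theorem~\ref{thm:Reid_localizations} directly to conclude that $\Comm_H(S)$ is dense in $H$, and then observes that $H_{(p)} = NS$ with $N = \Comm_R(S)$, whence $H = \overline{N}S$ and $O^p(R) \le O^p(H) \le \overline{N}$. So the paper black-boxes Reid's density theorem for $p$-localizations, while you essentially reprove (a relative version of) it by a direct Frattini-type argument adapted to force the conjugating element into $R$. Your approach is more elementary and self-contained, and yields the optimal conclusion $\overline{\Comm_R(S)} = R$; the paper's approach is shorter given that Theorem~\ref{thm:Reid_localizations} is already available, and only extracts the weaker containment $O^p(R) \le \overline{\Comm_R(S)}$, which is all that is used downstream (in Lemma~\ref{lem:CentralizerO^p} and Theorem~\ref{thm:EmbeddingLC}).
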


\begin{proof}
		Consider the closed subgroup $H := RS$ of $G$. The subgroup $S$ is a pro-$p$-Sylow subgroup of $H$, and Theorem~\ref{thm:Reid_localizations} ensures that $\Comm_H(S)$ is dense in $H$. We view  $H_{(p)}:=\Comm_H(S)$ as the $p$-localization of $H$ and denote by $\varphi \colon H_{(p)} \to H$ the continuous dense embedding given by the inclusion map. Set $N: = \varphi^{-1}(R)$. The group $N$ is a closed normal subgroup of $H_{(p)}$. Since $S$ commensurates itself, we have $S \leq H_{(p)}$. The kernel of the continuous map $H_{(p)} \to H \to H/R$ is $N$, and the restriction of that map  to the subgroup $S \leq H_{(p)}$ is surjective. We therefore have  $H_{(p)} = NS$. Furthermore, $H = \overline{\varphi(H_{(p)})} = \overline{\varphi(N)}S$, so $O^p(H) \leq  \overline{\varphi(N)}$. Hence,
		$$O^p(R) \leq O^p(H) \leq \overline{\varphi(N)} \leq R.$$
By definition, $N = \Comm_R(S)$, and the required assertion follows.   
\end{proof}

\begin{lem}\label{lem:O_pprime}
	Let $G$ be a profinite group, $R \leq G$ be a closed normal subgroup, and $S \leq G$ be a pro-$p$-Sylow subgroup for some prime $p$. If $R \cap S = \{1\}$, then $\QC_R(S)$ is dense in $R$. 
\end{lem}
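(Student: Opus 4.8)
The plan is to first extract the structural consequence of the hypothesis $R \cap S = \triv$, and then run a coprime fixed-point lifting argument level by level through the finite quotients of $R$. First I would observe that $R$ is pro-$p'$: any pro-$p$ subgroup $T \le R$ is a pro-$p$ subgroup of $G$, hence contained in some conjugate $gSg^{-1}$ of the pro-$p$-Sylow subgroup $S$; since $R$ is normal we have $g^{-1}Rg = R$, so $g^{-1}Tg \le S \cap R = \triv$ and $T = \triv$. Thus every finite continuous quotient of $R$ has order prime to $p$. Note that $O^p(R) = R$, so Lemma~\ref{lem:O^p} already gives that $\Comm_R(S)$ is dense in $R$; since $\QC_R(S) \le \Comm_R(S)$ by Lemma~\ref{lem:QC}, the real task is to upgrade this density from the commensurator to the smaller subgroup $\QC_R(S)$. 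Conjugation inside $H := RS$ gives a continuous action of $S$ on $R$, and the subgroups $V := R \cap U$, for $U$ an open normal subgroup of $G$, form a basis of $S$-invariant open normal subgroups of $R$. Since $\CC_R(S') \le \QC_R(S)$ for every open $S' \le S$, it suffices to prove that for each such $V$ there is an open $S' \le S$ with $\CC_R(S')V = R$; as $V$ ranges over this basis, this yields density of $\QC_R(S)$ in $R$.

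Fix $V$ and let $S'$ be the open kernel of the continuous action $S \to \Aut(R/V)$, so that $S'$ acts trivially on the finite $p'$-group $R/V$. I would then show that $\CC_R(S')$ surjects onto $R/V$. For every $S$-invariant open normal $W \le V$, the group $S'$ acts on the finite $p'$-group $R/W$ through a finite $p$-group $P_W$, and $P_W$ acts trivially on $R/V$. Given a target $\bar r \in R/V$, its preimage in $R/W$ is a $P_W$-invariant coset of $V/W$, a set of size coprime to $p$; since $P_W$ is a $p$-group, the congruence $|X| \equiv |X^{P_W}| \pmod p$ forces a fixed point in this coset. Hence the fibre $F_W := \{x \in \CC_{R/W}(S') : x \mapsto \bar r\}$ is a non-empty finite set, and the projections $R/W' \to R/W$ for $W' \le W$ restrict to maps $F_{W'} \to F_W$.

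Finally, since the $W \le V$ are cofinal among the open normal subgroups of $R$, we have $R = \varprojlim_{W \le V} R/W$, and because each $W$ is $S'$-invariant the fixed points are computed componentwise, giving $\CC_R(S') = \varprojlim_W \CC_{R/W}(S')$. The inverse system $(F_W)$ of non-empty finite sets has non-empty limit by compactness, and any element of $\varprojlim_W F_W$ is exactly an element of $\CC_R(S')$ mapping to $\bar r$. As $\bar r$ was arbitrary, $\CC_R(S')$ surjects onto $R/V$, which completes the argument. The main obstacle is the lifting of fixed points through the finite quotients, and the decisive idea is the choice of $S'$ as the kernel of the action on $R/V$: this makes $S'$ act trivially on $R/V$, so that the elementary coprime fixed-point count produces a surjection onto \emph{all} of $R/V$ rather than merely onto the potentially small fixed-point set $\CC_{R/V}(S)$. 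Assembling these liftings compatibly across all $W$ via the inverse-limit compactness argument is the second point requiring care.
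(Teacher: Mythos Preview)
Your proof is correct, but it takes a genuinely different route from the paper's argument. The paper proceeds as follows: since $R \cap S = \triv$ we have $R = O^p(R)$, so Lemma~\ref{lem:O^p} gives that $\Comm_R(S)$ is dense in $R$. Now $\Comm_R(S) = G_{(p)} \cap R$ is normal in the localization $G_{(p)}$, and it meets the open subgroup $S$ trivially, so it is a \emph{discrete} normal subgroup of $G_{(p)}$. By Lemma~\ref{lem:QC:basic} this forces $\Comm_R(S) \le \QZ(G_{(p)})$; in particular every element of $\Comm_R(S)$ centralizes an open subgroup of $S$, whence $\Comm_R(S) = \QC_R(S)$ and the latter is dense. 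The entire upgrade from $\Comm_R(S)$ to $\QC_R(S)$ is thus a one-line application of ``discrete normal $\Rightarrow$ quasi-central''.

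By contrast, you bypass the localization viewpoint entirely and give a self-contained coprime fixed-point lifting: for each $S$-invariant open normal $V \le R$ you exhibit an open $S' \le S$ (the kernel of the action on $R/V$) and show $\CC_R(S')$ surjects onto $R/V$ by the $p$-group congruence $|X| \equiv |X^{P_W}| \pmod p$ on each coset of $V/W$, assembled via an inverse-limit compactness argument. This is more elementary --- it uses nothing beyond the basic fixed-point count and compactness of inverse limits of non-empty finite sets --- and indeed your mention of Lemma~\ref{lem:O^p} is purely motivational; your argument does not need it. The paper's route is shorter and more conceptual, exploiting the localization machinery already in place, while yours has the virtue of being independent of Theorem~\ref{thm:Reid_localizations} and making the role of the coprimeness $R \cap S = \triv$ completely explicit.
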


\begin{proof}
	Consider the closed subgroup $H := RS \simeq R \rtimes S$ of $G$. Since $R \cap S = \{1\}$, we have $R = O^p(R)$. Therefore, Lemma~\ref{lem:O^p} ensures that $\Comm_R(S)$ is dense in $R$. 
	
	We infer that $\Comm_R(S)$ is normal in $\Comm_G(S) = G_{(p)}$. Since $R \cap S = \{1\}$, it follows that  $\Comm_R(S)$ is in fact a discrete normal subgroup of the localization $G_{(p)}$. Hence $\Comm_R(S) \leq \QZ(G_{(p)})$. This implies that $\Comm_R(S) = \QC_R(S)$, and the desired result follows. 
\end{proof}

\begin{prop}\label{prop:Primes-in-locally-normal}
	Let $G \in \mathscr R$, $U \leq G$ be a compact open subgroup, and $S \leq U$ be an infinite pro-$p$-Sylow subgroup of $U$ for some prime $p$. For every non-trivial locally normal subgroup $K$ of $G$, the intersection $K \cap S$ is infinite. 
\end{prop}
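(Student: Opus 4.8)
The plan is to push the whole problem into the $p$-localization, where the pro-$p$-Sylow $S$ becomes (commensurate to) an entire compact open subgroup and the Sylow bookkeeping evaporates. Assuming $K$ is closed (as locally normal subgroups are taken to be here), I would first replace $U$ by the compact open subgroup $U':=U\cap\N_G(K)$, which normalises $K$, so that $R:=K\cap U'$ is a closed normal subgroup of the profinite group $U'$. By Proposition~\ref{prop:Rs_is_RF_[a]} the group $G$ is $[A]$-semisimple, so by Proposition~\ref{prop:[A]-semisimplicity_LN} the non-trivial closed locally normal subgroup $K$ is not quasi-discrete; in particular $K$, and hence its open subgroup $R=K\cap U'$, is infinite. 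I would then fix a pro-$p$-Sylow $\tilde S$ of $U'$ with $S\cap U'\le\tilde S$.

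The hard part will be reconciling the given Sylow $S$ of $U$ with the Sylow $\tilde S$ of the smaller group $U'$. The key claim is that $S\cap U'$ is open both in $S$ and in $\tilde S$. Openness in $S$ is immediate since $U'$ is open in $U$. Openness in $\tilde S$ follows from a supernatural-number count: $|\tilde S|=|U'|_p$ divides $|U|_p=|S|$, while $|S:S\cap U'|$ divides the finite index $|U:U'|$, so $|\tilde S:S\cap U'|$ divides $|S:S\cap U'|$ and is finite. Hence $S$ and $\tilde S$ are commensurate, so $\Comm_G(S)=\Comm_G(\tilde S)$, and $S\cap U'$ is a compact open subgroup of the localisation contained in $S$. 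Since $\tilde S\supseteq S\cap U'$ is infinite, Theorem~\ref{thm:Reid_localizations} makes $G_{(\tilde S)}=\Comm_G(\tilde S)$ a non-discrete dense locally compact subgroup of $G$; thus $G_{(\tilde S)}\in\Rs$ by Theorem~\ref{thm:R_dense_embedding}, and it is $[A]$-semisimple by Proposition~\ref{prop:Rs_is_RF_[a]}.

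Next I would show $K':=K\cap\Comm_G(\tilde S)$ is non-trivial by a dichotomy on $R\cap\tilde S$. If $R\cap\tilde S\neq\triv$, then this subgroup already lies in $K\cap\tilde S\subseteq K\cap\Comm_G(\tilde S)$. If $R\cap\tilde S=\triv$, then Lemma~\ref{lem:O_pprime}, applied in the profinite group $U'$ to the closed normal subgroup $R$ and the Sylow $\tilde S$, gives that $\QC_R(\tilde S)$ is dense in the infinite group $R$ and so is non-trivial; moreover $\QC_R(\tilde S)\le R\le K$ while $\QC_R(\tilde S)\le\Comm_{U'}(\tilde S)\le\Comm_G(\tilde S)$ by Lemma~\ref{lem:QC}. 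In either case $K'=K\cap\Comm_G(\tilde S)=K\cap\Comm_G(S)$ is non-trivial, and it is closed and locally normal in $G_{(\tilde S)}$ because the inclusion $G_{(\tilde S)}\to G$ is continuous, so that the preimages of the closed set $K$ and the open set $\N_G(K)$ behave correctly.

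Finally I would run the elementary locally-pro-$p$ argument in $G_{(\tilde S)}$. As $G_{(\tilde S)}\in\Rs$ is $[A]$-semisimple, the non-trivial closed locally normal subgroup $K'$ is not quasi-discrete, hence non-discrete. Setting $P:=(S\cap U')\cap\N_{G_{(\tilde S)}}(K')$, an open subgroup of the compact open subgroup $S\cap U'$, the intersection $K'\cap P$ is open in $K'$ and therefore infinite. Since $K'\cap P\subseteq K\cap(S\cap U')\subseteq K\cap S$, I conclude that $K\cap S$ is infinite. I expect the genuinely delicate step to be the commensurability of $S$ and $\tilde S$ in the second paragraph; once the localisation at $\tilde S$ is identified with $G_{(p)}\in\Rs$, the remainder is a routine assembly of the localisation machinery together with Lemmas~\ref{lem:O_pprime} and~\ref{lem:QC}.
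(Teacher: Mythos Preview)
Your proof is correct and rests on the same ingredients as the paper's: pass to the $p$-localization via Theorem~\ref{thm:R_dense_embedding}, invoke Lemma~\ref{lem:O_pprime}, and exploit that $\QZ(G_{(p)})=\triv$. The paper packages these more tightly as a proof by contradiction: assuming $K\cap S$ is finite, one shrinks $K$ to a compact locally normal $L$ with $L\cap T=\triv$ for a suitable Sylow $T$, and then Lemma~\ref{lem:O_pprime} gives $\QC_L(T)$ dense in $L$ while simultaneously $\QC_L(T)\le\QZ(G_{(p)})=\triv$, forcing $L=\triv$ and hence $K\le\QZ(G)=\triv$. Your case~2 of the dichotomy is exactly this contradiction in disguise---the non-trivial elements of $\QC_R(\tilde S)$ you exhibit centralize an open subgroup of the compact open $\tilde S\le G_{(\tilde S)}$ and hence lie in $\QZ(G_{(\tilde S)})=\triv$, so that branch is vacuous---after which your final non-discreteness argument (really only needed in case~1) finishes. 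Both routes are valid; the paper's contradiction is a line shorter, while your direct argument makes explicit where the infinitely many elements of $K\cap S$ actually sit.
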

\begin{proof}
	Let $K$ be a locally normal subgroup of $G$ with $K \cap S$ finite; without loss of generality $K$ is compact. There is a compact open subgroup $V$ and a normal subgroup $L \normal V$ commensurate with $K$ such that $L \cap S = \triv$. Let $T$ be a pro-$p$-Sylow subgroup of $V$ containing $S \cap V$. The intersection $L \cap T$ is finite, so upon replacing $L$ by its intersection with a sufficiently small open normal subgroup of $V$, we may assume that $L \cap T$ is trivial. 
	
	Lemma~\ref{lem:O_pprime} ensures that $\QC_L(T)$ is dense in $L$. On the other hand, the group $\QC_L(T)$ is a subgroup of the localization $G_{(p)} = \Comm_G(T)$ which is contained in $\QZ(G_{(p)})$. By Theorem~\ref{thm:R_dense_embedding}, the quasi-center $\QZ(G_{(p)})$ is trivial. We infer that $\QC_L(T)$ is trivial, and so $L$ is trivial. The group $K$ is thus finite. Any finite locally normal subgroup is contained in the quasi-center. Invoking again Theorem~\ref{thm:R_dense_embedding},  we deduce that $K$ is trivial. Every non-trivial locally normal subgroup therefore has an infinite intersection with any infinite pro-$p$-Sylow subgroup.
\end{proof}

\begin{cor}\label{cor:LNpro-p->pro-p}
	Let $G \in \Rs$   and $\eta$ be any set of primes. 
	Then  $G$ has a non-trivial locally normal virtually pro-$\eta$ subgroup if and only if $G$ is locally pro-$\eta$.
\end{cor}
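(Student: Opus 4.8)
The plan is to treat the two implications separately. The implication that being locally pro-$\eta$ forces the existence of such a subgroup is routine, while the converse carries all the content and is where I would invoke the earlier analysis.

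First I would dispose of the easy direction. Suppose $G$ is locally pro-$\eta$ and fix an open pro-$\eta$ subgroup $U$. Since $G \in \Rs$ has a non-discrete monolith, $G$ itself is non-discrete, so $U$ is infinite, and $U$ is locally normal because its normalizer contains the open subgroup $U$. As $U$ is pro-$\eta$, it is in particular a non-trivial locally normal virtually pro-$\eta$ subgroup, which is all that is required.

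For the converse, let $K$ be a non-trivial locally normal subgroup that is virtually pro-$\eta$. A virtually pro-$\eta$ topological group is covered by finitely many cosets of a compact open pro-$\eta$ subgroup, hence is compact, and a compact subgroup of a Hausdorff group is closed; so I may treat $K$ as a compact subgroup. By Proposition~\ref{prop:Rs_is_RF_[a]} the group $G$ is regionally expansive, so Proposition~\ref{prop:RF_pro-pi} gives that $\eta(G)$ is finite and that $G$ is locally pro-$\eta(G)$. It therefore suffices to prove the inclusion $\eta(G) \subseteq \eta$: once this holds, an open pro-$\eta(G)$ subgroup of $G$ is automatically pro-$\eta$, witnessing that $G$ is locally pro-$\eta$. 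To establish the inclusion, fix $p \in \eta(G)$, a compact open subgroup $U \leq G$, and a pro-$p$-Sylow subgroup $S$ of $U$; since $p \in \eta(G)$, the subgroup $S$ is infinite. Proposition~\ref{prop:Primes-in-locally-normal} then forces $K \cap S$ to be infinite. Choosing an open finite-index pro-$\eta$ subgroup $K_0 \leq K$, the intersection $K_0 \cap S$ has finite index in $K \cap S$ and is thus infinite as well. But $K_0 \cap S$ is a closed subgroup of both the pro-$p$ group $S$ and the pro-$\eta$ group $K_0$, hence is simultaneously pro-$p$ and pro-$\eta$; were $p \notin \eta$, this would force $K_0 \cap S = \triv$ (a non-trivial profinite group cannot have supernatural order that is both a power of $p$ and coprime to $p$), contradicting infiniteness. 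Therefore $p \in \eta$, completing the argument.

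The substantive input is Proposition~\ref{prop:Primes-in-locally-normal}, which already encapsulates the hard work through localizations and the triviality of the quasi-center of $G_{(p)}$ supplied by Theorem~\ref{thm:R_dense_embedding}; once it is available, the remaining steps are bookkeeping. Thus I do not expect a genuine obstacle here, beyond correctly reducing the assertion ``$G$ is locally pro-$\eta$'' to the numerical inclusion $\eta(G) \subseteq \eta$ and handling the word ``virtually'' by passing to the finite-index pro-$\eta$ subgroup $K_0$.
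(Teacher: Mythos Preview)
Your proof is correct and follows essentially the same approach as the paper: both directions hinge on Proposition~\ref{prop:Primes-in-locally-normal} to force $\eta(G)\subseteq\eta$, with the reverse implication being trivial. Your exposition is somewhat more explicit (passing to the finite-index pro-$\eta$ subgroup $K_0$ and invoking Proposition~\ref{prop:RF_pro-pi} to reduce to the inclusion $\eta(G)\subseteq\eta$), but the substance is identical.
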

\begin{proof}
	Suppose that there exists a non-trivial locally normal virtually pro-$\eta$ subgroup $L$ of $G$ and fix $U$ a compact open subgroup of $G$. The group $L$ is infinite, via Theorem~\ref{thm:R_dense_embedding}.  Proposition~\ref{prop:Primes-in-locally-normal}  ensures any infinite pro-$p$-Sylow subgroup of $U$ has infinite intersection with $L$. The local prime content of $U$ is thus contained in $\eta$, hence  $U$ is virtually pro-$\eta$. We conclude that $G$ is locally pro-$\eta$.  The converse is trivial.
\end{proof}

The following consequence implies that the $p$-localization can only have topologically finitely generated compact open subgroups if the ambient group was already locally pro-$p$. 

\begin{cor}\label{cor:FinitelyGen_Localization}
	Let $G \in \Rs$ and $p$ be a prime. Then the following are equivalent.
	\begin{enumerate}[label=(\roman*)]
		\item Every compact open subgroup of $G$ is topologically finitely generated and virtually pro-$p$.
		\item Some compact open subgroup of $G$ has a topologically finitely generated infinite pro-$p$-Sylow subgroup.\qedhere
	\end{enumerate}
	
	In particular, if $G_{(p)}$ is has topologically finitely generated compact open subgroups, then either $G_{(p)}$ is discrete or $G_{(p)}=G$. 
\end{cor}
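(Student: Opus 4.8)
The plan is to prove the equivalence of (i) and (ii) first, and then read off the ``in particular'' clause. The implication (i) $\Rightarrow$ (ii) I expect to be routine: given any compact open $U$, non-discreteness of $G\in\Rs$ makes $U$ infinite, and since $U$ is virtually pro-$p$ it has an infinite open pro-$p$ subgroup $P$; any pro-$p$-Sylow $S\geq P$ is then open of finite index in $U$, hence infinite, and as an open subgroup of the topologically finitely generated $U$ it is itself topologically finitely generated.

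The substantive direction is (ii) $\Rightarrow$ (i). Let $U_0$ be a compact open subgroup with topologically finitely generated infinite pro-$p$-Sylow $S$. The crux is to show $O_{p'}(U_0)=\triv$. First I would note that $O_{p'}(U_0)$ is a closed pro-$p'$ subgroup that is normal in the open subgroup $U_0$, hence locally normal in $G$; being pro-$p'$ while $S$ is pro-$p$, it satisfies $O_{p'}(U_0)\cap S=\triv$. Proposition~\ref{prop:Primes-in-locally-normal} asserts that every non-trivial locally normal subgroup of $G$ meets $S$ in an infinite group, so $O_{p'}(U_0)=\triv$. Corollary~\ref{cor:Tate} then gives that $U_0=U_0/O_{p'}(U_0)$ is virtually pro-$p$. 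In a virtually pro-$p$ profinite group every pro-$p$-Sylow is open of finite index (by Sylow conjugacy it contains a conjugate of an open pro-$p$ subgroup), so $S$ is open in $U_0$; being topologically finitely generated and of finite index, it forces $U_0$ to be topologically finitely generated. Finally I would transfer these two properties across commensurability: any compact open $U$ meets $U_0$ in an open finite-index subgroup of both, and both ``topologically finitely generated'' and ``virtually pro-$p$'' pass to open subgroups and to finite-index overgroups, yielding (i).

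For the ``in particular'' clause, if $G_{(p)}$ is discrete the first alternative holds, so assume $G_{(p)}$ is non-discrete. By construction $G_{(p)}=G_{(S)}$, where $S$ is a pro-$p$-Sylow of some compact open $U\leq G$, and $S$ is a compact open subgroup of $G_{(p)}$. Non-discreteness makes $S$ infinite and the hypothesis makes $S$ topologically finitely generated, so condition (ii) holds for $G$ with these $U$ and $S$. Invoking (i), every compact open subgroup of $G$ is virtually pro-$p$; the normal core of an open pro-$p$ subgroup of $U$ is then a non-trivial locally normal (indeed pro-$p$) subgroup of $G$, so Corollary~\ref{cor:LNpro-p->pro-p} forces $G$ to be locally pro-$p$. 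Computing the $p$-localization using a pro-$p$ compact open subgroup $W$, whose only pro-$p$-Sylow is $W$ itself, one has $\Comm_G(W)=G$ with the $W$-localized topology equal to the ambient topology, whence $G_{(p)}=G$.

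I expect the main obstacle to be precisely the vanishing of $O_{p'}(U_0)$: this is the one point where the global structure of $\Rs$, through Proposition~\ref{prop:Primes-in-locally-normal}, must be combined with the local Sylow input of Tate's theorem in Corollary~\ref{cor:Tate}. Everything after that—conjugacy of pro-$p$-Sylows, Schreier-type passage between a finite-index subgroup and its overgroup, and the identification of the $p$-localization of a locally pro-$p$ group with the group itself—I anticipate being straightforward bookkeeping.
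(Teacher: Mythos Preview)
Your proof is correct and follows essentially the same approach as the paper: both establish $O_{p'}(U)=\triv$ via the structural restriction on locally normal subgroups in $\Rs$ (you invoke Proposition~\ref{prop:Primes-in-locally-normal} directly, the paper its consequence Corollary~\ref{cor:LNpro-p->pro-p}), and then apply Corollary~\ref{cor:Tate} to conclude that $U$ is virtually pro-$p$, hence $S$ is open. Your treatment is more explicit about the commensurability bookkeeping and the ``in particular'' clause (where your detour through Corollary~\ref{cor:LNpro-p->pro-p} is unnecessary, since ``every compact open subgroup is virtually pro-$p$'' already says $G$ is locally pro-$p$), but the logical skeleton is identical.
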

\begin{proof}
	The implication $(i)\Rightarrow (ii)$ is clear. Conversely, let $U \leq G$ be a compact open subgroup and $S\leq U$ be an infinite topologically finitely generated pro-$p$-Sylow subgroup of $U$. Let $p'$ denote the set of all primes different from $p$.  Then $G$ is not locally pro-$p'$, so by Corollary~\ref{cor:LNpro-p->pro-p}, none of the non-trivial locally normal subgroups of $G$ are pro-$p'$.  In particular, the $p'$-core $O_{p'}(U)$ of $U$ is trivial, so applying Corollary~\ref{cor:Tate}, $U$ is virtually pro-$p$. We conclude that $G$ is locally pro-$p$. The group $S$ is thus open, and assertion (i) follows. 
\end{proof}

We also record the following consequence for future references. 

\begin{lem}\label{lem:CentralizerO^p}
		Let $G \in \Rs$ and $p$ be a prime. If  $\eta(G)  \neq \{p\}$, then for every compact locally normal subgroup $L \leq G$, we have $\mathrm C_G(L) = \mathrm C_G(O^p(L))$. 
\end{lem}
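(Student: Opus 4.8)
The plan is to prove the non-trivial inclusion $\CC_G(O^p(L)) \le \CC_G(L)$; the reverse is immediate since $O^p(L) \le L$. Write $M := O^p(L)$, which is characteristic in $L$ (hence locally normal in $G$) with $L/M$ pro-$p$ by definition of $O^p$. Because $L$ is locally normal its normalizer is open, and since $M$ is characteristic in $L$ we have $\N_G(L) \le \N_G(M)$; I would fix a compact open subgroup $U \le \N_G(L) \le \N_G(M)$. By the final assertion of Lemma~\ref{lem:[A]-semisimple:commuting}, in order to establish the global equality $\CC_G(M) = \CC_G(L)$ it suffices to prove the local equality $\CC_U(M) = \CC_U(L)$. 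This reduction is what lets me work entirely inside a compact group and argue with commutators.

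The heart of the argument is the subgroup $C := \CC_L(M) = \CC_G(M) \cap L$. First I would note that $C$ is closed and locally normal, since $U$ normalizes both $\CC_G(M)$ and $L$. Next, $C \cap M = \Z(M)$ is a closed locally normal abelian subgroup of $G$, and $G$ is $[A]$-semisimple by Proposition~\ref{prop:Rs_is_RF_[a]}, so $C \cap M = \triv$. Consequently the continuous homomorphism $C \to L/M$, $c \mapsto cM$, is injective; as $C$ is compact and $L/M$ is pro-$p$, it is a homeomorphism onto a closed subgroup of $L/M$, whence $C$ is pro-$p$. Now I would bring in the hypothesis $\eta(G) \ne \{p\}$: by Proposition~\ref{prop:RF_pro-pi} the group $G$ is locally pro-$\eta(G)$ with $\eta(G)$ finite, and $\eta(G)$ is non-empty since $G$ is non-discrete; thus $\eta(G) \ne \{p\}$ forces $G$ not to be locally pro-$p$. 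Corollary~\ref{cor:LNpro-p->pro-p}, applied with the prime set $\{p\}$, then shows that $G$ has no non-trivial locally normal pro-$p$ subgroup, so $C = \CC_L(M) = \triv$.

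To finish I would show that $D := \CC_U(M)$ centralizes $L$. Here $D \le U$ normalizes both $L$ and $M$, we have $M \normal L$, and $[D,M] = \triv$. The three-subgroups lemma then gives $[[D,L],M] = \triv$, since $[[L,M],D] \le [M,D] = \triv$ and $[[M,D],L] = \triv$. Combined with $[D,L] \le L$ (as $D$ normalizes $L$), this yields $[D,L] \le \CC_L(M) = C = \triv$, so $D \le \CC_U(L)$ and hence $\CC_U(M) = \CC_U(L)$. Lemma~\ref{lem:[A]-semisimple:commuting} upgrades this to $\CC_G(M) = \CC_G(L)$, which is the claim.

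The conceptual crux, and the step I expect to be the main obstacle, is the identification of $C = \CC_L(M)$ as a pro-$p$ group: once one observes that $C$ embeds into the pro-$p$ quotient $L/M$ — which rests on $\Z(M) = \triv$ coming from $[A]$-semisimplicity — the hypothesis $\eta(G) \ne \{p\}$ can be deployed via Corollary~\ref{cor:LNpro-p->pro-p} to annihilate $C$. The remaining three-subgroups bookkeeping and the local-to-global passage through Lemma~\ref{lem:[A]-semisimple:commuting} are routine.
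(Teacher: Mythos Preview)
Your proof is correct and follows essentially the same route as the paper's: reduce to $\CC_U(M)=\CC_U(L)$ via the last assertion of Lemma~\ref{lem:[A]-semisimple:commuting}, observe that $\CC_U(M)\cap L$ embeds into the pro-$p$ quotient $L/M$ and is locally normal, and annihilate it using Corollary~\ref{cor:LNpro-p->pro-p} together with $\eta(G)\neq\{p\}$. The only difference is the final step: the paper passes from $\CC_U(M)\cap L=\triv$ to $[\CC_U(M),L]=\triv$ by invoking the equivalence (i)$\Leftrightarrow$(ii) of Lemma~\ref{lem:[A]-semisimple:commuting}, whereas you obtain the same conclusion by an elementary three-subgroups argument, which is a pleasant and slightly more self-contained alternative.
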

\begin{proof}
	Let $U \leq G$ be a compact open subgroup of $G$ containing $L$ as a normal subgroup. We know that $G$ is [A]-semisimple by Proposition~\ref{prop:Rs_is_RF_[a]}, so by Lemma~\ref{lem:[A]-semisimple:commuting}, it suffices to show that $\mathrm C_U(L) = \mathrm C_U(O^p(L))$. Setting $K :=  \mathrm C_U(O^p(L))$, we have $K \cap O^p(L) = \triv$ since $G$ is [A]-semisimple. Therefore, $K \cap L$ maps continuously and injectively into the pro-$p$ group $L/O^p(L)$, so $K \cap L \leq O_p(L)$. In view of Corollary~\ref{cor:LNpro-p->pro-p}, the hypothesis that  $\eta(G)  \neq \{p\}$ ensures that $O_p(L)$ is trivial. Hence, $K \cap L = \triv$, and $K \leq \mathrm C_U(L)$ by Lemma~\ref{lem:[A]-semisimple:commuting}. This shows that $\mathrm C_U(O^p(L)) \leq \mathrm C_U(L)$. The reverse inclusion is obvious. 
\end{proof}

\subsection{The local prime content of groups of automorphisms}

We can upgrade Proposition~\ref{prop:RF_pro-pi} to \tdlc groups of automorphisms of $G$, provided $G$ has a trivial quasi-center.

\begin{prop}\label{prop:prime_content_RF}
Let  $G$ be a regionally expansive \tdlc group with a trivial quasi-center. If $H$ is a locally compact group and $\psi \colon H \to \Aut(G)$ is a continuous, injective homomorphism, then $H$ is a \tdlc group with finite local prime content $\eta(H)$, and $H$ is locally pro-$\eta(H)$.
\end{prop}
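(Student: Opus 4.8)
The plan is to reduce Proposition~\ref{prop:prime_content_RF} to the already-established Proposition~\ref{prop:RF_pro-pi} by manufacturing, out of the action of $H$ on $G$, a regionally expansive \tdlc group in which $H$ embeds as a closed subgroup. The natural candidate is the semidirect product $L := G \rtimes H$, where $H$ acts through $\psi$. The guiding observation is that $\Aut(G)$ is totally disconnected by Proposition~\ref{prop:tdlc_action}, and the image $\psi(H)$ is a continuous injective image of a locally compact group; combined with Lemma~\ref{lem:centralizer_action}(i)--(ii), this should first pin down that $H$ itself is a \tdlc group.

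First I would verify that $H$ is totally disconnected, hence \tdlc. Since $\psi$ is continuous and injective and $\Aut(G)$ is totally disconnected, the connected component $H^\circ$ maps into $(\Aut G)^\circ = \triv$, so $H^\circ = \triv$ and $H$ is a \tdlc group. The substantive work is then to show $\eta(H)$ is finite and $H$ is locally pro-$\eta(H)$; for this I would apply Proposition~\ref{prop:RF_pro-pi}, which requires exhibiting $H$ (or a group into which $H$ embeds closely enough to transfer the conclusion) as regionally expansive.

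The key construction is to pass to $L = G \rtimes H$ and then quotient by $\CC_L(G)$. By Lemma~\ref{lem:centralizer_action}(i), $L$ is \tdlc; the subgroup $G \trianglelefteq L$ is regionally expansive (it is our hypothesized $G$), and I would apply Lemma~\ref{lem:centralizer_action}(iii)--(v) to analyze $\CC_L(G)$. The crucial point is that $G$ has trivial quasi-center by hypothesis; this feeds into Proposition~\ref{prop:normal-compressions} applied to the inclusion $G \hookrightarrow L$ (with $G$ playing the role of the regionally expansive normal subgroup with trivial quasi-center). That proposition yields that $Q := \QC_L(\overline{G}) = \CC_L(G)$ is closed and that $L/Q$ is regionally expansive with trivial quasi-center. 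Now I would observe that the composite $H \to L \to L/Q$ is injective: indeed, $H \cap Q = H \cap \CC_L(G)$ consists of elements of $H$ acting trivially on $G$, which is $\triv$ since the action is faithful; more carefully, one uses Lemma~\ref{lem:centralizer_action}(iii) identifying $\CC_L(G)$ and the injectivity of $\chi$ in part (v). Thus $H$ embeds continuously and injectively into the regionally expansive \tdlc group $L/Q$ as a closed subgroup, and the local prime content is a local invariant that only decreases under passing to (open subgroups of) such an embedding. Applying Proposition~\ref{prop:RF_pro-pi} to $L/Q$ gives that $L/Q$ is locally pro-$\pi$ for a finite set $\pi$; since a compact open subgroup of $H$ maps into a compact open subgroup of $L/Q$, it follows that $H$ is locally pro-$\pi'$ for some finite $\pi' \subseteq \pi$, and hence $\eta(H)$ is finite and $H$ is locally pro-$\eta(H)$.

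The main obstacle I anticipate is the bookkeeping around $\CC_L(G)$: one must confirm that $H$ survives as a \emph{closed} subgroup of $L/Q$ with its own topology intact (so that compact open subgroups of $H$ really do map to relatively compact open pieces of $L/Q$), rather than merely mapping injectively with a possibly coarser topology. The cleanest route is to verify that the image of $H$ in $L/Q$ carries the subspace topology matching $H$'s given topology, using that $H \cap Q = \triv$ together with the continuity and openness properties packaged in Lemma~\ref{lem:centralizer_action}; once $H$ sits as a closed subgroup of a regionally expansive group whose local prime content is finite, finiteness and the locally pro-$\eta(H)$ conclusion for $H$ follow since $\eta$ and local pro-$\pi$-ness are purely local invariants inherited by closed (indeed by open) subgroups.
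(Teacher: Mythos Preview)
Your approach is essentially the paper's: form $L = G \rtimes H$, invoke Proposition~\ref{prop:normal-compressions} on the normal embedding $G \hookrightarrow L$ to get $L/\CC_L(G)$ regionally expansive, and then apply Proposition~\ref{prop:RF_pro-pi}. The only difference is the endgame. The paper observes, via the map $\chi$ of Lemma~\ref{lem:centralizer_action}(v), that $\CC_L(G)$ itself is locally pro-$\eta(G)$; combining this with $L/\CC_L(G)$ locally pro-$\eta_1$ gives $L$ locally pro-$(\eta_1 \cup \eta(G))$, whence the closed subgroup $H \le L$ inherits a finite local prime content directly. Your route---injecting $H$ into $L/\CC_L(G)$ using $H \cap \CC_L(G) = \triv$---also works, and your worry about closedness and matching topologies is not actually an obstacle: a compact open subgroup $V \le H$ is compact in $L$, maps to a compact subgroup of $L/Q$, and the restriction $V \to L/Q$ is a continuous injection from a compact space to a Hausdorff space, hence a homeomorphism onto its image; that image, being a compact subgroup of a locally pro-$\pi$ group, is virtually pro-$\pi$. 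So the transfer is purely local and needs neither closedness of the image of $H$ nor a global topology comparison.
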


\begin{proof}
Proposition~\ref{prop:tdlc_action} ensures that $H$ is a \tdlc group.  Setting $L:=G\rtimes H$, we see by Proposition~\ref{prop:normal-compressions} that $L/\CC_L(G)$ is regionally expansive. Proposition~\ref{prop:RF_pro-pi} implies that $L/\CC_L(G)$ locally pro-$\eta_1$ for some finite set $\eta_1$. The group $G$ is also regionally expansive, so $G$ is locally pro-$\eta_2$ for some finite set $\eta_2$. Applying Lemma~\ref{lem:centralizer_action}, there is a continuous, injective homomorphism $\chi:\CC_L(G)\rightarrow G$, so $\CC_L(G)$ is locally pro-$\eta_2$. It now follows that $L$ is locally pro-$\eta$ where  $\eta:=\eta_1\cup\eta_2$. The group $H$ is thus locally pro-$\eta$, and thus has finite local prime content.
\end{proof}

We place stronger restriction on the local prime content for automorphisms of robustly monolithic groups.

\begin{thm}\label{thm:prime_content_RM}
Let $G \in \Rs$ with monolith $M$. If $H$ is a locally compact group and $\psi \colon H \to \Aut(G)$ is a continuous, injective homomorphism, then $H$ is a \tdlc group with $\eta(H) \subseteq \eta(M) = \eta(G)$. Moreover $H$ is locally pro-$\eta(H)$. 
\end{thm}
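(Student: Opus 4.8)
The plan is to deduce the topological conclusions from work already in place and then spend the real effort on the inclusion $\eta(H)\subseteq\eta(M)$. First, since $G\in\Rs$ is regionally expansive and $[A]$-semisimple by Proposition~\ref{prop:Rs_is_RF_[a]}, it is regionally expansive with trivial quasi-center, so Proposition~\ref{prop:prime_content_RF} immediately gives that $H$ is a \tdlc group, that $\eta(H)$ is finite, and that $H$ is locally pro-$\eta(H)$ (this also re-derives total disconnectedness of $H$ from Proposition~\ref{prop:tdlc_action}). The equality $\eta(M)=\eta(G)$ is handled by Proposition~\ref{prop:Primes-in-locally-normal}: the inclusion $\eta(M)\subseteq\eta(G)$ is trivial, and for the reverse, if $p\in\eta(G)$ then a compact open $U\le G$ has an infinite pro-$p$-Sylow subgroup $S$, and since $M$ is a nontrivial \emph{locally} normal subgroup of $G$, the intersection $M\cap S$ is infinite, whence $p\in\eta(M)$. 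So everything reduces to proving $\eta(H)\subseteq\eta(M)$.

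For this I would pass to the holomorph. Form $L:=G\rtimes H$, which is \tdlc by Lemma~\ref{lem:centralizer_action}(i). Since the monolith $M$ is the intersection of all nontrivial closed normal subgroups of $G$, it is characteristic in $G$ and hence normal in $L$. Writing $Q:=\CC_L(M)$, the key claim is that $\overline{L}:=L/Q$ lies in $\Rs$ and has monolith a dense normal copy $\overline{M}$ of $M$. Granting this, the result follows quickly. The inclusion $H=\{1\}\times H\hookrightarrow L\to\overline{L}$ is continuous, and it is injective because an element $(1,h)\in Q$ has $\psi(h)$ fixing $M$ pointwise, which forces $\psi(h)(g)^{-1}g\in\CC_G(M)=\triv$ for all $g$ (using Lemma~\ref{lem:monolith_centralizer}), i.e. $h=1$. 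A continuous injective homomorphism of \tdlc groups carries an infinite pro-$p$ subgroup homeomorphically (a continuous bijection from a compact group to a Hausdorff group is a homeomorphism), so $\eta(H)\subseteq\eta(\overline{L})$. Applying Proposition~\ref{prop:Primes-in-locally-normal} to $\overline{L}\in\Rs$ with the locally normal subgroup $\overline{M}$ gives $\eta(\overline{L})\subseteq\eta(\overline{M})$, and finally the dense embedding $M\hookrightarrow\overline{M}$ preserves local prime content (a compact open subgroup of $M$ maps to an isomorphic compact, hence closed, open subgroup of $\overline{M}$), so $\eta(\overline{M})=\eta(M)$. Chaining these yields $\eta(H)\subseteq\eta(M)$, and the ``moreover'' clause is already part of Proposition~\ref{prop:prime_content_RF}.

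The heart of the argument, and the expected main obstacle, is therefore the claim that $\overline{L}=L/\CC_L(M)\in\Rs$ with monolith $\cong M$. The sc-free part is clean: taking $\psi$ to be the inclusion $M\hookrightarrow L$ in Proposition~\ref{prop:normal-compressions} (valid since $M\in\Rs$ is regionally expansive with trivial quasi-center and $M$ is normal in $L$) shows directly that $Q=\QC_L(\overline{M})=\CC_L(M)$ is closed, that $\overline{L}$ is regionally expansive with trivial quasi-center, and that $\overline{M}:=\overline{MQ}/Q$ satisfies $\QC_{\overline{L}}(\overline{M})=\triv$; moreover $m\mapsto mQ$ is a normal compression of $M$ onto $\overline{M}$. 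What is \emph{not} supplied by this proposition is the topological simplicity of the monolith and full membership of $\overline{L}$ in $\Rs$, for which the available tool (Proposition~\ref{prop:R_overgroup}, equivalently Corollary~\ref{cor:RM_extension}) requires second countability of $M$ — and $M=\Mon(G)$ need not be $\sigma$-compact.

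To overcome this I would argue regionally rather than globally. Because $\overline{L}$ is regionally expansive it is first countable by Lemma~\ref{lem:FirstCountable}, so each of its compactly generated open subgroups is $\sigma$-compact and first countable, hence \emph{second countable}; by the regional characterization Theorem~\ref{thm:approx_R}, establishing $\overline{L}\in\Rs$ amounts to showing that a cofinal family of these (second countable) compactly generated open subgroups lies in $\Rs$, where the second-countable extension machinery applies. The delicate point — and where I expect to spend the most care — is to carry out this reduction compatibly with the normal copy $\overline{M}$ of $M$: one must approximate $M$ itself by $H$-invariant (or pro-$p$-invariant) compactly generated open subgroups $O\le M$ whose monoliths $\Mon(O)$ are second countable topologically simple members of $\Rs$ with $\eta(\Mon(O))=\eta(M)$ (using Theorem~\ref{thm:approx_R}, Lemma~\ref{lem:monolith_overgroup}, and Proposition~\ref{prop:Primes-in-locally-normal}), while ensuring that the acting group is still ``seen'' at the level of these pieces. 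Here the trivial quasi-center of $\overline{L}$ and the identity $\QC_{\overline{L}}(\overline{M})=\triv$ are exactly what rule out the degenerate possibility that the pro-$p$ part of the action collapses on every second-countable piece, so that the second-countable case of Corollary~\ref{cor:RM_extension} can be fed back to conclude $\overline{L}\in\Rs$.
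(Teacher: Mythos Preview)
Your overall architecture matches the paper's: form a semidirect product with (a piece of) $H$, quotient by the centralizer of the monolith, land in $\Rs$, and then read off the prime constraint from Corollary~\ref{cor:LNpro-p->pro-p}/Proposition~\ref{prop:Primes-in-locally-normal}. You have also correctly isolated the obstacle, namely that Proposition~\ref{prop:R_overgroup}/Corollary~\ref{cor:RM_extension} needs second countability while $M$ need not be $\sigma$-compact.

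The gap is in your proposed workaround. You write that one should ``approximate $M$ by $H$-invariant (or pro-$p$-invariant) compactly generated open subgroups $O\le M$'', but you have not said why any such $H$-invariant $O$ exists. For a general locally compact $H$ acting on $G$ there is no reason the $H$-orbit of a compact set should be relatively compact, so $G$ need not decompose as a directed union of $H$-invariant compactly generated open subgroups. The paper resolves this with a step you are missing: since $\eta(H)=\eta(H')$ for any compact open $H'\le H$, one first \emph{replaces $H$ by a compact open subgroup}. Compactness of $H$ then forces compact $H$-orbits on $G$, so $G$ really is a directed union of $H$-invariant compactly generated open subgroups $O_i\in\Rs$ (via Theorem~\ref{thm:approx_R}); the union $O=\bigcup O_i$ is a $\sigma$-compact, hence second countable, $H$-invariant open subgroup of $G$ in $\Rs$, and now Corollary~\ref{cor:RM_extension} applies to $O\rtimes \widetilde H$ for the appropriate quotient $\widetilde H$ of $H$.

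Your final sentence (``$\QC_{\overline L}(\overline M)=\triv$ rules out the degenerate possibility that the pro-$p$ part of the action collapses on every second-countable piece'') is also too vague to stand as written. One still needs a finiteness-versus-unboundedness tension: the paper sets this up by assuming $H$ is not virtually pro-$\eta$, choosing finite quotients $H/K_i$ whose $\eta'$-order tends to infinity, arranging the $O_i$ so that the action of $H$ on $O_i$ already sees the quotient $H/K_i$, and then deriving that the second countable group $(O\rtimes\widetilde H)/\CC(\Mon(O))\in\Rs$ is locally pro-$\eta$, contradicting the unbounded $\eta'$-order. Your outline gestures at this but does not supply the bookkeeping; once you add the reduction to compact $H$ and track kernels as above, your argument becomes the paper's.
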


\begin{proof}
Set $\eta: = \eta(G)$; note that $\eta(M) = \eta$ as a consequence of Corollary~\ref{cor:LNpro-p->pro-p}.

By Proposition~\ref{prop:prime_content_RF}, $H$ is a locally pro-$\pi$ \tdlc group for some finite set of primes $\pi$.  To show that $\eta(H) \subseteq \eta$, it suffices to show $\eta(H') \subseteq \eta$ for some compact open subgroup $H'$ of $H$; we may thus assume that $H$ is a compact pro-$\pi$ group.

Suppose for a contradiction $H$ is not virtually pro-$\eta$.  For each $i \in \Nb$, there are then finite discrete quotients $H/K_i$ of $H$ such that the $\eta'$-order of $H/K_i$ (that is, the largest factor of $|H/K_i|$ that is coprime to all $p \in \eta$) tends to infinity as $i \rightarrow \infty$.  Since $H$ is compact, it has compact orbits on $G$, so $G$ can be expressed as a directed union of $H$-invariant compactly generated open subgroups of $G$.  As each $K_i$ has finite index in $H$, there is an $H$-invariant compactly generated open subgroup $O_i$ of $G$ such that the kernel $K'_i$ of the action of $H$ on $O_i$ satisfies $K'_i \le K_i$.  Possibly taking the $O_i$ larger,  we may assume that $O_i \in \Rs$ for all $i \in \Nb$ in view of Theorem~\ref{thm:approx_R} and that $(O_i)_{i\in I}$ is $\subseteq$-increasing.

Let $O := \bigcup_{i \in \Nb}O_i$, let $K: = \bigcap_{i \in \Nb}K'_i$, and write $\widetilde{H}: = H/K$.  By construction, $O$ is a $\sigma$-compact open subgroup of $G$, and $O \in \Rs$ by Theorem~\ref{thm:approx_R}. In view of Lemma~\ref{lem:FirstCountable}, $O$ is second countable.  The map $\psi$ induces a continuous injective homomorphism $\psi' \colon \widetilde{H} \to \Aut(O)$, and as $O$ is second countable, the topology of $\Aut(O)$ is second countable. The group $\widetilde{H}$ is compact, so $\psi'$ is a closed map, implying that $\widetilde{H}$ is also second countable.

Form the semidirect product $O \rtimes \widetilde{H}$, using the action of $H$ on $O$ given by $\psi'$, and identify $O$ and $\widetilde{H}$ with the subgroups $O \rtimes \triv$ and $\triv \rtimes \widetilde{H}$ respectively.  Let $N: = \CC_{O\rtimes \widetilde{H}}(\Mon(O))$ and let $L: = (O\rtimes \widetilde{H})/N$.  Corollary~\ref{cor:RM_extension} ensures that $L$ is robustly monolithic.  The subgroups $N$ and $O$ normalize each other and have trivial intersection, since $\CC_O(\Mon(O)) = \triv$, so $N$ and $O$ commute inside $O \rtimes \widetilde{H}$.

Note that $\eta(O) = \eta$ since $O$ is open in $G$.  Take $W \leq O$ a compact open pro-$\eta$ subgroup. By the definition of the Braconnier topology (see \cite[Definition~(26.3)]{HR}), the set-wise stabilizer of $W$ in $\Aut(O)$ is an open subgroup of $\Aut(O)$, ensuring that $W$ is locally normal in $O \rtimes \widetilde{H}$.  The corresponding subgroup $WN/N$ of $L$ is then a non-trivial locally normal pro-$\eta$ subgroup of $L$.  It follows by Corollary~\ref{cor:LNpro-p->pro-p} that $L$ is locally pro-$\eta$.  Since $\widetilde{H}N/N$ is a compact subgroup of $L$, it follows in turn that $\widetilde{H}N/N$ is virtually pro-$\eta$, so $\widetilde{H}/(\widetilde{H} \cap N)$ is virtually pro-$\eta$.  Finally, $N$ is locally pro-$\eta$ via Lemma~\ref{lem:centralizer_action}(v), so $\widetilde{H} \cap N$ is virtually pro-$\eta$.  We conclude that $\widetilde{H}$ is virtually pro-$\eta$.  On the other hand, $\widetilde{H}$ maps continuously onto the finite groups $H/K_i$, whose $\eta'$-order is unbounded as $i \rightarrow \infty$. This contradicts the fact that $\widetilde{H}$ is virtually pro-$\eta$.  From this contradiction, we conclude that $H$ is in fact virtually pro-$\eta$, so, $\eta(H) \subseteq \eta$, as required.
\end{proof}

\begin{cor}
If $G\in \Rs$ (e.g.\ $G\in \Ss$) is locally pro-$p$, then any locally compact group that continuously, faithfully acts on $G$ by topological group automorphisms is locally pro-$p$.
\end{cor}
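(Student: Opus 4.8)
The plan is to deduce this corollary directly from Theorem~\ref{thm:prime_content_RM}, which already carries out all the substantial work; what remains is essentially bookkeeping. First I would unpack the hypothesis: a locally compact group $H$ acting continuously and faithfully on $G$ by topological group automorphisms is exactly a locally compact group equipped with a continuous, injective homomorphism $\psi \colon H \to \Aut(G)$, where $\Aut(G)$ carries the Braconnier topology. Continuity of $\psi$ is precisely the content of Lemma~\ref{lem:centralizer_action}(ii), applied to the action of $H$ on $G$, and injectivity is the faithfulness assumption. Hence the hypotheses of Theorem~\ref{thm:prime_content_RM} are satisfied.

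Next I would record what the local pro-$p$ assumption says about the local prime content. Since $G$ is locally pro-$p$, it admits an open pro-$p$ subgroup, so any infinite pro-$q$ subgroup of $G$ forces $q = p$; equivalently $\eta(G) \subseteq \{p\}$. (As $G \in \Rs$ is regionally expansive, $\eta(G)$ is finite and $G$ is locally pro-$\eta(G)$ by Proposition~\ref{prop:RF_pro-pi}, but for this corollary only the inclusion $\eta(G) \subseteq \{p\}$ is needed.)

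I would then apply Theorem~\ref{thm:prime_content_RM}. Writing $M := \Mon(G)$, the theorem asserts that $H$ is a \tdlc group with $\eta(H) \subseteq \eta(M) = \eta(G) \subseteq \{p\}$, and, moreover, that $H$ is locally pro-$\eta(H)$. Since $\eta(H) \subseteq \{p\}$, any pro-$\eta(H)$ group is in particular a pro-$p$ group, the only prime that can occur being $p$. Therefore the open pro-$\eta(H)$ subgroup of $H$ is an open pro-$p$ subgroup, and $H$ is locally pro-$p$, as claimed.

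As the argument is a one-step corollary, there is no genuine obstacle; the only points requiring care are the two bookkeeping matters above: confirming via Lemma~\ref{lem:centralizer_action} that a continuous faithful action gives a continuous injective homomorphism into $\Aut(G)$ in the Braconnier topology, and handling the degenerate case $\eta(H) = \varnothing$, in which $H$ is discrete and hence vacuously locally pro-$p$ through its trivial open subgroup.
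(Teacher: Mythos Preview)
Your proposal is correct and matches the paper's intent: the corollary is stated in the paper without proof, as an immediate consequence of Theorem~\ref{thm:prime_content_RM}, and your argument spells out exactly that deduction. The bookkeeping you add (translating the action hypothesis into a continuous injective map to $\Aut(G)$ and handling the $\eta(H)=\varnothing$ case) is accurate and appropriate.
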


The above theorem shows that the class of automorphism groups of robustly monolithic groups, and in particular of groups in $\Ss$, admits non-trivial restrictions. For example, $\Sym(\Nb)$ does not continuously embed into $\Aut(G)$ for any $G\in \Rs$. One can go further and formulate likely very difficult analogues of the Schreier Conjecture for the classes $\Ss$ and $\Rs$.
\begin{qu} Is every locally compact group $H$ that acts faithfully, continuously by outer automorphisms on some $G\in \Ss$ regionally elementary?
\end{qu}
\begin{qu} 
Is every $G\in \Rs$ such that $G/\Mon(G)$ is regionally elementary?
\end{qu}

\subsection{The centralizer lattice of a $p$-localization}

Proposition~\ref{prop:Rs_is_RF_[a]} ensures every member of $\Rs$ is [A]-semisimple. The class $\Rs$ is closed under taking dense locally compact subgroups, so the $p$-localization $G_{(p)}$ of any $G\in \Rs$ is an element of $\Rs$ as soon it is non-discrete. In view of \cite{CRW1}, we deduce that the centralizer lattice of non-discrete $p$-localizations $G_{(p)}$ is well defined. We now obtain an analogue of Proposition~\ref{prop:FaithfulLC} for $p$-localizations, giving in particular a canonical embedding of $\LC(G)$ into $\LC(G_{(p)})$. 

\begin{thm}\label{thm:EmbeddingLC}
Let $G \in \mathscr R$, $U \leq G$ be a compact open subgroup, and $S \leq U$ be an infinite pro-$p$-Sylow subgroup for some prime $p$. The map $\LN(G) \to \LN(G_{(p)})$ defined by
\[
 [L] \mapsto [L \cap S]=:[L]_{(p)} 
\]
enjoys the following properties, where $G_{(p)}$ is the $p$-localization $\Comm_G(S)$.
\begin{enumerate}[label=(\roman*)]
\item It is order-preserving, $G_{(p)}$-equivariant, and every non-zero element has a non-zero image;
 \item $(\alpha^\perp)_{(p)} = (\alpha_{(p)})^\perp$; 
 \item the restriction to the centralizer lattice yields an injective homomorphism  $\LC(G) \to \LC(G_{(p)})$ of Boolean lattices; and
\item if in addition $\LC(G) \neq \{0, \infty\}$, then $G$ acts faithfully on $\LC(G)$, and $G_{(p)}$ acts faithfully on $\LC(G_{(p)})$. \qedhere
\end{enumerate}
\end{thm}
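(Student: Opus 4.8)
The plan is to realize the map $[L]\mapsto[L\cap S]$ as an instance of the pull-back construction of Lemma~\ref{lem:PullBack}, applied to the canonical inclusion $\psi\colon G_{(p)}\to G$ with $V=S$. First I would record the structural facts: the localization $G_{(p)}=\Comm_G(S)$ is a dense locally compact subgroup of $G$ by Theorem~\ref{thm:Reid_localizations}, and it is non-discrete because $S$ is an infinite pro-$p$ subgroup (Proposition~\ref{prop:RF_pro-pi}). Theorem~\ref{thm:R_dense_embedding} then gives $G_{(p)}\in\Rs$, so both $G$ and $G_{(p)}$ are $[A]$-semisimple by Proposition~\ref{prop:Rs_is_RF_[a]}. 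Since $\psi$ is continuous and injective with open image $S$, and $\LN(G_{(p)})$ is identified with $\LN(S)$, the map $\psi^*$ of Lemma~\ref{lem:PullBack} is exactly $[L]\mapsto[\psi^{-1}(L\cap S)]=[L\cap S]$, which is automatically order-preserving, meet-preserving, and $G_{(p)}$-equivariant. That every non-zero element has non-zero image is precisely the assertion that $L\cap S$ is infinite for every non-trivial compact locally normal $L$, which is Proposition~\ref{prop:Primes-in-locally-normal}. Granting the remaining hypothesis of Lemma~\ref{lem:PullBack}, parts (i), (ii) and (iii) follow at once.

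The one non-formal hypothesis to verify is the centralizer identity $\CC_S(L\cap\Comm_G(S))=\CC_S(L)$ for every compact locally normal $L\le G$, and this is the crux. The inclusion $\CC_S(L)\subseteq\CC_S(L\cap\Comm_G(S))$ is trivial, so the content is the reverse inclusion, which I would establish by splitting on the local prime content. If $\eta(G)=\{p\}$, then $G$ is locally pro-$p$ by Proposition~\ref{prop:RF_pro-pi}, so $S$ is open in $G$, whence $\Comm_G(S)=G$ and the identity is a tautology. If $\eta(G)\neq\{p\}$, I would show that $\overline{\Comm_L(S)}$ contains $O^p(L)$ by means of Lemma~\ref{lem:O^p}; consequently any $s\in S$ centralizing $\Comm_L(S)=L\cap\Comm_G(S)$ centralizes its closure and hence $O^p(L)$. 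Lemma~\ref{lem:CentralizerO^p}, which applies exactly because $\eta(G)\neq\{p\}$, then upgrades this to $s\in\CC_G(O^p(L))=\CC_G(L)$, yielding the reverse inclusion.

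The main obstacle is precisely this centralizer identity, and within it the delicate point is that $S$ need not normalize $L$, so Lemma~\ref{lem:O^p} cannot be invoked verbatim with the given pair $(U,S)$: that lemma requires $L$ to be normal in a profinite group of which $S$ is a pro-$p$-Sylow subgroup, which forces $S\le\N_G(L)$. I expect to spend the real effort arranging an appropriate profinite group $P$ with $L\normal P$ and a pro-$p$-Sylow subgroup of $P$ whose commensurability class matches that of $S$ on $\N_G(L)$, so that the conclusion $\overline{\Comm_L(S)}\supseteq O^p(L)$ is genuinely available. Here I would exploit the freedom to replace $L$ by a commensurate representative contained in $U$ and normalized by an open subgroup of $S$, together with the independence of the localization from the choice of pro-$p$-Sylow (Theorem~\ref{thm:Reid_localizations}), in order to reduce to the normalized case to which Lemma~\ref{lem:O^p} applies.

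Finally, for (iv), assume $\LC(G)\neq\{0,\infty\}$, so that $|\LC(G)|>2$. Since $G\in\Rs$ is $[A]$-semisimple and monolithic with topologically simple monolith, Proposition~\ref{prop:LC_faithful} shows that $G$ acts faithfully on $\LC(G)$. By part (iii) the Boolean algebra $\LC(G)$ embeds into $\LC(G_{(p)})$, so $|\LC(G_{(p)})|>2$ as well; as $G_{(p)}\in\Rs$ also satisfies the hypotheses of Proposition~\ref{prop:LC_faithful}, the same proposition shows $G_{(p)}$ acts faithfully on $\LC(G_{(p)})$, which completes the argument.
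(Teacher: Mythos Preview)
Your proposal is correct and follows essentially the same route as the paper: reduce to Lemma~\ref{lem:PullBack} via the inclusion $\psi\colon G_{(p)}\to G$ with $V=S$, use Proposition~\ref{prop:Primes-in-locally-normal} for non-triviality, and establish the centralizer identity through Lemma~\ref{lem:O^p} and Lemma~\ref{lem:CentralizerO^p}. The obstacle you flag is real, and the paper resolves it exactly as you anticipate---by passing to a compact open subgroup $W$ with $L\trianglelefteq W$, choosing a pro-$p$-Sylow $T$ of $W$ containing $S\cap W$, and noting that $T$ and $S$ are commensurate so that $\Comm_L(S)=\Comm_L(T)$ and Lemma~\ref{lem:O^p} applies verbatim to the pair $(W,T)$.
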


\begin{proof}
We may assume that $G$ is not locally pro-$p$, since otherwise $G= G_{(p)}$ and the required assertions are trivially satisfied. Every non-zero class $[L]\in \LN(G)$ has non-zero image by Proposition~\ref{prop:Primes-in-locally-normal}. Let $L \leq G$ be a compact locally normal subgroup. We then have $\mathrm C_G(L) = \mathrm C_G(O^p(L))$ by Lemma~\ref{lem:CentralizerO^p}. Let $W$ be a compact open subgroup of $G$ containing $L$ as a normal subgroup and $T$ be a pro-$p$ Sylow subgroup of $W$ containing $S \cap W$. The groups $S$ and $T$  are commensurate, so they have the same commensurator in $G$. Moreover, the closure of $\Comm_L(S) = \Comm_L(T) $ contains $O^p(L)$ by Lemma~\ref{lem:O^p}. Therefore, 
$$\mathrm C_{G}(L \cap G_{(p)}) = \mathrm C_{G}(\Comm_L(S)) = \mathrm C_{G}(\overline{\Comm_L(S)}) \leq \mathrm C_{G}(O^p(L))= \mathrm C_G(L).$$ 
In particular, $\mathrm C_{S}(L \cap G_{(p)})  =  \mathrm C_S(L)$. All the hypotheses of Lemma~\ref{lem:PullBack} are now verified, and the remaining claims of (i) and (ii) and also claim (iii)  follow. 

For (iv), Proposition~\ref{prop:LC_faithful} ensures that if $\LC(G) \neq \{0, \infty\}$, then the $G$-action on $\LC(G)$ is faithful. Since the map $\LC(G) \to \LC(G_{(p)})$ by $ \alpha \mapsto \alpha_{(p)}$ is injective and $G_{(p)}$-equivariant, we deduce that the $G_{(p)}$-action on its centralizer lattice is faithful. 
\end{proof}

\begin{rmk}
	The map  $\alpha \mapsto \alpha_{(p)}$ is usually not injective on the whole structure lattice $\LN(G)$. For instance, if $G = \Aut(T)^+$ is the group of type-preserving automorphisms of the regular tree of degree~$d \geq 4$ and $p$ is an odd prime such that $p < d$, then the abelianization of an edge-stabilizer $U \leq G$ is an infinite elementary abelian $2$-group, and the pro-$p$-Sylow subgroups of $U$ are infinite. The group $U$ and its closed derived group $\ol{[U, U]}$ are not commensurate, but they have the same image in $\LN(G_{(p)})$. 
\end{rmk}

\section{Examples}\label{sec:examples1}

\subsection{Germs of automorphisms}\label{sec:germs}

In the context of \tdlc groups, the local structure of the group is manifested in its compact open subgroups, allowing for a useful notion of local homomorphisms.  We borrow the terminology of \cite{CapDeM}.

\begin{defn}
A \emph{local homomorphism} between two \tdlc groups $G$ and $H$ is a continuous homomorphism $\phi: U \rightarrow H$, where $U$ is an open subgroup of $G$.  It is a \emph{local isomorphism} if $\phi$ restricts to an isomorphism from $U$ to an open subgroup of $H$.  We say $G$ and $H$ are \emph{locally isomorphic} if a local isomorphism exists.  Two local homomorphisms $\phi_1,\phi_2$ are \emph{equivalent} if they agree on some open subgroup $W$ of $G$ that is contained in the domain of both $\phi_1$ and $\phi_2$.  The equivalence class $[\phi]$ of a local homomorphism is then called the \emph{germ} of $\phi$. 
\end{defn}

In the case of groups with trivial quasi-center, Barnea, Ershov and Weigel \cite{BEW} show that there is always a unique largest group in a given local isomorphism class.

\begin{thm}[{see \cite{BEW} and \cite{CapDeM}}]\label{thm:AbstractCommensurator}
Let $G$ be a \tdlc group with trivial quasi-center.  Define the group $\ms{L}(G)$ of {germs of automorphisms} of $G$ to be the set of germs of local isomorphisms from $G$ to itself.
\begin{enumerate}[label=(\roman*)]
\item $\ms{L}(G)$ has a group structure induced by composition of local isomorphisms.

\item Let $\ad: G \rightarrow \ms{L}(G)$ be defined by $\ad(g) = [y \mapsto gyg\inv]$.  Then $\mathrm{ad}$ is an injective group homomorphism, and there is a unique group topology on $\ms{L}(G)$ such that $\ad$ is continuous and open.

\item If $\phi_1: U \rightarrow G$ and $\phi_2: U \rightarrow H$ are continuous, open, and injective homomorphisms where $U$ and $H$ are \tdlc groups, then there is a unique continuous and open homomorphism $\theta: H \rightarrow \ms{L}(G)$ with kernel $\QZ(H)$ such that the following diagram commutes:
\[
\xymatrixcolsep{3pc}\xymatrix{
U \ar^{\phi_1}[r] \ar^{\phi_2}[d] & G \ar^{\ad}[d] \\
H \ar^{\theta}[r] & \ms{L}(G). }
\]
\end{enumerate}
\end{thm}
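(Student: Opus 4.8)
The plan is to establish the three assertions in turn, working throughout at the level of germs and tracking carefully the open subgroups on which representatives are defined; the decisive input in each part will be the interaction between germs of local automorphisms and inner automorphisms, together with the hypothesis $\QZ(G) = \triv$.

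For (i), I would define the product of germs $[\phi]$ and $[\psi]$ of local isomorphisms $G \to G$ to be $[\phi \circ \psi]$, the composite being formed on the open subgroup $\psi\inv(\mathrm{dom}(\phi) \cap \mathrm{im}(\psi))$; since a local isomorphism restricts to a homeomorphism between open subgroups, this composite is again a local isomorphism. Independence of the chosen representatives is immediate from the fact that equivalent local homomorphisms agree on a common open subgroup, associativity is inherited from composition of maps, the identity is the germ of $\mathrm{id}_G$, and the inverse of $[\phi]$ is the germ of $\phi\inv$ restricted to $\mathrm{im}(\phi)$. This is the routine part.

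For (ii), the homomorphism property of $\ad$ is clear since conjugations compose, and its injectivity is precisely the statement that an element centralizing an open subgroup lies in $\QZ(G) = \triv$. To topologize $\ms{L}(G)$ I would declare $\{\ad(U) \mid U \text{ a compact open subgroup of } G\}$ to be a basis of identity neighborhoods and verify the standard axioms for a neighborhood basis of a topological group. All axioms but one are trivial because the $U$ are subgroups, so that $\ad(U)\ad(U) = \ad(U) = \ad(U)\inv$ and $\ad(U_1 \cap U_2) \subseteq \ad(U_1) \cap \ad(U_2)$. The crux is the conjugation-invariance axiom, and here the key computation is that for a germ $\gamma = [\phi]$ with $\phi$ defined on the open subgroup $W$ and for $v \in W$, one has $\gamma\, \ad(v)\, \gamma\inv = \ad(\phi(v))$ as germs, since $\phi \circ (y \mapsto vyv\inv) \circ \phi\inv$ agrees on a small enough open subgroup with $x \mapsto \phi(v)\,x\,\phi(v)\inv$. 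Taking $V := \phi\inv(U \cap \phi(W))$ then gives $\gamma\, \ad(V)\, \gamma\inv \subseteq \ad(U)$, establishing the axiom. With this topology $\ad$ is a continuous open embedding onto the open subgroup $\ad(G)$, and uniqueness holds because any group topology making $\ad$ continuous and open must render $\ad(G)$ open carrying the pushforward topology, which determines the topology on all of $\ms{L}(G)$.

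For (iii), I would first note that $\lambda := \phi_1 \circ \phi_2\inv$, formed on the open subgroup $\phi_2(U) \leq H$, is a local isomorphism from $H$ to $G$, each $\phi_i$ being a homeomorphism onto an open subgroup. Then I set $\theta(h) := [\lambda]\,\ad_H(h)\,[\lambda]\inv$, interpreted in the groupoid of germs of local isomorphisms, where $\ad_H(h)$ is the germ of conjugation by $h$ in $H$; this is a germ of local automorphism of $G$, hence an element of $\ms{L}(G)$. That $\theta$ is a homomorphism is formal, and $\ker\theta = \QZ(H)$ since $\theta(h)=1$ forces conjugation by $h$ to be the identity germ on $H$. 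The conjugation formula of (ii), applied to $\lambda$, shows that $\theta(h) = \ad(\phi_1(\phi_2\inv(h)))$ for $h \in \phi_2(U)$; in particular $\theta \circ \phi_2 = \ad \circ \phi_1$ on $U$, so the diagram commutes, and on the open subgroup $\phi_2(U)$ the map $\theta$ coincides with the composite of continuous open maps $\ad \circ \phi_1 \circ \phi_2\inv$, whence $\theta$ is continuous and open. The main obstacle is uniqueness: given another such $\theta'$, one has $\theta' = \theta$ on $\phi_2(U)$, and comparing $\theta'(hvh\inv)$ with $\theta(hvh\inv)$ for $v$ in a small open subgroup shows that $\theta(h)\inv\theta'(h)$ centralizes $\ad(W)$ for some open $W \leq G$. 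To conclude I must rule this out, and this is exactly where $\QZ(G)=\triv$ is indispensable: by the conjugation formula any element of $\ms{L}(G)$ centralizing $\ad(W)$ is the germ of a local automorphism fixing an open subgroup pointwise, hence trivial by injectivity of $\ad$. This forces $\theta' = \theta$.
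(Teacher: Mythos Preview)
The paper does not supply a proof of this theorem; it is stated with the citation ``see \cite{BEW} and \cite{CapDeM}'' and invoked as a black box. Your proposal is a correct reconstruction of the standard argument from those references: the group law on germs, the topology transported along $\ad$ via the conjugation identity $\gamma\,\ad(v)\,\gamma\inv = \ad(\phi(v))$, and the universal property via conjugating $\ad_H$ by the germ $[\phi_1\circ\phi_2\inv]$. The uniqueness step in (iii) is handled cleanly, and your use of $\QZ(G)=\triv$ at each juncture (injectivity of $\ad$, triviality of centralizers of $\ad(W)$ in $\ms{L}(G)$) is exactly where the hypothesis enters in the literature. One small point you leave implicit in (ii) is Hausdorffness of the resulting topology, i.e.\ $\bigcap_U \ad(U)=\{1\}$; this follows immediately from injectivity of $\ad$ together with $\bigcap_U U=\{1\}$, so it is not a gap.
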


As noted in \cite{BEW} and \cite{CapDeM}, the above theorem leads to a largest \emph{topologically simple} group in a given local isomorphism class. 

\begin{cor}\label{cor:LargestSimple}
Let $G$ be a topologically simple \tdlc group with $\QZ(G)=\triv$. Then there is a topologically simple \tdlc group $\widetilde G$ with $\QZ(\widetilde G)=\triv$, unique up to isomorphism, such that the following hold.
\begin{enumerate}[label=(\roman*)]
\item $G$ embeds as an open subgroup of $\widetilde{G}$;
\item For any topologically simple \tdlc group $H$ locally isomorphic to $G$, there is an open embedding $H\rightarrow \widetilde{G}$.
\end{enumerate}
Furthermore, if $G$ is regionally expansive, then $G$, $\widetilde{G}$, and $\ms{L}(G)$ are all robustly monolithic.
\end{cor}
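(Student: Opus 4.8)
The plan is to realise $\widetilde G$ concretely inside the germ group $\ms L(G)$ and to extract all the required properties from a single germ-theoretic computation. Identify $G$ with its image $\ad(G)$, which is an open subgroup of $\mc L := \ms L(G)$ by Theorem~\ref{thm:AbstractCommensurator}, and define $\widetilde G := \ngrp{\ad(G)}_{\mc L}$, the closed normal closure of $\ad(G)$ in $\mc L$ (note $G$ is non-discrete, since $\QZ(G)=\triv$ forces this for non-trivial $G$). The first and most important step is to prove the key lemma that $\QC_{\mc L}(\ad(G)) = \triv$: if a germ $[\phi]$ centralises $\ad(W)$ for some open $W \le G$, then comparing the germs of $\phi \circ c_w$ and $c_w \circ \phi$ (where $c_w$ is conjugation by $w$) and using that $\phi$ is locally a homomorphism yields $w\inv\phi(w) \in \QZ(G) = \triv$ for all $w \in W$, so $\phi = \mathrm{id}$ near $1$ and $[\phi] = 1$. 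This immediately gives $\QZ(\mc L) = \QZ(\widetilde G) = \triv$ and $\CC_{\mc L}(\ad(G)) = \triv$.

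Next I would establish topological simplicity of $\widetilde G$, which also proves (i) since $\ad(G)$ is then an open, necessarily topologically simple, subgroup of $\widetilde G$. Let $N \ne \triv$ be closed normal in $\widetilde G$. As $\ad(G)$ is open and topologically simple, $N \cap \ad(G)$ is a normal subgroup of $\ad(G)$, so it is either trivial---forcing $N$ discrete and hence $N \le \QZ(\widetilde G) = \triv$ by Lemma~\ref{lem:QC:basic}, a contradiction---or equal to $\ad(G)$, so $\ad(G) \le N$. In the latter case, for every $g \in \mc L$ the conjugate $g\ad(G)g\inv$ lies in $\widetilde G$ by normality, so $N \cap g\ad(G)g\inv$ is normal in the topologically simple group $g\ad(G)g\inv$; the alternative $N \cap g\ad(G)g\inv = \triv$ would force $g\ad(G)g\inv \le \CC_{\mc L}(N) \le \CC_{\mc L}(\ad(G)) = \triv$, which is absurd, so $g\ad(G)g\inv \le N$ for all $g$ and thus $N = \widetilde G$.

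For (ii), given a topologically simple $H$ locally isomorphic to $G$, I would feed a local isomorphism into Theorem~\ref{thm:AbstractCommensurator}(iii) to obtain an open homomorphism $\theta \colon H \to \mc L$ with kernel $\QZ(H)$; since $\theta$ is nontrivial (it agrees with an injective map on an open subgroup) and $H$ is topologically simple, $\QZ(H) = \triv$ and $\theta$ is an open embedding. Its image is topologically simple and meets $\ad(G)$ in an open subgroup, so by normality of $\widetilde G$ and simplicity the image lies in $\widetilde G$, giving the desired open embedding $H \to \widetilde G$. Uniqueness is the delicate point: a second candidate $\widetilde G_1$ embeds openly into $\widetilde G$ and conversely, and the composite open self-embedding $j$ of $\widetilde G$ must be shown surjective. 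Here I would use that $\ms L(\widetilde G)$ is canonically $\mc L$ with $\ad_{\widetilde G}$ the inclusion $\widetilde G \hookrightarrow \mc L$, together with the identity $\ad(j(x)) = [j]\,\ad(x)\,[j]\inv$ valid for any local automorphism $j$; this says $j(x) = [j]\,x\,[j]\inv$ in $\mc L$, which is a bijection of $\widetilde G$ since $\widetilde G \normal \mc L$. Hence $j$ is onto, the mutual embeddings are isomorphisms, and $\widetilde G_1 \cong \widetilde G$.

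Finally, for the robustly monolithic conclusions under regional expansiveness: $G$ is its own monolith and so lies in $\Rs$ immediately. For $\widetilde G$, apply Lemma~\ref{lem:RF_overgroup} to the locally normal regionally expansive subgroup $\ad(G)$, noting $\QC_{\widetilde G}(\ad(G)) \le \QC_{\mc L}(\ad(G)) = \triv \le \ad(G)$, to conclude $\widetilde G$ is regionally expansive; being also non-discrete and topologically simple, $\widetilde G \in \Rs$. For $\mc L$, the same intersection dichotomy shows every nontrivial closed normal subgroup contains $\widetilde G$ (the trivial-intersection case lands in $\CC_{\mc L}(\widetilde G) \le \CC_{\mc L}(\ad(G)) = \triv$), so $\mc L$ is monolithic with monolith $\widetilde G \in \Rs$, whence $\ms L(G) \in \Rs$. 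I expect the main obstacle to be the uniqueness argument---specifically, showing open self-embeddings of $\widetilde G$ are automorphisms---together with pinning down the germ computation $\QC_{\mc L}(\ad(G)) = \triv$ on which every subsequent step rests.
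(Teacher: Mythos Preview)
The paper does not give its own proof of this corollary, deferring the construction of $\widetilde G$ to \cite{BEW} and \cite{CapDeM}; your argument supplies a complete self-contained proof, and the construction $\widetilde G := \ngrp{\ad(G)}_{\mc L}$ together with the germ computation $\QC_{\mc L}(\ad(G)) = \triv$ is exactly the right backbone. Your uniqueness argument via the identity $\ad(j(x)) = [j]\,\ad(x)\,[j]\inv$ and the normality of $\widetilde G$ in $\mc L$ is correct and elegant; this is precisely where the universal property of $\ms L(G)$ earns its keep.

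There is one genuine (though easily repaired) gap. In the topological simplicity argument for $\widetilde G$, after you have $\ad(G) \le N$, you assert that $N \cap g\ad(G)g\inv = \triv$ would force $g\ad(G)g\inv \le \CC_{\mc L}(N)$. This does not follow: trivial intersection yields commutation only when \emph{both} subgroups normalise each other, and there is no reason $N$ should normalise $g\ad(G)g\inv$. The fix is simpler than what you wrote: $g\ad(G)g\inv$ is open in $\widetilde G$ (being an $\mc L$-conjugate of the open subgroup $\ad(G)$), so $N \cap g\ad(G)g\inv = \triv$ would make $N$ discrete, which contradicts $\ad(G) \le N$. By contrast, your final-paragraph argument for the monolith of $\mc L$ is sound as written, since there both $N$ and $\widetilde G$ are normal in $\mc L$, so trivial intersection genuinely gives $[N,\widetilde G] \le N \cap \widetilde G = \triv$.
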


If $G \in \Rs$, then also $\widetilde{G} \in \Rs$.  However, in general, even if we start with $G \in \Ss$, the corresponding universal simple group $\widetilde{G}$ for the local isomorphism class need not be compactly generated or even $\sigma$-compact.  The following is an illustration of this situation.

\begin{example}\label{ex:Smith}
In \cite{Smith}, S.M.~Smith constructs a family $(G_i)_{i \in I}$ of $2^{\aleph_0}$ pairwise non-isomorphic groups in $\Ss$ that are all abstractly simple and locally isomorphic to one another. Let $\widetilde G$ be the topologically simple group in $\mathscr R$ afforded by applying Corollary~\ref{cor:LargestSimple} to some (any) of the $G_i$. Since a $\sigma$-compact \tdlc group has only countably many compactly generated open subgroups, we deduce that $\widetilde G$ is not $\sigma$-compact.  Given that each $G_i$ is abstractly simple, it is clear that $\widetilde G$ has no proper dense normal subgroup, so it is abstractly simple.  The group $\widetilde G$ is therefore a simple group contained in $\Rs$ but not in $\Ss$. 

We also note that $\widetilde G$  contains open subgroups belonging to $\Rs$ that are topologically simple, $\sigma$-compact and not compactly generated. In order to see this, we build a countable  ascending chain of open simple subgroups $(S_n)_{n\in \Nb}$ of $\widetilde G$ as follows. Pick any $i \in I$ and set $S_0 := G_i$. For $n>0$, since $S_n$ has countably many compactly generated open subgroups, there exists $j \in I$ such that $G_j$ is not contained in $S_n$. Set $S_{n+1} := \la S_n \cup G_j \ra$. The group $S_{n+1}$ is then a compactly generated open subgroup of $\widetilde G$ containing $S_n$ as a proper subgroup.  Since $\QZ(\widetilde{G}) = \triv$, the open subgroup $S_{n+1}$ also has trivial quasi-center and hence has no non-trivial discrete normal subgroups.  It then follows that every non-trivial normal subgroup $N$ of $S_{n+1}$ intersects the open subgroup $G_i$ non-trivially, hence contains $G_i$, for all $i \in I$ such that $G_i \le S_{n+1}$. Since $S_{n+1}$ is generated by subgroups $G_i$, we infer that $S_{n+1}$ is abstractly simple.  We now see that $\bigcup_n S_n$ is an open, $\sigma$-compact abstractly simple subgroup of $\widetilde G$ that is not compactly generated. Additionally, $\bigcup_n S_n$ belongs to the class $\Rs$ since it has trivial quasi-center and each $S_n$ is compactly generated and expansive.
\end{example}

\subsection{Restricted Burger-Mozes groups}\label{sec:U(F)}

Following  \cite{Serre80}, a \textit{graph} $\Gamma = (V,E,o,r)$ consists of a vertex set $V = V\Gamma$, a directed edge set $E = E\Gamma$, a map $o:E \rightarrow V$ assigning to each edge an \textit{initial vertex}, and a bijection $r: E \rightarrow E$, denoted by $e \mapsto \ol{e}$ and called \textit{edge reversal} such that $r^2 = \mathrm{id}$. A \textit{tree} is a connected graph without cycles. A tree is \textit{$d$-regular} if for each vertex $v$ there are $d$ many distinct edges $e$ such that $o(e)=v$.

Let $T$ be the $d$-regular tree for $d>2$ (at this point, we allow $d$ to be an infinite cardinal). A \textit{coloring} of $T$ is a map $c \colon ET\rightarrow X$ such that 
\[
c_v:=c\rest_{E(v)}:E(v)\rightarrow X
\]
is a bijection for every $v\in VT$ where $E(v)$ is the collection of edges $e$ with $o(e)=v$.  For $g\in \Aut(T)$ and $c$ a coloring of $T$, the \textit{local action} of $g$ at $v$ is defined to be 
\[
\sigma_c(g,v):=c_{g(v)}\circ g\circ c^{-1}_v\in \Sym(X).
\]
\begin{defn}
Let $T$ be the $d>2$ regular tree. For a permutation group $F\leq\Sym(d)$ and $c \colon ET\rightarrow [d]$ a coloring, the \emph{Burger--Mozes group} with local action prescribed by $F$ via $c$ is
\[
U_c(F):=\{g\in \Aut(T)\mid \forall v\in VT\;\sigma_c(g,v)\in F\}.
\]
\end{defn}

From now until the end of this subsection, we assume that $d$ is finite. The Burger--Mozes groups $U_c(F)$ are always closed subgroups of $\Aut(T)$. The subspace topology therefore induces a \tdlc group topology on $U_c(F)$. We always take $U_c(F)$ to be equipped with this topology.

Let us pause to recall some basic facts about groups acting on locally finite trees.

\begin{lem}[{\cite[Lemma~2.4]{CapDeM}}]\label{lem:cocompact_tree}
Let $T$ be a locally finite tree, let $G$ be a closed subgroup of $\Aut(T)$, and let $X \subseteq T$ be a minimal $G$-invariant subtree.  Then $G$ is compactly generated if and only if $G$ acts on $X$ with finitely many orbits.
\end{lem}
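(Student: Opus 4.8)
The plan is to use the standard fact that, for a locally finite tree $T$, the group $\Aut(T)$ is \tdlc in the topology of pointwise convergence on vertices, with every vertex stabilizer a compact open subgroup; the same then holds for the closed subgroup $G$. Fix $v_0 \in VX$ and write $U := G_{v_0}$, a compact open subgroup. The decisive use of minimality is the observation that for any $G$-orbit $Z \subseteq VX$ the convex hull of $Z$ in $X$ (the smallest subtree of $X$ containing $Z$) is a non-empty $G$-invariant subtree, so by minimality it equals $X$; in particular $X$ is the convex hull of $Gv_0$.

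For the direction ``compactly generated $\Rightarrow$ finitely many orbits'', I would first apply Lemma~\ref{lem:Cayley-Abels} with $H = G$ (so that $G = GU$ holds trivially) to obtain $g_1, \dots, g_n \in G$ with $\la g_1, \dots, g_n \ra U = G$. Let $Y_0$ be the convex hull of the finite vertex set $\{v_0\} \cup \{g_i^{\pm 1} v_0 : 1 \le i \le n\}$, a finite subtree of $X$. I would then show that $W := GY_0 = \bigcup_{g \in G} gY_0$ is connected. Taking $\Sigma := \{g_1^{\pm 1}, \dots, g_n^{\pm 1}\} \cup U$ as a topological generating set of $G$, every $s \in \Sigma$ satisfies $sv_0 \in Y_0$, so $Y_0 \cup sY_0$ is connected; writing an arbitrary element of $G$ as a word $s_1 \cdots s_k$ in $\Sigma$ and telescoping, the subtrees $Y_0, s_1 Y_0, s_1 s_2 Y_0, \dots$ form a chain of pairwise-overlapping connected pieces, so $W$ is connected. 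Thus $W$ is a non-empty $G$-invariant subtree of $X$, whence $W = X$ by minimality. Since the $G$-orbits of vertices of $W = GY_0$ are represented among the finite set $VY_0$, the group $G$ acts on $X$ with at most $|VY_0|$ vertex orbits, hence (by local finiteness) finitely many edge orbits as well.

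For the converse, suppose $G$ acts on $X$ with finitely many orbits. Choosing representatives $v_0, v_1, \dots, v_m$ of the vertex orbits and setting $R := \max_i d(v_i, Gv_0)$ (each term finite since $X$ is connected), every vertex of $X$ lies within distance $R$ of the orbit $Gv_0$. I would then set $S := \{g \in G : d(v_0, gv_0) \le 2R+1\}$, which is symmetric, contains $U$, and is compact: by local finiteness it is a finite union of sets $\{g : gv_0 = w\}$, each a coset of $U$ and hence compact. To see $\la S \ra = G$, take $g \in G$ and a geodesic $v_0 = x_0, x_1, \dots, x_\ell = gv_0$ in $X$; for each $j$ choose $h_j \in G$ with $d(x_j, h_j v_0) \le R$ (with $h_0 = 1$ and $h_\ell = g$). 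Then $d(v_0, h_j^{-1} h_{j+1} v_0) \le 2R+1$, so each $h_j^{-1}h_{j+1} \in S$, and $g = \prod_{j} (h_j^{-1} h_{j+1}) \in \la S \ra$. Hence $G$ is compactly generated.

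The step I expect to require the most care is the connectivity of $GY_0$ in the forward direction, together with the bookkeeping ensuring each generating set is genuinely compact (this is exactly where local finiteness of $T$ enters, through finiteness of balls). I would also flag the asymmetric role of minimality: it is invoked only in the forward direction, precisely to pass from the connected $G$-invariant subtree $GY_0$ back to $X$, whereas the converse argument holds verbatim for any $G$-invariant subtree on which $G$ has finitely many orbits.
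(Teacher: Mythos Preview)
Your proof is correct and follows the standard approach. Note, however, that the paper does not actually prove this lemma: it is quoted directly from \cite[Lemma~2.4]{CapDeM} with no proof given, so there is no ``paper's own proof'' to compare against. Your argument is essentially the one underlying that reference: use a compact open vertex stabilizer to reduce compact generation to finite generation modulo $U$, propagate a finite subtree along generators to get a connected $G$-invariant subtree (forcing $X$ by minimality), and for the converse use the bounded-distance-to-orbit trick to build a compact symmetric generating set. Your remark that minimality is used only in the forward direction is accurate and worth keeping.
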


\begin{lem}[{\cite[Corollaire 3.5]{Ti70}}]\label{lem:min_inv_stree}
	Suppose that $G$ is a compactly generated \tdlc group acting continuously on a locally finite tree $T$ and suppose that $G$ does not fix a vertex or end. Then there is a unique minimal $G$-invariant subtree $X\subseteq T$.  
\end{lem}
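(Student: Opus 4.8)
The plan is to construct the minimal invariant subtree explicitly as the convex hull of the axes of the hyperbolic elements of $G$, thereby reducing the statement to the single assertion that $G$ contains a hyperbolic element. Recall the classification of a single automorphism $g\in\Aut(T)$: writing $\ell(g)=\min_{x}d(x,gx)$, either $\ell(g)=0$ and $g$ is \emph{elliptic} (it fixes a vertex or inverts an edge), or $\ell(g)>0$ and $g$ is \emph{hyperbolic}, in which case its \emph{axis} $A_g=\{x:d(x,gx)=\ell(g)\}$ is a bi-infinite geodesic on which $g$ translates by $\ell(g)$; moreover $A_g$ is the unique minimal $\langle g\rangle$-invariant subtree. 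To dispose of inversions I would first pass to the barycentric subdivision $T'$ of $T$, a locally finite tree on which $G$ acts without inversions and with the same ends; a minimal $G$-invariant subtree of $T'$ descends to one of $T$.

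The crucial step, and the only point at which the hypotheses enter, is to show that $G$ contains a hyperbolic element. Suppose it does not, so that every element of $G$ is elliptic on $T'$. Here I would invoke the classical dichotomy that a group all of whose elements are elliptic fixes a vertex of $T'$ or fixes an end. Fixing an end of $T'$ means fixing an end of $T$, contradicting the hypothesis. Fixing a vertex $x$ of $T'$ means either fixing a vertex of $T$, again a contradiction, or stabilising a single edge $e$ of $T$ while swapping its endpoints; in that last degenerate case $e$ is itself visibly the unique minimal $G$-invariant subtree, and the proof is complete. In every remaining situation $G$ has a hyperbolic element $g_0$, so $A_{g_0}\neq\varnothing$.

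With a hyperbolic element available, I would set $X$ to be the convex hull in $T$ of $\bigcup_g A_g$, the union ranging over all hyperbolic $g\in G$. Then $X$ is a nonempty subtree, and it is $G$-invariant because conjugation sends $A_g$ to $A_{hgh^{-1}}$, so $G$ permutes the family $\{A_g\}$ and hence preserves its convex hull. For minimality and uniqueness simultaneously, I would observe that any nonempty $G$-invariant subtree $Y$ is in particular $\langle g\rangle$-invariant for each hyperbolic $g$, hence contains the axis $A_g$; thus $Y\supseteq\bigcup_g A_g$, and since $Y$ is convex, $Y\supseteq X$. Therefore $X$ is contained in every nonempty $G$-invariant subtree while being one itself, which makes it the smallest, and hence the unique minimal, $G$-invariant subtree.

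I expect the main obstacle to be the elliptic-group dichotomy invoked above: showing from scratch that a group all of whose elements are elliptic must fix a vertex or an end requires a Helly-type argument for the pairwise-intersecting fixed-point subtrees, and it is precisely there that continuity, local finiteness, and compact generation are needed, via compact point stabilisers, to control the bounded case. Once that dichotomy is granted, the standing hypotheses that $G$ fixes neither a vertex nor an end force the existence of a hyperbolic element (outside the degenerate inverted-edge case), after which the convex-hull construction finishes the argument formally. As this dichotomy is essentially the content attributed to \cite{Ti70}, in the final write-up I would cite it rather than reprove it and merely record the geometric construction above.
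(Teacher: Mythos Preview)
The paper does not prove this lemma; it is stated with a citation to \cite[Corollaire~3.5]{Ti70} and no argument is given. Your sketch is a correct outline of the classical proof, and indeed you yourself note that the key dichotomy (a group of elliptic isometries fixes a vertex or an end) is precisely what is being cited, so in the end your write-up would reduce to the same citation.
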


There is a well-behaved family of colorings, under which to consider the groups $U_c(F)$.
\begin{defn}
We say that a coloring $c$ of a $d$-regular tree $T$ is \emph{legal} if $c(e)=c(\overline{e})$ for each edge $e\in ET$, where $\overline{e}$ is the reverse edge.
\end{defn}

One can obtain a legally colored $d$-regular tree $T$. For example, take the group
\[
\Gamma: = \grp{x_1} \ast \grp{x_2} \ast \dots \ast \grp{x_d},\; \text{ where } |\grp{x_i}|=2 \text{ for } 1 \le i \le d,
\]
let $T$ be the Cayley graph of $\Gamma$ with respect to $\{x_1,\dots,x_d\}$, and set $c(g,gx_i) = i$ for all $g \in \Gamma$.

\begin{prop}[{Burger--Mozes, \cite[Section 3.2]{BuMo}}] 
Let  $d>2$, $T$ be the $d$-regular tree, and $F\leq \Sym(d)$. If $c$ and $k$ are legal colorings of $T$, then $U_c(F)\simeq U_k(F)$.
\end{prop}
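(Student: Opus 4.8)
The plan is to realize the isomorphism by conjugation inside $\Aut(T)$, reducing everything to a combinatorial fact about legal colorings. The key reduction is: it suffices to produce $\alpha \in \Aut(T)$ that is \emph{colour-preserving} from $(T,c)$ to $(T,k)$, i.e.\ $c = k \circ \alpha$ on $ET$, equivalently $k_v = c_{\alpha^{-1}(v)} \circ \alpha^{-1}|_{E(v)}$ for every vertex $v$. Granting such an $\alpha$, I would compute, for any $g \in \Aut(T)$ and any vertex $v$ (writing $w := \alpha^{-1}(v)$), that
\[
\sigma_k(\alpha g \alpha^{-1}, v) = k_{\alpha g\alpha^{-1}(v)} \circ (\alpha g \alpha^{-1}) \circ k_v^{-1} = c_{g(w)} \circ g \circ c_w^{-1} = \sigma_c(g, \alpha^{-1}(v)),
\]
where the middle equality comes from substituting $k_v = c_w \circ \alpha^{-1}|_{E(v)}$ and $k_{\alpha g\alpha^{-1}(v)} = c_{g(w)} \circ \alpha^{-1}$ and cancelling the two matching pairs $\alpha^{-1}\alpha$ that appear. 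From this identity, $g \in U_c(F)$ if and only if $\sigma_c(g,u) \in F$ for all $u$, if and only if $\sigma_k(\alpha g \alpha^{-1}, v) \in F$ for all $v$, i.e.\ $\alpha g \alpha^{-1} \in U_k(F)$. Hence $x \mapsto \alpha x \alpha^{-1}$ restricts to a bijection $U_c(F) \to U_k(F)$, and since conjugation by a fixed element of $\Aut(T)$ is a homeomorphism of $\Aut(T)$ preserving the subspace topology on closed subgroups, this bijection is a topological group isomorphism.

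It then remains to construct the colour-preserving $\alpha$, which I would do by induction on distance from a base vertex. Fix vertices $v_0, w_0$, set $\alpha(v_0) := w_0$, and use the bijections $c_{v_0}, k_{w_0} \colon E(\cdot) \to [d]$ to match the edges at $v_0$ to those at $w_0$ by colour; this extends $\alpha$ to the neighbours of $v_0$. Inductively, processing vertices in order of distance from $v_0$, suppose $\alpha$ is a colour-preserving partial isomorphism on the ball of radius $n$. Let $v$ be a new vertex at distance $n+1$ and let $e$ be the edge with $o(e) = v$ pointing back toward $v_0$; its reverse $\overline{e}$ originates at the already-processed parent $p$ of $v$, so $\alpha(\overline{e})$ is defined and $\alpha(e)$ is forced to equal $\overline{\alpha(\overline{e})}$. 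Legality guarantees this forced value is colour-consistent:
\[
c(e) = c(\overline{e}) = k(\alpha(\overline{e})) = k(\alpha(e)),
\]
using legality of $c$, the inductive colour-preservation on $\overline{e}$, and legality of $k$ together with $\alpha(e) = \overline{\alpha(\overline{e})}$. Matching the remaining $d-1$ edges of $E(v)$ with those of $E(\alpha(v))$ by colour then extends $\alpha$ colour-preservingly; the union over all $n$ is the required automorphism.

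The one step I would treat with genuine care is precisely this consistency check in the inductive construction: one must confirm that the edge along which a new vertex is reached is already coloured compatibly on both sides, and it is exactly the legality hypothesis $c(e) = c(\overline{e})$ (for both $c$ and $k$) that forces the colour of the incoming back-edge to agree, so that the purely local colour-matchings glue into a \emph{global} tree automorphism rather than conflicting. By symmetry (replacing $\alpha$ by $\alpha^{-1}$ and interchanging $c$ and $k$) one gets the inverse isomorphism, so $U_c(F) \simeq U_k(F)$. The remaining ingredients — the local-action computation and the topological statement — are direct unwindings of the definitions and require no new ideas.
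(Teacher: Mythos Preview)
Your proof is correct. The paper does not supply its own proof of this proposition; it simply cites Burger--Mozes \cite[Section~3.2]{BuMo}, where the argument is exactly the one you give: any two legal colorings differ by a tree automorphism $\alpha$ (constructed by the radius-by-radius colour-matching you describe, with legality ensuring the back-edge consistency), and conjugation by $\alpha$ then carries $U_c(F)$ isomorphically onto $U_k(F)$ via the local-action identity $\sigma_k(\alpha g\alpha^{-1},v)=\sigma_c(g,\alpha^{-1}(v))$. So your approach is not merely equivalent to but identical with the intended one, just spelled out in more detail than the original reference.
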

When $c$ is a legal coloring, the groups $U_c(F)$ have several important features.

\begin{prop}[See {\cite[Proposition~3.2.1]{BuMo}}]\label{prop:SimpleBurgerMozes}
Let $T$ be the $d >2$ regular tree, $c$ a legal coloring of $T$, and $F \leq \Sym(d)$ a permutation group which does not act freely on $[d]$. Then, the group $U_c(F)$ is compactly generated and monolithic, and its monolith is abstractly simple and coincides with the subgroup $U_c(F)^+$, which is generated by the pointwise stabilizers of edges. Additionally, the following conditions are equivalent.

\begin{enumerate}[label=(\roman*)]
	\item $U_c(F)$ is virtually in $\Ss$. 
	\item $U_c(F)^+$ is in $\Ss$.
	\item $[U_c(F): U_c(F)^+]$ is finite. 
	\item  $[U_c(F): U_c(F)^+] = 2$. 
	\item $F$ is transitive and generated by its point stabilizers.\qedhere
\end{enumerate}	
\end{prop}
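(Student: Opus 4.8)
The plan is to first fix the abstract structure of $U_c(F)$, then identify its monolith with $U_c(F)^+$, and finally to treat the fivefold equivalence, whose only genuinely delicate ingredient is the computation of the index $[U_c(F):U_c(F)^+]$. First I would record that $U_c(F)$ acts vertex-transitively on $T$: for a legal coloring the color-preserving group $U_c(\{1\})$ is the free product $\langle x_1\rangle \ast \cdots \ast \langle x_d\rangle$ acting simply transitively on $VT$, and $U_c(\{1\}) \leq U_c(F)$; in particular $U_c(F)$ fixes no vertex or end and $T$ is its unique minimal invariant subtree. Since the action is cocompact with compact open vertex stabilizers, Lemma~\ref{lem:cocompact_tree} gives that $U_c(F)$ is compactly generated, and it is non-discrete because vertex stabilizers are infinite for $d>2$. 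The pointwise stabilizer of an edge is an open subgroup, so $U_c(F)^+$ is open, hence closed, and normal. The hypothesis that $F$ does not act freely supplies a non-identity $f \in F$ fixing some color $a$; realizing $f$ as the local action at one endpoint of an edge of color $a$ and extending by the identity on the half-tree in direction $a$ produces a non-trivial element of the pointwise stabilizer of that edge, so $U_c(F)^+ \neq \triv$.

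Next I would establish abstract simplicity of $U_c(F)^+$ and identify it as the monolith. The group $U_c(F)$ satisfies Tits' independence property, since membership in $U_c(F)$ is determined vertex-by-vertex by the local actions, so the fixator of a path splits as the direct product of the fixators of the complementary half-trees. As $U_c(F)$ acts minimally without fixed end, Tits' simplicity theorem \cite{Ti70} then forces $U_c(F)^+$ to be either trivial or abstractly simple; by the previous paragraph it is abstractly simple. For the monolith claim, let $N \neq \triv$ be closed normal in $U_c(F)$. Then $N \cap U_c(F)^+$ is normal in the simple group $U_c(F)^+$, hence is trivial or all of $U_c(F)^+$; in the former case $N$ centralizes the open subgroup $U_c(F)^+$, which is impossible since the centralizer of $U_c(F)^+$ in $\Aut(T)$ is trivial (standard, as $U_c(F)^+$ acts minimally with no fixed end). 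Thus $U_c(F)^+ \leq N$, so $U_c(F)^+$ is the monolith.

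I then treat (i)$\Leftrightarrow$(ii)$\Leftrightarrow$(iii). Write $P := U_c(F)^+$, which is open, closed, normal, non-discrete and topologically simple. If $[U_c(F):P]$ is finite, then $P$ is compactly generated by Proposition~\ref{prop:cocompact_generation}, so $P \in \Ss$; this gives (iii)$\Rightarrow$(ii), and since $P$ then has finite index it also gives (iii)$\Rightarrow$(i). Conversely, if $P \in \Ss$ then $P$ is compactly generated; being normal and non-trivial it fixes no vertex or end, so its unique minimal invariant subtree (Lemma~\ref{lem:min_inv_stree}) is $U_c(F)$-invariant and hence equals $T$, whence Lemma~\ref{lem:cocompact_tree} shows $P$ acts cocompactly and therefore has finite index, giving (ii)$\Rightarrow$(iii). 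Finally, if $U_c(F)$ is virtually in $\Ss$, a finite-index open $K \in \Ss$ is topologically simple, so $K \cap P$ is trivial or $K$; triviality would force $K$ to centralize $P$ (both normal), contradicting triviality of the centralizer, so $K \leq P$ and $[U_c(F):P] < \infty$, i.e.\ (i)$\Rightarrow$(iii).

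It remains to prove (iii)$\Leftrightarrow$(iv)$\Leftrightarrow$(v), which is the Burger--Mozes index computation and the main obstacle of the whole proof. Since (iv)$\Rightarrow$(iii) is trivial, I would close the loop by proving (v)$\Rightarrow$(iv) and (iii)$\Rightarrow$(v). Pointwise stabilizers of edges fix two vertices of opposite type, hence preserve the bipartition of $T$, so $P$ lies in the type-preserving subgroup $U_c(F)_0$, which has index $2$ once $U_c(F)$ contains a type-reversing element (an edge inversion, available as soon as $F$ is transitive). The crucial assertion for (v)$\Rightarrow$(iv) is that when $F$ is transitive and $F = F^+ := \langle F_a : a \in [d]\rangle$ one has $P = U_c(F)_0$, so the index is exactly $2$; I would prove this by showing $U_c(F)^+$ realizes every type-preserving local-action pattern, using transitivity and point-stabilizer generation to correct an arbitrary element of $U_c(F)_0$ by edge-fixators along a fundamental domain. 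For (iii)$\Rightarrow$(v) I would argue contrapositively: if $F$ is intransitive, or transitive with $F \neq F^+$, I would construct a continuous homomorphism from $U_c(F)$ onto an infinite discrete group that vanishes on $U_c(F)^+$ --- assembled from the $F$-orbit structure on $[d]$ in the intransitive case, and from the local actions read modulo $F^+$ in the case $F \neq F^+$ --- so that $[U_c(F):U_c(F)^+]$ is infinite. Verifying that the kernel of this homomorphism is exactly $U_c(F)^+$, and that its image is infinite precisely when (v) fails, is the step I expect to require the most care.
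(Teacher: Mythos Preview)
Your proposal is essentially correct and, for the equivalence (i)$\Leftrightarrow$(ii)$\Leftrightarrow$(iii), follows the same tree-geometric argument as the paper: $U_c(F)^+$ is compactly generated if and only if its unique minimal invariant subtree is all of $T$, which by cocompactness and the fact that $U_c(F)^+$ contains a finite-index subgroup of a vertex stabilizer forces finite index. There is one slip in your (i)$\Rightarrow$(iii): you write that $K$ and $P$ are ``both normal'' to conclude they centralize each other when $K\cap P=\triv$, but a finite-index subgroup $K\in\Ss$ need not be normal in $U_c(F)$. The fix is immediate: $K$ is open, so $K\cap P$ is open in the non-discrete group $P$, hence non-trivial; since $P$ is normal, $K\cap P$ is a non-trivial closed normal subgroup of the topologically simple group $K$, so $K\le P$.

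The genuine difference from the paper is one of scope. The paper does not reprove Tits' simplicity, the monolith identification, or the equivalence (iii)$\Leftrightarrow$(iv)$\Leftrightarrow$(v); it simply cites \cite[Proposition~3.2.1]{BuMo} for all of that and restricts its own argument to showing (i)$\Leftrightarrow$(ii)$\Leftrightarrow$(iii) via the compact-generation criterion above. Your proposal instead reconstructs the Burger--Mozes argument: the independence-property route to simplicity, the centralizer argument for the monolith, and the index computation via the type-preserving subgroup and the quotient by $F^+$. This buys self-containment at the cost of redoing work already in the literature; the paper's approach is much shorter but black-boxes the combinatorial core. Your outline of (v)$\Rightarrow$(iv) and $\neg$(v)$\Rightarrow\neg$(iii) is the standard one, and note that for the contrapositive you only need $U_c(F)^+$ to lie in the kernel of your homomorphism with infinite image, not that the kernel equals $U_c(F)^+$.
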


\begin{proof}
By \cite[Proposition~3.2.1]{BuMo}, $U_c(F)^+$ is abstractly simple and is the monolith of $U_c(F)$, and (iii)-(v) are equivalent.  To show that (i), (ii) and (iii) are equivalent, it is enough to show that $U_c(F)^+$ is compactly generated if and only if $[U_c(F): U_c(F)^+]$ is finite.

The group $U_c(F)$ acts transitively on the vertices of $T$, so it is compactly generated by Lemma~\ref{lem:cocompact_tree}. Thus, if $U_c(F)^+$ has finite index, then it is compactly generated.  Conversely, if $U_c(F)^+$ is compactly generated, then the unique minimal $U_c(F)^+$-invariant subtree, as provided by Lemma~\ref{lem:min_inv_stree}, is also $U_c(F)$-invariant, so this subtree must in fact be $T$ itself. By Lemma~\ref{lem:cocompact_tree}, $U_c(F)^+$ has finitely many orbits on the vertices of $T$.  Since $U_c(F)^+$ contains a finite index subgroup of a vertex stabilizer in $U_c(F)$, it follows that $[U_c(F): U_c(F)^+]$ is finite.
\end{proof}

There is an important generalization of the Burger--Mozes groups due to Le Boudec \cite{LeBou16}, which we shall see gives examples of dense locally compact subgroups. 

\begin{defn}
Say that $d>2$ and $T$ is the $d$-regular tree. For a permutation groups $F\leq F'\leq\Sym(d)$ and $c \colon ET\rightarrow [d]$ a coloring, the \emph{restricted Burger--Mozes group} with local action prescribed by $F$ and $F'$ via $c$ is
\[
G_c(F,F'):=\{g\in \Aut(T)\mid \forall v\in VT\;\sigma_c(g,v)\in F'\text{ and }\forall^\infty v\in VT \;\sigma_c(g,v)\in F\},
\]
where ``$\forall^{\infty}$" is read ``for all but finitely many."
\end{defn}

\begin{rmk}
The name for the groups $G_c(F,F')$ recalls the analogy with restricted direct products. That is to say, a restricted Burger--Mozes group is to a Burger--Mozes group as a restricted direct product is to a direct product. 
\end{rmk}
The group $G_c(F,F')$ is always a subgroup of $U_c(F')$, and it contains $U_c(F)$. There is a natural \tdlc group topology on $G_c(F,F')$  under which $U_c(F)$ continuously embeds as an open subgroup. This is proved in \cite{LeBou16} for $c$ a legal coloring, but the same proof works for illegal colorings. 

\begin{rmk}
In \cite{LeBou16}, the author colors the undirected edges of trees, which is equivalent to assuming that all colorings are legal. It turns out that the restricted Burger-Mozes groups for \textit{illegal} colorings give a large family of interesting examples, well beyond those considered in \cite{LeBou16}. In a forthcoming article, we will explore these groups and their properties. In particular, we will use these groups to describe $p$-localizations of many Burger-Mozes groups.
\end{rmk}

Given a partition $\Omega$ of $[d]$, the \textit{Young group} associated to $\Omega$ is the set of $\sigma\in \Sym(d)$ such that $\sigma$ set-wise fixes each part of $\Omega$. If $\Omega$ is given by the orbits of some $D\leq \Sym(d)$, we denote the Young group by $\wh{D}$. Note that $D\leq \wh{D}$.

\begin{prop}[Le Boudec, {\cite[Proposition 3.5, Corollary 3.8]{LeBou16}}]\label{prop:LeBoudec}
Let $d>2$, $T$ be the $d$-regular tree, and $c$ be a legal coloring of $T$. If $F\leq F'\leq \wh{F}$ are subgroups of $\Sym(d)$, then $G_c(F,F')$ is a dense subgroup of $U_c(F')$, and $G_c(F,F')$ is compactly generated.
\end{prop}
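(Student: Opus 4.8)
Since any two legal colorings of the $d$-regular tree yield isomorphic configurations (the isomorphism $U_c(F')\simeq U_k(F')$ of Burger--Mozes is induced by a recoloring automorphism of $T$, which carries the local-action conditions defining $G_c(F,F')$ to the corresponding ones), I may assume $c$ is the coloring coming from the free product $\Gamma=\grp{x_1}\ast\cdots\ast\grp{x_d}$ realized on its Cayley graph. Then $\Gamma$ acts on $T$ by color-preserving automorphisms, regularly on $VT$; in particular $U_c(\triv)\le U_c(F)$ is vertex-transitive, and every $\gamma\in\Gamma$ satisfies $\sigma_c(\gamma,v)=\mathrm{id}$ for all $v$. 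The one algebraic input I will use repeatedly is that $F\le F'\le\wh{F}$ says exactly that every $s\in F'$ preserves the $F$-orbits on $[d]$, so for each color $a$ there is $\tau\in F$ with $\tau(a)=s(a)$. Geometrically: a color shift imposed across an edge by an element of $F'$ can always be absorbed, at the next vertex, by a local action in $F$. I would first record the resulting \emph{extension statement}: given a finite subtree $B\subseteq T$ and a partial automorphism defined on $B$ with all local actions in $F'$, one can extend it to $h\in\Aut(T)$ whose local action lies in $F$ at every vertex outside $B$. Indeed, on each branch hanging off $B$ one propagates outward one vertex at a time; at the first exterior vertex the incoming edge suffers a color shift by some $s\in F'$, absorbed by choosing the local action to be a $\tau\in F$ with $\tau(a)=s(a)$, and every subsequent shift is again by an element of $F$, hence correctable. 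Such an $h$ lies in $G_c(F,F')$.

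\textbf{Density.} The sets of automorphisms agreeing with a given element on a finite ball $B_n(v_0)$ form a neighborhood basis for the permutation topology that $U_c(F')$ inherits from $\Aut(T)$. Given $g\in U_c(F')$, its restriction to $B_n(v_0)$ is a partial automorphism with all local actions in $F'$, so the extension statement produces $h\in G_c(F,F')$ with $h=g$ on $B_n(v_0)$. Letting $n\to\infty$ shows $G_c(F,F')$ is dense in $U_c(F')$.

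\textbf{Compact generation.} Here the plan is to generate by $U_c(F)$ together with a finite set. For each $s\in F'$, applying the extension statement to $B=\{v_0\}$ with $s$ prescribed at $v_0$ produces $g_s\in G_c(F,F')$ fixing $v_0$, with $\sigma_c(g_s,v_0)=s$ and $\sigma_c(g_s,v)\in F$ for all $v\ne v_0$. Set $G_0:=\grp{U_c(F)\cup\{g_s:s\in F'\}}$; the generating set is compact, being a compact open subgroup together with finitely many points. For $w=\gamma(v_0)$ with $\gamma\in\Gamma$, the conjugate $\gamma g_s\gamma^{-1}\in G_0$ has local action $s$ at $w$ and in $F$ elsewhere, since $\gamma$ is color-preserving. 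I would then prove $G_0=G_c(F,F')$ by induction on the size of the finite \emph{defect set} $S(g):=\{v:\sigma_c(g,v)\notin F\}$. If $S(g)=\varnothing$ then $g\in U_c(F)$. Otherwise choose $v\in S(g)$, put $s:=\sigma_c(g,v)$ and $w:=g(v)$, and let $g_{s^{-1},w}\in G_0$ be the translate above with local action $s^{-1}$ at $w$ and in $F$ off $w$. The cocycle identity $\sigma_c(g_{s^{-1},w}\,g,u)=\sigma_c(g_{s^{-1},w},g(u))\,\sigma_c(g,u)$ together with injectivity of $g$ gives $\sigma_c(g_{s^{-1},w}g,v)=s^{-1}s=\mathrm{id}$, while for $u\ne v$ one has $g(u)\ne w$, so $\sigma_c(g_{s^{-1},w},g(u))\in F$ and the good/bad status of $u$ is unchanged. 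Hence $|S(g_{s^{-1},w}g)|=|S(g)|-1$, and induction yields $g\in G_0$.

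\textbf{Main obstacle.} The delicate point is this last induction: one must be sure that left multiplication by a single-defect element removes exactly one defect and creates none. That is precisely where the injectivity of $g$ (so $g(u)=w$ forces $u=v$) and the cocycle formula combine. The existence of the single-defect elements $g_s$, as well as the boundary corrections in the density argument, is exactly where $F'\le\wh{F}$ enters, via the fact that elements of $F'$ fix the $F$-orbits on $[d]$ setwise; without this hypothesis the corrections would force infinitely many vertices with local action outside $F$, and both arguments would break.
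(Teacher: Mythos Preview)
The paper does not prove this proposition; it is stated as a citation to Le~Boudec \cite[Proposition~3.5, Corollary~3.8]{LeBou16} with no accompanying argument. Your proof is correct and is essentially the standard argument (and presumably close to Le~Boudec's): the hypothesis $F'\le\wh{F}$ is exactly what allows an $F'$-color-shift across an edge to be absorbed by an $F$-local-action at the next vertex, which gives both the approximation lemma for density and the existence of the single-defect elements $g_s$ used in the compact-generation induction. One small point of phrasing: in the density step, when you ``restrict $g$ to $B_n(v_0)$'' the local action at boundary vertices of the ball is not fully determined by the restriction, but this is harmless---you only need $h$ to agree with $g$ as a map on the vertex set $B_n(v_0)$, so at distance-$n$ vertices you are free to complete the local action by an element of $F$ (which is possible precisely because the parent's local action lies in $F'\le\wh{F}$), and then continue outward in $F$. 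With that minor clarification your argument is complete.
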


In view of Proposition~\ref{prop:SimpleBurgerMozes}, $G_c(F,F')$ is in $\ms{R}$ whenever $F'$ is transitive and generated by its point stabilizers and $F$ does not act freely. One thus obtains many examples of dense locally compact subgroups of groups $G\in\Ss$ via the groups $G_c(F,F')$. Let us note a specific example.
\begin{example}
Let $c$ be a legal coloring of the $5$-regular tree. The type preserving subgroup of $G_c(\Alt(5),\Sym(5))$ is a dense locally compact subgroup of $\Aut^+(T_5)$. 
\end{example}

We now characterize when $G_c(F,F')$ is virtually simple. First, some preparatory lemmas.

\begin{lem}\label{lem:cocompact_orbits}
	Suppose that $G$ is a compactly generated \tdlc group which acts vertex transitively and continuously on a graph $\Gamma$ with regionally compact vertex stabilizers. If $N\normal G$ is closed and $N$ acts with finitely many orbits on $\Gamma$, then $N$ is cocompact in $G$.
\end{lem}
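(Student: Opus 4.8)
The plan is to reduce the assertion to the compactness of the quotient $Q := G/N$. Since $N$ is a closed normal subgroup, $N$ is cocompact in $G$ if and only if $Q$ is compact: one direction is immediate, and for the converse one covers $Q$ by finitely many translates of $\pi(U)$ for a compact open subgroup $U \le G$, lifts this cover through the quotient map $\pi \colon G \to Q$, and uses normality of $N$ to absorb $N$ on one side (so that $G = N X$ with $X$ a finite union of translates of $U$). It therefore suffices to prove that $Q$ is compact.

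First I would record the combinatorial input. Fix a base vertex $v$ and write $G_v$ for its stabilizer; since $G$ acts continuously on the discrete vertex set, $G_v$ is open, and it is regionally compact by hypothesis. Vertex transitivity identifies $V\Gamma$ with $G/G_v$, so the $N$-orbits on $V\Gamma$ correspond to the double cosets $N\backslash G/G_v$. The hypothesis that $N$ has finitely many orbits then yields a finite set $F \subseteq G$ with $G = NFG_v$. Applying $\pi$ gives $Q = F_0 A$, where $F_0 := \pi(F)$ is finite and $A := \pi(G_v)$; taking inverses shows $Q = A F_0\inv$, so $A$ is a cocompact subgroup of $Q$, and $Q$ is compactly generated as a quotient of the compactly generated group $G$.

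The heart of the argument is to show that $\overline{A}$ is compact; this is where the possible non-compactness of the vertex stabilizers must be confronted, and I expect it to be the main obstacle. Two structural facts drive the proof. First, since $G_v$ is regionally compact it is a directed union of compact open subgroups $U_\lambda$, so $A = \bigcup_\lambda C_\lambda$ is a directed union of the compact subgroups $C_\lambda := \pi(U_\lambda)$. Second, $\overline{A}$ is a closed cocompact subgroup of the compactly generated group $Q$, hence itself compactly generated by Proposition~\ref{prop:cocompact_generation}. Choosing a compact open subgroup $W$ of $Q$ and setting $V := \overline{A} \cap W$, which is a compact open subgroup of $\overline A$ satisfying $\overline A = AV$ (as $A$ is dense and $V$ open), Lemma~\ref{lem:Cayley-Abels} supplies finitely many $h_1,\dots,h_n \in A$ with $\la h_1,\dots,h_n\ra V = \overline A$. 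By directedness of $(C_\lambda)$, all the $h_i$ lie in a single $C_\mu$, so $\la h_1,\dots,h_n\ra \subseteq C_\mu$ and therefore $\overline A = \la h_1,\dots,h_n\ra V \subseteq C_\mu V \subseteq \overline A$. Thus $\overline A = C_\mu V$ is a product of two compact subgroups, hence compact.

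Finally, $Q = F_0 A \subseteq F_0 \overline A \subseteq Q$ forces $Q = F_0\overline A$, a finite union of translates of a compact set, so $Q$ is compact and $N$ is cocompact in $G$, as required.
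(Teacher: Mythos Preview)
Your proof is correct, and the underlying idea matches the paper's: both arguments exploit that a compactly generated group written as a directed union of open subgroups must equal one of them. The paper works directly in $G$: it observes that $G_{(v)}N$ has finite index in $G$ (since $G$ permutes the finitely many $N$-orbits), hence is compactly generated, and writes $G_{(v)}N = \bigcup_i U_iN$ as a directed union of open subgroups, concluding that $U_iN = G_{(v)}N$ for some $i$. You instead pass to the quotient $Q = G/N$ and show $\overline{A}$ is compact via Proposition~\ref{prop:cocompact_generation} and Lemma~\ref{lem:Cayley-Abels}.

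Your route works but is more elaborate than necessary. Since $G_v$ is open in $G$ and the quotient map $\pi$ is open, $A = \pi(G_v)$ is already an open subgroup of $Q$, hence closed; there is no need to take a closure. With this observation your argument simplifies to: $A$ is cocompact in the compactly generated group $Q$, hence compactly generated by Proposition~\ref{prop:cocompact_generation}; $A = \bigcup_\lambda C_\lambda$ is a directed union of compact open subgroups; so a compact generating set for $A$ lies in some $C_\mu$, giving $A = C_\mu$ compact. This is precisely the paper's argument transported to $Q$, and avoids the appeal to Lemma~\ref{lem:Cayley-Abels}.
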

\begin{proof}
	Let $v$ be a vertex of $\Gamma$ and let $\alpha$ denote the $N$-orbit of $v$. Since $N$ is normal, the group $G$ permutes the $N$-orbits of vertices. By hypothesis, there are finitely many such, so that the setwise stabilizer  $G_{\alpha}$ of $\alpha$  is of finite index in $G$. The transitivity of $N$ on $\alpha$ implies that $G_{\alpha}=G_{(v)}N$, where $G_{(v)}$ is the stabilizer of the vertex $v$. Since $G_{(v)}$ is regionally compact, $G_{(v)}=\bigcup_{i\in I}U_i$ where $(U_i)_{i\in I}$ is a directed sequence of compact open subgroups. We thus deduce that $G_{(v)}N=\bigcup_{i\in I}U_iN$. On the other hand, $G_{(v)}N$ has finite index in $G$, so it is also compactly generated. There is thus some $i$ such that $U_iN=G_{(v)}N$. We conclude that $N$ is cocompact in $G$, verifying the lemma.
\end{proof}

Via the discussion in \cite[Section 3.1]{LeBou16}, the action of $G_c(F,F')$ on the tree $T$ is such that vertex stabilizers are regionally compact.  Since a closed subgroup of a regionally compact group is regionally compact, we note the following.

\begin{lem}\label{lem:locally_elliptic}
Let $d>2$, $T$ be the $d$-regular tree, $c$ be a legal coloring of $T$, and $F\leq F'\leq \wh{F}$ be subgroups of $\Sym(d)$.  If $H$ is a closed subgroup of $G_c(F,F')$ that fixes a vertex of $T$, then $H$ is regionally compact.
\end{lem}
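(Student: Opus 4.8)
The plan is to deduce this immediately from the fact, recorded in the sentence preceding the statement, that the vertex stabilizers in $G_c(F,F')$ are regionally compact, together with the permanence of regional compactness under passing to closed subgroups. There is essentially no computation to do; the content is purely a matter of organizing two prior observations.

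First I would fix a vertex $v$ of $T$ that is fixed by $H$, so that $H$ is contained in the stabilizer $\Stab_{G_c(F,F')}(v)$. By the discussion of \cite[Section 3.1]{LeBou16} cited just above, this stabilizer is regionally compact; this is the one external input and the only place where the specific structure of the groups $G_c(F,F')$ (with $F \leq F' \leq \wh F$) is used. Since $H$ is by hypothesis closed in $G_c(F,F')$, it is in particular a closed subgroup of the regionally compact group $\Stab_{G_c(F,F')}(v)$.

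It then remains to invoke that a closed subgroup of a regionally compact group is regionally compact. I would argue this through the equivalent characterization recalled in the text: a group is regionally compact precisely when every one of its compactly generated closed subgroups is compact. Given any compactly generated closed subgroup $K$ of $H$, the group $K$ is also a compactly generated closed subgroup of the ambient regionally compact stabilizer, hence compact; thus $H$ is regionally compact. The main (and only mild) obstacle is to make sure one states this permanence property with the correct characterization of regional compactness so that the deduction is clean, and to be careful that ``fixes a vertex'' is interpreted as lying in the full stabilizer of $v$ rather than merely in a pointwise stabilizer of some edge or ray; once $H \leq \Stab_{G_c(F,F')}(v)$ is recorded, the conclusion is automatic.
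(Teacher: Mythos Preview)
Your proposal is correct and follows exactly the paper's approach: the paper states the lemma as an immediate consequence of the sentence preceding it, namely that vertex stabilizers in $G_c(F,F')$ are regionally compact (citing \cite[Section 3.1]{LeBou16}) and that a closed subgroup of a regionally compact group is regionally compact. Your write-up is slightly more explicit in spelling out the permanence argument, but the content is identical.
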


Our characterization is now in hand.

\begin{prop}\label{prop:virt_simple_G(F,F')}
Take $d>2$, let $c$ be a legal coloring of the $d$-regular tree, and $F'\leq \Sym(d)$ be such that the action of $F'$ is not free. If $F\leq F'\leq \wh{F}$, then $G_c(F,F')$ is virtually simple if and only if $F'$ is transitive and generated by its point stabilizers.
\end{prop}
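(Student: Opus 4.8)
The plan is to compare $G := G_c(F,F')$ with its closure $U_c(F')$ and thereby reduce the statement to the characterization for Burger--Mozes groups in Proposition~\ref{prop:SimpleBurgerMozes}. Write $U' := U_c(F')$ and $M' := U_c(F')^+ = \Mon(U')$. By Proposition~\ref{prop:LeBoudec}, $G$ is dense in $U'$ and compactly generated, and $G$ is non-discrete since it contains the infinite edge-fixators of $U_c(F)$. As $F'$ does not act freely, Proposition~\ref{prop:SimpleBurgerMozes} gives that $M'$ is abstractly simple and that $F'$ is transitive and generated by its point stabilizers if and only if $[U':M'] < \infty$; the whole proposition then amounts to proving that $G$ is virtually simple if and only if $[U':M'] < \infty$.

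For the forward direction I would pass to closures. If $G$ is virtually simple with finite-index simple subgroup $S_0$, then the normal core of $S_0$ is a nontrivial normal subgroup of the simple group $S_0$, hence equals $S_0$; so $S := S_0$ is itself a finite-index \emph{closed normal} abstractly simple subgroup of $G$. Taking closures in $U'$, finitely many $G$-translates of $\overline{S}$ by coset representatives cover the dense subgroup $G$ and hence all of $U'$, so $\overline S$ is a closed normal subgroup of $U'$ of index at most $[G:S]$. Being nontrivial, $\overline S$ contains $\Mon(U') = M'$, whence $[U':M'] \le [U':\overline S] < \infty$; Proposition~\ref{prop:SimpleBurgerMozes} then yields that $F'$ is transitive and generated by its point stabilizers.

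For the converse, assume $F'$ is transitive and generated by its point stabilizers, so $M' \in \Ss$ and $U' \in \Rs$; Theorem~\ref{thm:R_dense_embedding} then gives $G \in \Rs$. I would study the subgroup $G^+ := \langle G_{(e)} \mid e \in ET \rangle$ generated by the pointwise edge-stabilizers of $G$. It is normal (the edge set is permuted by $G$) and open, because each $G_{(e)}$ contains the compact open edge-fixator $U_c(F) \cap G_{(e)}$; hence $G^+$ is closed, and by Lemma~\ref{lem:monolith_overgroup} it lies in $\Rs$ and contains $\Mon(G)$. Since $G$ is vertex-transitive (being dense in the vertex-transitive $U'$ with open vertex stabilizers) and $G^+$ is dense in the type-preserving group $M'$, the subgroup $G^+$ has only finitely many vertex-orbits, so Lemma~\ref{lem:cocompact_orbits} applies — $G$ is compactly generated with regionally compact vertex stabilizers by Lemma~\ref{lem:locally_elliptic} — and shows $G^+$ is cocompact in $G$; being open and cocompact, $G^+$ has finite index. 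Each $G_{(e)}$ decomposes as a direct product $A \times B$ of the nontrivial pointwise stabilizers of the two half-trees determined by $e$, giving an open subgroup of $G^+$ of the form demanded by Corollary~\ref{cor:AbstractSimplicity}. Thus, once $G^+$ is shown to be topologically simple, Corollary~\ref{cor:AbstractSimplicity} promotes this to abstract simplicity, and the finite index of $G^+$ exhibits $G$ as virtually simple.

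The main obstacle is exactly the topological simplicity of $G^+$, equivalently the identification $G^+ = \Mon(G)$, equivalently $\Res(G^+) = G^+$. The $\Rs$-machinery alone only yields $\Mon(G) \le G^+$ with $G^+/\Mon(G)$ compact and residually discrete, i.e.\ a topologically finitely generated profinite quotient (using Proposition~\ref{prop:DENSE_IN_S} and the cocompactness above); ruling out a nontrivial such quotient is the real content and is where the hypothesis that $F'$ be transitive and generated by its point stabilizers is essential. I expect to handle this by a Tits-independence argument: under these hypotheses the $G^+$-action on $T$ is minimal and satisfies Tits' independence property, so the subgroup generated by the edge-fixators — namely $G^+$ itself — has no proper nontrivial closed normal subgroup, exactly as in the Burger--Mozes analysis underlying Proposition~\ref{prop:SimpleBurgerMozes}.
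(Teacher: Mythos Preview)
Your forward direction is essentially the paper's: both pass to the closure in $U_c(F')$, observe that a finite-index simple (hence normal) subgroup of $G$ has closure of finite index containing $\Mon(U_c(F'))=U_c(F')^+$, and then invoke Proposition~\ref{prop:SimpleBurgerMozes}.

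For the converse the routes diverge. The paper does \emph{not} use any $\Rs$-machinery here: it simply cites Le~Boudec \cite[Corollaries~4.6, 4.9 and Lemma~4.8]{LeBou16} to conclude that $G_c(F,F')$ is monolithic with \emph{open, abstractly simple} monolith $M$, observes that $\overline{M}\supseteq U_c(F')^+$ so $M$ has finitely many vertex-orbits, and then applies Lemma~\ref{lem:cocompact_orbits} to get $M$ cocompact, hence of finite index. Your plan instead defines $G^+=\langle G_{(e)}\rangle$ and proposes proving its simplicity via Tits' independence property. This is a legitimate alternative---one checks directly that $G_c(F,F')$ satisfies property~(P) (the half-tree decomposition of an edge-fixator stays inside $G_c(F,F')$ because the local actions on the ``frozen'' side become trivial), and Tits' theorem then gives \emph{abstract} simplicity of $G^+$ outright. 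But this makes most of your scaffolding redundant: Theorem~\ref{thm:R_dense_embedding}, Lemma~\ref{lem:monolith_overgroup}, Proposition~\ref{prop:DENSE_IN_S}, and the promotion via Corollary~\ref{cor:AbstractSimplicity} are all unnecessary, since Tits already delivers abstract simplicity. Once you have $G^+$ simple, your finite-index argument via Lemma~\ref{lem:cocompact_orbits} is the same as the paper's (with $G^+$ in place of $M$).

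In short: the paper outsources the simplicity to \cite{LeBou16}; your approach re-derives it via property~(P). Both work, but you should actually carry out the property-(P) verification rather than ``expect'' it, and strip out the $\Rs$-machinery, which plays no role in this direction of the paper's argument.
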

\begin{proof}
Suppose first that $G_c(F,F')$ has a simple normal subgroup $M$ of finite index. Since $F'$ does not act freely, Proposition~\ref{prop:SimpleBurgerMozes} ensures that $U_c(F')^+$ is a non-discrete simple open subgroup of $U_c(F')$, and this subgroup is the monolith of $U_c(F')$.   The group $G_c(F,F')$ is dense in $U_c(F')$, so $\overline{M}$ contains $U_c(F')^+$.  The intersection $N:=G_c(F,F')\cap U_c(F')^+$ is an infinite normal subgroup of $G_c(F,F')$, so it must intersect $M$ non-trivially. As $M$ is simple, in fact $M \le N$.  That $M$ has finite index in $G_c(F,F')$ now implies that $U_c(F')^+$ has finite index in $U_c(F')$. Via  Proposition~\ref{prop:SimpleBurgerMozes}, $F'$ is transitive and generated by its point stabilizers.

Conversely, suppose that $F'$ is transitive and generated by its point stabilizers. Via \cite[Corollary 4.6]{LeBou16}, every non-trivial normal subgroup of $G:=G_c(F,F')$ contains the commutator subgroup $[G_{(e)},G_{(e)}]$ for every edge $e$. Furthermore, \cite[Lemma 4.8]{LeBou16} ensures that these commutator subgroups are non-trivial. It follows that $G_c(F,F')$ is monolithic, and appealing to \cite[Corollary 4.9]{LeBou16},  $G_c(F,F')$ admits a simple monolith $M$. The closure $\overline{M}$ is a normal subgroup of $U_c(F')$ and therefore contains $U_c(F')^+$. In particular, $\overline{M}$ acts with finitely many orbits on $T$. Since vertex stabilizers are open, $M$ indeed acts with finitely many orbits on $T$.

By Lemma~\ref{lem:locally_elliptic}, the vertex stabilizers in $G_c(F,F')$ for the action on $T$ are regionally compact. Lemma~\ref{lem:cocompact_orbits} implies that $M$ is in fact cocompact in $G_c(F,F')$. Applying again \cite[Corollary 4.9]{LeBou16}, $M$ is open in $G_c(F,F')$, so $M$ has finite index in $G_c(F,F')$.
\end{proof}

The previous proposition applies even in the case that $F$ acts freely, making $G_c(F,F')$ discrete. We, however, are primarily interested in the non-discrete examples $G_c(F,F')$, and our next theorem gives further information on these groups. It should be compared with the corresponding property of the Burger--Mozes group $U_c(F)$ recalled in Proposition~\ref{prop:SimpleBurgerMozes}. 

\begin{thm}\label{thm:char_G(F,F')_Rs}
	Take $d>2$ and let $c$ be a legal coloring of the $d$-regular tree. Suppose that $F\leq F'\leq \wh{F}\leq \Sym(d)$ are such that $F$ does not act freely. Then the following are equivalent:
	\begin{enumerate}[label=(\roman*)]
	\item $F'$ is transitive and generated by its point stabilizers;
	\item $G_c(F,F')$ is virtually in $\Ss$;
	\item $G_c(F,F')$ is in $\Rs$.	\qedhere
	\end{enumerate}
\end{thm}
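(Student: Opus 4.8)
The plan is to establish the cycle (i) $\Rightarrow$ (ii) $\Rightarrow$ (iii) $\Rightarrow$ (i), using Proposition~\ref{prop:virt_simple_G(F,F')} as the pivot. Since $F \le F'$ and $F$ does not act freely, $F'$ does not act freely either, so that proposition applies and identifies (i) with the statement that $G := G_c(F,F')$ is \emph{virtually simple}. Throughout I would use that $G$ is compactly generated (Proposition~\ref{prop:LeBoudec}) and second countable, has regionally compact vertex stabilizers (Lemma~\ref{lem:locally_elliptic}), and is non-discrete (as $F \neq \triv$); and that $U_c(F')$ is monolithic with abstractly simple monolith $U_c(F')^+$ (Proposition~\ref{prop:SimpleBurgerMozes}).

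For (i) $\Rightarrow$ (ii): assuming (i), Proposition~\ref{prop:virt_simple_G(F,F')} and its proof give that the monolith $M := \Mon(G)$ is open, of finite index, and abstractly simple. As an open finite-index subgroup of the non-discrete compactly generated group $G$, $M$ is itself non-discrete, compactly generated (Proposition~\ref{prop:cocompact_generation}), and topologically simple, so $M \in \Ss$ and $G$ is virtually in $\Ss$. For (ii) $\Rightarrow$ (iii): let $K \in \Ss$ be open of finite index in $G$. Its normal core in $G$ is open, of finite index, normal, and contained in $K$; being a non-trivial closed normal subgroup of the topologically simple group $K$, it equals $K$, so $K \trianglelefteq G$. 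Now $K \in \Ss \subseteq \Rs$ is second countable, so I would apply Proposition~\ref{prop:R_overgroup} to the inclusion $K \hookrightarrow G$, obtaining that $\CC_G(K)$ is closed and $G/\CC_G(K) \in \Rs$. It remains to verify $\CC_G(K) = \triv$: since $K$ is a non-compact topologically simple group acting faithfully on $T$, it fixes no vertex, edge, or end (each alternative would yield a proper non-trivial closed normal subgroup of $K$ via a compact-stabilizer or Busemann-homomorphism argument), hence acts as a group of general type with two independent hyperbolic elements; any element centralizing $K$ preserves both axes and their endpoints and is therefore trivial. Thus $\CC_G(K) = \triv$ and $G \in \Rs$.

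The hard direction is (iii) $\Rightarrow$ (i), which I would attack by contraposition: if (i) fails, then by Proposition~\ref{prop:virt_simple_G(F,F')} the group $G$ is not virtually simple, and the goal becomes to show $G \notin \Rs$; equivalently, that a compactly generated $G \in \Rs$ acting on $T$ as above must be virtually simple. Assuming $G \in \Rs$, Theorem~\ref{thm:approx_R} gives that $G$ is monolithic with a non-discrete, regionally expansive, topologically simple monolith $M$. As $G$ contains independent hyperbolic elements (from $U_c(F)$), it acts on $T$ without fixed end or vertex, and the same then holds for the non-trivial normal subgroup $M$, so $M$ shares the unique minimal $G$-invariant subtree $X$ furnished by Lemma~\ref{lem:min_inv_stree}. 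Were I able to show that $M$ acts on $X$ with finitely many orbits, then $M$ would be cocompact in $G$ by Lemma~\ref{lem:cocompact_orbits} (using Lemma~\ref{lem:locally_elliptic}), hence compactly generated (Proposition~\ref{prop:cocompact_generation}) and so $M \in \Ss$; by \cite[Corollary~4.9]{LeBou16} such a cocompact $M$ is open and abstractly simple, making $G$ virtually simple and giving (i) through Proposition~\ref{prop:virt_simple_G(F,F')}.

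I expect the cocompactness of $M$ to be the main obstacle. The subtlety is that $M = \Mon(G)$ carries the (finer, ``localized'') topology of $G$ rather than the subspace topology from $\Aut(T)$, under which its vertex stabilizers are regionally compact but not compact; regional expansiveness in this topology does not by itself force compact generation. The plan is therefore to leverage $M$ being regionally expansive \emph{and} $[A]$-semisimple (Proposition~\ref{prop:Rs_is_RF_[a]}): I would argue that if $M$ acted with infinitely many orbits on $X$, the ``extra'' directions not seen by $G$'s finitely many orbits would produce, inside some compactly generated open subgroup of $M$, either a non-trivial compact normal subgroup or a non-trivial locally normal abelian (or quasi-central) subgroup, contradicting regional expansiveness and $[A]$-semisimplicity. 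Reconciling the restricted topology on $M$ with the tree dynamics to extract this contradiction is the crux, and is where the detailed structure theory of $G_c(F,F')$ from \cite{LeBou16} would be invoked.
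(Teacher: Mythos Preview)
Your (i)$\Rightarrow$(ii) is fine, and your (ii)$\Rightarrow$(iii) can be made to work, but the paper bypasses it entirely: since (i) means $U_c(F')$ is virtually in $\Ss$ (Proposition~\ref{prop:SimpleBurgerMozes}), hence $U_c(F')\in\Rs$, and $G_c(F,F')$ is a non-discrete dense locally compact subgroup of $U_c(F')$, one gets (iii) immediately from Theorem~\ref{thm:R_dense_embedding}. Your route through Proposition~\ref{prop:R_overgroup} and a tree-dynamics centraliser computation is unnecessarily indirect.

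The real problem is (iii)$\Rightarrow$(i), where your proposal is not yet a proof. You correctly identify that everything reduces to showing the monolith $M$ acts cocompactly on $T$, but your suggested mechanism (producing a locally normal abelian or quasi-central subgroup from ``extra orbits'') is vague and does not correspond to any argument in the paper. The paper's approach rests on two observations you are missing. First, $M$ is \emph{open} in $G$: since $F$ does not act freely, $U_c(F)$ is monolithic with open monolith $U_c(F)^+$ (Proposition~\ref{prop:SimpleBurgerMozes}), and $M\cap U_c(F)$ is a non-trivial closed normal subgroup of the open subgroup $U_c(F)$, hence $U_c(F)^+\le M$. Second, one does not work with $M$ directly but passes, via Theorem~\ref{thm:approx_R}, to a compactly generated open $O\le M$ with $O\in\Rs$. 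One then argues that $O$ fixes no vertex or end (its monolith is regionally expansive, hence not regionally compact, so cannot lie in a vertex stabiliser by Lemma~\ref{lem:locally_elliptic}; an end-fixing $O$ would force its monolith to be elliptic), so $O$ has a minimal invariant subtree $X$ with finitely many orbits (Lemmas~\ref{lem:min_inv_stree}, \ref{lem:cocompact_tree}). The key step is to show $X=T$: if $X$ were contained in a half-tree $T_1$ at some edge $e$, then Tits' property~$P$ for $U_c(F)$ gives $U_c(F)_{(T_1)}\le U_c(F)^+\le M$, and $W:=O\cap U_c(F)_{(T_1)}$ is an infinite normal subgroup of $O$ acting trivially on $X$, contradicting that no non-trivial normal subgroup of $O$ fixes a vertex. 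Thus $O$, hence $M$, has finitely many orbits on $T$, and Lemma~\ref{lem:cocompact_orbits} finishes. Your proposal lacks both the pass to a compactly generated $O$ and the property-$P$ argument ruling out proper subtrees; without these, there is no clear path to cocompactness.
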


\begin{proof}
	That $(i)$ and $(ii)$ are equivalent follows from Proposition~\ref{prop:virt_simple_G(F,F')} and Proposition~\ref{prop:LeBoudec}.  Suppose that $(i)$ holds. Via Proposition~\ref{prop:SimpleBurgerMozes}, the group $U_c(F')$ is in $\Rs$. The group $G_c(F,F')$ is a dense locally compact subgroup of $U_c(F')$, so $G_c(F,F')$ is in $\Rs$ as soon as it is non-discrete, in view of Theorem~\ref{thm:R_dense_embedding}. The group $G_c(F,F')$ is non-discrete exactly when $F$ does not act freely on $[d]$, which we assume to hold. Hence, $G_c(F,F')\in \Rs$, and $(iii)$ holds.
	
To show that $(iii)$ implies $(ii)$, suppose that $G_c(F,F')\in \Rs$. Let $M\normal G_c(F,F')$ be the monolith; this subgroup is necessarily open since it contains $U_c(F)^+$.  Via Theorem~\ref{thm:approx_R}, there is $O\leq M$ such that $O $ is a compactly generated open subgroup of $M$ and $O\in \Rs$.  Since $O \in \Rs$, the monolith of $O$ is regionally expansive, and in particular, the monolith is not regionally compact and so cannot be contained in any regionally compact subgroup.   Lemma~\ref{lem:locally_elliptic} ensures that the monolith of $O$ fixes none of the vertices of $T$. Therefore, given any non-trivial normal subgroup $W$ of $O$, the group $W$ does not fix any vertex of $T$.

In the stabilizer of any end of $T$, the elliptic isometries form an open normal subgroup; thus if $O$ fixes an end, then the monolith of $O$ must consist of elliptic isometries. This implies the monolith is regionally compact, which contradicts the fact that the monolith is regionally expansive. We deduce that $O$ fixes no vertex or end. Lemma~\ref{lem:min_inv_stree} now supplies a minimal invariant subtree $X\subseteq T$ for $O$, and $O$ acts on $X$ with finitely many orbits by Lemma~\ref{lem:cocompact_tree}.
	
	If $X$ is not equal to $T$, then we can find an edge $e$ such that $X\subseteq T_i$ for one of the two half trees $T_0$ and $T_1$ determined by $e$; we assume without loss of generality that $X\subseteq T_1$. The group $U_c(F)$ has Tits' property $P$, so $U_c(F)_{(e)}=U_c(F)_{(T_0)}\times U_c(F)_{(T_1)}$, and thus, $U_c(F)_{(T_1)}\leq U_c(F)^+\leq M$. The group $U_c(F)_{(T_1)}$ is infinite and compact in $M$, so $W:=O\cap U_c(F)_{(T_1)}$ is infinite. On the other hand, $W$ fixes $X$ pointwise, so $W$ is in the kernel of the action of $O$ on $X$. This contradicts our earlier conclusion that no non-trivial normal subgroup of $O$ fixes any vertex of $T$. We conclude that $X=T$, so $O$ acts with finitely many orbits on $T$.
	
	The monolith $M$ of $G_c(F,F')$ thus acts with finitely many orbits on $T$. Applying Lemma~\ref{lem:cocompact_orbits}, we conclude that $M$ in fact has finite index in $G_c(F,F')$. Since $M$ is simple, we have established $(ii)$.
\end{proof}

\begin{rmk}
In \cite{LeBou16}, several sufficient conditions ensuring that $G_c(F,F')$ is virtually simple are found. Our result Proposition~\ref{prop:virt_simple_G(F,F')} subsumes all of these. We note that more precise information is obtained on the index of the simple subgroup in \cite{LeBou16}, for the special cases considered.
\end{rmk}

\subsection{Further examples of simple groups in $\Rs$}\label{sec:ConcreteEx}

The goal of this final section is to provide further motivation to consider the class $\Rs$ by constructing  examples of non-discrete dense locally compact subgroups of groups in $\Ss$. In Proposition~\ref{prop:ex-non-sigma-cpt}, we obtain an example of a non-discrete simple \tdlc group   in $\Rs$ that is not $\sigma$-compact, but embeds as dense locally compact subgroup of a group in $\Ss$. In Example~\ref{ex:Radu}, we obtain an example of a non-discrete dense locally compact subgroup $H$ of a group $G$ in $\Ss$ such that no compactly generated subgroup of $H$ is dense in $G$. 

Following \cite{CapDeM}, for a finite permutation group $F\leq \Sym(n)$, $W(F)$ denotes the profinite wreath branch group given by $F$, which is defined as the projective limit of iterated permutational wreath products of $F$ with itself. If $F$ is a subgroup of $J\leq \Sym(n)$, we may naturally view $W(F)$ as a closed subgroup of $W(J)$. We note that the commensurator $\Comm_{W(\Sym(n))}(W(F))$ is dense in $W(\Sym(n))$ for \textit{any} $F$, since the commensurator contains the union of the finitely iterated wreath products of copies of $\Sym(n)$. We also observe that $\Comm_{W(\Sym(n))}(W(F))$ is a branch group, and the rigid vertex stabilizers are isomorphic to $\Comm_{W(\Sym(n))}(W(F))$. The proofs of these facts are exercises in the definitions.

Let $T$ be the $5$-regular tree, let $c:ET\rightarrow [5]$ be a legal coloring, and fix an edge $e$. Set $H:=\Aut(T)$ and $L:=U_c(\Alt(5))$. The edge stabilizers $H_{(e)}$ and $L_{(e)}$ are isomorphic to $W(\Sym(4))^2$ and $W(\Alt(4))^2$, respectively, where the symbol $G^2$ denotes the direct product $G \times G$. Letting $V\normal \Alt(4)$ be the Klein four group, $U:=W(V)^2$ is the pro-$2$-Sylow subgroup of $ W(\Alt(4))^2$.  Via Theorem~\ref{thm:Reid_localizations}, $\Comm_L(U)$ is dense in $L$. In particular,  $\ol{\Comm_H(U)}$ is   non-compact since it contains $L$. On the other hand, $\ol{\Comm_H(U)}$ contains $W(\Sym(4))^2$, by the first paragraph. We deduce that $\ol{\Comm_H(U)}$ is  a non-compact open subgroup of  $H$. The only such  subgroups of $H$ are $H^+:=\Aut^+(T)$ and $H$ (this can for example be deduced from the fact that $H$ has the Howe--Moore property, see  \cite[Theorem~4.2]{BuMo2}; an alternative direct proof can be found in \cite[Theorem~A]{CapDeM}). Therefore, $G:=\Comm_{H^+}(U)$ is dense in the group $H^+$, and   $H^+$ is an element of $\Ss$ by Proposition~\ref{prop:SimpleBurgerMozes}. 

We now consider $G:=\Comm_{H^+}(U)$ with the $U$-localized topology. Since $H^+$ is in $\Ss$, the group $G$ is a member of $ \Rs$ by Theorem~\ref{thm:R_dense_embedding}. Let $M\normal G$ be the monolith and observe that $M$ is a dense locally compact subgroup of $H^+$. Additionally, $M$ is a topologically simple group.

\begin{prop} \label{prop:ex-non-sigma-cpt}
$M$ is not $\sigma$-compact.
\end{prop}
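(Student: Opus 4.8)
The plan is to exhibit inside $M$ a closed subgroup that fails to be $\sigma$-compact, and then to use the elementary fact that closed subgroups of $\sigma$-compact locally compact groups are themselves $\sigma$-compact (if $M=\bigcup_n K_n$ with $K_n$ compact and $C\le M$ is closed, then $C=\bigcup_n (C\cap K_n)$ is a countable union of compacta). The subgroup I would use is a rigid half-tree stabilizer. Fix the edge $e=\{x,y\}$, let $T_0\ni x$ and $T_1\ni y$ be the two half-trees obtained by deleting $e$, and set $R_1:=\rist_G(T_1)=\{g\in G\mid g \text{ fixes } VT_0\cup\{x,y\} \text{ pointwise}\}$. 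Since the inclusion of $G$ (with the $U$-localized topology) into $H^+$ (with the tree topology) is continuous and $R_1$ is the preimage of a tree-closed set, $R_1$ is closed in $G$.

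The first key computation is the local structure of $G$ at $e$. Because $V$ is the Klein four subgroup, which is \emph{normal} in $\Sym(4)$, the profinite iterated wreath product $W(V)$ is normal in $W(\Sym(4))$. Consequently, writing $P_e=\{g\in G\mid gx=x,\ gy=y\}$ for the pointwise stabilizer of the endpoints, every automorphism of a half-tree fixing its root \emph{normalizes} (hence commensurates) the corresponding factor $W(V)$ of $U=W(V)^2$, so $P_e=R_0\times R_1$ with $R_0\cong R_1\cong W(\Sym(4))$. In particular $R_1\cong W(\Sym(4))$ as an abstract group, its compact open subgroup in the localized topology is $U\cap R_1=W(V)$ (open because $U$ is open in $G$), and the index is
\[
[R_1:U\cap R_1]=[W(\Sym(4)):W(V)]=\bigl|W(\Sym(4))/W(V)\bigr|.
\]
Since $W(V)\trianglelefteq W(\Sym(4))$ with quotient the infinite profinite group $W(\Sym(4)/V)\cong W(\Sym(3))$, this index is uncountable. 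A locally compact group with a compact open subgroup of uncountable index is not $\sigma$-compact, so $R_1$ is not $\sigma$-compact.

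The remaining, and main, step is to show $R_1\subseteq M=\Mon(G)$; granting this, $R_1$ is a closed non-$\sigma$-compact subgroup of $M$ and we are done. The plan here is a Tits independence-property argument for the $G$-action on $T$. The action is minimal (edge-transitive, by density of $G$ in $H^+$) and has no fixed end (it contains hyperbolic isometries in all directions, again by density), and it enjoys Tits' property (P): the decomposition $P_e=R_0\times R_1$ found above extends along every geodesic, since a modification of an automorphism on a branch far from $e$ only conjugates the relevant $W(V)$-factor of $U$ and hence keeps the element in $G=\Comm_{H^+}(U)$. By Tits' simplicity theorem, the subgroup $G^+$ generated by the pointwise stabilizers of edges is contained in every non-trivial normal subgroup of $G$; in particular $G^+\subseteq M$. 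As $R_1\subseteq P_e=G_{(\{x,y\})}\subseteq G^+$, we conclude $R_1\subseteq M$. The delicate point to verify carefully is precisely this inclusion $R_1\subseteq M$: one must confirm property (P) holds for $G$ along arbitrary geodesics (not just at the single edge $e$) so that Tits' theorem applies, or, equivalently, identify $M$ with the closure of $G^+$ and check $G^+$ lies below every non-trivial closed normal subgroup using that $G\in\Rs$ is $[A]$-semisimple and monolithic (Corollary~\ref{cor:minimal_A-semisimple}). Once $R_1\subseteq M$ is secured, the descent via the closed-subgroup fact completes the proof that $M$ is not $\sigma$-compact.
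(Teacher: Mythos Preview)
Your overall strategy---find a subgroup of $M$ whose index over its intersection with $U$ is uncountable---is sound, and the route via Tits' independence property is a legitimate alternative to the paper's branch-group argument. However, there is a genuine error in your local computation.

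The claim that $W(V)$ is normal in $W(\Sym(4))$ is false. Normality of $V$ in $\Sym(4)$ does \emph{not} propagate through iterated wreath products: already in $\Sym(4)\wr\Sym(4)=\Sym(4)^4\rtimes\Sym(4)$, conjugating $(1,1,1,1;(12)(34))\in V^4\rtimes V$ by $((12),1,1,1;1)$ gives $((12),(12),1,1;(12)(34))$, whose base components are not in $V$. Consequently your identification $R_1\cong W(\Sym(4))$ is wrong; what you actually get is $R_1=\Comm_{W(\Sym(4))}(W(V))$, which is exactly the group the paper calls $B$. The index $[R_1:W(V)]$ is still uncountable---for instance because $R_1$ contains $P:=\N_{W(\Sym(4))}(W(V))$ and $P/W(V)\cong\Sym(3)^{\Nb}$ by \cite[Lemma~6.6]{CapDeM}---so once this is corrected your argument goes through. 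Note also that your verification of property~(P) for $G$ at edges other than $e$ should use edge-transitivity of $G$ (which follows from density in $H^+$) to transport the decomposition $G_{(e)}=B\times B$ to an arbitrary edge, rather than the ``far from $e$'' heuristic you sketched.

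For comparison, the paper does not invoke Tits' theorem. Instead it observes that $B$ is a branch group, that $M\cap B$ is a non-trivial normal subgroup of $B$, and then applies Grigorchuk's theorem on normal subgroups of branch groups to obtain $[\rist_B(v),\rist_B(v)]\le M$ for some vertex $v$; the uncountable index then comes from the same computation $[P,P]/([P,P]\cap W(V))\cong C_3^{\Nb}$. Your Tits-theoretic route, once repaired, is arguably more direct (it yields all of $R_1\subseteq M$ rather than only a derived rigid stabilizer), but it requires checking property~(P) for $G$, whereas the paper's argument stays entirely inside the profinite half-tree stabilizer.
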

\begin{proof}
Set $B:=\Comm_{W(\Sym(4))}(W(V))$ and recall that $B$ is a branch group. Since $M$ is non-discrete, the intersection $M\cap B^2$ is non-trivial and open in $M$. As $M$ has a trivial quasi-centralizer, it follows that $M\cap B$ is non-trivial (indeed infinite) by Proposition~\ref{prop:FaithfulLC}. Via the proof of \cite[Theorem 4]{G00} or, alternatively,  of \cite[Theorem 3.2]{LMW17}, we may find a vertex $v$ such that the derived subgroup $K:=[\rist_B(v),\rist_B(v)]$ is a subgroup of $M$. 

The group $\rist_B(v)\simeq B$ contains a copy of $P:=\N_{W(\Sym(4))}(W(V))$, so $[P,P]\leq K$.  In view of \cite[Lemma 6.6]{CapDeM}, $P/W(V)\simeq \Sym(3)^{\Nb}$, hence $[P,P]/[P,P]\cap W(V)\simeq C_3^{\Nb}$. Letting $U=W(V)^2$ be as fixed above, it follows that  $K/K\cap U$ is uncountable, so $M$ is not $\sigma$-compact, since $U\cap M$ is a compact open subgroup of $M$.
\end{proof}

The group $M$ is thus a non-$\sigma$-compact topologically simple group in $\Rs$ which is a dense locally compact subgroup of a group in $\Ss$. This shows that we cannot limit our considerations to second countable groups, even if we restrict to the topologically simple members of $\Rs$ which embed into groups in $\Ss$.

The group $G= \Comm_{H^+}(U)$ serves to illustrate two other phenomena. The $2$-localization of $H^+=\Aut^+(T)$ is $(H^+)_{(2)}=\Comm_{H^+}(W(D_{8})^2)$ with the $W(D_8)^2$-localized topology. Our group $G$ embeds densely into $(H^+)_{(2)}$. The subgroup $U = W(V)^2$ is not commensurated in $(H^+)_{(2)}$, so $G$ is properly contained in $(H^+)_{(2)}$. Furthermore, one can check that $G$ is not normal in $(H^+)_{(2)}$.  The group $(H^+)_{(2)}$ is thus a locally pro-$2$ group that admits a proper dense locally compact subgroup $G$ such that the conclusions of (i) and (ii) of Proposition~\ref{prop:Loc-pro-nilp} fail.  This example shows why, in Proposition~\ref{prop:Loc-pro-nilp}, one cannot remove the hypothesis that $G$ have a compact open subgroup that is topologically finitely generated.
 
 The group $G$ also lies in a length two inclusion chain of groups in $\Rs$ with each a proper dense locally compact subgroup of the next:
 \[
 G\injects (H^+)_{(2)}\injects H^+.
 \]
 
 \begin{rmk} It is indeed possible to find abstractly simple groups $G_0$ and $G_1$ in $\Ss$ which are both locally pro-$2$ and such that $G_0$ is a dense locally compact subgroup of $G_1$. (The groups $G_0$ and $G_1$ are restricted Burger-Mozes groups for an illegal coloring.) The conclusion of Corollary~\ref{cor:fg_abs_simple} thus fails for the group $G_1$; the only hypothesis of Corollary~\ref{cor:fg_abs_simple} that fails is that $G_1$ does not have a compact open subgroup that is topologically finitely generated.  This illustrates, as with Proposition~\ref{prop:Loc-pro-nilp}, the relevance of the hypothesis in Corollary~\ref{cor:fg_abs_simple} that $G$ have a compact open subgroup that is topologically finitely generated. Additionally, $G_0$ and $G_1$ are members of a length two inclusion chain in $\Ss$ with each group a dense locally compact subgroup of the next. In a later article, we will discuss the details of this construction.
 \end{rmk}

Finally, we give an example of a dense locally compact subgroup $H$ in a group $G \in \Ss$ such that no compactly generated subgroup of $H$ is dense in $G$. Our construction is based on the following.  

\begin{lem}\label{lem:ChainApprox}
Let $G$ be a \tdlc group and   $R(0) \leq R(1) \leq \dots \leq G$ be an ascending chain of closed subgroups.  Let $U \leq G$ be compact open subgroup, $p$ be a prime, and $S \leq U$ be a pro-$p$-Sylow subgroup of $U$. Assume that $S \leq R(0)$. For each $n$, let $H(n) = \Comm_{R(n)}(S)$ and set $H = \bigcup_n H(n)$, viewed as a \tdlc group endowed with the $S$-localized topology. If $G = \overline{\bigcup_n R(n)}$, then $H$ is a dense locally compact subgroup of $G$. If in addition $R(n) \neq G$ for all $n$, then no compactly generated subgroup of $H$ is dense in $G$. 
\end{lem}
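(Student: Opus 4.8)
The plan is to prove the two assertions of Lemma~\ref{lem:ChainApprox} separately, relying on Theorem~\ref{thm:Reid_localizations} for the density and on the ascending chain structure for the final claim.

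First I would establish that $H$ is a dense locally compact subgroup of $G$. Since $S \leq R(0)$ and $S$ commensurates itself, we have $S \leq \Comm_{R(n)}(S) = H(n)$ for all $n$, so $S \leq H$ and $S$ is a compact open subgroup of $H$ in the $S$-localized topology. The inclusion map $\iota \colon H \to G$ is continuous and injective by the Corollary following the proposition of \cite{Reid} on localizations. It remains to show $\iota(H)$ is dense. For each $n$, Theorem~\ref{thm:Reid_localizations} applied to the \tdlc group $R(n)$ with compact open subgroup $R(n) \cap U$ and pro-$p$-Sylow subgroup $S$ ensures that $\Comm_{R(n)}(S) = H(n)$ is dense in $R(n)$. (Here one should note $S$ is a pro-$p$-Sylow subgroup of $R(n) \cap U$ as well, since $S$ is a pro-$p$-Sylow subgroup of the larger group $U$ and lies in $R(n)$.) Therefore $\overline{H} \supseteq \overline{\bigcup_n H(n)} \supseteq \overline{\bigcup_n R(n)} = G$, where the closures are taken in $G$. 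Thus $H$ is dense in $G$.

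Next I would prove that if $R(n) \neq G$ for all $n$, then no compactly generated subgroup of $H$ is dense in $G$. The key observation is that any compact subset of $H$, and hence any compactly generated subgroup $J \leq H$, is contained in some single $H(n)$: indeed, a compact generating set $X \subseteq H$ is covered by the open sets $H(n)$, which form an ascending chain, so $X \subseteq H(n)$ for some $n$ by compactness, whence $J \leq H(n)$. Since $H(n) = \Comm_{R(n)}(S) \leq R(n)$ as a subset of $G$, we have $\iota(J) \leq R(n)$. Taking closures in $G$ gives $\overline{\iota(J)} \leq \overline{R(n)} = R(n)$, because $R(n)$ is closed in $G$. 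As $R(n) \neq G$, the subgroup $J$ cannot be dense in $G$.

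The main subtlety to handle carefully is the identification of $S$ as a pro-$p$-Sylow subgroup of the relevant compact open subgroups at each level: when passing to $R(n)$, the compact open subgroup to use is $R(n) \cap U$, and one must check $S \leq R(n) \cap U$ is indeed a pro-$p$-Sylow subgroup of $R(n) \cap U$ so that Theorem~\ref{thm:Reid_localizations} applies and yields density of $H(n)$ in $R(n)$. This follows since $S$ is a maximal pro-$p$ subgroup of $U$ contained in $R(n) \cap U$, and any pro-$p$ subgroup of $R(n) \cap U$ is a pro-$p$ subgroup of $U$, so maximality is inherited. The only other point requiring attention is the compactness argument in the second part, namely that a compact generating set lies in a single term of the chain; this is routine given that the $H(n)$ are open and increasing. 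I expect no genuine obstacle here beyond bookkeeping with the two topologies on $H$ and $G$.
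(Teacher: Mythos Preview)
Your proposal is correct and follows essentially the same approach as the paper: apply Theorem~\ref{thm:Reid_localizations} to each $R(n)$ (with compact open subgroup $R(n)\cap U$ and pro-$p$-Sylow $S$) to get $\overline{H(n)}=R(n)$, then use the ascending open chain $(H(n))$ to trap any compactly generated subgroup of $H$ inside some $R(n)$. Your explicit verification that $S$ is a pro-$p$-Sylow of $R(n)\cap U$ and your compactness argument for the generating set are exactly the points the paper handles (more tersely).
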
	
\begin{proof}
	Since $S \leq R(0) \leq R(n)$ for all $n$, it follows that $S$ is a  pro-$p$-Sylow of $U \cap R(n)$ for all $n$. In particular, $H(n)$ is the $p$-localization $R(n)_{(p)}$, so $\overline{H(n)} = R(n)$ by Theorem~\ref{thm:Reid_localizations}. Therefore $\overline H$ contains $R(n)$ for all $n$. It follows that $H$ is dense in $G$ as soon as the union $\bigcup_n R(n)$ is dense. 
	
	The group $H(n)$ is an open subgroup of $H$ for each $n$. Since the $H(n)$ form an ascending chain whose union is the whole group $H$, every compactly generated subgroup $J \leq H$ is contained in $H(n)$ for some $n$. Thus, $\overline J \leq \overline{H(n)} = R(n)$, and the desired assertion follows. 
\end{proof}

\begin{example}\label{ex:Radu}
Let $d \geq 4$ be an integer, let $T$ be the $d$-regular tree and $G = \Aut(T)^+$, so that $G \in \Ss$ by Proposition~\ref{prop:SimpleBurgerMozes}. Let $U = G_{(e)}$ be an edge stabilizer, let $ p \leq d-1$ be an odd prime, and let $S$ be a pro-$p$-Sylow subgroup of $U$. In the notation introduced above, we have $U = G_{(e)} \cong W(\Sym(d-1))^2$ and $S \cong W(P)^2$, where $P$ is a $p$-Sylow subgroup of $\Sym(d-1)$. Since $p$ is odd, the group $P$ is also a $p$-Sylow subgroup of $\Alt(d-1)$, so that $S \cong W(P)^2$ is contained in $U(\Alt(d))_{(e)}$ (necessarily as a pro-$p$-Sylow subgroup). Moreover $S$ is non-discrete since $p \leq d-1$. 

 The existence of an ascending chain $R(n)$ of closed subgroups of $G$ satisfying all the hypotheses of Lemma~\ref{lem:ChainApprox} can be extracted from the work of N.~Radu from~\cite{Radu}. We describe the construction briefly, using freely the notation from \cite[\S4]{Radu}.  In particular, it follows from the Lemma that $G$ has a non-discrete dense locally compact subgroup $H$ such that no compactly generated subgroup of $H$ is dense in $G$. 

Assume first that $d$ is odd. For each $n \geq 0$, set $R(n) = G_{(i)}^+([0, n], [0, n])$ in the notation of  \cite[Definition~4.1]{Radu}, where $[0, n] = \{0, 1, \dots, n\}$. Then $R(0) = U(\Alt(d))^+$. In particular, as observed above, we have $S \leq R(0)$  and the group $S$ is non-discrete. By \cite[Proposition~4.8]{Radu}, we have $R(n) \neq G$ for all $n$. Using that $d$ is odd, one verifies that $R(n) \leq R(n+1)$ for all $n$. Finally, the fact that $G = \overline{\bigcup_n R(n)}$ can be established using a similar argument as in the proof of \cite[Fact~1 in Appendix~A]{CapRad}. 

In the case where $d$ is even, it is no longer true that $G_{(i)}^+([0, n], [0, n])$ is contained in $G_{(i)}^+([0, n+1], [0, n+1])$ for all $n$, so the definition of $R(n)$ must be modified. In that case, we set $R(n) = G_{(i)}^+(X_n, X_n)$, where the set $X_n \subseteq [0, n]$ is defined inductively by setting $X_0 = \{0\}$ and $X_{n} = \alpha(X_{n-1})$ for $n>0$, where $\alpha$ is the function defined after Lemma~4.2 in \cite{Radu_Latt}. The inclusion $R(n) \leq R(n+1)$ then follows from  \cite[Lemma~4.5]{Radu_Latt}, and the other verifications are similar as in the case where $d$ is odd. 
\end{example}

\acks

 We would like to thank the Isaac Newton Institute for Mathematical Sciences, Cambridge for support and hospitality during the program \textit{Non-positive curvature group actions and cohomology}. We would also like to thank the \textit{Winter of Disconnectedness} program for its support and hospitality. Substantial progress on this work was made while attending these programs. P.-E. Caprace is a F.R.S.-FNRS senior research associate, supported in part by EPSRC grant no EP/K032208/1. C. D. Reid was an ARC DECRA fellow, supported in part by ARC Discovery Project DP120100996. We thank the referee for his or her many detailed and useful comments.

\bibliographystyle{authordate1}
\bibliography{biblio_IMRN}

\end{document}